\newcounter{denseversion}
\newcounter{comments}
\newcounter{authorcounter}
\newcounter{adresscounter}
\def\title#1{\gdef\@title{#1}}
\def\@title{}
\def\subtitle#1{\gdef\@subtitle{#1}}
\def\@subtitle{}
\def\authortagsused{0}
\def\adresstag#1{\if!#1!\else$^{\;#1\;}$\fi}
\renewcommand{\author}[2][]{
  \stepcounter{authorcounter}
  \if!#1!\else\gdef\authortagsused{1}\fi
  \ifnum\value{authorcounter}=1
    \def\@authorstringa{#2\adresstag{#1}}
    \def\@authorstringb{#2}
    \def\@authorstringc{#2\adresstag{#1}}
  \else
    \g@addto@macro\@authorstringa{\ and #2\adresstag{#1}}
    \g@addto@macro\@authorstringb{\ and #2}
    \g@addto@macro\@authorstringc{\\#2\adresstag{#1}}
  \fi}
\def\@author{\ifnum\value{denseversion}=0\@authorstringa\else\@authorstringb\fi}
\def\@adressstringa{}
\def\@adressstringb{}
\newcommand{\adress}[2][]{
  \stepcounter{adresscounter}
  \ifnum\value{adresscounter}=1
    \g@addto@macro\@adressstringa{\ifnum\authortagsused=0\def\br{\\}\else\def\br{, }\fi\adresstag{#1}#2}
    \g@addto@macro\@adressstringb{\def\br{\\}\adresstag{#1}\parbox[t]{14cm}{#2}}
  \else
    \g@addto@macro\@adressstringa{\\[\bigskipamount]\adresstag{#1}#2}
    \g@addto@macro\@adressstringb{\\[\medskipamount]\adresstag{#1}\parbox[t]{14cm}{#2}}
  \fi}
\def\preprint#1{\gdef\@preprint{#1}}
\def\@preprint{}
\def\keywords#1{\gdef\@keywords{#1}}
\def\@keywords{}
\def\msc#1{\gdef\@msc{#1}}
\def\@msc{}
\def\email#1{
   \gdef\@email{#1}
   \g@addto@macro\@authorstringc{ {\it (#1)}}}
\def\@email{}
\def\dedication#1{\gdef\@dedication{#1}}
\def\@dedication{}
\def\mybaselinestretch#1{\gdef\@mybaselinestretch{#1}}
\def\@mybaselinestretch{}
\def\myparskip#1{\gdef\@myparskip{#1}}
\def\@myparskip{}
\renewcommand{\baselinestretch}{\@mybaselinestretch}
\def\denseversion{
  \setcounter{denseversion}{1}
  \newgeometry{left=3cm,right=3cm,top=3cm}
  \mybaselinestretch{1.1}
  \myparskip{0.5ex}
  \renewcommand{\baselinestretch}{\@mybaselinestretch}
  \setlength{\parskip}{\@myparskip}
  \normalfont
  \def\possiblelinebreak{}
  \fancyfoot[C]{\itshape{--$\,\,$\thepage$\,\,$--}}}
\def\possiblelinebreak{\\}
\renewcommand{\emph}[1]{\def\reserved@a{it}\ifx\f@shape\reserved@a\ul{#1}\else\textit{#1}\fi}
\newcommand{\mytableofcontents}{
   \ifnum\value{denseversion}=0
     \tableofcontents
   \else
     \renewcommand{\baselinestretch}{1.1}
     \setlength{\parskip}{0ex}
     \normalfont
     \begingroup
     \def\addvspace##1{\vskip0.4em}
     \tableofcontents
     \endgroup
     \renewcommand{\baselinestretch}{\@mybaselinestretch}
     \setlength{\parskip}{\@myparskip}
     \normalfont
   \fi}
\newlength{\zeilenlaenge}
\def\putindent#1{
  \settowidth{\zeilenlaenge}{#1}
  \ifnum\zeilenlaenge>\textwidth
    #1
  \else
    \noindent #1
  \fi
}
\def\href#1#2{#2}
\def\pdfdaten{
  \hypersetup{
    linktocpage = true,
    pdftitle = {\@title},
    pdfauthor = {\@author},
    pdfkeywords = {\@keywords},    
    bookmarksopen = true,
    bookmarksopenlevel = 1
  }}  
\def\showkeywords{\begin{flushleft}\footnotesize\textbf{Keywords}: \@keywords\end{flushleft}}
\def\showmsc{\begin{flushleft}\footnotesize\textbf{MSC 2010}: \@msc\end{flushleft}}
\def\mytitle{}
\def\zmptitle{
  \begin{tabular}{cc}
    \begin{minipage}[c]{0.4\textwidth}
      \begin{flushleft}
        \includegraphics[width=110pt]{../../tex/zmp}
      \end{flushleft}  
    \end{minipage}&
    \begin{minipage}[c]{0.55\textwidth}
      \begin{flushright}
      {\small\sf\@preprint}
      \end{flushright}
    \end{minipage}
  \end{tabular}
  \vskip 2cm}
\def\maketitle{
  \pdfdaten
  \newpage
  \noindent
  \mytitle
  \begin{center}
    \LARGE\@title\\
    \if!\@subtitle!\else\smallskip\LARGE\@subtitle\\\fi
    \bigskip
    \if!\@author!\else\bigskip\large\@author\\\fi
    \ifnum\value{denseversion}=0
      \if!\@adressstringa!\else\bigskip\normalsize\@adressstringa\\\fi
      \if!\@email!\else\ifnum\value{authorcounter}=1\bigskip\normalsize\textit{\@email}\\\else\fi\fi
    \else
    \fi
    \if!\@dedication!\else\bigskip\normalsize{\@dedication}\\\fi
  \end{center}
  \ifnum\value{denseversion}=0\vskip 1.5cm\else\vskip0.5cm\fi
  \if!\@draft!\else\thispagestyle{empty}\fi}
\def\kobib#1{
  \begin{raggedright}
  \ifnum\value{denseversion}=0\else\small\fi
  \Oldbibliography{#1/kobib}
  \bibliographystyle{#1/kobib}
  \end{raggedright}
  \ifnum\value{denseversion}=0\else
      \noindent
      \if!\@authorstringc!\else
        \ifnum\authortagsused=0\ifnum\value{authorcounter}>1\normalsize\@authorstringc\\[\medskipamount]\else\fi\else\normalsize\@authorstringc\\[\medskipamount]\fi
      \fi
      \if!\@adressstringb!\else\normalsize\@adressstringb\\{}\fi
      \ifnum\authortagsused=0
        \ifnum\value{authorcounter}=1
          \if!\@email!\else\linebreak\normalsize\textit{\@email}\\{}\fi
        \else
        \fi
      \else
      \fi
  \fi}
\let\Oldbibliography\bibliography
\def\bibliography#1{
  \begin{raggedright}
  \ifnum\value{denseversion}=0\else\small\fi
  \Oldbibliography{#1}
  \end{raggedright}
  \ifnum\value{denseversion}=0\else
      \medskip
      \noindent
      \if!\@authorstringc!\else
        \ifnum\authortagsused=0\ifnum\value{authorcounter}>1\normalsize\@authorstringc\\[\medskipamount]\else\fi\else\normalsize\@authorstringc\\[\medskipamount]\fi
      \fi
      \if!\@adressstringb!\else\normalsize\@adressstringb\\{}\fi
      \ifnum\authortagsused=0
        \ifnum\value{authorcounter}=1
          \if!\@email!\else\linebreak\normalsize\textit{\@email}\\{}\fi
        \else
        \fi
      \else
      \fi
  \fi
}
\newenvironment{commentfigure}{\begin{comment}}{\end{comment}}
\newenvironment{sidewayscommentfigure}{\begin{minipage}}{\end{minipage}}
\def\comments{
  \setcounter{comments}{1}
  \renewenvironment{comment}{\begin{list}{}{\rightmargin=1cm\leftmargin=1cm}\item\sf\begin{small}}{\end{small}\end{list}}
  
  }
\def\draftstamp#1#2#3{
  \ifnum\value{comments}=0
    \gdef\@draft{DRAFT - Version #1 - Last edited on #2 by #3 - Comments are not displayed}
  \else
      \gdef\@draft{DRAFT - Version #1 - Last edited on #2 by #3 - Comments are displayed}
  \fi
  \fancyhead[C]{\footnotesize\tt\textcolor{red}{\@draft}}}
\def\draft#1#2#3#4{
  \ifnum#4=1\comments\else\fi
  \draftstamp}
\def\@draft{}
\newcounter{marke}
\def\N {\mathbb{N}}
\def\R {\mathbb{R}}
\def\id{\mathrm{id}}
\def\subset{\subseteq}
\def\sep{\;|\;}
\renewcommand{\varepsilon}{\epsilon}
\renewcommand{\to}{\!\xymatrix@R=0cm@C=1.4em{\ar[r] &}}
\renewcommand{\mapsto}{\!\xymatrix@R=0cm@C=1.4em{\ar@{|->}[r] &}\!}
\renewcommand{\Rightarrow}{\!\xymatrix@R=0cm@C=1.4em{\ar@{=>}[r] &}\!}
\renewcommand{\Leftarrow}{\!\xymatrix@R=0cm@C=1.4em{\ar@{<=}[r] &}\!}
\newcommand{\incl}{\!\xymatrix@R=0cm@C=1.4em{\ar@{^(->}[r] &}\!}
\renewcommand\Leftrightarrow{\!\xymatrix@R=0cm@C=1.4em{\ar@{<=>}[r] &}\!}
\renewenvironment{proof}[1][\nameProof]
  {\par\pushQED{\qed}%
   \normalfont \topsep6\p@\@plus6\p@\relax
   \trivlist
   \item[\hskip\labelsep
         \itshape
         #1\@addpunct{.}]
  \leavevmode}
  {\popQED\endtrivlist\@endpefalse}
\def\notebox#1#2{\begin{minipage}[b]{#1}\sloppy\renewcommand{\baselinestretch}{0.8}\footnotesize \begin{center}#2\end{center}\end{minipage}}
\def\mquad{\hspace{-2em}}
\def\erf#1{(\ref{#1})}
\def\stackref#1#2{\stackrel{\text{\ref{#1}}}{#2}}
\def\eqref#1{\stackref{#1}{=}}
\newlength{\myeqt} 
\newlength{\myeqs} 
\newlength{\myeqm} 
\newlength{\myeqn} 
\newcommand\eqtext[2][\myeqn]{\symtext[#1]{#2}{=}}
\newcommand\symtext[3][\myeqn]{
  \settowidth{\myeqt}{#2}
  \settowidth{\myeqs}{$#3$}
  \addtolength{\myeqs}{\the\myeqm}
  \ifdim\myeqt>\myeqs
    \stackrel{\hspace{-#1}\notebox{#1}{\medskip #2 \\ $\downarrow$\smallskip}\hspace{-#1}}{#3}
  \else
    \stackrel{\text{#2}}{#3}
  \fi}
\newcommand\eqcref[2][\myeqn]{\symcref[#1]{#2}{=}}
\newcommand\symcref[3][\myeqn]{\symtext[#1]{\cref{#2}}{#3}}
\def\brackets#1{\IfStrEq{#1}{-}{}{(#1)}}
\def\subindex#1{\IfStrEq{#1}{-}{}{_{#1}}}
\newcommand{\alxydim}[2]{\begin{aligned}\xymatrix#1{#2}\end{aligned}}
\newlength{\myl}
\def\ddt#1#2#3{\left.\frac{\mathrm{d}^{\IfStrEq{#1}{1}{}{#1}}}{\mathrm{d}#2}\IfStrEq{#2}{#3}{\right.}{\right|_{#3}}}
\def\lie{\mathcal{L}\!i\!e}
\def\grpd{\mathcal{G}\!rpd}
\def\liegrpd{\lie\grpd}
\def\fun{\mathcal{F}un}
\def\two{2\text{-}}
\def\hom{\mathcal{H}\!om}
\def\act#1#2{#1/\!\!/#2}
\def\idmorph#1{#1_{dis}}
\def\pr{{\mathrm{pr}}}
\newlength{\widthtmp}
\def\length#1{\settowidth{\widthtmp}{#1}\the\widthtmp}
\def\ttimes#1#2{\hspace{-0.15em}\tensor[_{#1}]{\times}{_{#2}}}
\def\buntech#1#2{\mathcal{B}\hspace{-0.01em}un_{\hspace{0.05em}#1}^{#2}}
\def\bun#1#2{\buntech{#1}{}\brackets{#2}}
\def\buncon#1#2{\buntech{#1}{\nabla}\hspace{-0.05em}\brackets{#2}}
\def\bunconmod#1#2#3{\buntech{#1}{\nabla_{\!#3}}\hspace{-0.05em}\brackets{#2}}
\def\zweibun#1#2{\two\bun{#1}{#2}}
\def\zweibuncon#1#2{\two\buncon{#1}{#2}}
\def\quot#1{``#1''}
\def\quand{\quad\text{ and }\quad}
\def\quomma{\quad\text{, }\quad}
\def\quith{\quad\text{ with }\quad}
\def\nameProof{Proof}
\def\ff{f\!f}
\def\con#1#2{\mathcal{C}\!on_{#1}\brackets{#2}}
\def\conff#1#2{\mathcal{C}\!on^{{f\!f}}_{#1}\brackets{#2}}
\def\PE{poe}
\def\SE{soe}
\def\inf#1{\EuScript{#1}}
\def\xyst{3em}
\def\zweibunconff#1#2{\two\bunconmod{#1}{#2}{f\!f}}
\def\eqendofproof{\vspace*{-1.9em}}
\def\fa#1{{#1}^{a}}
\def\fb#1{{#1}^{b}}
\def\fc#1{{#1}^{c}}
\def\tor#1{#1\text{-}\mathcal{T}\!\!or}
\def\ob#1{\mathrm{Obj}(#1)}
\def\mor#1{\mathrm{Mor}(#1)}
\def\1mor#1{1\text{-}\mathrm{Mor}(#1)}
\def\2mor#1{2\text{-}\mathrm{Mor}(#1)}
\def\refsmoothfunctor{\cite[Remarks 2.3.3 (a) \& 2.4.2 (b)]{Waldorf2016}}
\def\refdefanafunctor{\cite[Definition 2.4.1 (b)]{Waldorf2016}}
\def\refsmoothnattrans{\cite[Remark 2.4.2 (b)]{Waldorf2016}}
\def\reftransfunctoranafunctor{\cite[Remarks 2.3.3 (c) \& 2.4.2 (b)]{Waldorf2016}}
\def\reflietwoalgebravaluedforms{\cite[Section 4]{Waldorf2016}}
\def\refpullback{\cite[Sections 4.3 \& 5.2]{Waldorf2016}}
\def\refpullbackconnection{\cite[Proposition 5.2.12]{Waldorf2016}}
\def\refclassification{\cite[Theorem 5.3.4]{Waldorf2016}}
\def\refcanonicalpullback{\cite[Remark 5.2.10 (a) -- (c)]{Waldorf2016}}
\def\refcanonicalpullbackshift{\cite[Remark 5.2.10 (e) -- (g)]{Waldorf2016}}
\def\refpullbackoncomposition{\cite[Lemma 4.3.5 (a)]{Waldorf2016}}
\def\reftransitionspan{\cite[Lemma 3.1.6]{Waldorf2016}}
\def\refcompositionanafunctors{\cite[Remark 2.3.2 (a)]{Waldorf2016}}
\def\refmorgammaactiononcomposition{\cite[Remark 2.4.2 (a)]{Waldorf2016}}
\def\refactionfullyfaithful{\cite[Lemma 3.1.4]{Waldorf2016}}
\def\reftransitionspananafunctor{\cite[Lemma 3.1.9]{Waldorf2016}}
\def\refactiontwobundle{\cite[Example 5.1.11]{Waldorf2016}}
\def\refactiontwobundlereduction{\cite[Corollary 5.3.6]{Waldorf2016}}
\def\reftrivialbundle{\cite[Section 5.4]{Waldorf2016}}
\def\reffunctorL{\cite[Proposition 5.4.4]{Waldorf2016}}
\def\reffunctorLproperties{\cite[Propositions 5.4.6 \& 5.4.9]{Waldorf2016}}
\def\refpullbackform{\cite[Lemma 4.3.3]{Waldorf2016}} \def\refshiftedpullback{\cite[Remark 5.2.10 (e)]{Waldorf2016}}
\def\refdefshiftedpullbacktrivialbundle{\cite[Eq. 5.4.4]{Waldorf2016}}
\title{Parallel transport in principal 2-bundles}
\author{Konrad Waldorf}
\email{konrad.waldorf@uni-greifswald.de}
\keywords{}
\begin{document}


\maketitle 

\begin{abstract}
A nice differential-geometric framework for (non-abelian) higher gauge theory is provided by principal 2-bundles, i.e. categorified principal bundles. Their total spaces are Lie groupoids,  local trivializations are  kinds of   Morita equivalences, and  connections are Lie-2-algebra-valued 1-forms.
  In this article, we construct explicitly the parallel transport of a connection on a principal 2-bundle. Parallel transport along a path is a Morita equivalence between the fibres over the end points, and  pa\-ral\-lel transport along a surface is an intertwiner between  Morita equivalences. We prove that our constructions fit into the general axiomatic framework for categorified parallel transport and surface holonomy.

\end{abstract}

\setcounter{tocdepth}{2}
\mytableofcontents


\setsecnumdepth{2}

\section{Introduction}

Many different concrete models for 2-bundles (sometimes called categorified bundles or gerbes) have been developed so far. For most of them, there exists a notion of a connection. For some of them, it is proved that there exists a corresponding parallel transport along paths and surfaces. However, to my best knowledge, in none of these models the relation between the connection and the parallel transport is concretely realized. The aim of the present paper is to fill this gap by constructing the parallel transport in one of these models: principal 2-bundles.

Let me try to clarify some of  above statements. First of all, our categorified bundles live over a smooth manifold $M$, and their structure group is a strict Lie 2-group.  Familiar models of  2-bundles with connection are (non-abelian) bundle gerbes \cite{murray,aschieri},   $G$-gerbes \cite{Laurent-Gengoux}, (non-abelian) differential cocycles \cite{breen1}, and  principal 2-bundles \cite{wockel1,pries2,Waldorf2016}.

In joint work with Urs Schreiber \cite{schreiber2}, based on earlier work of Baez-Schreiber \cite{baez2}, we have developed a model-independent, axiomatic framework for the parallel transport of connections in categorified bundles, called \quot{transport 2-functors}. Such a transport 2-functor is a 2-functor
\begin{equation*}
\mathrm{tra}: \mathcal{P}_2(M) \to \mathcal{C}\text{,}
\end{equation*} 
where $\mathcal{P}_2(M)$ is the path-2-groupoid of $M$ and $\mathcal{C}$ is some bicategory that depends on the model. The basic idea is that the objects of $\mathcal{P}_2(M)$  are the points $x \in\ M$, the morphisms are all smooth paths $\gamma$ in $M$, and the 2-morphisms are fixed-ends homotopies $\Sigma$ between paths (\quot{bigons}). Then, $\mathrm{tra}(x)$ is the \quot{fibre over $x$}, $\mathrm{tra}(\gamma)$ is the parallel transport along the path $\gamma$, and $\mathrm{tra}(\Sigma)$ is the parallel transport along the surface $\Sigma$. The axioms of a 2-functor describe how parallel transport behaves under gluing and cutting of paths and surfaces. The most difficult aspect of this framework is to axiomatically characterize smoothness conditions for the transport 2-functor. This has been worked out in  \cite{schreiber2}. It was proved there  that -- after picking particular bicategories $\mathcal{C}$  -- the bicategory of transport 2-functors is equivalent to several bicategories of above-mentioned models.

In all cases discussed in \cite{schreiber2}, these equivalences are given by spans of 2-functors which are in general not canonically invertible. This means, for instance, that not even for an abelian bundle gerbe with connection one associate in a canonical way a transport 2-functor. In particular, there is no clear answer to the question, what the parallel transport of such a bundle gerbe along a path is.  This is unsatisfying, in particular regarding the applications to higher gauge theory, where the parallel transport along a path constitutes the coupling to gauge fields.

In the present paper we consider the model of principal 2-bundles and provide a solution to this problem. Principal 2-bundles have been introduced by Wockel \cite{wockel1} and further worked out by Schommer-Pries \cite{pries2}. A principal 2-bundle is the most direct categorification of an ordinary principal bundle: its total space is a Lie groupoid on which a Lie 2-group $\Gamma$ acts in a certain way making it fibre-wise principal. Morphisms between principal 2-bundles -- in particular, local trivializations -- are not smooth functors between Lie groupoids but a generalization called \emph{anafunctor}, a kind of directed Morita equivalence.
Connections on principal 2-bundles have  recently been introduced in \cite{Waldorf2016}. We recall the central definitions in \cref{sec:2bundles}. The main part of this article is to construct the parallel transport of these connections:
\begin{enumerate}[(1)]

\item 
If $\gamma$ is a smooth path in $M$ starting at $x$ and ending at $y$, then we construct a $\Gamma$-equivariant anafunctor
$F_{\gamma}: \inf P_x \to \inf P_y$
between the fibres of $\inf P$ over these points. This  is the content of  \cref{sec:ptpaths}.    

\item
Suppose the connection is fake-flat. If $\Sigma$ is a smooth fixed-ends homotopy between paths $\gamma_1$ and $\gamma_2$, then we construct a $\Gamma$-equivariant transformation
$\varphi_{\Sigma}: F_{\gamma_1}\Rightarrow F_{\gamma_2}$
between the anafunctors associated to the two paths.
This is the content of \cref{sec:ptbigons}. 
\end{enumerate}
In principle, constructions (1) and (2) are performed in a  very similar way as for ordinary principal bundles. The basic idea is to  lift paths and homotopies \quot{horizontally} to the objects of the total space Lie groupoid. There are two main differences compared to ordinary principal bundles: horizontal lifts (i)  exist only locally and (ii) are not unique. Local existence requires to compensate  differences with structure on the morphisms of the total space Lie groupoid; this makes the whole construction more complex. Non-uniqueness requires to consider all possible horizontal lifts at one time; this forces us to consider anafunctors instead of ordinary functors. All these issues are carefully discussed and resolved in \cref{sec:ptpaths,sec:ptbigons}. The following is the main result of this article.

\begin{maintheorem}
\label{th:main}
Our constructions (1) and (2) of the parallel transport of a principal $\Gamma$-2-bundle fit into the axiomatic framework of transport 2-functors. This means:
\begin{enumerate}[(a)]

\item 
For every principal $\Gamma$-2-bundle $\inf P$ with fake-flat connection the assignments
$x \mapsto \inf P_x$, $\gamma\mapsto F_{\gamma}$, and $\Sigma \mapsto \varphi_{\Sigma}$
form a transport 2-functor
\begin{equation*}
\mathrm{tra}_{\inf P}:\mathcal{P}_2(M) \to \tor\Gamma
\end{equation*}
with target the bicategory of $\Gamma$-torsors.

\item
The assignment $\inf P \mapsto \mathrm{tra}_{\inf P}$ is  compatible with the bicategorical structure of principal $\Gamma$-2-bundles in the sense that it extends to a 2-functor
\begin{equation*}
\zweibunconff\Gamma M \to \mathrm{Trans}_{\Gamma}(M,\tor\Gamma)
\end{equation*}
between the bicategories of principal $\Gamma$-2-bundles with fake-flat connections and the bicategory of transport 2-functors.

\end{enumerate}
\end{maintheorem}

\cref{th:main} is proved in \cref{sec:pt2functor}  as \cref{th:2funct,th:pt2fun}.
We will show in a forthcoming paper that the 2-functor in (b) is actually an equivalence of bicategories. 
This means that  the model of principal $\Gamma$-2-bundles with connections comprises all aspects of categorified parallel transport.

This article is organized in a straightforward way. In \cref{sec:2bundles} we offer a short review about principal 2-bundles and connections. This review covers all material sufficient to understand the statements and constructions of this article. By intention, understanding all details of  the proofs  might require to consult \cite{Waldorf2016}. Therefore, all definitions, notations, and most symbols used in this article coincide with the corresponding ones in \cite{Waldorf2016}. 
  \cref{sec:ptpaths,sec:ptbigons} contain the constructions of the parallel transport, in \cref{sec:backcomp} we reduce these constructions to two important subclasses of principal 2-bundles, and  \cref{sec:pt2functor}  contains the proof of our main result. In an appendix we summarize and slightly extend results of \cite{schreiber5} about path-ordered and surface-ordered exponentials, which provide the \quot{local} foundations for parallel transport.

Admittedly, some constructions and proofs we perform in this article are quite  laborious. However, we believe that
our results -- once established --  provide a rather complete and convenient \quot{calculus} for categorified parallel transport in the well-established context of Lie groupoids.

\paragraph{Acknowledgements.} This work was supported by the German Research Foundation under project code WA 3300/1-1.

\setsecnumdepth{1}

\section{Principal 2-bundles}

\label{sec:2bundles}

We give a very short introduction to principal 2-bundles and connections. A comprehensive treatment is given in \cite{Waldorf2016}.
There is a bicategory $\liegrpd$ whose objects are \emph{Lie groupoids}, whose 1-morphisms are called \emph{anafunctors} (a.k.a. bibundles, Hilsum-Skandalis maps, Morita equivalences,...), and whose 2-morphisms are called \emph{transformations} (bibundle maps, intertwiners,...). Ordinary (smooth) functors form a proper subset among all anafunctors. Ordinary (smooth) natural transformations correspond to all transformations between functors.
The purpose of enlarging the set of 1-morphisms from functors to anafunctors is to invert certain functors (called \emph{weak equivalences}). One effect of this enlargement is that $\liegrpd$ is equivalent to the bicategory of differential stacks \cite{pronk}.

In this paper, a \emph{Lie 2-group} is a Lie groupoid whose objects and morphisms are equipped with Lie group structures, so that the structure maps are Lie group homomorphisms. Lie 2-groups are in one-to-one correspondence with \emph{crossed modules} of Lie groups. Often this version of a Lie 2-group is called \quot{strict}. 
A \emph{smooth right action} of a Lie 2-group $\Gamma$ on a Lie groupoid $\mathcal{X}$ is a smooth functor $R:\mathcal{X} \times \Gamma \to \mathcal{X}$ satisfying strictly the axioms of an action. 
Now, there is a new bicategory, whose objects are Lie groupoids equipped with smooth right $\Gamma$-actions, whose morphisms are $\Gamma$-equivariant anafunctors, and whose 2-morphisms are $\Gamma$-equivariant transformations. 

Finally, we fix the following conventions. If $X$ is a smooth manifold, we denote by $\idmorph{X}$ the Lie groupoid with objects $X$ and only identity morphisms. A smooth functor $\phi: \mathcal{X} \to \mathcal{Y}$ is called surjective/submersive, if it is so on the level on objects.

\begin{definition}
\label{def:zwoabun}
Let $M$ be a smooth manifold.
\begin{enumerate}[(a)]
\item 
A \emph{principal $\Gamma$-2-bundle over $M$} is a Lie groupoid $\inf P$, a smooth, surjective and submersive  functor $\pi: \inf P \to \idmorph M$,  and a smooth right action $R: \Gamma \times \inf P \to \inf P$ such that $\pi \circ R = \pi \circ \pr_1$ and the smooth functor
$(\mathrm{pr}_1, R) : \inf P \times \Gamma \to \inf P \times_M \inf P$
is a weak equivalence.

\item
A \emph{1-morphism} between principal $\Gamma$-2-bundles is a  $\Gamma$-equivariant anafunctor 
$J: \inf P_1 \to \inf P_2$
such that $\pi_2 \circ F =\pi_1$.

\item
A \emph{2-morphism} between 1-morphisms is a $\Gamma$-equivariant transformation.

\end{enumerate}
\end{definition}

Principal $\Gamma$-2-bundles over $M$ form a bigroupoid that we denote by $\zweibun\Gamma M$. Moreover, the assignment
$M \mapsto \zweibun \Gamma M$
is a stack  over the site of smooth  manifolds \cite[Theorem 6.2.1]{Nikolaus}.

\begin{remark}
We describe some notation and technical features related to our 1-morphisms, which will be used later in the paper. Let $\inf P_1$ and $\inf P_2$ be principal $\Gamma$-2-bundles over $M$.

\begin{enumerate}[(a)]

\item 
\label{rem:anafunctor:a}
The anafunctor underlying a 1-morphism $J:\inf P_1 \to \inf P_2$ consists of a \emph{total space} $J$, \emph{anchor maps} $\alpha_l: J \to \inf P_1$ and $\alpha_r: J \to \inf P_2$, and commuting smooth groupoid actions $\rho_l:\mor{\inf P_1} \ttimes{s}{\alpha_l} J \to J$ and $\rho_r:J \ttimes{\alpha_r}{t} \mor{\inf P_2} \to J$, which we will often denote by $\rho \circ j$ and $j \circ \rho$, respectively. Its $\Gamma$-equivariance consists of a  smooth right action $\rho:J \times \mor{\Gamma} \to J$, usually denoted by $j \cdot \gamma$, that is compatible with the groupoid actions in the sense that 
\begin{equation*}
R(\rho_1,\gamma_1) \circ (j \cdot \gamma) \circ R(\rho_2,\gamma_2) = (\rho_1 \circ j \circ \rho_2) \cdot (\gamma_1\circ \gamma\circ \gamma_2)
\end{equation*}
whenever all compositions are defined, see \refdefanafunctor.

\item
\label{rem:anafunctor:b}
If $\phi: \inf P_1 \to \inf P_2$ is a smooth functor that preserves the fibres and strictly commutes with the $\Gamma$-actions, then it induces a 1-morphism with total space  $J_{\phi} := \ob{\inf P_1} \ttimes{\phi}{t} \mor{\inf P_2}$, anchors  $\alpha_l(p,\rho) := p$ and $\alpha_r(p,\rho) := s(\rho)$, groupoid  actions  $\eta \circ (p,\rho) := (t(\eta),\phi(\eta) \circ \rho)$ and $(p,\rho) \circ \eta := (p, \rho \circ \eta)$, and $\mor{\Gamma}$-action $(p,\rho) \cdot\gamma :=  (R(p,t(\gamma)),R(\rho,\gamma))$, see \refsmoothfunctor.  
 
\item
\label{rem:anafunctor:c}
A smooth natural transformation $\eta:\phi\Rightarrow \phi'$ induces a transformation $f_{\eta}:J_{\phi} \Rightarrow J_{\phi'}$ by $f_{\eta}(p,\rho) := (x,\eta(p) \circ \rho)$. If $\eta$  is $\Gamma$-equivariant then $f_{\eta}$ is also $\Gamma$-equivariant, hence a 2-morphism, see \refsmoothnattrans.

\item
\label{rem:indtrans}
Let $\phi: \inf P_1 \to \inf P_2$ be a smooth, fibre-preserving, $\Gamma$-equivariant functor, and let $J:\inf P_1 \to \inf P_2$ be a 1-morphism. For a smooth map $\tilde f: \ob{\inf P_1} \to J$ we consider three conditions:
\begin{enumerate}[(T1)]

\item 
\label{T1}
$\alpha_l(\tilde f_{\gamma}(p)) = p$ and $\alpha_r (\tilde f_{\gamma}(p)) =  \phi(p)$

\item
\label{T2}
$\alpha\circ \tilde f_{\gamma}(p)\circ \beta=\tilde f_{\gamma}(t(\alpha))\circ \phi(\alpha)\circ \beta$

\item
\label{T3}
$\tilde f_{\gamma}(R(p,g))=\tilde f_{\gamma}(p)\cdot \id_g$.

\end{enumerate}
There is a bijection between smooth maps $\tilde f$ satisfying \cref{T1*}, \cref{T2*} and \cref{T3*} and 2-morphisms $f: J_{\phi} \Rightarrow J$. This bijection is established by the relation
$\tilde f(p)=f(p,\phi(\id_p))$, see \reftransfunctoranafunctor.

\end{enumerate}
\end{remark}

Next we come to connections. 
If $\mathcal{X}$ is a Lie groupoid and $\gamma$ is a Lie 2-algebra, then there is a differential graded-commutative Lie algebra $\Omega^{*}(\mathcal{X},\gamma)$ of \emph{$\gamma$-valued differential forms on $\mathcal{X}$} \reflietwoalgebravaluedforms. If $\phi:\mathcal{X} \to \mathcal{Y}$ is a smooth functor, then there is a \quot{pullback} Lie algebra homomorphism $\phi^{*}:\Omega^{*}(\mathcal{Y},\gamma) \to \Omega^{*}(\mathcal{X},\gamma)$. If $\gamma$ is the Lie 2-algebra of a Lie 2-group $\Gamma$, then there is an \emph{adjoint action} of $\Gamma$ on $\Omega^{*}(\mathcal{X},\gamma)$.
Further, $\Gamma$ carries a \quot{Maurer-Cartan}-form $\Theta\in\Omega^1(\Gamma,\gamma)$.

\begin{definition}
\label{def:connection}
A \emph{connection} on a principal $\Gamma$-2-bundle $\inf P$ is a $\gamma$-valued 1-form $\Omega \in \Omega^1(\inf P,\gamma)$ such that
\begin{equation*}
R^{*}\Omega = \mathrm{Ad}_{\pr_\Gamma}^{-1}(\pr_\inf P^{*}\Omega) + \pr_\Gamma^{*}\Theta
\end{equation*}
over $\inf P \times \Gamma$, where $\pr_{\inf P}$ and $\pr_{\Gamma}$ are the projections to the two factors.
\end{definition}

Let us spell out explicitly all structure and conditions that are packed into \cref{def:connection}. For this purpose, we assume that the Lie 2-group $\Gamma$ is given as a crossed module $(G,H,t,\alpha)$, where $t:H \to G$ is the Lie group homomorphism, and $\alpha:G \times H \to H$ is the action of $G$ on $H$. We will   denote by  $\alpha_g \in \mathrm{Aut}(H)$  the action of a fixed $g\in G$ on $H$, and for $h \in H$ we denote by $\tilde \alpha_h: G \to H$ the map defined by $\tilde \alpha_h(g) := h^{-1}\alpha(g,h)$.  The correspondence between $\Gamma$ and $(G,H,t,\alpha)$ is $\ob{\Gamma}=G$ and $\mor{\Gamma}=H \ltimes_{\alpha} G$, with $s(h,g)=g$ and $t(h,g)=t(h)g$. The associated Lie 2-algebra is the crossed module $(\mathfrak{g},\mathfrak{h},t_{*},\alpha_{*})$, where $\mathfrak{g}$ and $\mathfrak{h}$ are the Lie algebras of $G$ and $H$, respectively, and $t_{*}$ and $\alpha_{*}$ are the differentials of $t$ and $\alpha$. Throughout the whole paper we will work in exactly this setting of crossed modules. We  point to a formulary for calculations collected in \cite[Appendix A]{Waldorf2016}, which we will eventually use without further mentioning.

Now, a connection $\Omega$  on a principal $\Gamma$-2-bundle $\inf P$ consists of three components $\Omega=(\fa\Omega,\fb\Omega,\fc\Omega)$, which are ordinary differential forms:
\begin{equation*}
\fa\Omega\in\Omega^1(\ob{\inf P},\mathfrak{g})
\quomma 
\fb\Omega\in\Omega^1(\mor{\inf P},\mathfrak{h})
\quand
\fc\Omega\in\Omega^2(\ob{\inf P},\mathfrak{h})\text{.}
\end{equation*}
These satisfy the following conditions:
\begin{align}
R^{*}\fa\Omega &= \mathrm{Ad}_{g}^{-1}(p^{*}\fa\Omega) + g^{*}\theta &&\text{over }\ob{\inf P}\times \ob{\Gamma} 
\label{eq:conform:a}
\\
R^{*}\fb\Omega 
&=(\alpha_{g^{-1}})_{*}\left (\mathrm{Ad}_{h}^{-1}(p^{*}\fb\Omega)+(\tilde \alpha_{h})_{*}(p^{*}s^{*}\fa\Omega) +h^{*}\theta \right)\hspace{-1em} &&\text{over }\mor{\inf P}\times \mor{\Gamma}
\label{eq:conform:b}
\\
\label{eq:conform:c}
R^{*}\fc\Omega &= (\alpha_{g^{-1}})_{*}(p^{*}\fc\Omega)
&&\text{over }\ob{\inf P}\times \ob{\Gamma}
\text{.}\end{align}
Here, $p$, $g$ and $h$ denote the projections to either $\ob{\inf P}$ or $\mor{\inf P}$, $G$ and $H$, respectively.

The 2-form
$\mathrm{curv}(\Omega) := \mathrm{D}\Omega + \frac{1}{2}[\Omega\wedge \Omega] \in \Omega^2(\inf P,\gamma)$
is called the \emph{curvature} of $\Omega$. The connection $\Omega$ is called \emph{flat} if $\mathrm{curv}(\Omega)=0$.
Between general connections and flat connections  are \emph{fake-flat} connections: these  satisfy the conditions (with $\Delta := t^{*}-s^{*}$)
\begin{align*}
\mathrm{d}\fa\Omega +\frac{1}{2}[\fa\Omega \wedge \fa\Omega] +t_{*}(\fc\Omega)&=0
\quand
\Delta\fc\Omega +\mathrm{d}\fb\Omega+\frac{1}{2}[\fb\Omega\wedge \fb\Omega]+ \alpha_{*}(s^{*}\fa\Omega \wedge \fb\Omega)=0\text{.}
\end{align*}

If $J:\inf P_1 \to \inf P_2$ is a 1-morphism, then pulling back a connection $\Omega_2$ on $\inf P_2$ to $\inf P_1$ requires the following additional structure on $J$, as explained in \refpullback.

\begin{definition}
\label{def:pullback:a}
An \emph{$\Omega_2$-pullback} on a 1-morphism $J:\inf P_1 \to \inf P_2$ is a pair $\nu=(\nu_0,\nu_1)$ of differential forms $\nu_0\in\Omega^1(J,\mathfrak{h})$ and $\nu_1\in \Omega^2(J,\mathfrak{h})$ which are compatible with the $\inf P_2$-action $\rho_r$ in the sense that
\begin{align*}
\rho_r^{*}\nu_0= \pr_J^{*}\nu_0+ \pr_{\mor{\inf P_2}}^{*}\fb\Omega_2 
\quand
\rho_r^{*}\nu_1 = \pr_J^{*}\nu_1 + \pr_{\mor{\inf P_2}}^{*}\Delta\fc\Omega_2 
\end{align*}
over $J \ttimes{\alpha_r}{t} \mor{\inf P_2}$. An $\Omega_2$-pullback is called:
\begin{enumerate}[(a)]

\item
\label{def:pullback:b}
\emph{connective}, if it is compatible with the $\mor{\Gamma}$-action $\rho$ in the sense that
\begin{align*}
\rho^{*}\nu_0 &= (\alpha_{g^{-1}})_{*}\left (\mathrm{Ad}_{h}^{-1}(\pr_J^{*}\nu_0)+(\tilde\alpha_{h})_{*}(\pr_J^{*}\alpha_r^{*}\fa{\Omega}_2) +h^{*}\theta \right) 
\\
\rho^{*}\nu_1 &=  (\alpha_{g^{-1}})_{*} \big ( \mathrm{Ad}_h^{-1} (\pr_J^{*}\nu_1)+(\tilde \alpha_h)_{*}(t_{*} (\pr_J^{*}\alpha_r^{*}\fc\Omega_2)) \big )
\end{align*}
over $J \times \mor{\Gamma}$, where $g$ and $h$ are the projections to the factors of $\mor{\Gamma}=H \ltimes G$.

\item
 \emph{fake-flat}, if $\mathrm{d}\nu_0+ \frac{1}{2}[\nu_0\wedge \nu_0]+ \alpha_{*}(\alpha_r^{*}\fa\Omega_2 \wedge \nu_0)+\nu_1=0$. 
\end{enumerate}
\end{definition}

Given an $\Omega_2$-pullback $\nu$ on $\inf P_2$, one can define a 1-form $J_{\nu}^{*}\Omega_2$ on $\inf P_1$ that depends on the choice of $\nu$. If $\nu$ is connective, then $J_{\nu}^{*}\Omega_2$ is a connection on $\inf P_1$, and if $\Omega_2$ and $\nu$ are fake-flat, then $J_{\nu}^{*}\Omega_2$ is fake-flat (\refpullbackconnection). If a connection $\Omega_1$ on $\inf P_1$ is given, then we say that $\nu$ is \emph{connection-preserving} if $\Omega_1=J_{\nu}^{*}\Omega_2$.

A 2-morphism $f: J \Rightarrow J'$ between 1-morphisms $J,J':\inf P_1 \to \inf P_2$ equipped with $\Omega_2$-pullbacks $\nu$ and $\nu$, respectively, is called \emph{connection-preserving} if $\nu=f^{*}\nu'$. We form two bicategories of principal $\Gamma$-2-bundles with connection:
\begin{itemize}

\item 
A bicategory $\zweibuncon\Gamma M$ consisting of principal $\Gamma$-2-bundles with connections,  1-morphisms  with  connective, connection-preserving pullbacks, and connection-preserving 2-morphisms. 

\item
A bicategory $\zweibunconff\Gamma M$ consisting of principal $\Gamma$-2-bundles with fake-flat connections,  1-morphisms with fake-flat,  connective, connection-preserving pullbacks, and connection-preserving 2-morphisms. 

\end{itemize}
There is a classification result showing that these bicategories correspond to  non-abelian differential cohomology \refclassification. Moreover, it is straightforward to see that they form presheaves of bicategories over the category of smooth manifolds, i.e., there are consistent pullback 2-functors along smooth maps.

\begin{remark}
\label{rem:smoothfunctorconnection}
We describe how smooth functors can be turned into 1-morphisms in the setting with connections.  
Suppose $\phi: \inf P_1 \to \inf P_2$ is a fibre-preserving, $\Gamma$-equivariant smooth functor between principal $\Gamma$-2-bundles equipped with connections $\Omega_1$ and $\Omega_2$, respectively.  Let $J_{\phi}=\ob{\inf P_1} \ttimes{\phi}{t} \mor{\inf P_2}$ be the associated anafunctor (\cref{rem:anafunctor:b}).
\begin{enumerate}[(a)]

\item 
\label{rem:smoothfunctorconnection:a}
A \quot{canonical} $\Omega_2$-pullback on  $J_{\phi}$ is defined by   $\nu_0 := \pr_{2}^{*}\fb\Omega_2$ and $\nu_1 :=- \pr_{2}^{*}s^{*}\fc\Omega_2 + \pr_{1}^{*}\phi^{*}\fc\Omega_2$. It is always connective,  fake-flat if $\Omega_2$ is fake-flat, and connection-preserving if $\Omega_1=\phi^{*}\Omega_2$. See \refcanonicalpullback.

\item
\label{rem:smoothfunctorconnection:b}
The canonical $\Omega_2$-pullback $\nu$ on $J_{\phi}$ can be shifted by a pair $\kappa=(\kappa_0,\kappa_1)$ of differential forms $\kappa_0\in \Omega^1(\ob{\inf P_1},\mathfrak{h})$ and $\kappa_1\in \Omega^2(\ob{\inf P_1},\mathfrak{h})$, and the shifted pullback is again connective provided that these forms are $G$-equivariant in the sense that    $R^{*}\kappa_i = (\alpha_{\pr_2^{-1}})_{*}(\pr_{1}^{*}\kappa_i)$ over $\ob{\inf P_1} \times G$. See \refcanonicalpullbackshift.  

\end{enumerate}
\end{remark}

\setsecnumdepth 2

\section{Parallel transport along paths}

\label{sec:ptpaths}

Let $\inf P$ be a principal $\Gamma$-bundle with a connection $\Omega$. For $x\in M$ we denote by $\inf P_x := \pi^{-1}(\{x\})$ the fibre of $\inf P$ over $x$, which is a Lie groupoid with  smooth right $\Gamma$-action. In this section we define for each path $\gamma:[0,1] \to M$  a $\Gamma$-equivariant anafunctor 
\begin{equation*}
F_{\gamma}: \inf P_{\gamma(0)} \to \inf P_{\gamma(1)}\text{,}
\end{equation*}  
which we regard as the parallel transport along $\gamma$.
For this purpose, we first introduce and study in \cref{sec:horizontality} the notion of a horizontal path in the total space of $\inf P$. In \cref{sec:defpartranspaths} we give a complete definition of the anafunctor $F_{\gamma}$. In \cref{sec:compcomppaths,sec:natbundlemorph,sec:natpullbackpaths} we derive several properties of $F_{\gamma}$ with respect to path composition, 1-morphisms between principal 2-bundles, and pullback.

\subsection{Horizontal paths}

\label{sec:horizontality}

We start with some basic terminology and notation. By a \emph{path} in a smooth manifold  $X$ we understand a smooth map $\gamma:[a,b] \to X$, where $a,b\in \R$ with $a<b$. If $x:=\gamma(a)$ and $y:=\gamma(b)$, we use the notation $\gamma:x \to y$. If no interval is specified, then the unit interval $[0,1]$ is assumed. The tangent vector at $t\in [a,b]$ is denoted by $\dot\gamma(t)$ or $\partial_t\gamma(t)$. The constant path at a point $x\in X$ will be denoted by $x$ or $\id_x$. If $f:X \to  Y$ is a smooth map, we  write $f(\gamma)$ for the path $f\circ \gamma$. Further, if $R:\mathcal{X} \times \Gamma \to \mathcal{X}$ is a right action, we will write $R(\rho,g)$ instead of $R(\rho,\id_{g})$, for $\rho\in \mor{\inf X}$ and $g\in G$. For instance, if $\beta$ is a path in $\ob{\inf X}$ and $g$ is a path in $G$, then $R(\beta,g)$ stands for the path $t \mapsto R(\beta(t),\id_{g(t)})$. 

First we discuss horizontality for paths in the objects a principal 2-bundle $\inf P$ with connection $\Omega$.
A path $\beta:[a,b] \to \ob{\inf P}$ is  \emph{horizontal}, if 
$\fa\Omega(\dot\beta(t))=0$ for all $t\in [a,b]$. 

\begin{proposition}
Let $\inf P$ be a principal $\Gamma$-2-bundle with connection $\Omega$.
\begin{enumerate}[(a)]

\item 
\label{lem:obhorexists}
Suppose $\beta:[a,b] \to \ob{\inf P}$ is a path. Then, there exists a unique path $g:[a,b] \to G$ with $g(a)=1$ such that  $\beta^{hor} := R(\beta,g)$ is horizontal. \begin{comment}
In particular, $\beta^{hor}(a)=\beta(a)$ and $\pi(\beta(t))=\pi(\beta^{hor}(t))$.
\end{comment}

\item
\label{lem:obhor}
Suppose $\beta:[a,b]\to\ob{\inf P}$ is a horizontal path and $g\in G$. Then, $R(\beta,g)$ is horizontal.

\end{enumerate}
\end{proposition}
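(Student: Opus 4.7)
My plan is to reduce both parts to the transformation law \cref{eq:conform:a} satisfied by $\fa\Omega$ under the right $G$-action on $\ob{\inf P}$. For any smooth path $g:[a,b]\to G$, the path $\beta^g := R(\beta,g)$ is the composition $R\circ (\beta,g)$, so pulling back $\fa\Omega$ along $\beta^g$ and then evaluating on $\partial_t$ gives, by \cref{eq:conform:a},
\begin{equation*}
\fa\Omega(\dot{\beta^g}(t)) = \mathrm{Ad}_{g(t)^{-1}}\!\left(\fa\Omega(\dot\beta(t))\right) + \theta_{g(t)}(\dot g(t)),
\end{equation*}
where $\theta$ is the (left) Maurer-Cartan form on $G$. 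This is the master equation from which both (a) and (b) follow.

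For part (b) with $g\in G$ constant, one has $\dot g = 0$, hence $\theta_{g}(\dot g) = 0$; and $\fa\Omega(\dot\beta(t))=0$ by horizontality of $\beta$, so the right-hand side vanishes identically and $R(\beta,g)$ is horizontal.

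For part (a), horizontality of $\beta^g$ is equivalent to requiring that $g$ satisfy the time-dependent ODE
\begin{equation*}
\theta_{g(t)}(\dot g(t)) = -\mathrm{Ad}_{g(t)^{-1}}\!\left(\fa\Omega(\dot\beta(t))\right),
\end{equation*}
equivalently $\dot g(t) = -(R_{g(t)})_{*}\fa\Omega(\dot\beta(t))$ after applying $(L_{g(t)})_{*}$ to both sides and using the identity $(L_g)_{*}\circ \mathrm{Ad}_{g^{-1}} = (R_g)_{*}$. Together with the initial condition $g(a)=1$, this is a well-posed first-order ODE on $G$ whose right-hand side depends smoothly on $t$ via the smooth curve $\fa\Omega(\dot\beta(\cdot))\in\mathfrak g$. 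By the standard existence and uniqueness theorem for such Lie-group-valued ODEs — this is precisely the content of the path-ordered exponential recalled in the appendix — there is a unique solution $g:[a,b]\to G$, and it is defined on the whole interval $[a,b]$ (completeness of right-invariant time-dependent vector fields on a Lie group). Setting $\beta^{hor} := R(\beta,g)$ gives the claimed horizontal lift.

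The only mild subtlety is the bookkeeping of left vs.\ right Maurer-Cartan conventions when turning the horizontality equation into the explicit ODE; there is nothing really hard here, since once \cref{eq:conform:a} is applied the problem is reduced to a classical statement about ODEs on Lie groups, which is why the result can be stated cleanly without any separate appeal to compactness or completeness.
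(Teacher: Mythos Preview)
Your proof is correct and follows essentially the same approach as the paper: both apply \cref{eq:conform:a} to obtain $\fa\Omega(\dot\beta^{hor})=\mathrm{Ad}_{g}^{-1}(\fa\Omega(\dot\beta)) + g^{-1}\dot g$, reduce horizontality to the ODE $\dot g = -\fa\Omega(\dot\beta)\,g$ with $g(a)=1$, and deduce (b) directly from the same formula with constant $g$. Your remark on completeness of right-invariant vector fields is a slightly more careful justification of global existence than the paper's appeal to ``linear initial value problems'', but the argument is otherwise identical.
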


\begin{proof}
For \cref{lem:obhorexists*} we claim that the following statements are equivalent:
\begin{enumerate}[(1)]

\item 
$g$ is a solution of the differential equation
$\dot g(\tau) = - \fa\Omega(\dot\beta(\tau))g(\tau)$.

\item
$\beta^{hor}=R(\beta,g)$ is horizontal.

\end{enumerate}
Equivalence is proved by following the calculation using  \cref{eq:conform:a}: 
\begin{equation*}
\fa\Omega(\dot\beta^{hor})=\fa\Omega(\partial_t R(\beta,g) )=R^{*}\fa\Omega(\dot\beta,\dot g)=\mathrm{Ad}_{g}^{-1}(\fa\Omega(\dot\beta)) + g^{-1}\dot g\text{.}
\end{equation*}
Now, existence and uniqueness of $g$ follow from existence and uniqueness of solutions of  linear initial value problems.
\cref{lem:obhor*} follows immediately from the transformation behaviour of $\fa\Omega$, see \cref{eq:conform:a}. 
\end{proof}

Next we turn to paths in the morphisms of $\inf P$, and collect various statements that we will use throughout this article.
A path $\rho:[a,b] \to \mor{\inf P}$ is  \emph{horizontal}, if 
$\fb\Omega(\dot\rho(t))=0$ 
for all $t\in [a,b]$.

\begin{proposition}
Let $\inf P$ be a principal $\Gamma$-2-bundle with connection $\Omega$.
\begin{enumerate}[(a)]

\item 
\label{lem:makemorhor}
Suppose $\rho:[a,b] \to \mor{\inf P}$ is a path. Then, there exists a unique path $h:[a,b] \to\ H$ with $h(a)=1$ such $\rho^{hor} := R(\rho,(h,1))$ is horizontal. 
\begin{comment}
In particular, we have $s(\rho^{hor})=s(\rho)$ and $t(\rho^{hor})=R(t(\rho),t(h))$.
\end{comment}

\item 
\label{lem:hormor:a}
Suppose $\beta:[a,b] \to \ob{\inf P}$ is a path. Then, the path $\id_{\beta}$ in $\mor{\inf P}$ is horizontal.

\item
\label{lem:hormor:b}
Suppose $\rho:[a,b] \to \mor{\inf P}$ is a path. Then, $\rho$ is horizontal if and only if  its pointwise groupoid inversion $\rho^{-1}$ is horizontal.

\item
\label{lem:hormor:d}
Suppose $\rho_1,\rho_2:[a,b] \to \mor{\inf P}$ are horizontal paths with $s(\rho_2)=t(\rho_1)$. Then, their pointwise composition $\rho_2 \circ \rho_1$ is horizontal.

\item
\label{lem:hormor:h}
Suppose $\rho:[a,b] \to \mor{\inf P}$ is a horizontal path and $\gamma \in \mor{\Gamma}$. Then, $R(\rho,\gamma)$ is horizontal.

\item
\label{lem:hormor:c}
Suppose  $\rho:[a,b] \to \mor{\inf P}$ is a  horizontal path and $g:[a,b] \to G$ is a path. Then, $R(\rho,g)$ is horizontal. 

\item
\label{lem:hormor:e}
Suppose $\rho:[a,b] \to \mor{\inf P}$ is horizontal, and of the paths $s(\rho)$ and $t(\rho)$ in $\ob{\inf P}$ one is horizontal. Then, the other is horizontal, too. 

\item
\label{lem:hormor:f}
Suppose $\beta_1,\beta_2:[a,b] \to \ob{\inf P}$ are horizontal, and $\pi \circ \beta_1=\pi \circ \beta_2$. Then, there exists a horizontal path $\rho:[a,b] \to \mor{\inf P}$ and $g\in G$ such that $\beta_1=R(s(\rho),g^{-1})$ and $\beta_2=t(\rho)$.
Moreover, if there is $\rho_0\in \mor{\inf P}$ such that $s(\rho_0)=\beta_1(a)$ and $t(\rho_0)=\beta_2(a)$, then one can choose $\rho$ and $g$ such that $\rho(a)=\rho_0$ and $g=1$.

\item
\label{lem:hormor:g}
Suppose $\rho,\rho':[a,b] \to \mor{\inf P}$ are horizontal paths such that $s(\rho)=s(\rho')$ is horizontal and $t(\rho)=t(\rho')$. Then, there exists a unique $h\in H$ with $t(h)=1$ and $\rho'=R(\rho,(h,1))$.

\end{enumerate}
\end{proposition}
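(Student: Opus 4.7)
The plan is to lift the object-level argument of the preceding proposition to morphisms of $\inf P$, using the transformation formula \cref{eq:conform:b} together with the compatibility of $\fb\Omega$ with the structure maps of the Lie groupoid and standard Lie-group ODE theory. I would address the items in order, letting later ones build on earlier ones; items \cref{lem:makemorhor*}--\cref{lem:hormor:e*} are essentially direct calculations, while the substantive content sits in \cref{lem:hormor:f*,lem:hormor:g*}.

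For \cref{lem:makemorhor*}, writing $\rho^{hor}:=R(\rho,(h,1))$ and expanding $\fb\Omega(\dot\rho^{hor})$ via \cref{eq:conform:b} specialized to $g=1$ yields a smooth ODE for $h:[a,b]\to H$ (with coefficients depending on $h$ and on the data $\fa\Omega(s(\dot\rho))$, $\fb\Omega(\dot\rho)$); global existence and uniqueness on $[a,b]$ given $h(a)=1$ follow from standard theory. Item \cref{lem:hormor:a*} rests on the normalization $u^{*}\fb\Omega=0$ for the unit map $u:\ob{\inf P}\to\mor{\inf P}$, which is built into the definition of $\gamma$-valued forms on a Lie groupoid used in \cite{Waldorf2016}. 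Items \cref{lem:hormor:b*,lem:hormor:d*} follow from the transformation formulas of $\fb\Omega$ under groupoid inversion and composition, whose right-hand sides are linear in $\fb\Omega$- and $\fa\Omega$-inputs of the constituent paths and so vanish on horizontal inputs. Items \cref{lem:hormor:h*,lem:hormor:c*} are \cref{eq:conform:b} restricted to $g=\mathrm{const}$ or $h=1$, where the $\tilde\alpha_h$ and Maurer-Cartan terms drop out. Item \cref{lem:hormor:e*} uses the further compatibility $t^{*}\fa\Omega-s^{*}\fa\Omega=t_{*}\fb\Omega$ of $\gamma$-valued groupoid forms: horizontality of $\rho$ identifies $\fa\Omega(s(\dot\rho))$ with $\fa\Omega(t(\dot\rho))$, so one vanishes iff the other does.

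For \cref{lem:hormor:f*}, principality (the weak equivalence $(\pr_1,R):\inf P\times\Gamma\to\inf P\times_M\inf P$) supplies, after adjusting $\beta_1$ by the constant $g\in G$ for which $R(\beta_1(a),g^{-1})$ and $\beta_2(a)$ are connectable by a morphism (or, in the second half, so that $s(\rho_0)=\beta_1(a)$ and $g=1$), a smooth path $\tilde\rho:[a,b]\to\mor{\inf P}$ with $s(\tilde\rho)=R(\beta_1,g^{-1})$, $t(\tilde\rho)(a)=\beta_2(a)$, and $\tilde\rho(a)=\rho_0$. By \cref{lem:obhor}, $s(\tilde\rho)$ is horizontal. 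Applying \cref{lem:makemorhor*} produces $\rho:=R(\tilde\rho,(h,1))$ horizontal with $s(\rho)=s(\tilde\rho)$; by \cref{lem:hormor:e*}, $t(\rho)$ is horizontal with $t(\rho)(a)=\beta_2(a)$, and uniqueness of horizontal lifts of $\pi\circ\beta_2$ through $\beta_2(a)$ (from the uniqueness half of \cref{lem:obhorexists}) forces $t(\rho)=\beta_2$ on the nose. For \cref{lem:hormor:g*} I would set $\rho'':=\rho^{-1}\circ\rho'$; by \cref{lem:hormor:b*,lem:hormor:d*} this is a horizontal path of automorphisms of $s(\rho)=s(\rho')$. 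Principality identifies the automorphism group of a fibre object with $\ker(t:H\to G)$, so $\rho''=R(\id_{s(\rho)},(h,1))$ for a unique smooth $h:[a,b]\to \ker t$; feeding this into \cref{eq:conform:b} with $g=1$, \cref{lem:hormor:a*}, and horizontality of $s(\rho)$ collapses the horizontality condition to $h^{-1}\dot h=0$, so $h$ is constant.

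The main obstacle is \cref{lem:hormor:f*}: the essentially surjective and fully faithful parts of principality must be deployed smoothly along the entire parameter interval to supply $\tilde\rho$ with the prescribed initial data, and the horizontalization of \cref{lem:makemorhor*} must then be reconciled with the target condition $t(\rho)=\beta_2$ via uniqueness of horizontal lifts of base paths; every other item reduces essentially mechanically to the transformation behaviour of the connection forms under the crossed-module structure.
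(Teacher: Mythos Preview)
Your outline for \cref{lem:makemorhor*}--\cref{lem:hormor:e*} and for \cref{lem:hormor:g*} matches the paper's argument essentially verbatim: each item unwinds to the transformation formula you name, and your treatment of \cref{lem:hormor:g*} via the automorphism path $\rho^{-1}\circ\rho'$ is equivalent to the paper's direct appeal to \refactionfullyfaithful.

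The gap is in \cref{lem:hormor:f*}. You horizontalize $\tilde\rho$ while fixing its source, obtain a horizontal target $t(\rho)$ with $t(\rho)(a)=\beta_2(a)$, and then invoke ``uniqueness of horizontal lifts of $\pi\circ\beta_2$ through $\beta_2(a)$'' to conclude $t(\rho)=\beta_2$. But \cref{lem:obhorexists} only gives uniqueness of the $G$-valued function that horizontalizes a \emph{given} path in $\ob{\inf P}$; it does not say that two horizontal paths in $\ob{\inf P}$ with the same projection and the same initial point must coincide. Indeed they need not: for $\Gamma=BA$ (so $G=1$ and $\mathfrak g=0$) one has $\fa\Omega=0$, every path in $\ob{\inf P}$ is horizontal, and $\ob{\inf P}\to M$ is an arbitrary surjective submersion with many lifts through a given point. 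In general $\ob{\inf P}\to M$ is not a principal $G$-bundle, and $\ker\fa\Omega$ need not be a complement to the vertical bundle.

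The paper's cure is to reverse the roles of source and target. The transition span from \reftransitionspan\ supplies a (not yet horizontal) $\tilde\rho$ with $t(\tilde\rho)=\beta_2$ \emph{on the nose} and $R(s(\tilde\rho),g^{-1})=\beta_1$ for some \emph{path} $g$ in $G$. One then horizontalizes by $\rho':=R(\tilde\rho,(h,t(h)^{-1}))$, which is horizontal by \cref{lem:makemorhor*} and \cref{lem:hormor:c*} combined and still has $t(\rho')=t(\tilde\rho)=\beta_2$; the source shift is absorbed into $g':=g\,t(h)^{-1}$, so that $R(s(\rho'),g'^{-1})=\beta_1$. Now \cref{lem:hormor:e*} makes $s(\rho')$ horizontal, and feeding $\beta_1=R(s(\rho'),g'^{-1})$ into \cref{eq:conform:a} yields $0=\fa\Omega(\dot\beta_1)=-\dot g'g'^{-1}$, whence $g'$ is constant. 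The moral: you cannot pin down the target by a uniqueness argument on objects; you must carry $t(\rho)=\beta_2$ through the horizontalization and instead prove constancy of the $G$-correction on the source side.
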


\begin{proof}
For \cref{lem:makemorhor*} we claim that the following statements are equivalent:
\begin{enumerate}[(1)]

\item 
$h$ is a solution of the differential equation
$\dot h(\tau) = -\fb\Omega(\dot \rho(\tau))h(\tau)-( \alpha_{h(\tau)})_{*}(\fa\Omega(s_{*}(\dot \rho(\tau))))$.

\item
$\rho^{hor}$ is horizontal.

\end{enumerate}
Equivalence is proved by the following equation obtained  using \cref{eq:conform:b},
\begin{equation*}
\fb\Omega(\dot\rho^{hor})= \mathrm{Ad}_{h}^{-1}(\fb\Omega(\dot \rho))+(\tilde \alpha_{h})_{*}(\fa\Omega(s(\dot \rho))) +h^{-1}\dot h \text{.}
\end{equation*}
\begin{comment}
Indeed,
\begin{multline*}
\fb\Omega(\dot\rho^{hor}(\tau))=\fb\Omega(\partial_t  R(\rho(t),(h(t),1)) )=R^{*}\fb\Omega(\dot\rho(\tau),(\dot h(\tau),0))\\= \mathrm{Ad}_{h(\tau)}^{-1}(\fb\Omega(\dot \rho(\tau)))+(\tilde \alpha_{h(\tau)})_{*}(\fa\Omega(s_{*}(\dot \rho(\tau)))) +h(\tau)^{-1}\dot h(\tau) 
\end{multline*}
\end{comment}
Now, existence and uniqueness follow like in the proof of \cref{lem:obhorexists}.
For \cref{lem:hormor:a*} we have
$\fb\Omega(\partial_t\id_{\beta})=\id^{*}\fb\Omega(\dot\beta)=0$ 
since $\id^{*}\fb\Omega=0$. For \cref{lem:hormor:b*} we use $inv^{*}\fb\Omega=-\fb\Omega$, and for \cref{lem:hormor:d*}  $c^{*}\fb\Omega = \pr_1^{*}\fb\Omega + \pr_2^{*}\fb\Omega$.  \cref{lem:hormor:h*} is trivial. 
For \cref{lem:hormor:c*} we check 
\begin{equation*}
\fb\Omega(\partial_t  R(\rho,(1,g)) )=R^{*}\fb\Omega(\dot\rho,(0,\dot g))=(\alpha_{g})_{*}(\fb\Omega(\dot \rho)) =0\text{.}
\end{equation*}
For \cref{lem:hormor:e*} we use that $t^{*}\fa\Omega - s^{*}\fa\Omega = t_{*}(\fb\Omega)$. Since $\rho$ is horizontal, we have
\begin{equation*}
0=t_{*}(\fb\Omega(\dot\rho))=\fa\Omega(\partial_t  s(\rho))-\fa\Omega(\partial_t  t(\rho))\text{.}
\end{equation*}
For \cref{lem:hormor:f*} we note that $(\beta_1,\beta_2)$ is a path in $\ob{\inf P} \times_M \ob{\inf P}$. By \reftransitionspan\  there exists a transition span $\rho:[a,b] \to \mor{\inf P}$ along $(\beta_1,\beta_2)$ with transition function $g:[a,b] \to G$, i.e.  $\beta_1=R(s(\rho),g^{-1})$ and $\beta_2=t(\rho)$. If there is $\rho_0\in \mor{\inf P}$ such that $s(\rho_0)=\beta_1(a)$ and $t(\rho_0)=\beta_2(a)$, then  by \refactionfullyfaithful\ there exists $h_0\in H$ such that $R(\rho(a),(h_0,g(a)^{-1}))=\rho_0$ and $t(h_0)=g(a)$. Then we use $\tilde\rho:=R(\rho,(h_0,t(h_0)^{-1}))$ and $\tilde g := gt(h_0)^{-1}$, satisfying $R(s(\tilde\rho),\tilde g^{-1})=R(s(\rho),g^{-1})=\beta_1$ and $t(\tilde\rho)=t(\rho)=\beta_2$, as well as $\tilde\rho(a)=\rho_0$ and $\tilde g(a)=1$.
By \cref{lem:makemorhor*} there exists $h:[a,b] \to H$ with $h(a)=1$ such that $R(\rho,(h,1))$ is horizontal. Then, by \cref{lem:hormor:c*} also $\rho' := R(\rho,(h,t(h)^{-1}))$ is horizontal, and $t(\rho')=t(\rho)=\beta_2$. We set $g':= g t(h)^{-1}$. Then, $R(s(\rho'),g'^{-1})=R(s(\rho),t(h)^{-1}g'^{-1})=R(s(\rho),g^{-1})=\beta_1$. Now, by \cref{lem:hormor:e*}, it follows that $s(\rho')$ is horizontal. A short calculation shows that $0=\fa\Omega(\dot\beta_1)=-\dot g'g'^{-1}$; hence $g'$ is constant.
\begin{comment}
We have
\begin{align*}
0=\fa\Omega(\dot\beta_1(t))&=R^{*}\fa\Omega(\partial_t  s(\rho'(\tau)),\partial_t  g'(\tau)^{-1})
\\&=\mathrm{Ad}_{g'(t)}(\fa\Omega(\partial_t  s(\rho'(\tau))))-\theta(\partial_t  g'(\tau)^{-1})
\\&=\theta(\partial_t g'(\tau)^{-1})
\end{align*}
\end{comment}
For
\cref{lem:hormor:g*} we obtain by \refactionfullyfaithful\ a smooth map $h:[a,b] \to H$ with $t(h)=1$ and $\rho' = R(\rho,(h,1))$. Again, a short calculation shows $0=\fb\Omega(\dot\rho')=h^{-1}\dot h$; hence $h$ is constant.
\begin{comment}
We have
\begin{align*}
0=\fb\Omega(\dot\rho'(\tau)) &=\fb\Omega(\partial_t R(\rho(t),(h(t),1)) )
\\&=R^{*}\fb\Omega(\dot\rho(\tau),(\dot h(\tau),1))
\\&=\mathrm{Ad}_{h(\tau)}^{-1}(\fb\Omega(\dot \rho(\tau)))+(\tilde \alpha_{h(\tau)})_{*}(\fa\Omega(s_{*}(\dot \rho(\tau)))) +h(\tau)^{-1}\dot h(\tau) =0
\\&=h(\tau)^{-1}\dot h(\tau) =0\text{.}
\end{align*}
\end{comment} 
\end{proof}

Finally, we consider a 1-morphism $J:\inf P_1 \to \inf P_2$ in $\zweibuncon\Gamma M$ between principal $\Gamma$-2-bundles over $M$, connections $\Omega_1$ and $\Omega_2$ on $\inf P_1$ and $\inf P_2$, respectively, and a   connective, connection-preserving $\Omega_2$-pullback  $\nu=(\nu_0,\nu_1)$  on $J$. 
A path $\lambda:[a,b] \to J$ is  \emph{horizontal}, if $\nu_0(\dot\lambda(t))=0$ for all $t\in [a,b]$.

\begin{remark}
\label{rem:hor:functor}
Suppose $\phi:\inf P_1 \to \inf P_2$ is a smooth functor, $J_{\phi}=\ob{\inf P_1} \ttimes{\phi}{t} \mor{\inf P_2}$ is the associated anafunctor (\cref{rem:anafunctor:b}), and $\nu$ is the canonical $\Omega_2$-pullback on $J_{\phi}$ (\cref{rem:smoothfunctorconnection}). Then, a path $\lambda=(\gamma,\rho)$ in $J_{\phi}$ is horizontal if and only if  $\rho$ is horizontal in $\mor{\inf P_2}$.
\begin{comment}
Indeed,
\begin{equation*}
0=\nu_0(\dot\lambda(t))=\pr_2^{*}\fb\Omega_2(\dot\gamma(t),\dot\rho(t))=\fb\Omega_2(\dot\rho(t))
\end{equation*}
\end{comment}
If $\kappa=(\kappa_0,\kappa_1)$ shifts the canonical $\Omega_2$-pullback, then $\lambda$ is horizontal if and only if $\fb\Omega_2(\dot\rho)+\kappa_0(\dot\gamma)=0$.
\begin{comment}
Indeed,
\begin{equation*}
0=\nu_0^{\kappa}(\dot\lambda(t))=(\pr_1^{*}\kappa_0+\pr_2^{*}\fb\Omega_2)(\dot\gamma(t),\dot\rho(t))=\kappa_0(\dot\gamma(t))+\fb\Omega_2(\dot\rho(t))
\end{equation*}
\end{comment}
\end{remark}

\begin{proposition}
Let  $J:\inf P_1 \to \inf P_2$ be a 1-morphism in $\zweibuncon\Gamma M$. 
\begin{enumerate}[(a)]

\item
\label{lem:horF:b}
Suppose $\lambda:[a,b] \to J$ is horizontal, and $\gamma:[a,b] \to G$ is a path. Then, $\lambda \cdot \id_{\gamma}$ is horizontal.

\item 
\label{lem:horF:a}
Suppose $\lambda:[a,b] \to J$ is horizontal, and of the paths $\alpha_l(\lambda)$ and $\alpha_r(\lambda)$ one is horizontal. Then, the other is horizontal, too. 

\item
\label{lem:horF:c}
Suppose $\lambda:[a,b] \to J$ and $\rho:[a,b] \to \mor{\inf P_2}$ are paths with $\alpha_r(\lambda)=t(\rho)$. If of the three paths $\lambda$, $\rho$, and  $\lambda \circ  \rho$ two are horizontal, then the third is horizontal, too.

\item
\label{lem:horF:d}
Suppose $\lambda:[a,b] \to J$ and $\rho:[a,b] \to \mor{\inf P_1}$ are paths with $s(\rho) =\alpha_l(\lambda)$. If of the three paths $\lambda$, $\rho$, and  $\rho \circ\lambda$ two are horizontal, then the third is horizontal, too.

\item
\label{lem:horF}
Suppose $\lambda:[a,b] \to J$ is a path. Then, there exists a unique path $h:[a,b] \to H$ with $h(a)=1$ such that $\lambda_1(t) := \lambda(t) \cdot (h(t),1)$ and $\lambda_2(t) := \lambda(t) \cdot (h(t),t(h)^{-1}) $  are horizontal.

\end{enumerate}
\end{proposition}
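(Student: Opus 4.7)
The plan is to treat each of the five assertions in close parallel to the preceding proposition on horizontality in $\mor{\inf P}$, the new ingredient being the infinitesimal content of the connective, connection-preserving $\Omega_2$-pullback $\nu$. Besides the explicit compatibilities of $\nu_0$ with the right $\inf P_2$-action and with the $\mor{\Gamma}$-action recorded in \cref{def:pullback:a} and \cref{def:pullback:b}, I would extract from the connection-preserving condition $\Omega_1=J_{\nu}^{*}\Omega_2$ (cf.\ \refpullbackconnection) two further pointwise identities
\begin{equation*}
\alpha_l^{*}\fa\Omega_1 = \alpha_r^{*}\fa\Omega_2 + t_{*}(\nu_0) \quand \rho_l^{*}\nu_0 = \pr_J^{*}\nu_0 + \pr_{\mor{\inf P_1}}^{*}\fb\Omega_1,
\end{equation*}
the second one living on $\mor{\inf P_1}\ttimes{s}{\alpha_l}J$. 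Both can be verified directly in the smooth functor case (\cref{rem:hor:functor}) and follow in general from the construction of the pullback connection in \refpullbackconnection.

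Once these identities are in hand, each statement reduces to a short computation. For (a), evaluating the $\mor{\Gamma}$-compatibility of \cref{def:pullback:b} at $(1,\gamma)\in\mor{\Gamma}$ makes the adjoint and Maurer--Cartan contributions vanish and yields $\nu_0(\partial_t(\lambda\cdot\id_\gamma)) = (\alpha_{\gamma^{-1}})_{*}(\nu_0(\dot\lambda))$, which is zero whenever $\lambda$ is horizontal. For (b), substituting $\nu_0(\dot\lambda)=0$ into the first identity above equates $\fa\Omega_1(\partial_t\alpha_l(\lambda))$ with $\fa\Omega_2(\partial_t\alpha_r(\lambda))$, so horizontality of one side is equivalent to horizontality of the other. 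For (c) and (d), the right- and left-action compatibilities give
\begin{equation*}
\nu_0(\partial_t(\lambda\circ\rho))=\nu_0(\dot\lambda)+\fb\Omega_2(\dot\rho) \quand \nu_0(\partial_t(\rho\circ\lambda))=\nu_0(\dot\lambda)+\fb\Omega_1(\dot\rho),
\end{equation*}
so the vanishing of any two summands forces the third.

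For (e), evaluating the $\mor{\Gamma}$-compatibility at $(h(t),1)$ (so $g=1$ and $\alpha_{g^{-1}}=\id$) gives
\begin{equation*}
\nu_0(\partial_t(\lambda\cdot(h,1))) = \mathrm{Ad}_{h}^{-1}(\nu_0(\dot\lambda)) + (\tilde\alpha_{h})_{*}(\fa\Omega_2(\partial_t\alpha_r(\lambda))) + h^{-1}\dot h.
\end{equation*}
Setting the left-hand side to zero produces an initial-value problem for $h$ of exactly the form treated in the proof of \cref{lem:makemorhor*}; standard ODE theory yields a unique $h\maps[a,b]\to H$ with $h(a)=1$ solving it, and hence a horizontal $\lambda_1$. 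Since $(h,1)(1,t(h)^{-1})=(h,t(h)^{-1})$ in $H\ltimes_{\alpha}G$, one has $\lambda_2=\lambda_1\cdot\id_{t(h)^{-1}}$, so horizontality of $\lambda_2$ follows from part (a); uniqueness of $h$ is automatic because any other such path would satisfy the same initial-value problem.

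The main obstacle is the first paragraph: cleanly extracting the two identities from the connection-preserving condition, since neither is among the explicit axioms listed in \cref{def:pullback:a}, but both are needed to reduce the statements on anafunctors to the already established results on $\mor{\inf P}$. Once these are in place the remainder is routine bookkeeping closely modelled on the preceding proposition.
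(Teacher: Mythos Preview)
Your proposal is correct and follows essentially the same route as the paper: (a) from connectivity, (b) from the identity $t_{*}(\nu_0)=\alpha_l^{*}\fa\Omega_1-\alpha_r^{*}\fa\Omega_2$, (c) and (d) from the right- and left-action compatibilities of $\nu_0$, and (e) via the ODE derived from connectivity at $(h,1)$ together with (a). You are in fact slightly more explicit than the paper in flagging that the left-action identity $\rho_l^{*}\nu_0=\pr_J^{*}\nu_0+\pr_{\mor{\inf P_1}}^{*}\fb\Omega_1$ is not among the listed axioms but must be extracted from the connection-preserving condition; the paper simply uses it without comment.
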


\begin{proof}
\cref{lem:horF:b*} follows since $\nu$ is connective.
\cref{lem:horF:a*} is exactly as \cref{lem:hormor:e}, using that $\nu$ is connection-preserving, which implies $t_{*}(\nu_0)=\alpha_l^{*}\fa\Omega_1 - \alpha_r^{*}\fa\Omega_2$. For \cref{lem:horF:c*} we check that $\nu_0(\partial_t\rho_r (\lambda, \rho))=\nu_0(\dot\lambda)+ \fb\Omega_2(\dot\rho)$, and   \cref{lem:horF:d*} is analogous. 
\begin{comment}
Indeed,
\begin{equation*}
\nu_0(\partial_t\rho_r (\lambda(t), \rho(t)))=\rho_r^{*}\nu_0(\dot\lambda(t),\dot\rho(t))=\nu_0(\dot\lambda(t))+ \fb\Omega_2(\dot\rho(t))
\end{equation*} 
and
\begin{equation*}
\nu_0(\partial_t\rho_l (\rho(t),\lambda(t)))=\rho_l^{*}\nu_0(\dot\rho(t),\dot\lambda(t))=\fb\Omega_1(\dot\rho(t))+\nu_0(\dot\lambda(t))\text{.}
\end{equation*} 
\end{comment}
For \cref{lem:horF*} we claim that
the following three statements are equivalent:
\begin{enumerate}[(1)]
\item 
$\lambda_2$ is horizontal.

\item 
$\lambda_1$ is horizontal.

\item
$h$ solves the differential equation $\dot h(t)=-\nu_0(\dot\lambda(t))h(t)-(\alpha_{h(t)})_{*}(\alpha_r^{*}\fa\Omega_2(\dot\lambda(t) )))$.

\end{enumerate}
Equivalence between (1) and (2) is \cref{lem:horF:b*}.
Equivalence between (2) and (3) is proved by the following calculation, using connectivity:
\begin{equation*}
\nu_0(\dot\lambda_1) = \rho^{*}\nu_0(\dot\lambda,(\dot h,0))=\mathrm{Ad}_{h}^{-1}(\nu_0(\dot\lambda))+(\tilde\alpha_{h})_{*}(\alpha_r^{*}\fa\Omega_2(\dot\lambda))+h^{-1}\dot h\text{.}
\eqendofproof
\end{equation*}
\end{proof}

\subsection{Definition of  parallel transport along paths}

\label{sec:defpartranspaths}

Let $\inf P$ be a principal $\Gamma$-2-bundle over $M$ with connection $\Omega$.
Let $\gamma:[0,1] \to M$ be a path in $M$. In this section we define the anafunctor $F_{\gamma}: \inf P_{\gamma(0)} \to \inf P_{\gamma(1)}$. We proceed in the following four steps: Step 1 is to define a set $F_{\gamma}(t)$ with respect to a fixed subdivision $t$ of $[0,1]$. Step 2 is to define  anchors and actions for $F_{\gamma}(t)$ with the required algebraic properties. Step 3 is to get rid of the subdivision via a direct limit construction, resulting in a set $F_{\gamma}$. Step 4 is  to equip $F_{\gamma}$ with the structure of a smooth manifold, and to show that anchors and actions are smooth. 

\subsubsection{Step 1: Total space with respect to a fixed subdivision}

We consider for $0<n\in \N$ the set $T_n :=\{ (t_i)_{i=0}^{n} \sep 0=t_0<t_1<...<t_n=1 \}$ of possible $n$-fold subdivisions of the interval $[0,1]$,
and define for $t\in T_n$ the set
\begin{multline}
\label{eq:defF}
F_{\gamma}(t) := \{ (\{\rho_i\}_{i=0}^{n},\{\gamma_i\}_{i=1}^{n}) \sep \rho_i\in \mor{\inf P},\gamma_i:[t_{i-1},t_i]\to \ob{\inf P}\text{ horizontal paths},\\\pi \circ \gamma_i=\gamma|_{[t_{i-1},t_i]}\text{, }t(\rho_i)=\gamma_{i+1}(t_i)\text{ and }s(\rho_i)=\gamma_i(t_i) \}/\sim
\end{multline}
where $\sim$ is an equivalence relation defined below. In words, $F_{\gamma}(t)$ consists of locally defined horizontal lifts $\gamma_i$  of the pieces $\gamma|_{[t_{i-1},t_i]}$, together with morphisms $\rho_i$ between the endpoint of each lift to the initial point of the next one. We think about the elements of $F_{\gamma}(t)$ as  \quot{formal} compositions of paths in $\ob{\inf P}$ and and morphisms of $\inf P$, and we will use the notation $\xi=\rho_n \ast \gamma_n \ast ... \ast \rho_2 \ast \gamma_1 \ast \rho_0$ for a representative $\xi$ of an element in $F_{\gamma}(t)$.

The equivalence relation in \cref{eq:defF} is generated by  relations $\{\sim_j\}_{1\leq j \leq n}$ defined as follows:
we define a relation
\begin{equation}
\label{eq:equivreldef}
\rho_n \ast \gamma_{n}\ast ... \ast \gamma_1\ast \rho_0\;\; \sim_j \;\; \rho_n' \ast \gamma_{n}'\ast ... \ast \gamma_1'\ast \rho_0'
\end{equation}
if there exist a horizontal path  $\tilde\rho:[t_{j-1},t_j] \to \mor{\inf P}$ such that 
\begin{equation}
\label{eq:er:condgamma}
\gamma_j=s(\tilde\rho)
\quand
\gamma_j'=t(\tilde\rho)\text{,}
\end{equation}
and $\gamma_i'=\gamma_i$ for all $1\leq i \leq n$, $i \neq j$, as well as
\begin{align}
\label{eq:er:condrho:all}
\rho_{j-1}'=\tilde\rho(t_{j-1})\circ \rho_{j-1}
\quand
\rho'_j=\rho_j \circ  \tilde\rho(t_j)^{-1}
\end{align}
and $\rho_i'=\rho_i$  for all $0\leq i \leq n$, $i\neq j,j-1$. We will use the terminology  that the relation \cref{eq:equivreldef} is \emph{via} $\tilde\rho$. It is straightforward to check using \cref{lem:hormor:e} that given one representative $\xi=\rho_n \ast \gamma_{n}\ast ... \ast \gamma_1\ast \rho_0$  and a horizontal path $\tilde\rho$ with $\gamma_j=s(\tilde\rho)$, then one can turn \cref{eq:er:condgamma,eq:er:condrho:all} into definitions, producing another element $\rho_n' \ast \gamma_{n}'\ast ... \ast \gamma_1'\ast \rho_0'$, related to $\xi$ via $\tilde\rho$.
\begin{comment} 
If $\rho_n \ast \gamma_{n}\ast ... \ast \gamma_1\ast \rho_0 $ and $j,\tilde\rho$ are given such that $\gamma_j=s(\tilde\rho)$ then one can use \cref{eq:er:condgamma,eq:er:condrho:1,eq:er:condrho:2,eq:er:condrho:3,eq:er:condrho:4} as definitions of $\gamma_i'$ and $\rho_i'$, provided that $\tilde\rho$ is horizontal.
All compositions are well-defined:
first we check the compositions:
\begin{equation*}
t(\rho_{j-1})=\gamma_j(t_{j-1})=s(\tilde\rho(t_{j-1}))
\end{equation*} 
and
\begin{equation*}
t(\tilde\rho(t_j)^{-1}) =s(\tilde\rho(t_j))=\gamma_j(t_j)=s(\rho_j)
\end{equation*}
and then the conditions between jumps and paths:
\begin{align*}
s(\rho_{i}') &= \gamma_{i}'(t_{i}) &&\forall 0\leq i<j-1 
\\
s(\rho_{j-1}') &=s(\rho_{j-1})=\gamma_{j-1}(t_{j-1})=\gamma_{j-1}'(t_{j-1})
\\
s(\rho_{j}') &=t(\tilde\rho(t_j))=\gamma_{j}'(t_{j})
\\
s(\rho_{i}') &= s(\rho_i)=\gamma_{i}(t_{i})= \gamma_{i}'(t_{i}) &&\forall j+1<i\leq n
\end{align*}
and
\begin{align*}
t(\rho_{i}') &= \gamma_{i+1}'(t_{i}) &&\forall 0\leq i<j-1 
\\
t(\rho_{j-1}') &=t(\tilde\rho(t_{j-1}))=t(\tilde\rho(t_{j-1}))=\gamma_{j}'(t_{j-1})
\\
t(\rho_{j}') &=t(\rho_j)=\gamma_{j+1}(t_{j})=\gamma_{j+1}'(t_{j})
\\
t(\rho_{i}') &= t(\rho_i)=\gamma_{i+1}(t_{i})= \gamma_{i+1}'(t_{i}) &&\forall j+1<i\leq n
\end{align*}
\end{comment}

\begin{lemma}
\label{lem:equiv}
\begin{enumerate}[(a)]

\item 
\clabel{lem:equiv:a}
For each $1\leq j \le n$, $\sim_j$ is an equivalence relation.

\item
\clabel{lem:equiv:b}
For $1\leq i<j\leq n$ and $\xi^1 \sim_i \xi' \sim_j \xi^2$, there exists $\xi''$ such that $\xi^1\sim_j \xi''\sim_i \xi^2$.

\end{enumerate}
\end{lemma}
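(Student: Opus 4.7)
The plan is to verify both statements by direct manipulation of the horizontal paths $\tilde\rho$ that witness the relation $\sim_j$.

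\emph{For part \cref{lem:equiv:a}:} I would check the three axioms of an equivalence relation by exhibiting an explicit witness $\tilde\rho$ in each case. Reflexivity follows by choosing $\tilde\rho := \id_{\gamma_j}$, which is horizontal by \cref{lem:hormor:a}; the induced formulas \cref{eq:er:condgamma} and \cref{eq:er:condrho:all} then reduce to identities. Symmetry follows because if $\xi \sim_j \xi'$ via $\tilde\rho$, then $\xi' \sim_j \xi$ via the pointwise inverse $\tilde\rho^{-1}$, which is horizontal by \cref{lem:hormor:b} and satisfies $s(\tilde\rho^{-1})=t(\tilde\rho)=\gamma_j'$, $t(\tilde\rho^{-1})=s(\tilde\rho)=\gamma_j$, and  $\tilde\rho^{-1}(t_{j-1}) \circ \rho'_{j-1}=\rho_{j-1}$ and $\rho'_j \circ \tilde\rho^{-1}(t_j)^{-1}=\rho_j$ by cancellation in the groupoid. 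Transitivity follows by composing witnesses: if $\xi^1 \sim_j \xi^2$ via $\tilde\rho_1$ and $\xi^2 \sim_j \xi^3$ via $\tilde\rho_2$, then $\xi^1 \sim_j \xi^3$ via the pointwise composition $\tilde\rho_2 \circ \tilde\rho_1$, which is horizontal by \cref{lem:hormor:d}.

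\emph{For part \cref{lem:equiv:b}:} Let $\tilde\rho^i$ (on $[t_{i-1},t_i]$) witness $\xi^1 \sim_i \xi'$ and $\tilde\rho^j$ (on $[t_{j-1},t_j]$) witness $\xi' \sim_j \xi^2$. My plan is to define $\xi''$ by first applying $\tilde\rho^j$ to $\xi^1$ and then $\tilde\rho^i$ to the result. I would split into two cases depending on adjacency of the indices. If $j>i+1$, the data affected by $\sim_i$ (namely $\rho_{i-1},\gamma_i,\rho_i$) is disjoint from that affected by $\sim_j$ (namely $\rho_{j-1},\gamma_j,\rho_j$), so  the two operations commute termwise, and one simply reuses $\tilde\rho^j$ and $\tilde\rho^i$ in the opposite order. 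If $j=i+1$, the only overlap is at $\rho_i=\rho_{j-1}$; here the key point is that in either order the resulting element at that index is
\begin{equation*}
\tilde\rho^j(t_i)\circ \rho_i^1\circ \tilde\rho^i(t_i)^{-1},
\end{equation*}
which follows from associativity of the composition in $\mor{\inf P}$. Note that applicability is preserved by the first step: after applying $\tilde\rho^j$ to $\xi^1$, the path $\gamma_i$ is unchanged so the condition $\gamma_i=s(\tilde\rho^i)$ still holds, and similarly after applying $\tilde\rho^i$ the condition $\gamma_j=s(\tilde\rho^j)$ still holds.

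I expect no serious obstacle; the argument is bookkeeping. The only point requiring attention is the adjacent case $j=i+1$, where one must verify that the two possible orders of shifts on $\rho_i$ yield the same element, and that in each intermediate step the source/target conditions needed to apply the next relation are preserved. These checks are immediate from the definitions \cref{eq:er:condgamma}--\cref{eq:er:condrho:all} together with the associativity of groupoid composition.
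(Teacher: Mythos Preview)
Your proposal is correct and follows essentially the same route as the paper. For \cref{lem:equiv:a} you use exactly the same witnesses (identity, pointwise inverse, pointwise composition) with the same horizontality lemmas; for \cref{lem:equiv:b} you reuse the two given witnesses in the opposite order, which is precisely what the paper does (define $\xi''$ by applying the $j$-witness to $\xi^1$, then verify that the $i$-witness carries $\xi''$ to $\xi^2$). Your explicit case split into $j>i+1$ versus $j=i+1$ is a presentational choice; the paper treats them uniformly in the displayed text but makes the same distinction when checking the overlapping index $\rho_i=\rho_{j-1}$, arriving at the same formula $\tilde\rho^j(t_i)\circ\rho_i^1\circ\tilde\rho^i(t_i)^{-1}$.
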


\begin{proof}
To \cref{lem:equiv:a*}: For reflexivity put $\tilde\rho(t):=\id_{\gamma(t)}$, which is horizontal by \cref{lem:hormor:a}.
\begin{comment}
\cref{eq:er:condgamma,eq:er:condrho:1,eq:er:condrho:2,eq:er:condrho:3,eq:er:condrho:4} result in $\gamma_i'=\gamma_i$  and $\rho_i'=\rho_i$. \end{comment}
For symmetry assume that $\xi\sim_{j} \xi'$ via $\tilde\rho$. Then,  $\tilde\rho' := \tilde\rho^{-1}$ is horizontal by \cref{lem:hormor:b}, and we have $\xi' \sim_j \xi$ via $\tilde\rho'$. 
\begin{comment}
Starting with:
\begin{equation*}
\rho_n \ast \gamma_{n}\ast ... \ast \gamma_1\ast \rho_0 \sim_j \rho_n' \ast \gamma_{n}'\ast ... \ast \gamma_1'\ast \rho_0'
\end{equation*}
Then we have 
\begin{equation*}
\gamma_j'=t(\tilde\rho)=s(\tilde\rho^{-1})=s(\tilde\rho')\text{.} \end{equation*}
Applying $\tilde\rho'$ to $\rho_n' \ast \gamma_{n}'\ast ... \ast \gamma_1'\ast \rho_0$ we obtain $\rho_n \ast \gamma_{n}\ast ... \ast \gamma_1\ast \rho_0$. We find
\begin{equation*}
\forall 1\leq i <j:\gamma_i=\gamma_i'
\quomma 
\gamma_j=t(\tilde\rho')
\quand
\forall j<i\leq n:\gamma_i=\gamma_i'
\end{equation*}
as well as
\begin{align*}
\forall0\leq i <j-1:&&\rho_i&=\rho_i'
\\
&&\rho_{j-1}&=\tilde\rho(t_{j-1})^{-1}\circ \rho_{j-1}=\tilde\rho'(t_{j-1})^{-1}\circ \rho_{j-1}'=\tilde\rho'(t_{j-1})\circ \rho_{j-1}'
\\
\forall j+1<i\leq n:&&\rho_i&=\rho_i'
\end{align*}
For the computation of $\rho_j$ we first get from our assumption and \cref{eq:er:condrho:3}: 
\begin{equation*}
\rho'_j\circ  \tilde\rho(t_j)=\rho_j
\end{equation*}
so that
\begin{align*}
\rho_j &= \rho'_j\circ  \tilde\rho(t_j) 
\\&=\rho_j' \circ  \tilde\rho'(t_j)^{-1}
\end{align*}
This is the correct result.
\end{comment}
Transitivity goes analogously using  \cref{lem:hormor:d}.
\begin{comment}
Indeed,
we assume that
\begin{equation*}
\rho_n \ast  ... \ast \gamma_1\ast \rho_0 \sim_j \rho_n' \ast  ... \ast \gamma_1'\ast \rho_0'
\quand
\rho_n' \ast  ... \ast \gamma_1'\ast \rho_0' \sim_j \rho_n'' \ast  ... \ast \gamma_1''\ast \rho_0''
\end{equation*}
with the first relation via $\tilde\rho_2$ and the second via $\tilde\rho_1$. In particular, we have $\gamma_j=s(\tilde\rho_2)$ and $\gamma_j'=t(\tilde\rho_2)=s(\tilde\rho_1)$ and $t(\tilde\rho_1)=\gamma_j''$. 
We consider  $\tilde\rho:= \tilde\rho_{1} \circ \tilde \rho_{2}$, this is horizontal by \cref{lem:hormor:d}; then
\begin{equation*}
\rho_n \ast  ... \ast \gamma_1\ast \rho_0 \sim_j \rho_n'' \ast  ... \ast \gamma_1''\ast \rho_0
\end{equation*}  
via $\tilde\rho$.  
We check all identities: we have $\gamma_j=s(\tilde\rho)$ and 
\begin{multline*}
\label{eq:er:condgamma}
\forall 1\leq i <j:\gamma_i''=\gamma_i'=\gamma_i
\quomma 
\gamma_j''=t(\tilde\rho)\\
\quand
\forall j<i\leq n:\gamma_i''=\gamma_i'=\gamma_i
\end{multline*}
as well as
\begin{align*}
\forall0\leq i <j-1:&&\rho_i''&=\rho_i'=\rho_i
\\
&&\rho_{j-1}''&=
\tilde\rho_1(t_{j-1}) \circ \rho'_{j-1}
\\&&&=\tilde\rho_1(t_{j-1}) \circ \tilde \rho_{2}(t_{j-1})\circ \rho_{j-1}
\\&&&=\tilde\rho(t_{j-1})\circ \rho_{j-1}
\\
\forall j<i\leq n:&&\rho_i''&=\rho_i'=\rho_i
\end{align*}
and
\begin{align*}
&&\rho''_j&= \rho_j' \circ  \tilde\rho_1(t_j)^{-1}
\\&&&= \rho_j \circ  \tilde\rho_2(t_j)^{-1}\circ  \tilde\rho_1(t_j)^{-1}
\\&&&= \rho_j \circ  \tilde\rho(t_j)^{-1}
\end{align*}
This is the correct result.
\end{comment} 
To \cref{lem:equiv:b*}: 
We let  $\xi^1 \sim_i \xi'$ be via $\tilde\rho^1$ and $\xi' \sim_j \xi^2$ be via $\tilde\rho^2$.  
\begin{comment}
Thus, we  $\gamma_i^1=s(\tilde\rho^1)$ and 
\begin{equation}
\forall 1\leq k <i:\gamma_k'=\gamma_k^1
\quomma 
\gamma_i'=t(\tilde\rho^1)
\quand
\forall i<k\leq n:\gamma_k'=\gamma_k^1
\end{equation}
as well as
\begin{align}
\forall0\leq k <i-1:&&\rho_k'&=\rho_k^1
\\
&&\rho_{i-1}'&=\tilde\rho^1(t_{i-1})\circ \rho_{i-1}^1
\\
&&\rho'_i&=\rho_i^1 \circ  \tilde\rho^1(t_i)^{-1}
\\
\forall i<k\leq n:&&\rho_k'&=\rho_k^1
\end{align}
We further have $\gamma_j'=s(\tilde\rho^2)$ and 
\begin{equation*}
\forall 1\leq k <j:\gamma_k^2=\gamma_k'
\quomma 
\gamma_j^2=t(\tilde\rho^2)
\quand
\forall j<k\leq n:\gamma_k^2=\gamma_k'
\end{equation*}
as well as
\begin{align*}
\forall0\leq k <j-1:&&\rho_k^2&=\rho_k'
\\
&&\rho_{j-1}^2&=\tilde\rho^2(t_{j-1})\circ \rho_{j-1}'
\\
&&\rho^2_j&=\rho_j' \circ  \tilde\rho^2(t_j)^{-1}
\\
\forall j<k\leq n:&&\rho_k^2&=\rho_k'
\end{align*}
\end{comment}
We define $\tilde\rho^3 :=\tilde\rho^2$. We define $\xi''$ such that $\xi^1 \sim_j \xi''$ via  $\tilde\rho^3$. \begin{comment}
This works since
\begin{equation*}
\gamma_j^1=\gamma_j'=  s(\tilde\rho^2)=  s(\tilde\rho^3)\text{.}
\end{equation*}
We have
\begin{equation*}
\forall 1\leq k <j:\gamma_k''=\gamma_k^1
\quomma 
\gamma_j''=t(\tilde\rho^3)
\quand
\forall j<k\leq n:\gamma_k''=\gamma_k^1
\end{equation*}
as well as
\begin{align*}
\forall0\leq k <j-1:&&\rho_k''&=\rho_k^1
\\
&&\rho_{j-1}''&=\tilde\rho^3(t_{j-1})\circ \rho_{j-1}^1
\\
&&\rho''_j&=\rho_j^1 \circ  \tilde\rho^3(t_j)^{-1}
\\
\forall j<k\leq n:&&\rho_k''&=\rho_k^1
\end{align*}
\end{comment}
Now one  can check  that  $\xi''\sim_i \xi^2$ via $\tilde\rho^1$. 
\begin{comment}
We first check that
\begin{equation*}
 s(\tilde\rho^1)=\gamma_i^1= \gamma''_i
\text{.}\end{equation*}
Further we have
\begin{align*}
\forall 1\leq k <i:&&\gamma_k^2&=\gamma_k'=\gamma_k^1=\gamma_k''
\\
&&\gamma_i^2&=\gamma_i'=t(\tilde\rho^1)
\quomma
\\
\forall i<k< j:&&\gamma_k^2&=\gamma_k'=\gamma_k^1=\gamma_k''
\\
&&\gamma_j^2 &= t(\tilde\rho^2)=t(\tilde\rho^3)=\gamma_j''
\\
\forall j<k\leq n:&&\gamma_j^2 &= \gamma_k'=\gamma_k^1=\gamma_k''
\end{align*}
Finally we have
\begin{align*}
\forall 1\leq k <i-1:&&\rho_k^2&=\rho_k'=\rho_k^1=\rho_k''
\\&& \rho^2_{i-1}&=\rho'_{i-1}= \tilde\rho^1(t_{i-1})\circ \rho_{i-1}^1=\tilde\rho^1(t_{i-1})\circ \rho_{i-1}''
\\
\text{if $i=j-1$}:&&\rho_i^2&=\tilde\rho^2(t_{i})\circ \rho_{i}'
\\&&&= \tilde\rho^2(t_i) \circ \rho_i^1 \circ  \tilde\rho^1(t_i)^{-1} 
\\&&&= \tilde\rho^2(t_i) \circ \tilde\rho^3(t_{i})^{-1}\circ \rho_i'' \circ  \tilde\rho^1(t_i)^{-1} 
\\&&&= \tilde\rho^2(t_i) \circ \tilde\rho^2(t_i)^{-1}\circ \rho_i'' \circ  \tilde\rho^1(t_i)^{-1} 
\\&&&= \rho_i'' \circ  \tilde\rho^1(t_i)^{-1}
\\ \text{if }i<j-1:  &&\rho_i^2 &=\rho'_i
 \\&&&=\rho_i^1 \circ  \tilde\rho^1(t_i)^{-1}
\\&&&=\rho_i'' \circ \tilde\rho^1(t_i)^{-1}
\\
\forall i<k< j-1:&&\rho_k^2&=\rho_k'=\rho_k''
\\
&&\rho_j^2 &= \rho_j' \circ  \tilde\rho^2(t_j)^{-1}
\\&&&=\rho_j^1 \circ  \tilde\rho^2(t_j)^{-1}
\\&&&=\rho_j''
\\
\forall j<k\leq n:&&\rho_k^2 &= \rho_k'=\rho_k^1=\rho_k''=\rho_k''
\end{align*}
\end{comment}
\end{proof}

\subsubsection{Step 2: Anchors and actions}

Next we define a left $\inf P_{x}$-action and a right $\inf P_{y}$-action on the set $F_{\gamma}(t)$; their anchors are
\begin{align*}
\alpha_l &:F_{\gamma}(t) \to \inf P_{x}: \rho_n \ast \gamma_{n}\ast ... \ast \gamma_1\ast \rho_0 \mapsto s(\rho_0)
\\
\alpha_r &:F_{\gamma}(t) \to \inf P_{y}: \rho_n \ast \gamma_{n}\ast ... \ast \gamma_1\ast \rho_0 \mapsto t(\rho_n)\text{.}
\end{align*}
These maps are obviously well-defined under the equivalence relation.

\begin{lemma}
\label{lem:laction}
The map  $\mor{\inf P_{x}} \ttimes s{\alpha_l} F_{\gamma}(t) \to F_{\gamma}(t)$ defined by 
\begin{equation*}
\rho \circ (\rho_n \ast \gamma_{n}\ast ... \ast \gamma_1\ast \rho_0) :=  \rho_n \ast \gamma_{n}\ast ... \ast \gamma_1\ast (\rho_0 \circ \rho^{-1})
\end{equation*}
is a well-defined left action of $\inf P_{x}$ on $F_{\gamma}(t)$ with anchor $\alpha_l$, and keeps $\alpha_r$ invariant.  \end{lemma}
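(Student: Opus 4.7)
The plan is to verify four things in turn: well-definedness under the equivalence relation $\sim$, the groupoid-action axioms (unitality and associativity), that the anchor of the action is $\alpha_l$, and that $\alpha_r$ is invariant. The last three are essentially formal manipulations of the definition; the only non-trivial point is well-definedness, and even that is simple once one notices that the action only touches the left-most morphism $\rho_0$.

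For well-definedness, since $\sim$ is generated by the relations $\sim_j$ for $1 \le j \le n$, it suffices to check compatibility with each generator. For $j \ge 2$, the generating relation $\sim_j$ modifies only $\rho_{j-1}$, $\gamma_j$ and $\rho_j$, leaving $\rho_0$ unchanged; hence the action commutes trivially with $\sim_j$. The only interesting case is $j=1$. Here, if $\xi \sim_1 \xi'$ via a horizontal $\tilde\rho:[t_0,t_1]\to\mor{\inf P}$, then $\rho_0' = \tilde\rho(t_0)\circ \rho_0$ while $\rho_1$, $\gamma_1$, etc.\ transform by the prescribed formulae. Post-composing $\rho_0$ with $\rho^{-1}$ preserves this relation, since
\begin{equation*}
(\rho_0 \circ \rho^{-1})' = \tilde\rho(t_0) \circ \rho_0 \circ \rho^{-1} = \rho_0' \circ \rho^{-1},
\end{equation*}
and the rest of the data is left untouched by the action; hence $\rho\circ \xi \sim_1 \rho\circ \xi'$ via the same $\tilde\rho$.

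For the action axioms, unitality follows from $\id_{s(\rho_0)}^{-1}=\id_{s(\rho_0)}$ and $\rho_0 \circ \id=\rho_0$. Associativity is the computation
\begin{equation*}
(\rho' \circ \rho)\circ \xi = \rho_n\ast\cdots\ast\bigl(\rho_0 \circ (\rho'\circ\rho)^{-1}\bigr) = \rho_n\ast\cdots\ast\bigl((\rho_0\circ \rho^{-1})\circ \rho'^{-1}\bigr) = \rho' \circ (\rho\circ \xi),
\end{equation*}
valid whenever the relevant compositions are defined, which they are by the source--target matching condition in the fibre product $\mor{\inf P_x}\ttimes{s}{\alpha_l} F_\gamma(t)$.

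The anchor statement is a direct calculation: $\alpha_l(\rho \circ \xi) = s(\rho_0\circ \rho^{-1}) = t(\rho^{-1})^{-1}\!\!=t(\rho)$, as required for a left groupoid action with anchor $\alpha_l$. Invariance of $\alpha_r$ is immediate because $\rho_n$ is unaffected: $\alpha_r(\rho\circ \xi)=t(\rho_n)=\alpha_r(\xi)$. I do not anticipate any real obstacle here; the only point requiring care is the bookkeeping that $\rho$ enters the formula as $\rho^{-1}$ on the right of $\rho_0$, which is what turns a right-composition operation into a genuine left action with the expected anchor.
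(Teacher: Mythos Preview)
Your proof is correct and follows essentially the same approach as the paper: the paper's proof simply notes that only $\sim_1$ needs to be checked (via the relation $\rho_0' = \tilde\rho(t_0)\circ\rho_0$) and declares the remaining statements obvious, whereas you spell out those details explicitly. One small notational slip: in the anchor computation you wrote $s(\rho_0\circ\rho^{-1}) = t(\rho^{-1})^{-1} = t(\rho)$, but the middle expression is meaningless (objects have no inverse); the correct step is simply $s(\rho_0\circ\rho^{-1}) = s(\rho^{-1}) = t(\rho)$.
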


\begin{proof}
For the well-definedness, only $\sim_1$ has to be checked, which is done via \cref{eq:er:condrho:all}. The other statements are obvious.
\begin{comment}
First of all, the composition is well-defined: 
\begin{equation*}
t(\rho^{-1}) = s(\rho) = \alpha_l(\xi) = s(\xi)\text{.}
\end{equation*}
The left anchor is preserved:
\begin{equation*}
\alpha_l(\rho \circ \xi) =\alpha_l(\xi \ast \rho^{-1})=s(\rho^{-1}) = t(\rho)\text{.}
\end{equation*}
It is obviously an action. 
\end{comment} 
\end{proof}

\begin{lemma}
\label{lem:action}
The map  $F_{\gamma}(t) \ttimes{\alpha_r}t\mor{\inf P_{y}} \to F_{\gamma}(t)$ defined by 
\begin{equation*}
(\rho_n \ast \gamma_{n}\ast ... \ast \gamma_1\ast \rho_0)\circ \rho := \rho^{-1} \circ \rho_n \ast \gamma_{n}\ast ... \ast \gamma_1\ast \rho_0
\end{equation*}
is a well-defined right action of $\inf P_{y}$ on $F_{\gamma}(t)$ with anchor $\alpha_r$,  it keeps $\alpha_l$ invariant, and it commutes with the left action of \cref{lem:laction}. Moreover, if $\xi,\xi'\in F_{\gamma}$ with $\alpha_l(\xi)=\alpha_l(\xi')$, then there exists a unique $\rho\in \mor{\inf P_{y}}$ such that $\xi\circ \rho=\xi'$.
\end{lemma}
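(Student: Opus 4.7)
The plan has three parts: (i) well-definedness of the right action, (ii) the algebraic properties (action axioms, anchor compatibility, commutation with the left action), and (iii) the free-and-transitive statement in the last sentence. For (i), the only relation that touches $\rho_n$ is $\sim_n$; if $\xi \sim_n \xi'$ via a horizontal $\tilde\rho$, then $\xi\circ \rho \sim_n \xi'\circ \rho$ via the same $\tilde\rho$, because pre-composing $\rho_n$ with $\rho^{-1}$ commutes with post-composing by $\tilde\rho(t_n)^{-1}$. The claims in (ii) then follow at once, since the right action modifies only $\rho_n$ while the left action of \cref{lem:laction} modifies only $\rho_0$.

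For the existence part of (iii), I would align $\xi$ with $\xi'$ piece by piece through successive equivalences $\sim_1,\ldots,\sim_n$. Assume inductively that a representative $\xi^{(k-1)}$ already agrees with $\xi'$ in the data $\rho_0,\gamma_1,\ldots,\rho_{k-2},\gamma_{k-1},\rho_{k-1}$; the base case $k=1$ uses $\alpha_l(\xi) = \alpha_l(\xi')$ to identify the prescribed starting morphism $\rho_0^{\xi'}\circ(\rho_0^\xi)^{-1}$. The two horizontal lifts $\gamma_k^{\xi^{(k-1)}}$ and $\gamma_k^{\xi'}$ of $\gamma|_{[t_{k-1},t_k]}$ then satisfy the hypotheses of the refined form of \cref{lem:hormor:f} with a prescribed initial morphism and $g=1$, producing a horizontal $\tilde\rho^{(k)}$ from $\gamma_k^{\xi^{(k-1)}}$ to $\gamma_k^{\xi'}$ whose value at $t_{k-1}$ is exactly the morphism needed so that $\sim_k$ carries $\rho_{k-1}^{\xi^{(k-1)}}$ to $\rho_{k-1}^{\xi'}$ (the identity for $k \geq 2$, by the induction hypothesis). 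After $n$ steps, $\xi^{(n)}$ and $\xi'$ can differ at most in $\rho_n$, whose sources both equal $\gamma_n^{\xi'}(t_n)$; the element $\rho := \rho_n^{(n)} \circ (\rho_n^{\xi'})^{-1} \in \mor{\inf P_y}$ then satisfies $\xi\circ \rho = \xi'$.

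For uniqueness, suppose $\xi\circ \rho_1 \sim \xi\circ \rho_2$. Using \cref{lem:equiv}, the connecting chain can be sorted in index order and merged into a single sweep $\sim_1,\ldots,\sim_n$ via horizontal paths $\tilde\rho^{(1)},\ldots,\tilde\rho^{(n)}$. Since every $\gamma_j$ must remain unchanged, $s(\tilde\rho^{(j)}) = t(\tilde\rho^{(j)}) = \gamma_j$; comparison with the horizontal identity path $\id_{\gamma_j}$ (\cref{lem:hormor:a}) through \cref{lem:hormor:g} then supplies a unique $h^{(j)} \in \ker(t)$ with $\tilde\rho^{(j)} = R(\id_{\gamma_j},(h^{(j)},1))$. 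Preservation of $\rho_0$ forces $\tilde\rho^{(1)}(t_0) = \id$, hence $h^{(1)} = 1$ by the uniqueness clause of \cref{lem:hormor:g}, so $\tilde\rho^{(1)} \equiv \id_{\gamma_1}$; the induction then propagates through $\rho_1,\ldots,\rho_{n-1}$ to kill every $\tilde\rho^{(k)}$. Thus $\rho_n$ is preserved as well, forcing $\rho_1 = \rho_2$.

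The main obstacle is the combinatorial bookkeeping in these last two paragraphs. Existence depends critically on the refined form of \cref{lem:hormor:f} allowing a prescribed initial morphism with $g=1$, without which each $\sim_k$-modification would spoil the previously aligned $\rho_{k-1}$; uniqueness depends on reorganizing arbitrary chains of $\sim_j$-equivalences into a canonical sweep so that \cref{lem:hormor:g} applies level by level.
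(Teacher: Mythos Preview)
Your proof is correct and follows essentially the same approach as the paper: well-definedness reduces to the relation $\sim_n$, existence is obtained by successively applying $\sim_1,\ldots,\sim_n$ using the refined form of \cref{lem:hormor:f} with prescribed initial morphism, and uniqueness is proved by reorganizing the chain via \cref{lem:equiv} and then using \cref{lem:hormor:g} to show each step is trivial. The only cosmetic difference is that you run the uniqueness argument as an ascending sweep through $\sim_1,\ldots,\sim_n$, while the paper sorts in descending order and inducts on the minimal index appearing; both reductions use \cref{lem:hormor:g} in the same way.
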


\begin{proof}
Well-definedness and the properties of an action are straightforward to check.
\begin{comment}
The composition makes sense, since
\begin{equation*}
t(\rho_n) =t(\rho)=s(\rho^{-1})\text{.}
\end{equation*}
It is well-defined: suppose
\begin{equation*}
\rho_n \ast \gamma_{n}\ast ... \ast \gamma_1\ast \rho_0 \sim \rho_n' \ast \gamma_{n}'\ast ... \ast \gamma_1'\ast \rho_0'
\end{equation*}
via $j,\tilde\rho$, in particular, we have  from \cref{eq:er:condrho:4} $\rho_n'=\rho_n$.
Then we have
\begin{equation*}
\rho^{-1} \circ \rho_n \ast \gamma_{n}\ast ... \ast \gamma_1\ast \rho_0 \sim \rho^{-1} \circ\rho_n' \ast \gamma_{n}'\ast ... \ast \gamma_1'\ast \rho_0'\text{.}
\end{equation*}
via $j,\tilde\rho$: the only condition we have to check is the new \cref{eq:er:condrho:4} for $i=n$:
\begin{align*}
\rho^{-1} \circ\rho_n' 
&\eqcref{eq:er:gtrans} \rho^{-1}\circ \rho_n
\end{align*}
\end{comment}
More difficult is to prove existence and uniqueness of $\rho$; this is exactly the point where our equivalence relation becomes relevant. For existence, suppose
\begin{equation*}
\xi = \rho_n \ast \gamma_{n}\ast ... \ast \gamma_1\ast \rho_0 
\quand
\xi' = \rho_n' \ast \gamma_{n}'\ast ... \ast \gamma_1'\ast \rho_0' 
\end{equation*}
satisfy  $\alpha_l(\xi)=\alpha_l(\xi')$. This implies that $\rho_0'\circ \rho_0^{-1}\in \mor{\inf P}$ satisfies $s(\rho_0'\circ \rho_0^{-1})=t(\rho_0)=\gamma_1(t_0)$ and $t(\rho_0'\circ \rho_0^{-1})=t(\rho_0')=\gamma_1'(t_0)$.  By  \cref{lem:hormor:f} there exists a horizontal path $\tilde\rho: [t_0,t_1] \to \mor{\inf P}$ with $\tilde\rho(t_0)=\rho_0'\circ \rho_0^{-1}$, $s(\tilde\rho)=\gamma_1$ and $t(\tilde\rho)=\gamma_1'$. 
Via $\tilde\rho$ we obtain a relation
\begin{equation*}
\rho_n\\ \\  \ast \gamma_{n}\ast ... \ast \gamma_1\ast \rho_0 \sim_1 \rho_n^{(1)} \ast \gamma_{n}^{(1)}\ast ... \ast \rho_1^{(1)} \ast  \gamma_1^{(1)} \ast \rho_0^{(1)}\text{,}
\end{equation*}
with $\gamma_1^{(1)} = t(\tilde\rho)=\gamma_1'$ and $\rho_0^{(1)}=\tilde\rho(t_{0})\circ \rho_{0} = \rho_0'$. 
Now we are in the situation that $s(\rho_1^{(1)})=s(\rho_1')$, and can use $\sim_2$ in the same manner as $\sim_1$ before. After $n$ steps, we arrive at a relation
\begin{equation*}
\rho_n \ast \gamma_{n}\ast ... \ast \gamma_1\ast \rho_0 \sim \rho_n^{(n)} \ast \gamma_{n}'\ast ... \ast \gamma_1'\ast \rho_0'\text{.}
\end{equation*} 
Now we define $\rho := \rho_n^{(n)} \circ \rho_n'^{-1}$; this definition yields $\xi\circ \rho=\xi'$. \begin{comment}
This is well-defined because $s(\rho_n'')=\gamma_{n}'(t_n)=s(\rho_n')$. 
We have
\begin{multline*}
\xi \circ \rho \sim (\rho_n'' \ast \gamma_{n}'\ast ... \ast \gamma_1'\ast \rho_0') \circ \rho\\= \rho^{-1}\circ\rho_n''\ast \gamma_{n}'\ast ... \ast \gamma_1'\ast \rho_0' = \rho_n' \ast \gamma_{n}'\ast ... \ast \gamma_1'\ast \rho_0' =\xi'\text{.}
\end{multline*}
\end{comment}

Next we show that $\rho$ is unique, i.e. we prove that a relation $\xi\circ \rho\sim\xi$ implies $\rho=\id$. 
\begin{comment}
We assume that $\xi \circ \rho_1\sim\xi'\sim\xi\circ \rho_2$, with the consequence that $\xi\circ \rho\sim\xi$ for $\rho := \rho_1\circ \rho_2^{-1}$. \end{comment}
Putting $\xi=\rho_n \ast \gamma_{n}\ast ... \ast \gamma_1\ast \rho_0$, the assumption is
\begin{equation*}
\rho_n\\ \\  \ast \gamma_{n}\ast ... \ast \gamma_1\ast \rho_0 \sim (\rho^{-1} \circ \rho_n) \ast \gamma_{n}\ast ... \ast \gamma_1\ast \rho_0\text{,}
\end{equation*}
where  $\sim$ is a finite chain composed of the relations $\sim_1$, ..., $\sim _n$.
By \cref{lem:equiv} we can assume that this chain is ordered with descending $i$ and each $i$ appears at most once.  If the chain $\sim$ is empty, we must have $\rho_n = \rho^{-1} \circ \rho_n$; this proves $\rho=\id$. If it is non-empty, we proceed by induction over the minimum $j$ of  occurring relations $\sim_j$. We write the chain $\sim$ as
 \begin{equation*}
\rho_n\\ \\  \ast \gamma_{n}\ast ... \ast \gamma_1\ast \rho_0 \sim' \rho_n'\\ \\  \ast \gamma_{n}'\ast ...\ast \rho_1' \ast \gamma_1'\ast \rho_0' \sim_j (\rho^{-1} \circ \rho_n) \ast \gamma_{n}\ast ... \ast \gamma_1\ast \rho_0\text{,}
\end{equation*}
with $\sim'$ a chain composed  only of the relations $\sim_{j+1,}...,\sim_n$. Since $\sim'$  does not affect the parts before $\rho_{j}$, we have  $\gamma_i'=\gamma_i$ for $1\leq i\leq\ j$ and $\rho_i'=\rho_i$ for $0\leq i <j$. We claim that $\sim_j$ implies coincidence of the remaining parts: (A)  $\gamma_i'=\gamma_i$ for  $j< i \leq n$, (B) $\rho_j'=\rho_j$ if $j<n$, (C) $\rho_i'=\rho_i$ for all $j< i < n$, and (D)
$\rho_n'=(\rho^{-1} \circ \rho_n)$. Given the claim, we obtain
\begin{equation*}
\rho_n\\ \\  \ast \gamma_{n}\ast ... \ast \gamma_1\ast \rho_0 \sim' (\rho^{-1} \circ \rho_n) \ast \gamma_{n}\ast ... \ast \gamma_1\ast \rho_0\text{,}
\end{equation*}
and have hence shifted the induction parameter from $j$ to $j+1$. At the end the minimum is shifted to $n+1$, meaning that the chain of relations becomes empty.

In order to prove the claim, we assume that $\sim_j$ is via $\tilde\rho$. Since $\gamma_j'=\gamma_j$ and $\rho_{j-1}'=\rho_{j-1}$, we have   $t(\tilde\rho)=s(\tilde\rho)$ and $\tilde\rho(t_{0})=\id$. 
This shows (A) and (C).  If $n=j$, then we have by
\cref{eq:er:condrho:all} 
$\rho_j'=\rho^{-1} \circ \rho_j\circ  \tilde\rho(t_j)^{-1}$.
If $j<n$, then we have by \cref{eq:er:condrho:all}
$\rho_j'=\rho_j\circ  \tilde\rho(t_j)^{-1}$
and $\rho_n'=\rho^{-1} \circ \rho_n$. 
We show that $\tilde\rho(t_j)^{-1}=\id$,
\begin{comment}
For references, we write again
\begin{equation}
\label{eq:324324}
\tilde\rho(t_j)^{-1}=\id
\end{equation}
\end{comment}
which proves the remaining  claims (B) and (C). Indeed, we observe that $\tilde\rho$ and $\id_{\gamma_j}$ satisfy the assumptions of \cref{lem:hormor:g}, and since  $\tilde\rho(t_{j-1})=\id_{\gamma_j(t_{j-1})}$, we have $\tilde\rho=\id_{\gamma_j}$.
\begin{comment}
Now \cref{eq:324324} follows. 
\end{comment}
\end{proof}

Next we define the $\mor{\Gamma}$-action on $F_{\gamma}(t)$, which at the end constitutes the $\Gamma$-equivariance of the anafunctor $F_{\gamma}$.

\begin{lemma}
\label{lem:gammaaction}
The map $F_{\gamma}(t)\times \mor{\Gamma} \to F_{\gamma}(t)$ defined by 
\begin{equation*}
(\rho_n \ast \gamma_n \ast ... \ast \gamma_1\ast \rho_0) \cdot (h,g)
 := R(\rho_n,g) \ast R(\gamma_n,g) \ast ... \ast R(\gamma_1,g)\ast R(\rho_0,(h^{-1},t(h)g)) \end{equation*}
is a well-defined action and compatible with the left $\inf P_{x}$-action and the right $\inf P_{y}$-action in the sense of \cref{rem:anafunctor:a}.
\end{lemma}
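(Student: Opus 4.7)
The plan is to verify four claims in sequence: (i) that the right-hand side of the formula is a valid element of $F_\gamma(t)$; (ii) that the assignment descends to the equivalence relation $\sim$; (iii) that it satisfies the axioms of a right group action of $\mor{\Gamma}$; and (iv) that it is compatible with the left $\inf P_x$-action and the right $\inf P_y$-action in the sense of \cref{rem:anafunctor:a}.

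For (i), since $g\in G$ is constant the paths $R(\gamma_i,g)$ are horizontal by the object-horizontality lemma, and $\pi\circ R(\gamma_i,g)=\pi\circ\gamma_i=\gamma|_{[t_{i-1},t_i]}$ since $\pi\circ R=\pi\circ\pr_1$. The interior endpoint conditions $s(R(\rho_i,g))=R(\gamma_i,g)(t_i)$ and the analogous condition on $t$ hold by functoriality of source and target under $R$. The twist at the left end is arranged so that
\begin{equation*}
t(R(\rho_0,(h^{-1},t(h)g)))=R(t(\rho_0),t(h^{-1})t(h)g)=R(\gamma_1,g)(t_0)\text{,}
\end{equation*}
matching the left endpoint of $R(\gamma_1,g)$; moreover the new anchors lie in $\inf P_x$ and $\inf P_y$ by $\pi$-invariance of $R$. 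For (ii), if $\xi\sim_j\xi'$ via a horizontal path $\tilde\rho$, the plan is to show that $\xi\cdot(h,g)\sim_j\xi'\cdot(h,g)$ via $R(\tilde\rho,g)$, which is horizontal by the morphism-horizontality lemma. For $j>1$ this reduces to applying $R(-,g)$ to the defining identities of $\sim_j$ and invoking functoriality on morphisms of $\inf P$. The case $j=1$ requires factoring $(h^{-1},t(h)g)=\id_g\circ(h^{-1},t(h)g)$ vertically in $\mor{\Gamma}$, so that functoriality of $R$ yields
\begin{equation*}
R(\tilde\rho(t_0)\circ\rho_0,(h^{-1},t(h)g))=R(\tilde\rho(t_0),g)\circ R(\rho_0,(h^{-1},t(h)g))\text{,}
\end{equation*}
which is precisely the modified left-boundary identity required by $\sim_1$ with respect to $R(\tilde\rho,g)$.

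For (iii), the identity of $\mor{\Gamma}$ acts trivially because $R(-,\id_1)$ is the identity. For associativity, the middle and rightmost pieces combine via $R(R(-,g_1),g_2)=R(-,g_1g_2)$ from the right $G$-action property, and the leftmost piece reduces, by functoriality of $R$ in the second slot, to the identity
\begin{equation*}
(h_1^{-1},t(h_1)g_1)(h_2^{-1},t(h_2)g_2)=((h_1\alpha_{g_1}(h_2))^{-1},t(h_1\alpha_{g_1}(h_2))g_1g_2)
\end{equation*}
in $\mor{\Gamma}=H\ltimes_\alpha G$, which is a short computation using the Peiffer identity $\alpha_{t(h)}=\mathrm{Ad}_h$. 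Claim (iv) then follows by a similar pattern: splitting the groupoid-composition morphism in $\mor{\Gamma}$ into the appropriate vertical factors allows functoriality of $R$ in both slots to transport the right-hand action through the left and right groupoid actions, yielding the intertwining identity of \cref{rem:anafunctor:a}.

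The main obstacle is the asymmetric twist $(h^{-1},t(h)g)$ at the leftmost jump. It is inserted precisely so that the right anchor retains a plain $g$-shift while the left anchor absorbs the full $t(h)g$-shift dictated by the $\Gamma$-equivariance of the fibre action. Both well-definedness under $\sim_1$ and the right-action axiom therefore reduce to non-trivial identities in the semidirect product $H\ltimes_\alpha G$ that rely on the Peiffer relation of the crossed module, and it is in checking these that the proof requires care rather than routine bookkeeping.
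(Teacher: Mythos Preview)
Your parts (i)--(iii) are correct and essentially match the paper. The paper organizes well-definedness slightly differently, factoring $(h,g)=(h,1)\cdot(1,g)$ horizontally and checking the two factors separately, but your vertical splitting at $j=1$ achieves the same thing.

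The gap is in (iv). Compatibility with the left $\inf P_x$-action does hold on representatives by the functoriality argument you sketch, and the paper dismisses it in one line. Compatibility with the right $\inf P_y$-action, however, does \emph{not} hold on representatives, and no factoring in $\mor\Gamma$ repairs this. When $(h,g)\circ(h_r,g_r)$ is composable one has $g=t(h_r)g_r$; the representative of $(\xi\circ\rho)\cdot((h,g)\circ(h_r,g_r))$ then has every path piece shifted by $g_r$, while the representative of $(\xi\cdot(h,g))\circ R(\rho,(h_r,g_r))$ has them shifted by $g$. For $h_r\neq 1$ these are genuinely different elements of $F_\gamma(t)$, not reconcilable by functoriality of $R$.

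The paper proves the two representatives are $\sim$-equivalent by induction on the length $n$ of $\xi$: for $n=0$ the two sides are single morphisms and literally equal; for larger $n$ one applies $\sim_1$ with the constant horizontal path $\tilde\rho=R(\id_{\gamma_1},(h_r,g_r))$, which converts $R(\gamma_1,g_r)$ into $R(\gamma_1,g)$ and pushes the discrepancy one step to the right, reducing to an instance of length $n-1$. This inductive use of the equivalence relation is the substantive content of the lemma and occupies most of the paper's proof; your sketch of (iv) does not contain it.
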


\begin{proof}
The axioms of an action are straightforward to check on the level of representatives.
\begin{comment}
For the action:
\begin{align*}
&\mquad(\rho_n \ast \gamma_n \ast ... \ast \gamma_1\ast \rho_0) \cdot (h,g)\cdot (h',g')
 \\&= (R(\rho_n,g) \ast R(\gamma_n,g) \ast ... \ast R(\gamma_1,g)\ast R(\rho_0,(h^{-1},t(h)g)))\cdot (h',g')
 \\&= R(\rho_n,gg') \ast R(\gamma_n,gg') \ast ... \ast R(\gamma_1,gg')\ast R(\rho_0,(h^{-1},t(h)g)\cdot (h'^{-1},t(h')g'))
 \\&= R(\rho_n,gg') \ast R(\gamma_n,gg') \ast ... \ast R(\gamma_1,gg')\ast R(\rho_0,(h^{-1},t(h)g)\cdot (h'^{-1},t(h')g'))
 \\&= R(\rho_n,gg') \ast R(\gamma_n,gg') \ast ... \ast R(\gamma_1,gg')\ast R(\rho_0,(\alpha(g,h'^{-1})h^{-1},t(h\alpha(g,h'))gg'))\text{.}
\end{align*}
\end{comment} 
In order to check  well-definedness, we can then write $(h,g)=(h,1)\cdot (1,g)$ and check separately. For well-definedness with respect to elements $(1,g)\in \mor{\Gamma}$, it is straightforward to see that  if
\begin{equation*}
\rho_n \ast  ... \ast \gamma_1\ast \rho_0 \sim_j \rho_n' \ast  ... \ast \gamma_1'\ast \rho_0'
\end{equation*}
via $\tilde\rho$, then  
\begin{equation*}
(\rho_n \ast  ... \ast \gamma_1\ast \rho_0) \cdot (1,g) \sim_j (\rho_n' \ast  ... \ast \gamma_1'\ast \rho_0')\cdot (1,g)
\end{equation*}
via $R(\tilde\rho, g)$, which is horizontal by \cref{lem:hormor:c}.
\begin{comment}
Indeed, our assumptions mean that $\gamma_j=s(\tilde\rho)$ and 
\begin{equation*}
\forall 1\leq i <j:\gamma_i'=\gamma_i
\quomma 
\gamma_j'=t(\tilde\rho)
\quand
\forall j<i\leq n:\gamma_i'=\gamma_i
\end{equation*}
as well as
\begin{align*}
\forall0\leq i <j-1:&&\rho_i'&=\rho_i
\\
&&\rho_{j-1}'&=\tilde\rho(t_{j-1})\circ \rho_{j-1}
\\
&&\rho'_j&=\rho_j \circ  \tilde\rho(t_j)^{-1}
\\
\forall j<i\leq n:&&\rho_i'&=\rho_i
\end{align*}
We have
\begin{equation*}
s(R(\tilde\rho ,g)) = R(s(\tilde\rho), g)=R(\gamma_j,g)\text{.}
\end{equation*}
Applying the relation to $(\rho_n \ast  ... \ast \gamma_1\ast \rho_0) \cdot (1,g)$, we obtain 
and 
\begin{multline*}
\forall 1\leq i <j:\gamma_i''=R(\gamma_i,g)=R(\gamma_i',g)
\quomma 
\gamma_j''=t(R(\tilde\rho, g))=R(\gamma_j',g)
\\\quand
\forall j<i\leq n:\gamma_i'':=R(\gamma_i,g)=R(\gamma_i,g)=R(\gamma_i',g)\text{.}
\end{multline*}
as well as
\begin{align*}
\forall0\leq i <j-1:&&\rho_i''&:=R(\rho_i,g)=R(\rho_i',g)
\\
&&\rho_{j-1}''&:=R(\tilde\rho(t_{j-1}), g)\circ R(\rho_{j-1},g)=R(\tilde\rho(t_{j-1})\circ \rho_{j-1},g)=R(\rho_{j-1}',g)
\\
&&\rho''_j&:=R(\rho_j,g) \circ  R(\tilde\rho(t_j)^{-1},g)= R(\rho_j',g)
\\
\forall j<i\leq n:&&\rho_i''&:=R(\rho_i,g)=R(\rho_i',g)\text{.}
\end{align*}
\end{comment}
For well-definedness with respect to  elements of the form $(h,1)$, it suffices to consider $\sim_1$, which is easy. 
\begin{comment}
If $\rho_{0}'=\tilde\rho(t_{0})\circ \rho_{0}$ then 
\begin{equation*}
R(\rho'_0,(h^{-1},t(h)))=R(\tilde\rho(t_{0})\circ \rho_{0},(1,1)\circ (h^{-1},t(h)))=\tilde\rho(t_0) \circ R(\rho_0,(h^{-1},t(h)))\text{.}
\end{equation*}
\end{comment}
The compatibility with the anchors and the  left $\inf P_x$-action hold on the level of representatives and are  straightforward to check.
\begin{comment}
For the left anchor:
\begin{equation*}
\alpha_l(\xi \cdot (h,g)) =s(\xi\cdot (h,g))=s(R(\id_{s(\xi)},(h^{-1},t(h)g)))= R(s(\xi),t(h)g)=R(\alpha_l(\xi),t(h)g)\text{.}
\end{equation*}
and the right anchor:
\begin{equation*}
\alpha_r(\xi\cdot (h,g)) =t(\xi\cdot (h,g))=t(\xi \cdot g)=R(t(\xi),g)=R(\alpha_r(\xi),g)\text{.}
\end{equation*}
The condition for left composition  is
\begin{eqnarray*}
&&\hspace{-2cm}(\rho_l \circ \xi ) \cdot ((h_l,g_l)\circ (h,g) )
\\&=& ( \xi \ast \rho_l^{-1}) \cdot (h_lh,g)
\\&=&  (\xi \cdot g) \ast (\rho_l^{-1} \cdot g) \ast R(\id_{s( \xi \ast \rho_l^{-1})},(h^{-1}h_l^{-1},t(h_lh)g))
\\&=&  (\xi \cdot g) \ast( R(\rho_l^{-1}, (1,g)) \circ R(\id_{t( \rho_l)},(h^{-1}h_l^{-1},t(h_lh)g)))
\\&=&  (\xi \cdot g) \ast R(\rho_l^{-1}, (h^{-1}h_l^{-1},t(h_lh)g))
\\&=&    (\xi \cdot g) \ast (R(\rho_l^{-1},(h^{-1}h_l^{-1},t(h_l)t(h)g)) \\&=&    (\xi \cdot g) \ast (R(\id_{s(\xi)},(h^{-1},t(h)g))\circ R(\rho_l^{-1},(h_l^{-1},t(h_l)t(h)g))) \\&=&    (\xi \cdot g) \ast R(\id_{s(\xi)},(h^{-1},t(h)g))\ast R(\rho_l,(h_l,t(h)g))^{-1} \\&=& R(\rho_l,(h_l,g_l)) \circ (\xi\cdot (h,g))
\end{eqnarray*}
\end{comment}
Compatibility with the right $\inf P_y$-action, however, only holds on the level of equivalence classes: we claim that
\begin{equation}
\label{eq:actioncomp:left}
(\xi \circ \rho)\cdot ((h,g)\circ (h_r,g_r))= R(\rho^{-1},g_r) \ast R(\xi,g_r)\ast R(\id_{s(\xi)},(h_r^{-1}h^{-1},t(h)g))
\end{equation}
and
\begin{equation}
\label{eq:actioncomp:right}
(\xi\cdot (h,g))\circ R(\rho,(h_r,g_r))=R(\rho^{-1},(h_r^{-1},g))\ast R(\xi,g)\ast  R(\id_{s(\xi)},(h^{-1},t(h)g)) 
\end{equation}
are equivalent.
\begin{comment}

More explicitly, this is
\begin{align*}
&\mquad(\xi \circ \rho)\cdot ((h,g)\circ (h_r,g_r))
\\&= (\rho^{-1} \ast \xi)\cdot (hh_r,g_r) 
\\&= R(\rho^{-1},g_r) \ast R(\xi,g_r)\ast R(\id_{s(\xi)},(h_r^{-1}h^{-1},t(h)g))
\\&\sim R(\rho^{-1},(h_r^{-1},g))\ast R(\xi,g)\ast  R(\id_{s(\xi)},(h^{-1},t(h)g)) 
\\&= (R(\xi,g)\ast  R(\id_{s(\xi)},(h^{-1},t(h)g)))\circ R(\rho,(h_r,g_r))
\\&= (\xi\cdot (h,g))\circ R(\rho,(h_r,g_r))
\end{align*}
Here, the following conditions have to be assumed:
\begin{align*}
g&=t(h_r)g_r
\\
t(\xi)&=t(\rho)\text{.}
\end{align*}
\end{comment}
This relation will be proved by induction over the length of $\xi$. We start with the case $n=0$, i.e. $\xi =\rho_0$. Then, \cref{eq:actioncomp:left,eq:actioncomp:right} are, respectively,
\begin{align*}
\rho_0' &:= R(\rho^{-1},g_r) \circ R(\rho_0,g_r)\circ R(\id_{s(\rho_0)},(h_r^{-1}h^{-1},t(h)g))
\\
\rho_0'' &:=R(\rho^{-1},(h_r^{-1},g))\circ R(\rho_0,g)\circ  R(\id_{s(\rho_0)},(h^{-1},t(h)g))\text{.}
\end{align*}
Both expressions are in fact  equal; in particular, $\rho_0'\sim\rho_0''$.
\begin{comment}
We have
\begin{align*}
\rho_0'' &=R(\rho^{-1},(h_r^{-1},g))\circ R(\rho_0,(h^{-1},t(h)g))
\\&=R(\rho^{-1} \circ \rho_0,(h_r^{-1}h^{-1},t(h)g))
\\&=R(\rho^{-1},g_r) \circ R(\rho_0,(h_r^{-1}h^{-1},t(h)g))
\\&=\rho_0' \text{,}
\end{align*}
\end{comment}
Now let $n>1$ and  $\xi=\rho_n \ast ... \ast \rho_0$. Then, \cref{eq:actioncomp:left} is an element $\xi'$ consisting of
\begin{align*}
\rho_n' &:=R(\rho^{-1},g_r) \circ R(\rho_n,g_r)
&
\rho_i' &:=  R(\rho_i,g_r)\text{ for $1\leq i < n$}
\\
\rho_0' &:= R(\rho_0,g_r)\circ R(\id_{s(\xi)},(h_r^{-1}h^{-1},t(h)g))
&
\gamma_i' &:= R(\gamma_i,g_r)\text{ for $1\leq i \leq n$}\text{.}
\end{align*}
We use $\sim_1$ with  $\tilde\rho := R(\id_{\gamma_1},(h_r,g_r))$; this is horizontal by \cref{lem:hormor:h,lem:hormor:c}, and satisfies $s(\tilde\rho)=R(\gamma_1,g_r)=\gamma_1'$. Then, we obtain an equivalent representative $\xi''$ with $\xi' \sim_1 \xi''$, which consists of the components $\gamma_1'':=t(\tilde\rho)$,
\begin{align*}
\rho''_1 &:= \rho_1' \circ  \tilde\rho(t_1)^{-1}
= R(\rho_1,g_r) \circ  R(\id_{\gamma_1(t_1)},(h_r^{-1},g))\text{,}
\\
\rho_{0}''&:=\tilde\rho(0)\circ \rho'_{0} 
\\&=R(\id_{\gamma_1(0)},(h_r,g_r))\circ R(\rho_0,g_r)\circ R(\id_{s(\xi)},(h_r^{-1}h^{-1},t(h)g))
\\&= R(\rho_0,(h^{-1},t(h)g))
\\&= R(\rho_0,g)\circ R(\id_{s(\xi)},(h^{-1},t(h)g))\text{,}
\end{align*}
as well as $\gamma_i'':=\gamma_i'$ and $\rho_i'' := \rho_i'$ for all other indices $i$. 
\begin{comment}
More precisely, we have
\begin{equation*}
\gamma_1'':=t(\tilde\rho)=R(\gamma_1,g)
\quand
\forall 1<i\leq n:\gamma_i'':=\gamma_i'=R(\gamma_i,g_r)
\end{equation*}
as well as
\begin{align*}
&&\rho_{0}''&:=\tilde\rho(0)\circ \rho'_{0} \\&&&=R(\id_{\gamma_1(0)},(h_r,g_r))\circ R(\rho_0,g_r)\circ R(\id_{s(\xi)},(h_r^{-1}h^{-1},t(h)g))
\\&&&= R(\rho_0,(h_r,g_r)\circ (h_r^{-1}h^{-1},t(h)g))
\\&&&= R(\rho_0,(h^{-1},t(h)g))
\\&&&= R(\rho_0,g)\circ R(\id_{s(\xi)},(h^{-1},t(h)g))
\\
&&\rho''_1&:=\rho_1' \circ  \tilde\rho(t_1)^{-1}
\\&&&= R(\rho_1,g_r) \circ  R(\id_{\gamma_1(t_1)},(h_r^{-1},g))
\\
\forall j<i\leq n-1:&&\rho_i''&:=\rho_i'=R(\rho_i,g_r)
\\&&\rho_n'' &:= \rho_n'
\\&&&= R(\rho^{-1},g_r) \circ R(\rho_n,g_r )
\end{align*}
\end{comment}
We define $\xi^{-} := \rho_n \ast ... \ast \rho_1 $, $g_r^{-} := g_r$, $\rho^{-} := \rho$, $h_r^{-}:= h_r$ and $h^{-} := 1$, and $g^{-}:=g$. 
\begin{comment}
Then we have 
\begin{align*}
t(h_r^{-})g_r^{-}&=t(h_r)g_r=g=g^{-}
\\
t(\xi^{-})&=t(\rho_n'')=...=t(\rho)=t(\rho^{-})
\end{align*}
We also have
\begin{equation*}
 h_r^{-1} =(h_r^{-})^{-1}(h^{-})^{-1}
\quand
g = t(h^{-})g^{-}
\end{equation*}
\end{comment}
With this new notation we can rewrite $\xi''$ as
\begin{align*}
\xi'' &=\rho_n''\ast \gamma_n'' \ast ... \ast \gamma_2'' \ast \rho_1'' \ast \gamma_1''\ast \rho_0''
\\&=R(\rho^{-1},g_r) \ast R(\rho_n \ast ... \ast \rho_1,g_r) \ast  R(\id_{\gamma_1(t_1)},(h_r^{-1},g)) \ast R(\gamma_1,g)\ast R(\rho_0,g)\circ R(\id_{s(\xi)},(h^{-1},t(h)g))
\\&= R((\rho^{-})^{-1},g_r^{-}) \ast R(\xi^{-},g_r^{-}) \ast  R(\id_{s(\xi^{-})},((h_r^{-})^{-1}(h^{-})^{-1},t(h^{-})g^{-}))\\&\qquad\ast R(\gamma_1,g)\ast R(\rho_0,g)\circ R(\id_{s(\xi)},(h^{-1},t(h)g))\text{.}
\end{align*}
Now the first line of the last result is precisely in the form where we can apply the induction hypothesis; thus, we have
\begin{align*}
\xi'' &\sim R((\rho^{-})^{-1},((h_r^{-})^{-1},g^{-}))\ast R(\xi^{-},g^{-})\ast  R(\id_{s(\xi^{-})},((h^{-})^{-1},t(h^{-})g^{-})) \\&\qquad\ast R(\gamma_1,g)\ast R(\rho_0,g)\circ R(\id_{s(\xi)},(h^{-1},t(h)g))
\\ &\sim R( \rho^{-1},(h_r^{-1},g))\ast R(\rho_n \ast ... \ast \rho_1,g)
\\&\qquad\ast  R(\id_{\gamma_1(1)},(1,g)) \ast R(\gamma_1,g)\ast R(\rho_0,g)\circ R(\id_{s(\xi)},(h^{-1},t(h)g))
\\ &\sim R( \rho^{-1},(h_r^{-1},g))\ast R(\xi,g) \ast R(\id_{s(\xi)},(h^{-1},t(h)g))\text{.}
\end{align*}
This is \cref{eq:actioncomp:right}.
\end{proof}

\subsubsection{Step 3: Direct limit}

So far we have worked relative to the fixed subdivision $t\in T_n$ of the interval; the next step is to get rid of this parameter.  
The set $T := \bigsqcup_{n\in \N} T_n$ is a directed set, where $t \leq t'$ if $\{t_i\} \subset \{t_i'\}$ as subsets of $\R$. For $t\leq t'$ we have a map
\begin{equation*}
f_{t,t'}: F_{\gamma}(t) \to F_{\gamma}(t')
\end{equation*}
defined by adding identities $\rho_i=\id$ and splitting  $\gamma_i$ in two parts, at all additional points. These maps give $\{F_{\gamma}(t)\}_{t\in T}$  the structure of  a direct system of sets. Its direct limit is denoted by $F_{\gamma}$. It is straightforward to see that the anchors $\alpha_r$ and $\alpha_l$, the actions of \cref{lem:laction,lem:action}, and the $\mor\Gamma$-action of \cref{lem:gammaaction}  descent to $F_{\gamma}$.

\subsubsection{Step 4: Smooth structure}

Next we equip $F_{\gamma}$ with the structure of a smooth manifold. 
Consider a point $p_0\in \inf P_x$ and choose an element $\xi_0\in F_{\gamma}$ with $\alpha_l(\xi_0)=p_0$. Such an element $\xi_0$ exists: choose $t\in T$ such that there exist sections $\sigma_i: U_i \to \ob{\inf P}$ defined on open sets $U_i$ with $\gamma([t_{i-1},t_i]) \subset U_i$. By \reftransitionspan\ there exist $\rho\in \mor{\inf P}$ and $g\in G$ such that $t(\rho) = \sigma_1(x)$ and $R(s(\rho),g^{-1})=p_0$. We set $\rho_0 := R(\rho,g^{-1})$ and $\gamma_1' := R(\sigma_1(\gamma|_{[t_0,t_1]}),g^{-1})$. \begin{comment}
We have $s(\rho_0)=R(s(\rho),g^{-1})=p_0$ and $t(\rho_0)=R(t(\rho),g^{-1})=R(\sigma_1(x),g^{-1})=\gamma_1'(t_0)$.
\end{comment}
By \cref{lem:obhorexists} there exists $\gamma_1$ such that $\pi(\gamma_1)=\pi(\gamma_1')$ and $\gamma_1(t_0)=\gamma_1'(t_0)$. 
We set $p_1 := \gamma_1(t_1)$ and proceed in the same way until $i=n$. We end up with an element $\xi_0 = \rho_n \ast ... \ast \gamma_1 \ast \rho_0$ with $\alpha_l(\xi_0)=s(\rho_0)=p_0$.

Given $\xi_0$ we construct an open neighborhood $U$ of $p_0$ with a local section against $\alpha_l$.
Choose an open neighborhood $\tilde U \subset \ob{\inf P} \times_M \ob{\inf P}$ of $(p_0,p_0)$ together with a transition span $\rho$ and a transition function $g$ (\reftransitionspan). 
\begin{comment}
That is, $t(\rho(p_0,p))=p_0$ and $R(s(\rho(p_0,p)),g(p_0,p)^{-1})=p$.
\end{comment}
We consider  the smooth map $i_{p_0}: \ob{\inf P_x} \to \ob{\inf P} \times_M \ob{\inf P}$ with $i_{p_0}(p):=(p_0,p)$, and define  $U := i_{p_0}^{-1}(\tilde U) \subset \ob{\inf P_x}$. We define a map $\sigma_{\xi_0,\rho,g}: U \to F_{\gamma}$ by setting
\begin{equation*}
\sigma_{\xi_0,\rho,g}(p):= (\rho(p_0,p)^{-1}\circ \xi_0)\cdot (1,g(p_0,p)^{-1})\text{.}
\end{equation*}
\begin{comment}
Explicitly, for $\xi_0 = \rho_n \ast ... \ast \gamma_1 \ast \rho_0$ this is an element $\sigma(p)=\rho_n' \ast ... \ast \gamma_1' \ast \rho_0'$  with $\rho_0' = R(\rho_0 \circ \rho(p_0,p),g(p_0,p)^{-1})$ and $\rho_i':=R(\rho_i,g(p_0,p)^{-1})$ and $\gamma_i':=R(\gamma_i,g(p_0,p)^{-1})$ for $1\leq i \leq n$. 
\end{comment}
By \cref{lem:laction,lem:gammaaction} we have $\alpha_l(\sigma_{\xi_0,\rho,g}(p))=p$, i.e. $\sigma_{\xi_0, \rho,g}$ is a section against $\alpha_l$. It determines a map 
\begin{equation*}
\phi_{\xi_0,\rho,g}:U \ttimes{\alpha_r\circ \sigma_{\xi_0,\rho,g}}{t} \mor{\inf P_y} \to \alpha_l^{-1}(U):(p,\tilde \rho)\mapsto \sigma_{\xi_0,\rho,g}(p) \circ \tilde \rho\text{,}
\end{equation*}
which is a bijection due to \cref{lem:action}.

\begin{lemma}
\label{lem:smoothstructure}
There exists a unique smooth manifold structure on $F_{\gamma}$  such  that  all bijections $\phi_{\xi_0,\rho,g}$ are smooth.
\end{lemma}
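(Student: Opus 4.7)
The plan is to use the family $\{\phi_{\xi_0,\rho,g}\}$ as a smooth atlas on $F_\gamma$, declaring each $\phi_{\xi_0,\rho,g}$ to be a diffeomorphism onto an open subset. Each codomain $U \ttimes{\alpha_r\circ \sigma_{\xi_0,\rho,g}}{t} \mor{\inf P_y}$ is a smooth manifold, because $\alpha_r\circ \sigma_{\xi_0,\rho,g}$ is a smooth map into $\ob{\inf P_y}$ (using smoothness of the transition span $\rho$ and transition function $g$) and $t$ is a submersion.

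For coverage, given $\xi \in F_\gamma$ I set $p_0 := \alpha_l(\xi)$, take $\xi_0 := \xi$, and choose $\tilde U$, $\rho$, $g$ as in the construction. Since $\alpha_l(\sigma_{\xi_0,\rho,g}(p_0)) = p_0 = \alpha_l(\xi_0)$, the uniqueness part of \cref{lem:action} supplies a unique $\tilde\rho \in \mor{\inf P_y}$ with $\sigma_{\xi_0,\rho,g}(p_0) \circ \tilde\rho = \xi_0$, so $\xi = \phi_{\xi_0,\rho,g}(p_0,\tilde\rho)$ lies in the image of this chart.

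The core step is to show that for two charts $\phi_1 := \phi_{\xi_0,\rho,g}$ and $\phi_2 := \phi_{\xi_0',\rho',g'}$ with overlapping images, the transition $\phi_2^{-1}\circ \phi_1$ is smooth. Because both $\phi$-maps preserve $\alpha_l$ and are right $\mor{\inf P_y}$-equivariant (\cref{lem:laction,lem:action}), the transition must have the form $(p,\tilde\rho) \mapsto (p,\eta(p)\circ \tilde\rho)$, where $\eta(p) \in \mor{\inf P_y}$ is the unique morphism with $\sigma_2(p)\circ \eta(p) = \sigma_1(p)$ granted again by \cref{lem:action}. Smoothness of the transition thus reduces to smoothness of $p \mapsto \eta(p)$. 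To establish this, I would unfold the explicit representatives of $\sigma_1(p)$ and $\sigma_2(p)$, invoke \cref{lem:hormor:f} on each piece of the formal composition to produce a horizontal path linking the two tuples of horizontal lifts with smoothly varying initial datum $p$, and then read off $\eta(p)$ from its boundary values together with the smooth transition spans $\rho,\rho',g,g'$. The required smooth dependence of these horizontal paths on the parameter $p$ follows from smooth dependence of solutions of linear ODEs on initial conditions and parameters, applied to the ODEs underlying \cref{lem:obhorexists,lem:makemorhor}.

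Uniqueness of the smooth structure is then automatic, since the charts cover $F_\gamma$ and any two smooth structures making each $\phi_{\xi_0,\rho,g}$ a diffeomorphism must coincide chart by chart. The main obstacle will be the smoothness of $\eta$: although each individual horizontal path depends smoothly on its data by standard ODE theory, one has to track representatives of $\sigma_1(p)$ and $\sigma_2(p)$ through possibly several applications of the relations $\sim_j$, and verify that the assembly of $\eta(p)$ from these pieces commutes with smooth parameter dependence.
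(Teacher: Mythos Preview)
Your overall strategy matches the paper's: treat the $\phi_{\xi_0,\rho,g}$ as charts and verify that the transitions are smooth, which reduces to the smoothness of $p\mapsto\eta(p)$. The divergence is entirely in how that last step is carried out.

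You propose to replay the existence argument of \cref{lem:action}: for each piece of the formal composition, produce via \cref{lem:hormor:f} a horizontal path linking the two representatives, with smooth dependence on $p$ coming from ODE theory applied to \cref{lem:obhorexists,lem:makemorhor}. This can presumably be made to work, but you correctly flag the obstacle: one must check that transition spans along $p$-families of paths can be chosen smoothly, that the $n$-fold iteration in \cref{lem:action} preserves smooth dependence, and that passing to a common refinement of the subdivisions of $\xi_0$ and $\xi_0'$ causes no trouble.

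The paper avoids all of this. It fixes a base point $q\in W$ and a single element $\tilde\rho_q$ with $\sigma(q)=\sigma'(q)\circ\tilde\rho_q$. The key observation is that $\sigma(p)$ and $\sigma(q)$ are built from the \emph{same} fixed $\xi_0$ via the left $\inf P_x$-action and the $\Gamma$-action, with the only $p$-dependence entering through the smooth transition span $\rho(p_0,p)$ and function $g(p_0,p)$. Repeated use of \refactionfullyfaithful\ then produces smooth $H$-valued maps $h,h',\eta$ comparing $\rho(p_0,p)$ to $\rho(p_0,q)$, $\rho'(p_0',p)$ to $\rho'(p_0',q)$, and $\rho(q,p)$ to $\rho'(q,p)$; combining these with the already-established equivariance of the actions (\cref{lem:laction,lem:action,lem:gammaaction}) yields an explicit closed-form expression for $\eta(p)$ as a smooth function of $p$. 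No horizontal paths are constructed, no ODE parameter-dependence is invoked, and the equivalence relation on $F_\gamma$ is never unwound: it enters only once, in the bare existence of $\tilde\rho_q$. The payoff of the paper's route is that smoothness is manifest from the formula; the cost of yours is the careful bookkeeping you anticipate.
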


\begin{proof}
The bijections $\phi_{\xi_0,\rho,g}$ induce a topology on $F_{\gamma}$, which is  Hausdorff and second countable. It remains to prove that the \quot{transition functions} are smooth.

We write $\sigma := \sigma_{\xi_0,\rho,g}$ and 
consider another section $\sigma':=\sigma_{\xi_0',\rho',g'}: U' \to F_{\gamma}$ constructed in the same way around $p_0' = \alpha_l(\xi_0')$, such that   $W:=U \cap U'$ is non-empty. 
The transition function is the unique map $\tilde \rho: W \to \mor{\inf P}$ with 
\begin{equation}
\label{eq:rhox}
\sigma_{\xi_0',\rho',g'}(p)=\sigma_{\xi_0,\rho,g}(p) \circ \tilde \rho(p)
\end{equation}
for all $p\in W$.
\begin{comment}
Existence: suppose $[\xi_1],[\xi_2]\in F^{p}_{\gamma}$, with $\xi_1\in F_{\gamma}(t_1)$ and $\xi_2 \in F_{\gamma}(t_2)$ and $\alpha_l(\xi_1)=\alpha_l(\xi_2)$. Choose $t$ such that $t_1\leq t$ and $t_2\leq t$. By \cref{lem:action} there exists $\rho$ such that $f_{t_1,t}(\xi_1)=f_{t_2,t}(\xi_2) \circ \rho$. Thus, $[\xi_1]=[\xi_2]\circ \rho$. 

Uniqueness: suppose $[\xi]\in F_{\gamma}$, with $\xi\in F^{p}_{\gamma}(t)$, and suppose $[\xi] \circ \rho= [\xi]$. Thus, there exists $t'$ such that $f_{t,t'}(\xi) \circ \rho= f_{t,t'}(\xi)$ in $F^{p}_{\gamma}(t')$. By \cref{lem:action} we have $\rho=\id$. 
\end{comment}
In order to show  that $\tilde \rho$ is smooth, we compute it explicitly. We fix $q\in W$. By \cref{lem:action} there exists a unique $\tilde\rho_q\in \mor{\inf P}$ such that $\sigma(q)=\sigma'(q)\circ \tilde \rho_q$.  With \refactionfullyfaithful\  there exists a smooth map $h:W \to H$ such that
\begin{comment}
Indeed, consider the smooth maps
\begin{align*}
\rho_1&: W \to \mor{\inf P}: p \mapsto \rho(p_0,p)
\\
\rho_2&: W \to \mor{\inf P}: p \mapsto \rho(p_0,q)\circ R(\rho(q,p),g(p_0,q))
\\
g_1&: W \to G: p \mapsto g(p_0,p)^{-1}g(q,p)g(p_0,q)
\end{align*}
and $g_2 := 1$. By \cite[Lemma 3.1.4]{Waldorf2016} there exists a smooth map $h:W \to H$ such that  $R(\rho_1,(h,g_1))=\rho_2$ and $t(h)g_1=1$
\end{comment}
\begin{align}
\label{eq:smooth:1}
R(\rho(p_0,p),(h(p),g(p_0,p)^{-1}g(q,p)g(p_0,q)))&=\rho(p_0,q)\circ R(\rho(q,p),g(p_0,q))
\\
\label{eq:smooth:2}
t(h(p))g(p_0,p)^{-1}g(q,p)g(p_0,q)&=1
\end{align}
for all $p\in W$.
The definition of $\sigma$ and \cref{eq:smooth:1} imply that
\begin{equation}
\label{eq:smooth:5}
\sigma(p) =(R(\rho(q,p)^{-1},(\alpha(g(p_0,q),h(p)),1)) \circ \sigma'(q)\circ \tilde \rho_q )\cdot (1,g(p_0,q)g(p_0,p)^{-1})\text{.}
\end{equation}
\begin{comment}
Indeed, we have
\begin{align*}
\sigma(p) &= (\rho(p_0,p)^{-1}\circ \xi_0)\cdot (1,g(p_0,p)^{-1})  
\\&=(R(\rho(q,p)^{-1},(\alpha(g(p_0,q),h(p)),g(p_0,q))) \circ \rho(p_0,q)^{-1}\circ \xi_0)\cdot (1,g(p_0,p)^{-1})
\\&=(R(\rho(q,p)^{-1},(\alpha(g(p_0,q),h(p)),g(p_0,q))) \circ \rho(p_0,q)^{-1}\circ \xi_0)\cdot (1,g(p_0,q)^{-1}) \cdot (1,g(p_0,q)g(p_0,p)^{-1})
\\&=(R(\rho(q,p)^{-1},(\alpha(g(p_0,q),h(p)),g(p_0,q))\cdot (1,g(p_0,q)^{-1})) \circ \sigma(q) )\cdot (1,g(p_0,q)g(p_0,p)^{-1})
\\&=(R(\rho(q,p)^{-1},(\alpha(g(p_0,q),h(p)),1)) \circ \sigma(q) )\cdot (1,g(p_0,q)g(p_0,p)^{-1})
\end{align*}
Here we have used that \cref{eq:smooth:1} implies
\begin{align*}
\rho(p_0,p)&=R(\rho(p_0,q)\circ R(\rho(q,p),g(p_0,q)),(\alpha(g_1^{-1},h(p)^{-1}),g_1^{-1}))
\\
&=R(\rho(p_0,q)\circ R(\rho(q,p),g(p_0,q)),(h(p)^{-1},t(h(p))))\text{.}
\end{align*}
and thus
\begin{align*}
\rho(p_0,p)^{-1}&=R(R(\rho(q,p)^{-1},g(p_0,q)) \circ \rho(p_0,q)^{-1},(h(p),1))
\\&=R(R(\rho(q,p)^{-1},(1,g(p_0,q))) \circ \rho(p_0,q)^{-1},(h(p),1)\circ (1,1))
\\&=R(\rho(q,p)^{-1},(1,g(p_0,q))\cdot (h(p),1)) \circ \rho(p_0,q)^{-1}
\\&=R(\rho(q,p)^{-1},(\alpha(g(p_0,q),h(p)),g(p_0,q))) \circ \rho(p_0,q)^{-1}
\end{align*}

\end{comment}
Analogously, for the primed quantities, there exists  a smooth map $h': W \to H$ with
\begin{comment}
\begin{equation}
\label{eq:smooth:4}
t(h'(p))g'(p_0',p)^{-1}g'(q,p)g'(p_0',q)=1
\end{equation}
and
\end{comment}
\begin{equation}
\label{eq:smooth:6}
\sigma'(p)=(R(\rho'(q,p)^{-1},(\alpha(g'(p_0',q),h'(p)),1)) \circ \sigma'(q) )\cdot (1,g'(p_0',q)g'(p_0',p)^{-1})\text{.}
\end{equation}
\begin{comment}
\begin{align*}
R(\rho'(p_0',p),(h'(p),g'(p_0',p)^{-1}g'(q,p)g'(p_0',q)))&=\rho'(p_0',q)\circ R(\rho'(q,p),g'(p_0',q))
\\
t(h'(p))g'(p_0',p)^{-1}g'(q,p)g'(p_0',q)&=1\text{.}
\end{align*}
\end{comment}
Again by \refactionfullyfaithful\ there exists another smooth map $\eta:W \to H$ such that  
\begin{comment}
Next we set $\rho_1(p) := \rho(q,p)$, $\rho_2(p) := \rho'(q,p)$, as well as $g_2 := 1$ and $g_1(p) := g(q,p)^{-1}g'(q,p)$. By \cite[Lemma 3.1.4]{Waldorf2016} there exists a smooth map $\eta:W \to H$ such that  $R(\rho_1,(\eta,g_1))=\rho_2$ and $t(\eta)g_1=1$.
\end{comment}
\begin{equation*}
R(\rho(q,p),(\eta(p),g(q,p)^{-1}g'(q,p)))=\rho'(q,p)\text{.}
\end{equation*}
\begin{comment}
and
\begin{equation*}
t(\eta(p))g(q,p)^{-1}g'(q,p)=1\text{.}
\end{equation*}
\end{comment}
Solving for $\rho(q,p)^{-1}$
\begin{comment}
which gives
\begin{align*}
\rho(q,p)^{-1} &= R(\rho'(q,p),(\eta(p)^{-1},t(\eta)))^{-1} =R(\rho'(q,p)^{-1},(\eta(p),1))\text{.}
\end{align*}
\end{comment}
and substituting in \cref{eq:smooth:5} gives
\begin{align*}
\sigma(p) 
= (R(\rho'(q,p)^{-1},(\eta(p)\alpha(g(p_0,q),h(p)),1)) \circ \sigma'(q)\circ \tilde \rho_q  )\cdot (1,g(p_0,q)g(p_0,p)^{-1})\text{.}
\end{align*}
Forcing \cref{eq:smooth:6} into this expression yields
\begin{equation*}
\sigma(p)=(\sigma'(p)\circ R(\tilde \rho_q ,g'(p_0',q)g'(p_0',p)^{-1}))\cdot (h(p),t(h(p))^{-1})\text{,}
\end{equation*}
where $h (p):=\alpha(g'(p_0',p)g'(p_0',q)^{-1},\alpha(g'(p_0',q),h'(p)^{-1})\eta(p)\alpha(g(p_0,q),h(p)))$.
\begin{comment}
We obtain
\begin{align*}
\sigma(p)&= (R(\rho'(q,p)^{-1},(\alpha(g'(p_0',q),h'(p)),1)\cdot (\alpha(g'(p_0',q),h'(p)^{-1})\eta(p)\alpha(g(p_0,q),h(p)),1))\\&\qquad \circ \sigma'(q)\circ \tilde \rho_q )\cdot (1,g(p_0,q)g(p_0,p)^{-1})  
\\&= (R(\rho'(q,p)^{-1},(\alpha(g'(p_0',q),h'(p)),1)) \circ \sigma'(q)\circ \tilde \rho_q )
\\&\qquad\cdot (\alpha(g'(p_0',q),h'(p)^{-1})\eta(p)\alpha(g(p_0,q),h(p)),g(p_0,q)g(p_0,p)^{-1}) \\&= (R(\rho'(q,p)^{-1},(\alpha(g'(p_0',q),h'(p)),1)) \circ \sigma'(q)\circ \tilde \rho_q )
 \cdot (1,g'(p_0',q)g'(p_0',p)^{-1}) \\&\qquad \cdot (1,g'(p_0',p)g'(p_0',q)^{-1}) \cdot (\alpha(g'(p_0',q),h'(p)^{-1})\eta(p)\alpha(g(p_0,q),h(p)),g(p_0,q)g(p_0,p)^{-1}) \\&= (\sigma'(p)\circ R(\tilde \rho_q ,g'(p_0',q)g'(p_0',p)^{-1})) \\&\qquad \cdot (\alpha(g'(p_0',p)g'(p_0',q)^{-1},\alpha(g'(p_0',q),h'(p)^{-1})\eta(p)\alpha(g(p_0,q),h(p))),g'(p_0',p)g'(p_0',q)^{-1}g(p_0,q)g(p_0,p)^{-1})
\end{align*} 
We claim that the target of the acting element is $1\in G$:
\begin{align*}
&\mquad t(\alpha(g'(p_0',p)g'(p_0',q)^{-1},\alpha(g'(p_0',q),h'(p)^{-1})\eta(p)\alpha(g(p_0,q),h(p))))
\\&= g'(p_0',p)g'(p_0',q)^{-1}t(\alpha(g'(p_0',q),h'(p)^{-1})\eta(p)\alpha(g(p_0,q),h(p)))g'(p_0',q)g'(p_0',p)^{-1} \\&= g'(p_0',p)t(h'(p)^{-1})g'(p_0',q)^{-1}t(\eta(p))g(p_0,q)t(h(p))g(p_0,q)^{-1}g'(p_0',q)g'(p_0',p)^{-1} \\&= g(p_0,p)g(p_0,q)^{-1}g'(p_0',q)g'(p_0',p)^{-1}  
\end{align*} 
\end{comment}
Using the compatibility of the $\Gamma$-action with the right $\inf P_y$-action, we can write
\begin{align*}
\sigma(p)&= \sigma'(p)\circ (R(\tilde \rho_q ,(1,g'(p_0',q)g'(p_0',p)^{-1}) \cdot (h(p),t(h(p))^{-1}))\text{.}
\end{align*}
\begin{comment}
We obtain
\begin{equation*}
\tilde \rho(p):=R(\tilde \rho_q ,(\alpha(g'(p_0',q),h'(p)^{-1})\eta(p)\alpha(g(p_0,q),h(p)),g'(p_0',p)g'(p_0',q)^{-1}g(p_0,q)g(p_0,p)^{-1}))
\end{equation*}
which depends smoothly on $p$.
\end{comment}
This is an explicit expression for the transition function $\tilde\rho$, and it  depends smoothly on $p$. 
\end{proof}

\begin{proposition}
\label{prop:smoothpt}
The smooth manifold 
$F_{\gamma}$  together with the anchor maps $\alpha_l$ and $\alpha_r$, the actions of  \cref{lem:laction,lem:action}, and the $\mor{\Gamma}$-action of \cref{lem:gammaaction}, define a $\Gamma$-equivariant anafunctor $F_{\gamma}: \inf P_{x} \to \inf P_{y}$,
\end{proposition}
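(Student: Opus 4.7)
The plan is to verify, via the local trivializations $\phi_{\xi_0,\rho,g}$ introduced in \cref{lem:smoothstructure}, the remaining data that make $F_\gamma$ into a $\Gamma$-equivariant anafunctor: smoothness of the anchors, the submersion/surjectivity property of $\alpha_l$, smoothness of both groupoid actions, and smoothness of the $\mor\Gamma$-action. The algebraic axioms (action properties, commutativity of the left and right actions, the $\inf P_x$-action being principal in the sense of an anafunctor, and the compatibility relation of \cref{rem:anafunctor:a}) have already been dealt with in \cref{lem:laction,lem:action,lem:gammaaction}, so it only remains to promote these to the smooth setting.

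First I would observe that for a chart $\phi_{\xi_0,\rho,g}\maps U \ttimes{}{} \mor{\inf P_y} \to \alpha_l^{-1}(U)$ the composition $\alpha_l \circ \phi_{\xi_0,\rho,g}$ is the first projection and $\alpha_r\circ \phi_{\xi_0,\rho,g}$ is $(p,\tilde\rho) \mapsto s(\tilde\rho)$; both are smooth, so $\alpha_l$ and $\alpha_r$ are smooth. Since locally $\alpha_l$ looks like a projection from a fibre product, it is a surjective submersion, and the existence/uniqueness part of \cref{lem:action} shows that the right $\inf P_y$-action is free and transitive on the fibres of $\alpha_l$; in the chart this action is just $(p,\tilde\rho)\circ \rho'=(p,\tilde\rho\circ \rho')$, i.e.\ composition in the Lie groupoid $\inf P_y$, so it is smooth.

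Next I would treat the left $\inf P_x$-action and the $\mor\Gamma$-action. For the left action of $\rho\in \mor{\inf P_x}$ on $\xi\in \alpha_l^{-1}(U)$, the result lies in $\alpha_l^{-1}(t(\rho))$, so by covering $\mor{\inf P_x}$ by the open neighbourhoods constructed in \cref{lem:smoothstructure} and using the explicit smooth transition functions derived in the proof of \cref{lem:smoothstructure}, it is enough to express $\rho\circ \sigma_{\xi_0,\rho_t,g_t}(p)$ in another chart $\sigma_{\xi_0',\rho',g'}$ around $t(\rho)$. By the uniqueness clause of \cref{lem:action} the resulting \quot{transition morphism} $\tilde\rho(p)\in \mor{\inf P_y}$ is given by the same closed-form expression that appeared in the proof of \cref{lem:smoothstructure}, with an additional $\rho$-dependent factor supplied by \refactionfullyfaithful; hence it depends smoothly on $(p,\rho)$. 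The $\mor\Gamma$-action is handled analogously: using \cref{lem:gammaaction} one can write $\sigma_{\xi_0,\rho_t,g_t}(p)\cdot (h,g)$ as $\sigma_{\xi_0\cdot (h,g),\ldots}(R(p,t(h)g)) \circ \tilde\rho(p,h,g)$ for a transition morphism $\tilde\rho$ that is smooth in $(p,h,g)$ by the same argument.

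The main obstacle is tracking the bookkeeping of the left action and the $\mor\Gamma$-action through the equivalence relation: a priori both actions are defined only on representatives, and although they descend to $F_\gamma$ by \cref{lem:laction,lem:gammaaction}, their expression in the local charts requires applying \refactionfullyfaithful\ once more to produce the smooth parameters that relate one chart to the next, exactly as in the proof of \cref{lem:smoothstructure}. Once these smooth parametrizations are in hand, all smoothness claims reduce to smoothness of compositions and inversions in the Lie groupoids $\inf P$, $\inf P_x$, $\inf P_y$ and the Lie 2-group $\Gamma$, and the remaining anafunctor axioms follow automatically from the already-established set-theoretic statements.
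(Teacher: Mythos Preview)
Your proposal is correct and follows essentially the same approach as the paper: both verify smoothness of the anchors and the right $\inf P_y$-action directly in a chart $\phi_{\xi_0,\rho,g}$ (where they become the first projection, $s\circ\pr_2$, and composition in $\inf P_y$ respectively), and both handle the left $\inf P_x$-action and the $\mor\Gamma$-action by constructing a second chart $\phi_{\xi_0',\rho',g'}$ around the image point (with $\xi_0'=\rho_0\circ\xi_0$, respectively $\xi_0'=\xi_0\cdot(h,g)$) and invoking \refactionfullyfaithful\ to produce the smooth parameter that expresses the action in these charts. The paper carries out the bookkeeping you allude to in full, writing down explicit formulas for the chart expressions of both actions, but the strategy is identical to yours.
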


\begin{proof}
In a chart $\phi$, we have  $\alpha_l \circ \phi = \pr_1$ and $\alpha_r \circ \phi=s \circ \pr_2$ hence $\alpha_l$ and $\alpha_r$ are smooth and submersions. The right $\inf P_y$-action is
in a chart
$\phi(p,\tilde \rho)\circ \rho' =\sigma(p)\circ\tilde  \rho\circ \rho'=\phi(p,\tilde \rho\circ \rho')$,
hence it is smooth. We consider the smooth bijection $F_{\gamma} \ttimes{\alpha_r}t \mor{\inf P_y} \to F_{\gamma} \ttimes{\alpha_l}{\alpha_l} F_{\gamma}$; that its inverse is smooth is -- in charts -- precisely the smoothness of the transition function $\tilde\rho$ of \cref{lem:smoothstructure}. Thus, $F_{\gamma}$ is a principal $\inf P_y$-bundle over $\inf P_x$.

For the left $\inf P_x$-action, consider a section $\sigma_{\xi_0,\rho,g}$ defined in an open neighborhood $U$ around $p_0$, and a morphism $\rho_0\in \mor{\inf P_x}$ such that $s(\rho_0) = p_0$. 
We set $p_0' := t(\rho_0)$ and $\xi_0' := \rho_0 \circ \xi_0$. Choose a transition span $\rho':\tilde U' \to \mor{\inf P_x}$ with transition function $g'$ defined in an open neighborhood $\tilde U' \subset \ob{\inf P_x} \times_M \ob{\inf P_x}$ of $(p_0,p_0)$. 
\begin{comment}
That is, $t(\rho'(p_0',p'))=p_0'$ and $R(s(\rho'(p_0',p')),g'(p_0',p')^{-1})=p'$.
\end{comment}
This makes up another section $\sigma_{\xi_0',\rho',g'}$ defined in an open neighborhood $U'$ of $p_0'$.
Let $V := s^{-1}(U) \cap t^{-1}(U') \subset \mor{\inf P_x}$. Using \refactionfullyfaithful\  there exists a unique smooth map $h:V \to H$ such that
\begin{align}
\label{eq:smoothness:h1}
R(\rho'(p_0',t(\eta))^{-1} \circ \rho_0,(h(\eta),g(p_0,s(\eta))^{-1}))&= \eta \circ  R(\rho(p_0,s(\eta))^{-1},(1,g(p_0,s(\eta))^{-1}))
\\
\label{eq:smoothness:h2}
t(h(\eta))g(p_0,s(\eta))^{-1}&=g'(p_0',t(\eta))^{-1}\text{,}
\end{align}
for all $\eta\in V$.
\begin{comment}
Indeed, for  $\eta\in V$ we define
\begin{equation*}
\rho_1 :=\rho'(p_0',t(\eta))^{-1} \circ \rho_0 
\quand
\rho_2:= \eta \circ  R(\rho(p_0,s(\eta))^{-1},(1,g(p_0,s(\eta))^{-1}))
\end{equation*}
\begin{comment}
These compositions are well-defined:
\begin{align*}
t(\rho_0) &= p_0' =t(\rho'(p_0',t(\eta)))=s(\rho'(p_0',t(\eta))^{-1})
\\
s(\eta)&=R(s(\rho(p_0,s(\eta)),g(p_0,s(\eta))^{-1})=t(R(\rho(p_0,s(\eta))^{-1},g(p_0,s(\eta))^{-1}))
\end{align*}
\end{comment}
Then we have
\begin{align*}
s(\rho_2) &=R(t(\rho(p_0,s(\eta))),g(p_0,s(\eta))^{-1})=R(p_0,g(p_0,s(\eta))^{-1})=R(s(\rho_1),g(p_0,s(\eta))^{-1})
\\
t(\rho_2) &=t(\eta)=R(s(\rho'(p_0',t(\eta))),g'(p_0',t(\eta))^{-1})=R(t(\rho_1),g'(p_0',t(\eta))^{-1})\text{.}
\end{align*}
By \refactionfullyfaithful\
 and there exists a unique $h_{\eta}\in H$ such that
\begin{align*}
R(\rho_1,(h_{\eta},g(p_0,s(\eta))^{-1}))=\rho_2
\quand
t(h_{\eta})g(p_0,s(\eta))^{-1}=g'(p_0',t(\eta))^{-1}\text{,}
\end{align*}
and the map $h: V \to H: \eta \mapsto h_{\eta}$ is smooth. 
Using these formulas, one can check that the left action is given in charts by
$\eta \circ \phi_{\xi_0,\rho,g}(p,\tilde\rho) =\phi_{\xi_0',\rho',g'}(t(\eta),R(\tilde\rho,  (h(\eta),g(p_0,p)^{-1})))$, 
and is hence smooth.  
\begin{comment}
Indeed, the map
\begin{equation*}
\alxydim{@C=0.5cm}{V \ttimes{s}{\id} U \ttimes{\alpha_r\circ \sigma_{\xi_0,\rho,g}}{t} \mor{\inf P_y} \ar[rr]^-{\id \times \phi_{\xi_0,\rho,g}} && V \ttimes{s}{\alpha_l} \alpha_l^{-1}(U) \ar[r]^-{\circ} & \alpha_l^{-1}(U') \ar[r]^-{\phi^{-1}_{\xi_0',\rho',g'}} & U' \ttimes{\alpha_r\circ \sigma_{\xi_0',\rho',g'}}{t} \mor{\inf P_y}}
\end{equation*}
is
\begin{align*}
(\eta,p,\tilde\rho) &\mapsto (\eta,\sigma_{\xi_0,\rho,g}(p) \circ \tilde \rho) 
\\&\mapsto \eta \circ\sigma_{\xi_0,\rho,g}(p) \circ \tilde \rho 
\\&\qquad=\eta \circ((\rho(p_0,p)^{-1}\circ \xi_0)\cdot (1,g(p_0,p)^{-1})) \circ \tilde \rho 
\\&\qquad=\eta \circ(\rho(p_0,p)^{-1}\cdot (1,g(p_0,p)^{-1}))\circ (\xi_0\cdot (1,g(p_0,p)^{-1})) \circ \tilde \rho 
\\&\qquad\eqcref{eq:smoothness:h1} R(\rho'(p_0',p')^{-1} \circ \rho_0,(h(\eta),g(p_0,p)^{-1}))\circ (\xi_0\cdot (1,g(p_0,p)^{-1})) \circ \tilde \rho
\\&\qquad=((\rho'(p_0',p')^{-1} \circ \rho_0 \circ \xi_0) \cdot (h(\eta),g(p_0,p)^{-1})) \circ \tilde \rho
\\&\qquad \eqcref{eq:smoothness:h2} ((\rho'(p_0',p')^{-1}\circ \rho_0 \circ \xi_0)\cdot (1,g'(p_0',p')^{-1}))\circ R(\tilde\rho,  (h(\eta),g(p_0,p)^{-1})) \\&\qquad= ((\rho'(p_0',p')^{-1}\circ \xi_0')\cdot (1,g'(p_0',p')^{-1}))\circ \tilde \rho' 
\\&\qquad= \sigma_{\xi_0',\rho',g'}(p') \circ \tilde \rho' 
\\&\mapsto (p',\tilde\rho')
\end{align*}
with $p' := t(\eta)$ and $\tilde\rho' := R(\tilde\rho,  (h(\eta),g(p_0,p)^{-1}))$.  In this calculation, we had $p=s(\eta)$. 
\end{comment}

It remains to verify the smoothness of the $\mor{\Gamma}$-action. Consider again a section $\sigma_{\xi_0,\rho,g}$ defined in an open neighborhood $U$ of a point $p_0$,  and a morphism $(h,g) \in \mor{\Gamma}$. We recall that $\rho$ is a transition span defined in an open set $\tilde U \subset \ob{\inf P_x} \times_M \ob{\inf P_x}$, and that $U = i_{p_0}^{-1}(\tilde U)$. Let $g_0 := t(h) g$ and $p_0' := R(p_0,g_0)$. Choose open neighborhoods $V \subset  \ob{\Gamma}$ of $g_0$ and $\tilde U' \subset \ob{\inf P_x} \times_M \ob{\inf P_x}$ of $(p_0',p_0')$ such that $(R(x',\tilde g^{-1}),R(y',\tilde g^{-1})) \in \tilde U$ and $(p_0,R(p,\tilde g g_0^{-1}))\in \tilde U$ for all $(p_0,p)\in \tilde U'$, $(x',y')\in  \tilde U'$ and $\tilde g\in V$. 
\begin{comment}
This makes sense, since
\begin{equation*}
(R(p_0',g_0^{-1}),R(p_0',g_0^{-1}))=(p_0,p_0)
\quand
(p_0,R(p,g_0 g_0^{-1}))=(p_0,p)
\text{.}
\end{equation*}
\end{comment}
Using \refactionfullyfaithful\ one can construct a  smooth map $h:U \times V \to H$ such that parameterizes the dependence in the second argument of $\rho$ under the action be group elements of the form $\tilde gg_0^{-1}$ for $\tilde g\in V$, in the sense that
\begin{align}
\label{eq:smoothness:h3}
R( \rho(p_0,p),(h(p,\tilde g),g(p_0,p)^{-1}\tilde g g_0^{-1}g(p_0,R(p,\tilde g g_0^{-1}))))&= \rho(p_0,R(p,\tilde gg_0^{-1}))
\\
\label{eq:smoothness:h4}
t(h(p,\tilde g))g(p_0,p)^{-1}\tilde g g_0^{-1}g(p_0,R(p,\tilde g g_0^{-1}))&=1\text{.}
\end{align}
\begin{comment}
Indeed, we set
\begin{equation*}
\rho_2 := \rho(p_0,R(p,\tilde gg_0^{-1}))
\quand
\rho_1 := \rho(p_0,p)\text{.}
\end{equation*}
We have
\begin{align*}
t(\rho_2) &=p_0=t(\rho_1)
\\
s(\rho_2) &= R(p,\tilde g g_0^{-1}g(p_0,R(p,\tilde g g_0^{-1}))) = R(s(\rho_1),g(p_0,p)^{-1}\tilde g g_0^{-1}g(p_0,R(p,\tilde g g_0^{-1})))
\end{align*}
Thus we set $g_1 := g(p_0,p)^{-1}\tilde g g_0^{-1}g(p_0,R(p,\tilde g g_0^{-1}))$. Then, by the lemma there exists a unique $h \in H$  such that
\begin{align*}
R(\rho_1,(h,g_1))=\rho_2
\\
t(h)g_1=1\text{.}
\end{align*}

\end{comment}
Next we translate the transition span $\rho$ along $g_0$, and obtain  another transition span $\rho': \tilde U' \to \mor{\inf P_x}$ with  transition function $g': \tilde U' \to G$  by setting 
\begin{align*}
\rho'(x',y') &:= R(\rho(R(x',g_0^{-1}),R(y',g_0^{-1})),(h,g))\text{.}
\\
g'(x',y') &:=g_0^{-1}g(R(x',g_0^{-1}),R(y',g_0^{-1}))g
\end{align*}
\begin{comment}
Indeed,
\begin{align*}
t(\rho'(x',y')) &= R(t(\rho(R(x',g_0^{-1}),R(y',g_0^{-1}))),g_0)=x'
\\
s(\rho'(x',y')) &=R(s(\rho(R(x',g_0^{-1}),R(y',g_0^{-1}))),g)
\\&=R(R(R(y',g_0^{-1}),g(R(x',g_0^{-1}),R(y',g_0^{-1}))),g)
\\&=R(y',g_0^{-1}g(R(x',g_0^{-1}),R(y',g_0^{-1})) g)
\\&=R(y',g'(x',y'))
\end{align*}
\end{comment}
We set $\xi_0' := \xi_0 \cdot (h,g)$; this satisfies $\alpha_l(\xi_0')=R(\alpha_l(\xi_0),g_0)=p_0'$. Now we have defined another section $\sigma_{\xi_0',\rho',g'}$  in a neighborhood $U' :=i_{p_0'}^{-1}(\tilde U')$ of $p_0'$. We let $\tilde V := t^{-1}(V) \subset \mor{\Gamma}$. Now, the action in charts of $(\tilde h,\tilde g) \in \tilde V$ is given by
$\phi_{\xi_0,\rho,g}(p,\tilde\rho) \cdot (\tilde h,\tilde g) = \phi_{\xi_0',\rho',g'}(p',\tilde\rho')$
with $p' :=R(p,\tilde g_0)$, and
\begin{equation*}
\tilde \rho' :=R( \tilde \rho, (\tilde h\alpha(\tilde gg_0^{-1}g(p_0,R(p,\tilde g_0g_0^{-1}))^{-1},h(p,\tilde g_0)),\tilde gg_0^{-1}g(p_0,R(p,\tilde g_0g_0^{-1}))^{-1}))\text{,}
\end{equation*}
where $\tilde g_0 := t(\tilde h)\tilde g$. 
Both expressions depend smoothly on $p$, $\tilde\rho$, $\tilde h$, and $\tilde g$; this shows the smoothness.
\end{proof}

\subsection{Compatibility with path composition}

\label{sec:compcomppaths}

In this section we describe the compatibility of the parallel transport along paths with the composition of paths. In the transport 2-functor formalism described in \cref{sec:orga} they constitute the functoriality of the 2-functor on the level of 1-morphisms.

Before we come to path composition, we look at the constant path $\id_x$ at $x\in M$. We define a $\Gamma$-equivariant transformation \begin{equation*}
u_x:\id_{\inf P_x} \Rightarrow F_{\id_x}\text{,} \end{equation*}
which expresses the fact that the parallel transport along a constant path $\id_x$ is canonically 2-isomorphic to the identity 2-functor on the fibre $\inf P_x$.
We define $u_x$
using \cref{rem:indtrans}. The underlying smooth map  $\tilde u_x: \ob{\inf P_x} \to F_{\id_x}$ is defined by $\tilde u_x(p) := \id_p$, where $\id_p$ denotes the constant path at $p$. Verifying \cref{T1*,T2*,T3*} are straightforward calculations; alone in \cref{T2*} one has to use once the equivalence relation on $F_{\id_x}$.
\begin{comment}
\cref{T1*} is obvious. For \cref{T2*}:
for well-definedness, we have $s(\rho)=p=t(\rho')$. Then, 
\begin{equation*}
\rho \circ \tilde u_x(p) \circ \rho' =\rho'^{-1}\ast \id_p\ast \rho^{-1} \sim\rho'^{-1} \circ \rho^{-1} \ast \id_{t(\rho)} =\tilde u_x(t(\rho))\circ \rho\circ \rho'
\end{equation*}
where the equivalence relation was applied to the constant path $t \mapsto \rho$.
For \cref{T3*}:
\begin{equation*}
\tilde u_x(R(p,g))=\id_{R(p,g)}=\id_p\cdot \id_g=\tilde u_x(p)\cdot \id_g
\end{equation*}
\end{comment}

Two paths $\gamma_1,\gamma_2:[0,1] \to M$ are \emph{composable}, if $\gamma_1(1)=\gamma_2(0)$ and the usual path concatenation $\gamma_2 \ast \gamma_1$ is smooth.  In the following, we will often assume composability without explicitly mentioning it; at no place we use piecewise smoothness or any other regularity.    
We construct a transformation
\begin{equation*}
c_{\gamma_1,\gamma_2}: F_{\gamma_2} \circ F_{\gamma_1} \Rightarrow F_{\gamma_2\ast \gamma_1}\text{,}
\end{equation*}
which expresses the fact that the parallel transport along a composite path is canonically 2-isomorphic to the composition of the separate parallel transports. 
In order to define $c_{\gamma_1,\gamma_2}$ we consider $\xi_1\in F_{\gamma_1}$ and $\xi_2\in F_{\gamma_2}$ such that $\alpha_r(\xi_1)=\alpha_l(\xi_2)$, i.e. $(\xi_1,\xi_2)\in F_{\gamma_2} \circ F_{\gamma_1}$. Its image under $c_{\gamma_1,\gamma_2}$ is the element  $\xi_2 \ast \xi_1 \in F_{\gamma_2\ast \gamma_1}$ obtained by reparameterizing all path pieces, and composing the last morphism of $\xi_1$ with the first of $\xi_2$. 
\begin{comment}
The reparameterization is done in the canonical way, i.e. the ones of $\xi_1$ are transformed by $t \mapsto \frac{1}{2}t$, and the ones of $\xi_2$ are transformed by $t\mapsto \frac{1}{2}(1+t)$.
\end{comment}
This is obviously anchor-preserving and action-preserving, and preservation of the $\mor \Gamma$-action can easily be checked.
\begin{comment}
We check
\begin{align*}
c_{\gamma_1,\gamma_2}((\xi_1,\xi_2)\cdot (h,g)) &\eqtext{\cite[Remark 2.4.2]{Waldorf2016}}c_{\gamma_1,\gamma_2}(\xi_1\cdot (h,g),\xi_2\cdot (1,g))
\\&= c_{\gamma_1,\gamma_2}(\xi_1\cdot (1,g)\ast R(\rho_0,(h^{-1},t(h)g)),\xi_2\cdot (1,g))
\\&= (\xi_2\cdot (1,g)) \ast(\xi_1\cdot (1,g)\ast R(\rho_0,(h^{-1},t(h)g)))
\\&= (\xi_2 \ast\xi_1)\cdot (1,g)\ast R(\rho_0,(h^{-1},t(h)g))) \\&= (\xi_2 \ast\xi_1)\cdot (h,g) 
\\&=c_{\gamma_1,\gamma_2}(\xi_1,\xi_2)\cdot(h,g)
\end{align*}
\end{comment}

If $\gamma_1$, $\gamma_2$, and $\gamma_3$ are  paths, and  $\gamma_3 \ast (\gamma_2 \ast \gamma_1)$ and $(\gamma_3 \ast \gamma_2)\ast \gamma_1$ are again paths (i.e. smooth), then the canonical reparameterization between $\gamma_3 \ast (\gamma_2 \ast \gamma_1)$ and $(\gamma_3 \ast \gamma_2)\ast \gamma_1$ induces an obvious transformation $\alpha_{\gamma_1,\gamma_2,\gamma_3}: F_{\gamma_3  \ast (\gamma_2  \ast \gamma_1)} \Rightarrow F_{(\gamma_3  \ast \gamma_2) \ast \gamma_1}$.
The following coherence property follows then directly from the definition of $c_{\gamma_1,\gamma_2}$. 

\begin{proposition}
\label{lem:F3}
Let $\inf P$ be a principal $\Gamma$-2-bundle with connection.
Then, the following diagram commutes:
\begin{equation*}
\alxydim{@C=-1cm@R=\xyst}{ && F_{\gamma_3} \circ F_{\gamma_2} \circ F_{\gamma_1} \ar@{=>}[drr]^{c_{\gamma_2,\gamma_3}\circ \id} \ar@{=>}[dll]_{\id\circ c_{\gamma_1,\gamma_2}} && \\ F_{\gamma_3} \circ F_{\gamma_2 \ast \gamma_1} \ar@{=>}[dr]_{c_{\gamma_3,\gamma_2 \ast\gamma_1}} &&&& F_{\gamma_3 \ast \gamma_2} \circ F_{\gamma_1} \ar@{=>}[dl]^{c_{\gamma_3 \ast\gamma_2,\gamma_1}} \\ & F_{\gamma_3 \ast (\gamma_2  \ast\gamma_1)} \ar@{=>}[rr]_-{a_{\gamma_1,\gamma_2,\gamma_3}} & \hspace{3.2cm} & F_{(\gamma_3 \ast\gamma_2) \ast\gamma_1}}
\end{equation*}
\end{proposition}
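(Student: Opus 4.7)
My plan is to check the diagram pointwise on representatives, exploiting the fact that the concatenation map $c_{\gamma_1,\gamma_2}$ is essentially a formal juxtaposition followed by composition of the single morphism at the gluing point. Given a triple $(\xi_1,\xi_2,\xi_3)\in F_{\gamma_3}\circ F_{\gamma_2}\circ F_{\gamma_1}$, both $c_{\gamma_2,\gamma_3}\circ \id$ and $\id\circ c_{\gamma_1,\gamma_2}$ produce the juxtaposition $\xi_3\ast\xi_2\ast\xi_1$ (after the appropriate canonical reparameterizations); the only difference between the two branches of the pentagon is that in the left branch the piece $\xi_1$ is reparameterized by $t\mapsto \tfrac12 t$ first and then the composite piece $\xi_2\ast\xi_1$ by $t\mapsto\tfrac12 t$ again, whereas in the right branch $\xi_1$ is reparameterized just once by $t\mapsto \tfrac14 t$, and similarly for $\xi_2$ and $\xi_3$. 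The resulting representatives differ exactly by the reparameterization that underlies $a_{\gamma_1,\gamma_2,\gamma_3}$.

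Concretely, I would first write out representatives $\xi_k=\rho_{n_k}^{(k)}\ast \gamma_{n_k}^{(k)}\ast\dots\ast \gamma_1^{(k)}\ast\rho_0^{(k)}$ for $k=1,2,3$, with $\alpha_r(\xi_k)=\alpha_l(\xi_{k+1})$ encoded as $t(\rho_{n_k}^{(k)})=s(\rho_0^{(k+1)})$, and then trace both compositions $c_{\gamma_2\ast\gamma_1,\gamma_3}\bigl(c_{\gamma_1,\gamma_2}(\xi_1,\xi_2),\xi_3\bigr)$ and $c_{\gamma_1,\gamma_3\ast\gamma_2}\bigl(\xi_1,c_{\gamma_2,\gamma_3}(\xi_2,\xi_3)\bigr)$. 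In both cases the two gluing morphisms $\rho_0^{(2)}\circ\rho_{n_1}^{(1)}$ and $\rho_0^{(3)}\circ\rho_{n_2}^{(2)}$ are formed, independently of the order in which one glues, so the two formal words of paths and morphisms agree symbol by symbol. The only divergence is the affine reparameterization of the base parameter.

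The reparameterization transformation $a_{\gamma_1,\gamma_2,\gamma_3}$ is defined precisely so that it intertwines these two different affine rescalings on each path piece: it sends the representative with subdivision induced by $(\gamma_3\ast\gamma_2)\ast\gamma_1$ to the one with subdivision induced by $\gamma_3\ast(\gamma_2\ast\gamma_1)$ by precomposing each horizontal lift $\gamma_i^{(k)}$ with the corresponding affine map. Since reparameterizing a horizontal path stays horizontal, the resulting data still satisfies the conditions in \cref{eq:defF}. Hence going around the diagram leaves the representative invariant up to the relabeling implemented by $a_{\gamma_1,\gamma_2,\gamma_3}$, proving commutativity.

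The step I expect to require the most care is bookkeeping of the reparameterizations at each gluing, in particular verifying that the transformation $a_{\gamma_1,\gamma_2,\gamma_3}$ as defined (from reparameterization of the base path) indeed coincides with the map produced by the two nested applications of $c_{-,-}$; but this reduces to comparing two piecewise affine bijections of $[0,1]$, so no analytical input beyond the elementary definition is needed, and the equivalence relation on $F_\gamma$ is not even invoked.
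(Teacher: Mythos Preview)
Your argument is correct and is exactly the unpacking of what the paper means when it says the result ``follows then directly from the definition of $c_{\gamma_1,\gamma_2}$''; the paper gives no further proof. One minor slip: in the right branch $\xi_1$ is reparameterized once by $t\mapsto\tfrac12 t$, not $t\mapsto\tfrac14 t$ (so $\xi_1$ lands on $[0,\tfrac14]$ on the left versus $[0,\tfrac12]$ on the right), but this does not affect your argument, since that discrepancy is precisely what $a_{\gamma_1,\gamma_2,\gamma_3}$ accounts for.
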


Next we observe a compatibility condition between the transformations $c_{\gamma_1,\gamma_2}$ and $u_x$. For this purpose, we first  identify transformations $l_{\gamma}: F_{\gamma} \Rightarrow F_{\gamma\ast \id_x}$ and $r_{\gamma}:  F_{\gamma} \Rightarrow F_{\id_y\ast \gamma}$ associated to a path $\gamma:x \to y$.
Indeed, given $\xi\in F_{\gamma}$, we reparameterize all path pieces by $t \mapsto \frac{1}{2}+\frac{1}{2}t$, and add (using the formal composition $\ast$) the constant path $[0,\frac{1}{2}]\ni t\mapsto \alpha_l(\xi)$ at the beginning; this gives an element of $F_{\gamma\ast \id_x}$, and defines the transformation $l_{\gamma}$. The transformation $r_{\gamma}$ is defined analogously. The following result follows directly from the definitions.

\begin{proposition}
\label{lem:F4}
Let $\inf P$ be a principal $\Gamma$-2-bundle with connection.
For every path $\gamma:x \to y$ the following diagrams
commute:
\begin{equation*}
\alxydim{@=\xyst}{F_{\gamma}\circ \id_{\inf P_x} \ar@{=>}[d]_{\id\circ u_x} \ar@{=}[r]^-{} & F_{\gamma} \ar@{=>}[d]^{l_{\gamma}} \\ F_{\gamma}\circ F_{\id_x} \ar@{=>}[r]_{c_{\id_x,\gamma}}& F_{\gamma \ast \id_x}}
\quand
\alxydim{@=\xyst}{\id_{\inf P_y} \circ F_{\gamma} \ar@{=>}[d]_{u_y\circ \id} \ar@{=}[r]^-{} & F_{\gamma} \ar@{=>}[d]^{r_{\gamma}} \\ F_{\id_y}\circ F_{\gamma} \ar@{=>}[r]_{c_{\gamma,\id_y}}& F_{\id_y \ast \gamma}}
\end{equation*}
\end{proposition}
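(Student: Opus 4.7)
The plan is to verify both commutative diagrams by chasing a representative $\xi = \rho_n \ast \gamma_n \ast \dots \ast \gamma_1 \ast \rho_0 \in F_\gamma$ through the diagram and comparing the two resulting elements in $F_{\gamma \ast \id_x}$ (resp.\ $F_{\id_y \ast \gamma}$) directly, without invoking the equivalence relation on path representatives. The identifications $F_\gamma \circ \id_{\inf P_x} \cong F_\gamma$ and $\id_{\inf P_y} \circ F_\gamma \cong F_\gamma$ used implicitly by the $=$ in the top row are just the identifications provided by the unit action of $\mor{\inf P_x}$ (resp.\ $\mor{\inf P_y}$) via identity morphisms. After this identification, $\xi$ corresponds to the pair $(\id_p,\xi)$ in the total space of the composition, where $p := \alpha_l(\xi)$.

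For the left diagram, I would then compute $(\id \circ u_x)(\xi) = (u_x(p),\xi) \in F_\gamma \circ F_{\id_x}$, where $u_x(p)$ is represented by $\id_p \ast \mathrm{const}_p \ast \id_p \in F_{\id_x}$ (taking the trivial subdivision $\{0,1\}$, the constant path at $p$, and identity morphisms at both ends). Applying $c_{\id_x,\gamma}$ reparameterizes the constant path $\mathrm{const}_p$ to the interval $[0,\tfrac{1}{2}]$, reparameterizes each $\gamma_i$ by $t \mapsto \tfrac{1}{2}+\tfrac{1}{2}t$, and glues the last morphism of $u_x(p)$ — namely $\id_p$ — with the first morphism $\rho_0$ of $\xi$ to give $\rho_0 \circ \id_p = \rho_0$. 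The resulting representative in $F_{\gamma \ast \id_x}$ is exactly $\rho_n \ast \gamma_n' \ast \dots \ast \gamma_1' \ast \rho_0 \ast \mathrm{const}_p \ast \id_p$, with the $\gamma_i'$ the reparameterized pieces. By the very definition of $l_\gamma$ — prepend the constant path $\mathrm{const}_p$ at the beginning on $[0,\tfrac{1}{2}]$ and reparameterize the rest by $t\mapsto \tfrac{1}{2}+\tfrac{1}{2}t$ — this coincides with $l_\gamma(\xi)$ on the nose, so equality holds already at the level of representatives.

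The second diagram is treated in the same fashion: $(u_y \circ \id)(\xi) = (\xi,u_y(y))$ with $y := \alpha_r(\xi)$ and $u_y(y) = \id_y \ast \mathrm{const}_y \ast \id_y$, and then $c_{\gamma,\id_y}$ reparameterizes the $\gamma_i$ onto $[0,\tfrac{1}{2}]$, places $\mathrm{const}_y$ on $[\tfrac{1}{2},1]$, and composes the first morphism $\id_y$ of $u_y(y)$ with the last morphism $\rho_n$ of $\xi$ to yield $\id_y \circ \rho_n = \rho_n$. This again agrees term-for-term with the definition of $r_\gamma(\xi)$.

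I do not anticipate an obstacle here: because $u_x(p)$ is built precisely from the data (constant path and identity morphisms) that $l_\gamma$ inserts, and because the formal composition $\ast$ underlying $c_{\gamma_1,\gamma_2}$ is defined so that gluing an identity morphism is a no-op, both diagrams commute strictly, not merely up to the equivalence relation $\sim$. The only mild subtlety worth flagging is that one should verify the $\Gamma$-equivariance and anchor compatibility of the identification $F_\gamma \circ \id_{\inf P_x} \cong F_\gamma$ used in the top row, but this is immediate from the $\inf P_x$-action on $F_\gamma$ constructed in \cref{lem:laction}.
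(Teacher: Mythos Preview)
Your proposal is correct and follows essentially the same approach as the paper: the paper simply remarks that the result follows directly from the definitions, and its (commented-out) proof chases an element $\xi\in F_\gamma$ through the left diagram just as you do, observing $c_{\id_x,\gamma}(\id_{\alpha_l(\xi)},\xi)=\xi\ast \id_{\alpha_l(\xi)}=l_\gamma(\xi)$. One small notational slip: in the second diagram you write $u_y(y)$ with $y:=\alpha_r(\xi)$, but $y$ is already the basepoint $\gamma(1)$; you mean $u_y(q)$ for $q:=\alpha_r(\xi)\in\ob{\inf P_y}$.
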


\begin{comment}
\begin{proof}
For the diagram on the left, consider $\xi\in F_{\gamma}$. Then, $(\id \circ u_x)(\xi)=(\id_{\alpha_l(\xi)},\xi)$ and then $c_{\id_x,\gamma}(\id_{\alpha_l(\xi)},\xi)=\xi\ast \id_{\alpha_l(\xi)}=l_{\gamma}(\xi)$ .
Thus, the diagram is commutative. The one on the right commutes analogously.
\end{proof}
\end{comment}

\subsection{Naturality with respect to  bundle morphisms}

\label{sec:natbundlemorph}

Suppose $J:\inf P \to \inf P'$ is a 1-morphism between principal $\Gamma$-2-bundles with connections $\Omega$ and $\Omega'$, respectively, equipped with a connective, connection-preserving $\Omega'$-pullback $\nu$. Let $\gamma: [0,1] \to M$ be a path with $x:=\gamma(0)$ and $y:=\gamma(1)$. We construct a $\Gamma$-equivariant transformation
\begin{equation*}
\alxydim{@=\xyst}{\inf P_x \ar[r]^{F_{\gamma}} \ar[d]_{J_x} & \inf P_y \ar@{=>}[dl]|*+{J_{\gamma}} \ar[d]^{J_y} \\ \inf P_x' \ar[r]_{F_{\gamma}'} & \inf P_y'}
\end{equation*}
relating the parallel transport $F_{\gamma}$ in $\inf P$ with the parallel transport $F'_{\gamma}$ in $\inf P'$. Involving the definition of composition of anafunctors (\refcompositionanafunctors)  $J_{\gamma}$ is induced by a map
\begin{equation*}
J_{\gamma} : F_{\gamma} \ttimes{\alpha_r}{\alpha_l} J_y \to J_x \ttimes{\alpha_r}{\alpha_l}F'_\gamma\text{,}
\end{equation*}
which we define in the following. We start with the following terminology:
a \emph{horizontal lift of $\xi\in F_{\gamma}$ to $J$}
is a collection $\tilde\xi =(n,t,\{\rho_i\}_{i=0}^{n},\{\tilde\gamma_i\}_{i=1}^{n})$ consisting of $n\in \N$, a subdivision $t\in T_n$, morphisms $\rho_i\in \mor{\inf P_x}$ and horizontal paths $\tilde\gamma_i:[t_{i-1},t_i]\to J$ such that
$\alpha_l(\tilde\gamma_i(t_i))=s(\rho_i)$  and $\alpha_l(\tilde\gamma_i(t_{i-1}))=t(\rho_{i-1})$ for $1\leq i \leq n$, and
\begin{equation*}
\xi\sim\rho_n\ast \alpha_l(\tilde\gamma_n)\ast \rho_{n-1} \ast ... \ast \alpha_l(\tilde\gamma_1)\ast \rho_0\text{.}
\end{equation*}
This means, in particular, that the paths $\alpha_l(\tilde\gamma_1)$ are horizontal.
It is easy to see, e.g. using \cref{lem:horF:b,lem:horF}, that every $\xi\in F_{\gamma}$  admits horizontal lifts. 
\begin{comment}
We can choose the representative $\xi=\rho_n \ast \gamma_n \ast ... \ast \rho_0$ such that, for $1\leq i \leq n$, each $\gamma_i:[t_{i-1},t_i] \to \ob{\inf P}$ maps into an open set $U_i \subset \ob{\inf P}$ that supports  sections $\sigma_i: U \to J$ against $\alpha_l: J \to \ob{\inf P}$. By \cref{lem:horF} there exist unique smooth maps $h_i: [t_{i-1},t_i] \to H$ with $h_i(t_{i-1})=1$ and $\tilde\gamma_i(t):= \sigma_i(\gamma_i(t)) \cdot (h_i(t),t(h_i(t))^{-1})$ horizontal in $J$ (using \cref{lem:horF:b}). \end{comment}

If $j\in J$ such that  $\alpha_r(\xi)=\alpha_l(j)$, then the \emph{$j$-target} of a horizontal lift $\tilde\xi$  is an element $(j',\xi')\in J_x \ttimes{\alpha_r}{\alpha_l}F'_\gamma$ defined in the following way. We set $\gamma_i' := \alpha_r(\tilde\gamma_i)$. Since $\tilde\gamma_i$ and $\alpha_l(\tilde\gamma_i)$ are horizontal, $\gamma_i'$ is horizontal by \cref{lem:horF:a}.  We proceed for an index $1\leq i <n$, and note that $\alpha_l(\rho_i \circ \tilde\gamma_i(t_i))=\alpha_l(\tilde\gamma_{i+1}(t_i))$.
\begin{comment}
These compositions are well-defined:
\begin{equation*}
s(\rho_i)=\gamma_i(t_i)=\alpha_l(\tilde\gamma_i(t_i) )
\end{equation*}
Indeed, for $1\leq i <n$ we have
\begin{equation*}
\alpha_l(\rho_i \circ \tilde\gamma_i(t_i)) = t(\rho_i)=\gamma_{i+1}(t_{i})=\alpha_l(\tilde\gamma_{i+1}(t_i))
\end{equation*}
\end{comment}
Since $\alpha_l:J \to \ob{\inf P}$ is a principal $\inf P'$-bundle, there exists a unique $\rho_i' \in \mor{\inf P'}$ such that $\rho_i \circ \tilde\gamma_i(t_i) = \tilde\gamma_{i+1}(t_i)\circ \rho_i'$, and we get $t(\rho_i')=\gamma'_{i+1}(t_i)$ and $s(\rho_i')=\gamma_i'(t_i)$. 
\begin{comment}
It satisfies $t(\rho_i')=\alpha_r(\tilde\gamma_{i+1}(t_i))=\gamma_{i+1}'(t_i)$ and $s(\rho_i')=\alpha_r(\rho_i \circ \tilde\gamma_i(t_i))=\alpha_r(\tilde\gamma_i(t_i))=\gamma_i'(t_i)$, and we have $\rho_0'= \id_{\tilde\gamma_1(0)}$. \end{comment}
The case $i=n$ is treated separately involving the element  $j$. We have $\alpha_l(\rho_n \circ \tilde\gamma_n(1))=\alpha_l(j)$.
\begin{comment}
Indeed,
\begin{equation*}
\alpha_l(\rho_n \circ \tilde\gamma_n(1))=t(\rho_n)=\alpha_r(\xi)=\alpha_l(j)\text{.}
\end{equation*}
\end{comment}
Hence, there exists a unique $\rho'_n \in \mor{\inf P'}$ such that $\rho_n \circ \tilde\gamma_n(1) = j\circ \rho_n'$, satisfying $s(\rho_n')=\gamma_n'(t_n)$. 
\begin{comment}  
Indeed,
\begin{equation*}
s(\rho_n')=\alpha_r(\tilde\rho_n)=\alpha_r(\tilde\gamma_n(t_n))=\gamma_n'(t_n)
\end{equation*}
\end{comment}
The relations we have collected assert that we can combine the morphisms $\rho_i'$ and paths $\gamma_i'$ and set $\xi' := \rho_n' \ast \gamma_n' \ast ... \ast \gamma_1'\ast \id \in F_{\gamma'}$. Finally, we put $j':=\rho_0^{-1}\circ \tilde\gamma_1(0) \in J_x$. The pair $(j',\xi')$ is by definition the $j$-target of $\tilde\xi$. 
\begin{comment}
The composition is well-defined: $\alpha_l(\tilde\gamma_1(0))=\gamma_1(0)=t(\rho_0)=s(\rho_0^{-1})$.
We have
\begin{equation*}
\alpha_r(j')=\alpha_r(\tilde\gamma_1(0))=\gamma_1'(0)=\alpha_l(\xi')\text{.}
\end{equation*}
\end{comment}

\begin{lemma}
\label{lem:compmorph:def}
The $j$-target is independent of the horizontal lift: if $\tilde\xi_1$ and $\tilde\xi_2$ are horizontal lifts of $\xi$ and $j\in J$ with $\alpha_r(\xi)=\alpha_l(j)$, then the $j$-targets of $\tilde\xi_1$ and $\tilde\xi_2$ coincide.
\end{lemma}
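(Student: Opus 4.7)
I would prove the lemma by three successive reductions, each eliminating one of the three independent ambiguities in a horizontal lift of $\xi$: the choice of subdivision $t$, the choice of representative $\rho_n \ast \alpha_l(\tilde\gamma_n)\ast\cdots\ast\rho_0$ of $\xi$ in $F_\gamma$ at fixed $t$, and the choice of horizontal path in $J$ above a fixed representative. The conclusion is to be read as an equality in the quotient defining the anafunctor composition, namely $(j' \circ \rho, \xi') \sim (j', \rho \circ \xi')$ for every composable $\rho \in \mor{\inf P'}$. For the first reduction, given lifts $\tilde\xi_1$ over $t^{(1)}$ and $\tilde\xi_2$ over $t^{(2)}$, I pass to the common refinement $t$; refining a horizontal lift at a new point $\tau$ means splitting the horizontal path $\tilde\gamma_i$ in two and inserting the identity morphism $\id_{\tilde\gamma_i(\tau)} \in \mor{\inf P}$. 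Inspecting the $j$-target construction shows that the induced $\rho'_k \in \mor{\inf P'}$ at such an inserted point is the identity, while the split $\alpha_r(\tilde\gamma_i)$ recombines via reflexivity of $\sim$ in $F'_\gamma$, so the $j$-target is unchanged.

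For the second reduction, at a common subdivision the two representatives of $\xi$ differ in $F_\gamma$ by a finite chain of generating relations $\sim_j$. It suffices to handle a single $\sim_j$ via some horizontal $\tilde\rho: [t_{j-1}, t_j] \to \mor{\inf P}$ with $s(\tilde\rho) = \alpha_l(\tilde\gamma_j^{(1)})$ and $t(\tilde\rho) = \alpha_l(\tilde\gamma_j^{(2)})$ as in \cref{eq:er:condgamma}, and with neighboring jumps modified per \cref{eq:er:condrho:all}. Starting from the first lift, I construct a horizontal lift for the second representative by $\tilde\gamma_j^{\text{new}}(\tau) := \tilde\rho(\tau) \circ \tilde\gamma_j^{(1)}(\tau)$, which is horizontal in $J$ by \cref{lem:horF:d}. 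Unwinding the $j$-target shows that the boundary factors $\tilde\rho(t_{j-1})$ and $\tilde\rho(t_j)$ propagate through the induced $\rho'_i$ and cancel exactly against the prescribed changes in $\rho_{j-1}, \rho_j$; this reduces the problem to the case where both lifts present $\xi$ by the same representative.

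In the remaining case, for each $i$ we have two horizontal paths $\tilde\gamma_i^{(1)}, \tilde\gamma_i^{(2)}: [t_{i-1},t_i] \to J$ with $\alpha_l(\tilde\gamma_i^{(1)}) = \alpha_l(\tilde\gamma_i^{(2)})$. Since $\alpha_l: J \to \ob{\inf P}$ is a principal $\inf P'$-bundle under $\rho_r$, there is a unique smooth path $\sigma_i: [t_{i-1}, t_i] \to \mor{\inf P'}$ with $\tilde\gamma_i^{(2)} = \tilde\gamma_i^{(1)} \circ \sigma_i$, and the connectivity identity $\rho_r^{*}\nu_0 = \pr_J^{*}\nu_0 + \pr^{*}\fb\Omega'$ combined with horizontality of both lifts forces $\sigma_i$ to be horizontal in $\mor{\inf P'}$. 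A direct unwinding of the $j$-target construction yields $\rho'^{(2)}_i = \sigma_{i+1}(t_i)^{-1} \circ \rho'^{(1)}_i \circ \sigma_i(t_i)$ for $0 \leq i < n$, $\rho'^{(2)}_n = \rho'^{(1)}_n \circ \sigma_n(1)$, and $j'^{(2)} = j'^{(1)} \circ \sigma_1(0)$. Applying the sequence of relations $\sim_1, \ldots, \sim_n$ on $F'_\gamma$, each via the horizontal path $\sigma_i^{-1}$ in $\mor{\inf P'}$, converts $\sigma_1(0)^{-1} \circ \xi'^{(1)}$ into $\xi'^{(2)}$; combined with the shift of $j'$ by $\sigma_1(0)$ this is precisely the equivalence $(j'^{(1)}, \xi'^{(1)}) \sim (j'^{(2)}, \xi'^{(2)})$ in $J_x \ttimes{\alpha_r}{\alpha_l} F'_\gamma$. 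The hard part will be this final step: I must carefully propagate the $\sigma_i$ through every induced morphism and match the resulting global discrepancy $\sigma_1(0)$ against exactly the right cascade of $\sim_j$-relations on $F'_\gamma$, so that the identification happens via the anafunctor-composition quotient rather than strict equality; the first two steps, while notationally cumbersome, are purely mechanical.
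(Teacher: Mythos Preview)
Your proposal is correct and complete. The organization differs from the paper's in a way worth noting.

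The paper collapses your steps 2 and 3 into a single move. Rather than first adjusting the lift so that both horizontal lifts sit over the \emph{same} representative of $\xi$ and then comparing two lifts over that common representative, the paper observes directly that the equivalence of the two representatives in $F_\gamma$ is witnessed by a simultaneous family of horizontal paths $\phi_i:[t_{i-1},t_i]\to\mor{\inf P}$ with $s(\phi_i)=\alpha_l(\tilde\gamma_{1,i})$, $t(\phi_i)=\alpha_l(\tilde\gamma_{2,i})$, and the endpoint compatibilities of \cref{eq:er:condrho:all}. It then pushes these $\phi_i$ through the principal $\inf P'$-bundle $\alpha_l:J\to\ob{\inf P}$ to obtain horizontal paths $\phi_i'$ in $\mor{\inf P'}$ via $\phi_i\circ\tilde\gamma_{1,i}=\tilde\gamma_{2,i}\circ\phi_i'$, and checks that these $\phi_i'$ realize precisely the equivalence $(j_1',\xi_1')\sim(j_2',\xi_2')$. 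Your $\sigma_i$ in step~3 are the special case $\phi_i=\id$, and your modification $\tilde\gamma_j^{\text{new}}=\tilde\rho\circ\tilde\gamma_j^{(1)}$ in step~2 is what happens when one absorbs a single generating $\phi_j=\tilde\rho$ into the lift; the paper's argument is the composite of all of these at once.

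What your decomposition buys is clarity about exactly which lemmas are invoked where, and you are explicit about the subdivision-refinement step, which the paper passes over silently (its proof tacitly assumes a common $t$). What the paper's one-step version buys is brevity: once one knows that any equivalence in $F_\gamma(t)$ can be packaged as such a family $\{\phi_i\}$ (which follows from \cref{lem:equiv} by composing the individual $\tilde\rho$'s), the whole argument is a single transfer through $J$ followed by a single chain of $\sim_j$'s in $F'_\gamma$. Both routes land on exactly the same identification $(j_1',\xi_1')=(j_2'\circ\phi_1'(0),\phi_1'(0)^{-1}\circ\xi_2')=(j_2',\xi_2')$.
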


\begin{proof}
If the lifts are $\tilde\xi_1=(\{\rho_{1,i}\},\{\tilde\gamma_{1,i}\})$ and $\tilde\xi_2=(\{\rho_{2,i}\},\{\tilde\gamma_{2,i}\})$, then we have 
\begin{equation*}
\rho_{1,n}\ast \alpha_l(\tilde\gamma_{1,n})\ast \rho_{1,n-1} \ast ... \ast \alpha_l(\tilde\gamma_{1,1})\ast \rho_{1,0} \sim \rho_{2,n}\ast \alpha_l(\tilde\gamma_{2,n})\ast \rho_{2,n-1} \ast ... \ast \alpha_l(\tilde\gamma_{2,1})\ast \rho_{2,0}\text{.}
\end{equation*}
Thus, there exist horizontal paths $\phi_i$ in $\mor{\inf P_x}$ with $s(\phi_i)= \alpha_l(\tilde\gamma_{1,i})$ and $t(\phi_i)= \alpha_l(\tilde\gamma_{2,i})$, as well as 
\begin{equation}
\label{eq:lem:horliftJ}
\phi_1(0)\circ \rho_{1,0}=\rho_{2,0}
\quomma
\rho_{1,n}=\rho_{2,n} \circ \phi_n(1)
\quand
\phi_{i+1}(t_i)\circ \rho_{1,i} = \rho_{2,i} \circ \phi_{i}(t_i)\text{.} 
\end{equation}
We have $\alpha_l(\phi_i(t) \circ \tilde\gamma_{1,i}(t))=\alpha_l(\tilde\gamma_{2,i}(t))$, so that there exist unique paths $\phi_i'$ in $\mor{\inf P_y}$ with $t(\phi_i')=\alpha_r(\tilde\gamma_{2,i})=\gamma'_{2,i}$ and $s(\phi_i')=\alpha_r(\tilde\gamma_{1,i})=\gamma'_{1,i}$, such that 
\begin{equation}
\label{eq:lem:horliftJ2}
\phi_i(t) \circ \tilde\gamma_{1,i}(t)= \tilde \gamma_{2,i}(t) \circ \phi_i'(t)
\end{equation}
By \cref{lem:horF:c,lem:horF:d*} it follows that $\phi_i'$ is horizontal. Next we collect the necessary identities \cref{eq:lem:horliftJ3,eq:lem:horliftJ4,eq:lem:horliftJ5}  that prove that the paths $\phi_i'$ constitute an equivalence between $\xi_1'$ and $\phi_1'(0)^{-1}\circ\xi_2'$.
We consider for $1\leq i < n$  the defining relations
\begin{equation*}
\rho_{1,i} \circ \tilde\gamma_{1,i}(t_i) = \tilde\gamma_{1,i+1}(t_i)\circ \rho_{1,i}'
\quand
\rho_{2,i} \circ \tilde\gamma_{2,i}(t_i) = \tilde\gamma_{2,i+1}(t_i)\circ \rho_{2,i}'
\end{equation*}
for $\rho'_{1,i}$ and $\rho'_{2,i}$.  Combining  with \cref{eq:lem:horliftJ,eq:lem:horliftJ2} we get
\begin{equation}
\label{eq:lem:horliftJ3}
 \rho_{2,i}' \circ \phi_i'(t_i) = \phi_{i+1}'(t_i)\circ \rho_{1,i}'\text{.}
\end{equation}
At $i=0$ we have $\rho_{1,0}'=\rho_{2,0}'=\id$. But $\phi_1'(0)^{-1}\circ\xi_2'$ has as its first morphism not $\rho'_{2,0}$ but $\rho''_{2,0}=\rho'_{2,0}\circ \phi_1'(0)$, so that we have 
\begin{equation}
\label{eq:lem:horliftJ4}
\rho'_{1,0}\circ \phi_1'(0)=\rho_{2,0}''
\end{equation}   
At $i=n$ the defining relations are
$\rho_{1,n} \circ \tilde\gamma_{1,n}(1) = j\circ \rho_{1,n}'$ and $\rho_{2,n} \circ \tilde\gamma_{2,n}(1) = j\circ \rho_{2,n}'$. 
Combining with with \cref{eq:lem:horliftJ,eq:lem:horliftJ2} gives 
\begin{equation}
\label{eq:lem:horliftJ5}
\rho_{2,n}' \circ \phi_n'(1)=  \rho_{1,n}'\text{.}
\end{equation} 
Finally, we have $\rho_{1,0}\circ j'_1= \tilde\gamma_{1,1}(0)$ and $\rho_{2,0} \circ j'_2= \tilde\gamma_{2,1}(0)$. Combining  with \cref{eq:lem:horliftJ,eq:lem:horliftJ2} we get $j'_2\circ \phi_1'(0)=  j'_1$. Thus, we have 
\begin{equation*}
(j_1',\xi_1')=(j'_2\circ \phi_1'(0),\phi_1'(0)^{-1}\circ \xi_2')=(j_2',\xi_2')\text{;}
\end{equation*}
this shows the claim.
\end{proof}

By \cref{lem:compmorph:def} we have a well-defined map
\begin{equation*}
J_{\gamma} : F_{\gamma} \ttimes{\alpha_r}{\alpha_l} J_y \to J_x \ttimes{\alpha_r}{\alpha_l}F'_\gamma :(\xi,j) \mapsto (j',\xi')\text{.}
\end{equation*}

\begin{lemma}
The map $J_{\gamma}$ induces a transformation $J_y \circ F_{\gamma} \Rightarrow F_{\gamma}'\circ J_x$.
\end{lemma}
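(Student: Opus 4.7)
The plan is to verify that $J_\gamma$ preserves both anchors, is equivariant with respect to the intermediate groupoid actions (by $\mor{\inf P_x}$ on the left and by $\mor{\inf P'_y}$ on the right), descends to a well-defined map between the quotients $J_y \circ F_\gamma$ and $F'_\gamma \circ J_x$, is $\mor{\Gamma}$-equivariant, and is smooth.

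The first three points are algebraic and follow directly by inspection. For the anchors, $\alpha_l(j') = t(\rho_0^{-1}) = s(\rho_0) = \alpha_l(\xi)$, and $\alpha_r(\xi') = t(\rho'_n) = \alpha_r(j)$, where the last equality follows from the defining identity $\rho_n \circ \tilde\gamma_n(1) = j \circ \rho'_n$. For the left $\mor{\inf P_x}$-action, replacing $\xi$ by $\rho \circ \xi$ modifies only $\rho_0$ to $\rho_0 \circ \rho^{-1}$ by \cref{lem:laction}; the same horizontal lift $\tilde\xi$ can be reused, leaving $\xi'$ unchanged and turning $j'$ into $\rho \circ j'$. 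For the right $\mor{\inf P'_y}$-action, replacing $j$ by $j \circ \rho'$ forces $\rho'_n$ to become $\rho'^{-1} \circ \rho'_n$ in the defining relation at $i=n$, so $j'$ is unchanged and $\xi'$ becomes $\xi' \circ \rho'$ by \cref{lem:action}. Descent to the quotient $J_y \circ F_\gamma$ is handled analogously: for $\rho \in \mor{\inf P_y}$, the pair $(\xi \circ \rho, \rho^{-1} \circ j)$ admits the obvious horizontal lift (same $\tilde\gamma_i$ but with $\rho_n$ replaced by $\rho^{-1} \circ \rho_n$), and left-multiplication by $\rho^{-1}$ in the identity at $i=n$ produces $(\rho^{-1} \circ \rho_n) \circ \tilde\gamma_n(1) = (\rho^{-1} \circ j) \circ \rho'_n$, reproducing the original $\rho'_n$; hence the $(\rho^{-1} \circ j)$-target equals the $j$-target of $\tilde\xi$.

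For $\mor{\Gamma}$-equivariance, the translate $\xi \cdot (h,g)$ admits a horizontal lift obtained by translating $\tilde\xi$: by \cref{lem:horF:b} together with the connectivity of $\nu$, each path $\tilde\gamma_i$ can be translated by the corresponding $\mor{\Gamma}$-element with any required $H$-correction absorbed through connectivity, and the induced paths $\gamma'_i = \alpha_r(\tilde\gamma_i)$ translated by $g$ remain horizontal in $\inf P'$ by \cref{lem:hormor:c}. The morphisms $\rho'_i$ transform under the corresponding $\mor{\Gamma}$-actions, and a bookkeeping of the defining relations identifies the resulting target as exactly $(j', \xi') \cdot (h,g)$. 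Smoothness is verified via the charts built in \cref{lem:smoothstructure}: around any representative one picks smooth sections of $\alpha_l : J \to \ob{\inf P}$ through the endpoints of $\gamma_i$, so that the horizontal paths $\tilde\gamma_i$ arise as smooth solutions of the linear ODE from the proof of \cref{lem:horF}, and consequently the morphisms $\rho'_i$ and the element $j'$ depend smoothly on $(\xi, j)$ via \refactionfullyfaithful.

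The main obstacle in this proof is the $\mor{\Gamma}$-equivariance check, which requires carefully tracking the $H$-corrections that restore horizontality of the lifts $\tilde\gamma_i$ after translation by $(h,g)$; once the connectivity of $\nu$ is exploited to produce the analogue of \cref{lem:hormor:c} at the level of paths in $J$, the remaining computation is a mechanical unwinding of the defining relations, entirely parallel to the verifications carried out for the intermediate groupoid actions.
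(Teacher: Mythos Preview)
Your proposal is correct and follows essentially the same approach as the paper: check descent to the quotient, anchor preservation, equivariance under the left $\inf P_x$- and right $\inf P'_y$-actions, $\mor{\Gamma}$-equivariance, and smoothness, each by manipulating a chosen horizontal lift.

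Two points where your execution diverges from the paper are worth flagging. First, for $\mor{\Gamma}$-equivariance you anticipate needing ``$H$-corrections that restore horizontality of the lifts $\tilde\gamma_i$ after translation by $(h,g)$''. In fact no correction is needed: since the $\mor{\Gamma}$-action on the compositions is $(\xi,j)\cdot(h,g)=(\xi\cdot(h,g),j\cdot\id_g)$, the paper simply uses the translated lift $\tilde\gamma_i\cdot\id_g$, which is horizontal directly by \cref{lem:horF:b} (action by a \emph{constant} $\id_g$). The $h$-part only enters through $\tilde\rho_0=R(\rho_0,(h^{-1},t(h)g))$ and affects only $j'$. So this step is actually cleaner than you suggest. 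Second, your smoothness sketch proposes building smooth families of horizontal lifts via sections of $\alpha_l$ and the ODE of \cref{lem:horF}. The paper instead uses the already-established action compatibilities to reduce, inside a chart $\phi_{\xi_0,\rho,g}$, to computing the $j$-target of a \emph{single fixed} horizontal lift of $\xi_0$ for varying $j$; only $\rho'_n$ then depends on the chart parameters, and its smoothness follows because $\alpha_l:J\to\ob{\inf P}$ is a principal $\inf P'$-bundle. Your route would also work but requires extra care to make the family of lifts depend smoothly on the chart parameter, whereas the paper's reduction sidesteps this entirely.
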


\begin{proof}
Again consulting  \refcompositionanafunctors\ we have to check first that
\begin{equation*}
J_{\gamma}(\xi\circ \rho, \rho^{-1}\circ j)=J_{\gamma}(\xi,j)\text{.}
\end{equation*}
Let $\tilde\xi = (\{\rho_i\},\{\tilde\gamma_i\})$ be a lift $\xi$ with $j$-target $(j',\xi')$. Then, a lift $\tilde\zeta$ of $\xi \circ \rho$ is obtained from $\tilde\xi$ by only changing $\rho_n$ to $\tilde \rho_n:=\rho^{-1}\circ \rho_n$. We compute the $j$-target $(j',\xi')$ of $\tilde\xi$ and the $(\rho^{-1}\circ j)$-target $(j'',\zeta')$ of $\tilde\zeta$. The only difference is at their last morphisms $\rho_n'$ and $\tilde\rho_n'$, whose defining identities are $\rho_{n} \circ \tilde\gamma_{n}(1) = j\circ \rho_{n}'$ and $\tilde\rho_{n} \circ \tilde\gamma_{n}(1) = \rho^{-1}\circ j\circ \tilde\rho_{n}'$, showing that $\tilde\rho_n'=\rho_n'$ and thus $\xi'=\zeta'$. The equality $j'=j''$ is obvious from their defining identities; this shows the claim.
Now we have a well-defined  map  $J_y \circ F_{\gamma} \Rightarrow F_{\gamma}'\circ J_x$ and have to check that it is a $\Gamma$-equivariant transformation.

That $J_{\gamma}$ is anchor-preserving is straightforward to see.
\begin{comment}
It is anchor-preserving:
\begin{equation*}
\alpha_l(\xi) =s(\rho_0)=t(\rho_0^{-1})=\alpha_l(j')
\quand
\alpha_r(j)=t(\rho_n')=\alpha_r(\xi')\text{.}
\end{equation*}
\end{comment}
It also respects the actions:
for the right action we have to show that
\begin{equation*}
J_{\gamma}(\xi,j \circ \rho)=(j',\xi'\circ \rho)\text{,}
\end{equation*}
where $(j',\xi')$ is the $j$-target of a horizontal lift $\tilde\xi=(\{\rho_i\},\{\tilde\gamma_i\})$ of $\xi$. The same $\tilde\xi$ is also a horizontal  lift of $\xi$, and the only difference between its $j$-target and its $(j \circ \rho)$-target is at the last morphisms, where we get instead of $\rho_n'$ the morphism $\rho_n''=\rho^{-1} \circ \rho_n'$.  Thus, its $(j \circ \rho)$-target is $(j',\xi'\circ \rho)$, as claimed.
For the left action we have to show
\begin{equation*}
J_{\gamma}(\rho \circ \xi,j )=(\rho \circ j',\xi')\text{.}
\end{equation*}
We choose for $\rho\circ \xi$ the horizontal  lift $\tilde\xi$ with only $\rho_0$ changed to $\tilde \rho_0=\rho_0 \circ \rho^{-1}$. Correspondingly, $j'$ changes to $\tilde j' =\rho \circ j'$, while $\xi'$ remains unchanged. This shows the claim.

We check that the $\mor{\Gamma}$-action is preserved, which is equivalent to the identity
\begin{equation*}
J_{\gamma}(\xi\cdot(h,g),j\cdot\id_g)=(j'\cdot(h,g),\xi'\cdot g)
\end{equation*}
see \refmorgammaactiononcomposition. 
\begin{comment}
Indeed,
\begin{multline*}
J_{\gamma}((\xi,j)\cdot (h,g))=J_{\gamma}(\xi\cdot(h,g),j\cdot\id_g)
\\=(j'\cdot(h,g),\xi'\cdot \id_g)=(j',\xi')\cdot(h,g)=J_{\gamma}(\xi,j)\cdot(h,g)
\end{multline*}
\end{comment}
Here we have again fixed a choice $\tilde\xi=(\{\rho_i\},\{\tilde\gamma_i\})$ of a horizontal  lift of $\xi$, and defined $(j',\xi')$ as the $j$-target of $\tilde\xi$. For 
\begin{equation*}
\xi\cdot(h,g)=R(\rho_n,g) \ast R(\gamma_n,g) \ast ... \ast R(\gamma_1,g)\ast R(\rho_0,(h^{-1},t(h)g))
\end{equation*}
we choose the horizontal  lift $(\{\tilde\rho_i\},\{\tilde\gamma_i \cdot \id_g\})$ with $\tilde\rho_i:= R(\rho_i ,g)$ for $1\leq i \leq n$ and $\tilde\rho_0 := R(\rho_0,(h^{-1},t(h)g))$.
We compute its $(j\cdot\id_g)$-target. We obtain the paths $R(\gamma_i',g)$ and the morphisms $R(\rho_i',g)$, and hence $\xi'\cdot \id_g$. The change in $\tilde\rho_0$ only enters the defining identity for $j'$ from $\rho_{0}\circ j'= \tilde\gamma_{1}(0)$ to $R(\rho_0,(h^{-1},t(h)g))\circ \tilde j'= \tilde\gamma_{1}(0)\cdot \id_g$, ending up with $\tilde j'= j'\cdot(h,g)$; this shows the claim.
\begin{comment}
Indeed,
\begin{align*}
(\rho_0 \circ (\tilde j' \cdot (h,g)^{-1}))\cdot\id_g &=R(\rho_0,(1,t(h)g)) \circ \tilde j' )\cdot (h,g)^{-1}\cdot\id_g 
\\&= R(\rho_0,(1,t(h)g)\cdot (h,g)^{-1}) \circ \tilde j'  
\\&=R(\rho_0,(h^{-1},t(h)g))\circ \tilde j'
\\&= \tilde\gamma_{1}(0)\cdot \id_g
\\&= (\rho_{0}\circ j')\cdot \id_g
\end{align*}
\end{comment}

Finally, we check that our map $J_{\gamma}$  is smooth. We consider a chart of $F_{\gamma} \ttimes{\alpha_r}{\alpha_l} J_y$,
\begin{equation*}
U \ttimes{\alpha_r\circ \sigma_{\xi_0,\rho,g}}{t} \mor{\inf P_y}\ttimes{s}{\alpha_l} J_y \to \alpha_l^{-1}(U)\ttimes{\alpha_r}{\alpha_l} J_y:(p,\tilde \rho,j)\mapsto (\sigma_{\xi_0,\rho,g}(p) \circ \tilde \rho,j)\text{,}
\end{equation*}
where $\sigma_{\xi_0,\rho,g}$ is the smooth section defined in \cref{sec:defpartranspaths}. 
\begin{comment}
Explicitly,
\begin{equation*}
\sigma_{\xi_0,\rho,g}(p) =(\rho(p_0,p)^{-1}\circ \xi_0)\cdot (1,g(p_0,p)^{-1})\text{.}
\end{equation*}
\end{comment}
Using the  approved compatibility with the various  actions, we obtain
\begin{equation}
\label{eq:lem:Jsmooth}
J_{\gamma}(\sigma_{\xi_0,\rho,g}(p) \circ \tilde \rho,j)=(\rho(p_0,p)^{-1}\circ J_{\gamma}( \xi_0,(\tilde\rho\circ j)\cdot (1,g(p_0,p))))\cdot (1,g(p_0,p)^{-1})\text{.}
\end{equation}
\begin{comment}
Indeed,
\begin{align*}
J_{\gamma}(\sigma_{\xi_0,\rho,g}(p) \circ \tilde \rho,j) &=J_{\gamma}((\rho(p_0,p)^{-1}\circ \xi_0)\cdot (1,g(p_0,p)^{-1}),\tilde\rho\circ j) 
\\&=J_{\gamma}((\rho(p_0,p)^{-1}\circ \xi_0,(\tilde\rho\circ j)\cdot (1,g(p_0,p)))\cdot (1,g(p_0,p)^{-1})) \\&=J_{\gamma}((\rho(p_0,p)^{-1}\circ \xi_0,(\tilde\rho\circ j)\cdot (1,g(p_0,p)))\cdot (1,g(p_0,p)^{-1}) \\&=(\rho(p_0,p)^{-1}\circ J_{\gamma}( \xi_0,(\tilde\rho\circ j)\cdot (1,g(p_0,p))))\cdot (1,g(p_0,p)^{-1})\text{.} \end{align*}
\end{comment}
Now let $\tilde\xi$ be a horizontal lift for $\xi_0$. Let $(j',\xi')$ be its $j$-target (note that $j=(\tilde\rho\circ j)\cdot (1,g(p_0,p))$ for $\tilde\rho=\id$ and $p=p_0$). Now we compute its $((\tilde\rho\circ j)\cdot (1,g(p_0,p)))$-target for general $\tilde\rho$ and $p$.
The change does not affect $j'$, and we denote it by $(j',\xi'(p,\tilde\rho,j))$, with $\xi'(p_0,\id,j)=\xi'$. In fact, the change only affects the last morphism $\rho_n'(p,\tilde\rho,j)$ of $\xi'(p,\tilde\rho,j)$. Its defining identity is
\begin{equation*}
\rho_n \circ \tilde\gamma_n(1) = (\tilde\rho\circ j)\cdot (1,g(p_0,p))\circ \rho_n'(p,\tilde\rho,j)\text{.}
\end{equation*}
Since $\alpha_l:J \to \ob{\inf P_x}$ is a principal $\inf P_y$-bundle, this shows that $\rho_n'(p,\tilde\rho,j)$ depends smoothly on all parameters. We can write $\xi'(p,\tilde\rho,j)=\xi'\circ (\rho_n' \circ \rho_n'(p,\tilde\rho,j)^{-1} )$. Thus,
\begin{equation*}
J_{\gamma}( \xi_0,(\tilde\rho\circ j)\cdot (1,g(p_0,p)))=(j',\xi'\circ (\rho_n' \circ \rho_n'(p,\tilde\rho,j)^{-1}))=(j',\xi')\circ (\rho_n' \circ \rho_n'(p,\tilde\rho,j)^{-1} )\text{.}
\end{equation*}
Inserting into \cref{eq:lem:Jsmooth} gives our final result for the map $J_{\gamma}$ in above chart:
\begin{equation*}
J_{\gamma}(\sigma_{\xi_0,\rho,g}(p) \circ \tilde \rho,j)=(\rho(p_0,p)^{-1}\circ (j',\xi')\circ (\rho_n' \circ \rho_n'(p,\tilde\rho,j)^{-1} ))\cdot (1,g(p_0,p)^{-1})\text{.}
\end{equation*}
The right hand side is an expression of smooth functions in $(p,\tilde\rho,j)$ using operations that are smooth by \cref{prop:smoothpt}; thus, it is smooth.
\end{proof}

\begin{remark}
\label{re:Jfunctor}
Suppose $\phi:\inf P \to \inf P'$ is a fibre-preserving, smooth, $\Gamma$-equivariant functor such that $\Omega = \phi^{*}\Omega'$. Then, there is a well-defined map $\phi: F_{\gamma} \to F_{\gamma}'$ defined by associating to $\xi=\rho_n\ast \gamma_n\ast ... \ast \rho_0\in F_{\gamma}$ the element $\phi(\xi) := \phi(\rho_n)\ast \phi(\gamma_n) \ast ... \ast \phi(\rho_0)\in F_{\gamma}'$. Now let $J=\ob{\inf P} \ttimes{\phi}{t} \mor{\inf P'}$ be the associated anafunctor equipped with its canonical $\Omega'$-pullback $\nu$, see \cref{rem:smoothfunctorconnection}. Then, the transformation
\begin{equation*}
J_{\gamma} : F_{\gamma} \ttimes{\alpha_r}{\alpha_l} J_y \to J_x \ttimes{\alpha_r}{\alpha_l}F'_\gamma
\end{equation*}
can be expressed in terms of the functor $\phi$  by the formula
\begin{equation*}
J_{\gamma}(\xi,(p,\rho'))= ((p',\id_{\phi(p')}), \phi(\xi) \circ \rho')\text{,}
\end{equation*}
where $p=\alpha_r(\xi)$ and $p'=\alpha_l(\xi)$.
Indeed, a horizontal  lift $\tilde\xi=(\{\rho_i\},\{\tilde\gamma_i\})$ of $\xi$ is obtained by choosing a representative  $\xi=\rho_n\ast \gamma_n\ast ... \ast \rho_0$  and setting $\tilde\gamma_i:=(\gamma_i,\id_{\phi( \gamma_i)})$. By \cref{rem:hor:functor,lem:hormor:a}, $\tilde\gamma_i$ is horizontal with respect to $\nu$. We compute the $(p,\rho')$-target $(j',\xi')$ of $\tilde\xi$: first we obtain $\gamma_i' =\phi(\gamma_i)$.
\begin{comment}
Indeed,
\begin{equation*}
\gamma_i' = \alpha_r (\tilde\gamma_i)=t(\id_{\phi( \gamma_i)})=\phi(\gamma_i)
\end{equation*}
\end{comment}
It is then straightforward to check that $\rho_i'=\phi(\rho_i)$ (for $1\leq i < n$) as well as  $\rho_n' = \rho'^{-1}\circ \phi(\rho_n)$, so that 
\begin{equation*}
\xi'=(\rho'^{-1}\circ \phi(\rho_n)) \ast \phi(\gamma_n)\ast ... \ast \phi(\gamma_1)\ast \id= \phi(\rho_0)\circ \phi(\xi) \circ \rho'\text{.}
\end{equation*} 
\begin{comment}
Indeed, the defining identity for $1\leq i < n$ is $\rho_i \circ \tilde\gamma_i(t_i) = \tilde\gamma_{i+1}(t_i)\circ \rho_i'$, which gives here
\begin{equation*}
\rho_i \circ (\gamma_i(t_i),\id_{\phi( \gamma_i(t_i))}) = (\gamma_{i+1}(t_i),\id_{\phi( \gamma_{i+1}(t_i))})\circ \rho_i'
\end{equation*}
which simplifies to
\begin{equation*}
 (t(\rho_i),\phi(\rho_i)) = (\gamma_{i+1}(t_i),\rho_i')
\end{equation*}
and thus gives the claim. For $i=n$ the defining identity is $\rho_n \circ \tilde\gamma_n(1) = j\circ \rho_n'$, which is here
\begin{equation*}
\rho_n \circ (\gamma_n(1),\id_{\phi( \gamma_n(1))})= (p,\rho')\circ \rho_n'
\end{equation*}
which simplifies to
\begin{equation*}
 (t(\rho_n),\phi(\rho_n))= (p,\rho'\circ \rho_n')\text{.}
\end{equation*}
\end{comment}
Finally, we obtain $j'=(\alpha_l(\xi),\phi(\rho_0)^{-1})$.
\begin{comment}
Indeed.
\begin{equation*}
j'=\rho_0^{-1}\circ \tilde\gamma_1(0)=\rho_0^{-1}\circ (\gamma_i(0),\id_{\phi( \gamma_i(0))})=(s(\rho_0),\phi(\rho_0)^{-1})=(\alpha_l(\xi),\phi(\rho_0)^{-1})\text{.}
\end{equation*} 
\end{comment}
The result is 
\begin{equation*}
(j',\xi')=((\alpha_l(\xi),\phi(\rho_0)^{-1}),\phi(\rho_0)\circ \phi(\xi) \circ \rho')= ((\alpha_l(\xi),\id), \phi(\xi) \circ \rho')\text{,}
\end{equation*}
as claimed.

Now we suppose that  $\kappa=(\kappa_0,\kappa_1)$ shifts the canonical $\Omega'$-pullback  $\nu$ to another connection-preserving and connective  pullback $\nu^{\kappa}$.
The connections $\Omega$ and $\Omega'$ are then related by the formulas of \refshiftedpullback. Our lifts $\tilde\gamma_i=(\gamma_i,\id_{\phi( \gamma_i)})$ are no longer horizontal with respect to $\nu^{\kappa}$. By \cref{lem:horF}
there exist  unique paths $h_i:[t_{i-1},t_i] \to H$ with $h_i(t_{i-1})=1$ such that $\tilde\gamma_i^{\kappa} := \tilde\gamma_i \cdot (h_i,t(h_i)^{-1})= (\gamma_i,R(\id_{\phi( \gamma_i)},(h_i,t(h_i)^{-1}))) $ is horizontal. From \cref{rem:hor:functor} one can conclude that $h_i$ solves the initial value problem
\begin{equation}
\label{eq:functorJ:h}
\dot h_i(t) = -h_i(t) \kappa_{0}(\dot\gamma_i(t)) \quand h_i(t_{i-1})=1\text{.}
\end{equation}
\begin{comment}
Note that $\nu^{\kappa}$ is connection-preserving if and only if $\fa\Omega = \phi^{*}\fa{\Omega'} + t_{*}(\kappa_0)$ (\cite[Remark 5.2.10 (e)]{Waldorf2016}).
Indeed, the condition of \cref{rem:hor:functor} is for $\rho_i:=R(\id_{\phi( \gamma_i)},(h_i,t(h_i)^{-1}))$
\begin{align*}
0&=\fb{\Omega'}(\dot\rho_i(t))+\kappa_0(\dot\gamma_i(t))
\\&= R^{*}\fb{\Omega'}(\id_{\phi( \gamma_i)},(h_i,t(h_i)^{-1}))+\kappa_0(\dot\gamma_i(t))
\\&= (\alpha_{t(h_i)})_{*}\left (\mathrm{Ad}_{h_i}^{-1}(\phi^{*}\id^{*}\fb{\Omega'}(\dot\gamma_i))+(\tilde \alpha_{h_i})_{*}(\phi^{*}\fa{\Omega'}(\dot\gamma_i)) +h_i^{-1}\dot h_i \right)+\kappa_0(\dot\gamma_i(t))
\\&= \mathrm{Ad}_{h_i}\left ((\tilde \alpha_{h_i})_{*}(\phi^{*}\fa{\Omega'}(\dot\gamma_i)) +h_i^{-1}\dot h_i \right)+\kappa_0(\dot\gamma_i(t))
\\&= \mathrm{Ad}_{h_i}\left ((\tilde \alpha_{h_i})_{*}(\underbrace{\fa{\Omega}(\dot\gamma_i)}_{=0}-t_{*}(\kappa_0(\dot\gamma_i))) +h_i^{-1}\dot h_i \right)+\kappa_0(\dot\gamma_i(t))
\\&= \mathrm{Ad}_{h_i}\left (-\mathrm{Ad}_{h_i}^{-1}(\kappa_0(\dot\gamma_i))+\kappa_0(\dot\gamma_i) +h_i^{-1}\dot h_i \right)+\kappa_0(\dot\gamma_i(t))
\\&= \mathrm{Ad}_{h_i}\left (\kappa_0(\dot\gamma_i) +h_i^{-1}\dot h_i \right)
\end{align*}
This is equivalent to
\begin{equation*}
h_i^{-1}\dot h_i=-\kappa_0(\dot\gamma_i)\text{.} 
\end{equation*}
\end{comment}
Since $\alpha_l(\tilde\gamma_i^{\kappa})=\gamma_i$ it is clear that $\tilde\xi^{\kappa}=(\{\rho_i\},\{\tilde\gamma_i^{\kappa}\})$ is a horizontal lift of $\xi$. We compute the $(p,\rho')$-target $(j',\xi')$ of $\tilde\xi^{\kappa}$. We get $\gamma_i' =R(\phi(\gamma_i),t(h_i)^{-1})$, and for the morphisms we get $\rho_i'=  R(\phi(\rho_i),(h_i(t_i),t(h_i(t_i))^{-1}))$ for $1\leq i < n$,  $ \rho_n' = \rho'^{-1}\circ R( \phi(\rho_n) ,(h_n(1),t(h_n(1))^{-1}))$, and $j'=(p,\phi(\rho_0)^{-1})$. 
\begin{comment}
The defining identity for $1\leq i < n$ is $\rho_i \circ \tilde\gamma_i^{\kappa}(t_i) = \tilde\gamma_{i+1}^{\kappa}(t_i)\circ \rho_i'$, which gives 
\begin{equation*}
\rho_i \circ(\gamma_i(t_i),R(\id_{\phi( \gamma_i(t_i))},(h_i(t_i),t(h_i(t_i))^{-1})))=(\gamma_{i+1}(t_i),\id_{\phi( \gamma_{i+1}(t_i))})\circ \rho_i'
\end{equation*}
This implies
\begin{equation*}
\rho_i'=\phi(\rho_i) \circ R(\id_{\phi( \gamma_i(t_i))},(h_i(t_i),t(h_i(t_i))^{-1}))=R(\phi(\rho_i),(h_i(t_i),t(h_i(t_i))^{-1}))
\end{equation*}
The defining identity for $\rho_n$ is $\rho_n \circ \tilde\gamma_n^{\kappa}(1) = j\circ \rho_n'$, which gives
\begin{equation*}
\rho_n \circ (\gamma_n(1),R(\id_{\phi( \gamma_n(1))},(h_n(1),t(h_n(1))^{-1}))) = (p,\rho')\circ \rho_n'
\end{equation*}
This implies
\begin{equation*}
 \rho_n' = \rho'^{-1}\circ R( \phi(\rho_n) ,(h_n(1),t(h_n(1))^{-1}))\text{.} 
\end{equation*}
Finally,
\begin{equation*}
j'=\rho_0^{-1}\circ \tilde\gamma_1(0)=\rho_0^{-1}\circ (\gamma_1(0),\id_{\phi( \gamma_1(0))})
\end{equation*}
\end{comment} 
Summarizing, we have 
\begin{equation*}
J_{\gamma}(\xi,(p,\rho'))=((p',\id_{\phi(p')}), \phi^{\kappa}(\xi)\circ \rho' )\text{,}
\end{equation*}
where $p':=\alpha_l(\xi)$  and we now define
\begin{equation*}
\phi^{\kappa}(\xi) := \rho_n'' \ast \gamma_n'\ast \rho_{n-1}''\ast ... \ast \gamma_1'\ast \rho_0'' \end{equation*}
from the following components:
 $\gamma_i'$ is given by above formulae, with $h_i$ determined by \cref{eq:functorJ:h},  $\rho_i'':=R(\phi(\rho_i),(h_i(t_i),t(h_i(t_i))^{-1}))$ for $1\leq i \leq n$, and $\rho_0'':=\phi(\rho_0)$. \begin{comment}
Indeed,
\begin{align*}
J_{\gamma}(\xi,(p,\rho'))&=((p,\phi(\rho_0)^{-1}),\rho_n' \ast \gamma_n'\ast ... \ast \gamma_1'\ast \id )\\&=((p,\phi(\rho_0)^{-1}),\rho_0'\circ \phi^{\kappa}(\xi)\circ \rho' )\\&=((p,\id), \phi^{\kappa}(\xi)\circ \rho'  )
\end{align*}
\end{comment}
\end{remark}

The following result describes in which way the transformation $J_{\gamma}$ is compatible with path composition. In the 2-functor formalism described in \cref{sec:orga} it is one of the axioms of a pseudonatural transformations between the transport 2-functors associated to the two principal $\Gamma$-2-bundles.

\begin{proposition}
\label{lem:T1}
Let $J:\inf P_1 \to \inf P_2$ be a 1-morphism in $\zweibuncon\Gamma M$. The associated transformation $J_{\gamma}$ is compatible with path composition in the sense that the following diagram commutes for each pair of composable paths $\gamma_1:x \to y$ and $\gamma_2:y \to z$:
\begin{equation*}
\alxydim{@C=-1.2cm@R=\xyst}{ && J_z \circ F_{\gamma_2}\circ F_{\gamma_1} \ar@{=>}[drr]^{\id \circ c_{\gamma_1,\gamma_2}} \ar@{=>}[dll]_{J_{\gamma_2}\circ \id} && \\ F'_{\gamma_2} \circ J_y \circ F_{\gamma_1} \ar@{=>}[dr]_{\id \circ J_{\gamma_1}} &&&& \qquad J_z \circ F_{\gamma_2\ast \gamma_1} \quad \ar@{=>}[dl]^{J_{\gamma_2\ast\gamma_1}} \\ & F'_{\gamma_2}\circ F'_{\gamma_1}\circ J_x \ar@{=>}[rr]_-{c'_{\gamma_1,\gamma_2}\circ \id} & \hspace{3.5cm} & F_{\gamma_2\ast \gamma_1} \circ J_x}
\end{equation*}
\end{proposition}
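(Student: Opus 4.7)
The plan is to chase the hexagon on a composable triple $(\xi_1,\xi_2,j)$ with $\xi_k\in F_{\gamma_k}$ and $j\in J_z$, comparing the two routes via explicit horizontal lifts. Fix representatives $\xi_k=\rho^{(k)}_{n_k}\ast\gamma^{(k)}_{n_k}\ast\ldots\ast\rho^{(k)}_0$ and choose horizontal lifts $\tilde\xi_k=(\{\rho^{(k)}_i\},\{\tilde\gamma^{(k)}_i\})$ for $k=1,2$. Going counterclockwise, the first leg produces the $j$-target $(j'_2,\xi'_2)$ of $\tilde\xi_2$; then $J_{\gamma_1}$ applied to $(\xi_1,j'_2)$ yields the $j'_2$-target $(j'_1,\xi'_1)$ of $\tilde\xi_1$; and $c'_{\gamma_1,\gamma_2}$ finally delivers $(j'_1,\xi'_2\ast\xi'_1)$. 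Going clockwise, $c_{\gamma_1,\gamma_2}$ first produces $\xi_2\ast\xi_1\in F_{\gamma_2\ast\gamma_1}$, a representative of which is the reparameterized concatenation with junction morphism $\rho^{(2)}_0\circ\rho^{(1)}_{n_1}$; then $J_{\gamma_2\ast\gamma_1}$ applied to $(\xi_2\ast\xi_1,j)$ delivers a $j$-target to be computed from any horizontal lift.

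The key observation is that a horizontal lift $\widetilde{\xi_2\ast\xi_1}$ of $\xi_2\ast\xi_1$ can be built by the same reparameterization-and-concatenation recipe used by $c_{\gamma_1,\gamma_2}$: reparameterize each $\tilde\gamma^{(1)}_i$ onto a subinterval of $[0,\tfrac{1}{2}]$, each $\tilde\gamma^{(2)}_i$ onto a subinterval of $[\tfrac{1}{2},1]$, keep the inner morphisms $\rho^{(k)}_i$, and use $\rho^{(2)}_0\circ\rho^{(1)}_{n_1}$ at the junction. Horizontality of each $\tilde\gamma^{(k)}_i$ is a pointwise condition on $\nu_0$ and hence invariant under reparameterization, and the algebraic composability conditions $\alpha_l(\tilde\gamma^{(k)}_i(t_i))=s(\rho^{(k)}_i)$ carry over verbatim. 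By \cref{lem:compmorph:def}, the $j$-target of $J_{\gamma_2\ast\gamma_1}(\xi_2\ast\xi_1,j)$ can be computed using this $\widetilde{\xi_2\ast\xi_1}$.

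Now I read off the target. The unique morphisms $\rho'\in\mor{\inf P_2'}$ defined by $\rho_i\circ\tilde\gamma_i(t_i)=\tilde\gamma_{i+1}(t_i)\circ\rho'_i$ separate cleanly into those producing $\xi'_1$ on the first half and those producing $\xi'_2$ on the second half, except at the junction, where the identity reads
\begin{equation*}
(\rho^{(2)}_0\circ\rho^{(1)}_{n_1})\circ\tilde\gamma^{(1)}_{n_1}(1)=\tilde\gamma^{(2)}_1(0)\circ\mu
\end{equation*}
for an unknown $\mu\in\mor{\inf P_2'}$. Using the defining equation $\tilde\gamma^{(2)}_1(0)=\rho^{(2)}_0\circ j'_2$ and cancelling $\rho^{(2)}_0$ (principality of $\alpha_l:J\to\ob{\inf P_1}$) gives $\rho^{(1)}_{n_1}\circ\tilde\gamma^{(1)}_{n_1}(1)=j'_2\circ\mu$, which is precisely the identity defining the last morphism of $\xi'_1$ in the $j'_2$-target of $\tilde\xi_1$. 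Hence the concatenated output equals $\xi'_2\ast\xi'_1$; the outgoing $j$-component at $t=0$ is $(\rho^{(1)}_0)^{-1}\circ\tilde\gamma^{(1)}_1(0)=j'_1$. Both routes therefore yield $(j'_1,\xi'_2\ast\xi'_1)$.

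The main obstacle I expect is the junction bookkeeping described above: keeping track of how the auxiliary morphism $j'_2$ threads between the two applications of $J_{\gamma_k}$ and verifying that the extra factor of $\rho^{(2)}_0$ picked up by the junction is exactly the factor absorbed by $c'_{\gamma_1,\gamma_2}$, which relies crucially on the uniqueness part of the principal bundle structure on $\alpha_l$. A secondary issue is to check that the specific reparameterizations used by $c$ and $c'$ are compatible with the horizontal-lift recipe, but this is immediate because horizontality is pointwise and anchors and $\mor\Gamma$-actions are preserved; consequently $\Gamma$-equivariance of the full chase follows automatically.
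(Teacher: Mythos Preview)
Your proposal is correct and follows essentially the same approach as the paper: choose horizontal lifts $\tilde\xi_1,\tilde\xi_2$ separately, compute the counter-clockwise route to get $(j'_1,\xi'_2\ast\xi'_1)$, concatenate the lifts to a horizontal lift of $\xi_2\ast\xi_1$, and verify that its $j$-target coincides. Your junction analysis (solving for $\mu$ via $\tilde\gamma^{(2)}_1(0)=\rho^{(2)}_0\circ j'_2$ and cancelling $\rho^{(2)}_0$) is exactly the computation the paper alludes to as ``straightforward''; the only minor remark is that cancelling $\rho^{(2)}_0$ needs nothing more than invertibility of groupoid morphisms rather than principality of $\alpha_l$.
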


\begin{proof}
We consider an element $(\xi_1,\xi_2,j)\in F_{\gamma_1}\ttimes{\alpha_r}{\alpha_l}F_{\gamma_2} \ttimes{\alpha_r}{\alpha_l} J_z$. We choose separately horizontal lifts $\tilde\xi_1$ and $\tilde\xi_2$ of $\xi_1$ and $\xi_2$ to $J$. Let $(j',\xi_2')$ be the $j$-target of $\tilde\xi_2$, and let $(j'',\xi_1')$ be the $j'$-target of $\tilde\xi_1$. Then, going counter-clockwise, we have
\begin{equation*}
(\xi_1,\xi_2,j) \mapsto (\xi_1,j',\xi_2')\mapsto (j'',\xi_1',\xi_2')\mapsto (j'',\xi_2'\ast\xi_1')\text{.}
\end{equation*}
Under the obvious reparameterization and renumbering one can combine the horizontal lifts $\tilde\xi_1$ and $\tilde\xi_2$ to a lift $\widetilde{\xi_2\ast\xi_1}$ of $c_{\gamma_1,\gamma_2}(\xi_1,\xi_2)=\xi_2\ast\xi_1$. A straightforward computation shows that its $j$-target is $(j'',\xi_2'\ast\xi_1')$. 
This shows that the diagram is commutative.
\end{proof}

\begin{comment}
\begin{remark}
It can be written as a pasting diagram:
\begin{equation*}
\alxydim{@=1.7cm}{\inf P_x \ar[r]^{F_{\gamma_1}} \ar[d]_{J_x} & \inf P_y \ar[r]^{F_{\gamma_2}} \ar@{=>}[dl]|*+{J_{\gamma_1}} \ar[d]^{J_y} & \inf P_z \ar@{=>}[ld]|*+{J_{\gamma_2}} \ar[d]^{J_z} \\ \inf P_x' \ar@/_4pc/[rr]_{F'_{\gamma_2\circ\gamma_1}}="1" \ar@{=>}[r];"1"|*{c'_{\gamma_1,\gamma_2}} \ar[r]_{F_{\gamma_1}'} & \inf P_y' \ar[r]_{F'_{\gamma_2}} & \inf P'_z}
=\alxydim{@=1.7cm}{\inf P_x \ar[r]^{F_{\gamma_1}}  \ar@/_4pc/[rr]_{F_{\gamma_2\circ\gamma_1}}="1" \ar[d]_{J_x} & \inf P_{y} \ar@{=>}"1"|{c_{\gamma_1,\gamma_2}} \ar[r]^{F_{\gamma_2}}& \inf P_z \ar@/^4pc/@{=>}[dll]|*+{J_{\gamma_2\circ \gamma_1}} \ar[d]^{J_z} \\ \inf P_x' \ar@/_4pc/[rr]_{F_{\gamma_2\circ\gamma_1}'} && \inf P_z'}
\end{equation*}
\end{remark}
\end{comment}
Further,  $J_{\gamma}$  is compatible with the   composition of bundle morphisms in the following sense:

\begin{proposition}
\label{lem:morphcompcomp}
Suppose $J:\inf P \to \inf P'$ and $K:\inf P'\to  \inf P''$ are 1-morphisms in $\zweibuncon\Gamma M$. Then, the following diagram commutes for each path $\gamma:x \to y$:
\begin{equation*}
\alxydim{@C=0em@R=\xyst}{K_y \circ J_y \circ F_{\gamma} \ar@{=>}[dr]_{\id \circ J_{\gamma}} \ar@{=>}[rr]^{(K \circ J)_{\gamma}} && F''_{\gamma} \circ K_x \circ J_x \ar@{<=}[dl]^{K_{\gamma} \circ \id}
\\&K_y \circ F'_{\gamma} \circ J_x &}
\end{equation*}
\end{proposition}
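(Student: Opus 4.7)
The plan is to trace a representative element $(\xi,j,k)\in F_{\gamma} \ttimes{\alpha_r}{\alpha_l} J_y \ttimes{\alpha_r}{\alpha_l} K_y$ through both sides of the triangle, and show the outputs agree in $F''_{\gamma}\circ K_x \circ J_x$. The key idea is to use a \emph{single} horizontal lift of $\xi$ to $J$ to drive all three computations at once, exploiting the fact that $K\circ J$ has total space $(J \ttimes{\alpha_r}{\alpha_l} K)/\mor{\inf P'}$ (see \refcompositionanafunctors).

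First, choose a horizontal lift $\tilde\xi^J=(\{\rho_i\},\{\tilde\gamma_i^J\})$ of $\xi$ to $J$. By \cref{lem:horF:a} each path $\alpha_r(\tilde\gamma_i^J)$ is horizontal in $\inf P'$, hence the analogue of \cref{lem:horF} for $K$ furnishes horizontal paths $\tilde\gamma_i^K:[t_{i-1},t_i]\to K$ with $\alpha_l(\tilde\gamma_i^K)=\alpha_r(\tilde\gamma_i^J)$. The pairs $\tilde\lambda_i:=[\tilde\gamma_i^J,\tilde\gamma_i^K]$ are then horizontal paths in $K\circ J$, and together with the morphisms $\rho_i$ they constitute a horizontal lift of $\xi$ to $K\circ J$. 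This is the lift I would use to compute $(K\circ J)_{\gamma}(\xi,[j,k])$.

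The heart of the proof is to unfold the defining identities of the $[j,k]$-target of this lift. For $1\leq i<n$ the identity $\rho_i\circ \tilde\lambda_i(t_i)=\tilde\lambda_{i+1}(t_i)\circ \rho_i''$ in $K\circ J$ translates, via the equivalence defining the quotient, into the existence of a unique $\sigma_i\in \mor{\inf P'}$ satisfying
\begin{align*}
\rho_i\circ \tilde\gamma_i^J(t_i) &= \tilde\gamma_{i+1}^J(t_i)\circ \sigma_i,
&
\sigma_i\circ \tilde\gamma_i^K(t_i) &= \tilde\gamma_{i+1}^K(t_i)\circ \rho_i''.
\end{align*}
The first identity identifies $\sigma_i$ with the morphism $\rho_i'$ produced by $J_{\gamma}$ from the lift $\tilde\xi^J$. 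Observing that the $\rho_i'$ together with the paths $\tilde\gamma_i^K$ constitute a horizontal lift of $\xi'=J_\gamma(\xi,j)_2$ to $K$ (the anchor conditions $\alpha_l(\tilde\gamma_i^K)=\gamma_i'=\alpha_r(\tilde\gamma_i^J)$ hold by construction), the second identity then recovers precisely the morphism produced by $K_{\gamma}$ from this lift when computing the $k$-target. The boundary cases $i=n$ and $i=0$ are treated the same way: at $i=n$ one reads off both the source relation $\rho_n\circ \tilde\gamma_n^J(1)=j\circ \rho_n'$ (identifying $\sigma_n=\rho_n'$) and the final target relation $\rho_n'\circ \tilde\gamma_n^K(1)=k\circ \rho_n''$; at $i=0$ the freedom to choose the representative of the class $[j'',k'']\in (K\circ J)_x$ by taking $\tau=\id$ yields $j''=\rho_0^{-1}\circ\tilde\gamma_1^J(0)=j'$ and $k''=\tilde\gamma_1^K(0)=k'$.

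Combining these comparisons, $(K\circ J)_{\gamma}(\xi,[j,k])=([j',k'],\xi'')$ with the very same $\xi''$ and the very same representative $(j',k')$ produced by first applying $J_\gamma$ and then $K_\gamma$. Independence of all choices was already established in \cref{lem:compmorph:def}, so no further checks are needed. I expect the main bookkeeping obstacle to be the careful unfolding of the $\mor{\inf P'}$-quotient relation in $K\circ J$ at the boundary cases $i=0$ and $i=n$; once the $\sigma_i=\rho_i'$ identification is made for interior $i$, the whole diagram collapses.
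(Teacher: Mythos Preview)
Your proposal is correct and follows essentially the same approach as the paper: both choose a horizontal lift of $\xi$ to $J$, then produce compatible horizontal paths in $K$ over the right anchors, and verify that the pairs $(\tilde\gamma_i^J,\tilde\gamma_i^K)$ form a horizontal lift of $\xi$ to $K\circ J$ whose $(j,k)$-target coincides with the result of applying $J_\gamma$ and $K_\gamma$ in succession. The paper phrases the second step as ``let $\tilde\xi'=(\{\rho_i^K\},\{\tilde\gamma_i^K\})$ be a horizontal lift of $\xi'$ to $K$'' with the implicit choice $\rho_i^K=\rho_i'$, whereas you make the identification $\sigma_i=\rho_i'$ explicit by unfolding the $\mor{\inf P'}$-quotient relation; the content is the same. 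One small point worth stating explicitly is why the pair $(\tilde\gamma_i^J,\tilde\gamma_i^K)$ is horizontal in $K\circ J$: this uses the definition of the pullback on a composition of anafunctors (\refpullbackoncomposition), which the paper cites at exactly this spot.
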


\begin{proof}
We start with an element $(\xi,j,k)$ in $F_{\gamma} \ttimes{\alpha_r}{\alpha_l} J_y \ttimes{\alpha_r}{\alpha_l} K_y$. Let $\tilde \xi=(\{\rho_i^{J}\},\{\tilde\gamma_i^{J}\})$ be a horizontal lift of $\xi$ to $J$, and let $(\xi',j')$ be its $j$-target. Let $\tilde\xi'=(\{\rho_i^{K}\},\{\tilde\gamma_i^{K}\})$ be a horizontal lift of $\xi'$ to $K$, and let $(k',\xi'')$ be its $k$-target. Then, going counter-clockwise results in $(j',k',\xi'')$. In order to compute the clock-wise direction, we notice that $(\{\rho_i^{J}\},\{(\tilde\gamma_i^J,\tilde\gamma_i^{K})\})$ is a horizontal lift of $\xi$ to $K \circ J$, using the definition of the pullback on the composition $K\circ J$ (see \refpullbackoncomposition).
A straightforward computation shows that its $(j,k)$-target is $(j',k',\xi'')$.
\begin{comment}
Indeed, we have $\alpha_r(\tilde\gamma_i^J,\tilde\gamma_i^{K})=\alpha_r(\tilde\gamma_i^{K})=\gamma_i''$. The defining identities for $1\leq i < n$ are
\begin{equation*}
 \tilde\gamma_i^{J}(t_i) \circ \rho_i^{K-1} =\rho^{J-1}_i\circ \tilde\gamma_{i+1}^{J}(t_i)
\quand
\rho_i ^{K}\circ \tilde\gamma_i^{K}(t_i) = \tilde\gamma_{i+1}^{K}(t_i)\circ \rho_i''\text{;}
\end{equation*}
they combine to
\begin{equation*}
\rho_i^{J}\circ( \tilde\gamma_i^{J}(t_i) , \tilde\gamma_i^{K}(t_i)) =( \tilde\gamma_{i+1}^{J}(t_i), \tilde\gamma_{i+1}^{K}(t_i))\circ \rho_i''\text{.}
\end{equation*}
At the end point, the defining identities are
\begin{equation*}
 \tilde\gamma_n^{J}(1)\circ \rho_n^{K-1}  =\rho_n^{J-1} \circ j\quand
\rho_n^{K} \circ \tilde\gamma_n^{K}(1) = k\circ \rho_n''\text{;}
\end{equation*}
they combine to
\begin{equation*}
\rho_n^{J} \circ(\tilde \gamma_n^{J}(1), \tilde\gamma_n^{K}(1)) = (j,k)\circ \rho_n''\text{.}
\end{equation*}
This shows that we obtain $\xi''$. Finally, we have (since $\rho_0^{K}=\id$)
\begin{equation*}
\rho_0^{J-1}\circ (\tilde\gamma_1^{J}(0) ,\tilde\gamma_1^{K}(0))=  (\rho_0^{J-1}\circ\tilde\gamma_1^{J}(0) ,\rho_0^{K-1}\tilde\gamma_1^{K}(0))=(j',k')\text{.}
\end{equation*}  
\end{comment}
\end{proof}

Finally, there is a compatibility condition with 2-morphisms, which is responsible for an axiom of a  modification in the 2-functor formalism of \cref{sec:orga}.

\begin{proposition}
\label{lem:modification}
Suppose $J,J':\inf P \to \inf P'$ are 1-morphisms in $\zweibuncon\Gamma M$, and $f: J \Rightarrow J'$ is a connection-preserving 2-morphism. Then, the following diagram commutes for each path $\gamma:x \to y$:
\begin{equation*}
\alxydim{@=\xyst}{J_y \circ F_{\gamma} \ar@{=>}[r]^{J_{\gamma}} \ar@{=>}[d]_{f_y \circ \id} & F'_{\gamma}\circ J_x \ar@{=>}[d]^{\id \circ f_x} \\ J'_y \circ F_{\gamma} \ar@{=>}[r]_{J'_{\gamma}}  & F'_{\gamma}\circ J'_x \text{.} }
\end{equation*}
\end{proposition}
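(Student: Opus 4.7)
The plan is to trace an arbitrary element of $F_{\gamma} \ttimes{\alpha_r}{\alpha_l} J_y$ around both paths of the square and show equality by exploiting the horizontal-lift construction of $J_{\gamma}$ and $J'_{\gamma}$ together with the condition $\nu = f^{*}\nu'$.

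First I would observe the key fact enabled by the hypothesis that $f$ is connection-preserving: if $\lambda:[a,b] \to J$ is a horizontal path (i.e.\ $\nu_0(\dot\lambda)=0$), then $f\circ \lambda$ is horizontal in $J'$, since $\nu'_0(\partial_t (f\circ \lambda)) = (f^{*}\nu'_0)(\dot\lambda) = \nu_0(\dot\lambda) = 0$. Moreover, $f$ is anchor-preserving, so $\alpha_l'(f\circ \lambda) = \alpha_l(\lambda)$ and $\alpha_r'(f\circ \lambda) = \alpha_r(\lambda)$; hence $f$ transports horizontal lifts to $J$ into horizontal lifts to $J'$ with the same base data.

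Concretely, given $(\xi,j) \in F_{\gamma} \ttimes{\alpha_r}{\alpha_l} J_y$, I would fix a horizontal lift $\tilde\xi=(\{\rho_i\},\{\tilde\gamma_i\})$ of $\xi$ to $J$ and then form $\tilde\xi^{f}:=(\{\rho_i\},\{f\circ \tilde\gamma_i\})$, which by the preceding observation is a horizontal lift of $\xi$ to $J'$ (the $\rho_i \in \mor{\inf P}$ are unchanged). Let $(j',\xi')$ be the $j$-target of $\tilde\xi$. Going clockwise in the square the image of $(\xi,j)$ is $(\id \circ f_x)(j',\xi') = (f_x(j'),\xi')$. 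Going counter-clockwise, one must compute the $f_y(j)$-target of $\tilde\xi^{f}$. The paths are $\alpha_r'(f\circ \tilde\gamma_i) = \alpha_r(\tilde\gamma_i) = \gamma_i'$, exactly as before. For the connecting morphisms, the defining identities $\rho_i \circ f(\tilde\gamma_i(t_i)) = f(\tilde\gamma_{i+1}(t_i))\circ \rho_i''$ and $\rho_n \circ f(\tilde\gamma_n(1)) = f_y(j) \circ \rho_n''$ can be solved using that $f$ is $\mor{\inf P}$- and $\mor{\inf P'}$-equivariant: $f(\rho_i \circ \tilde\gamma_i(t_i)) = f(\tilde\gamma_{i+1}(t_i)\circ \rho_i') = f(\tilde\gamma_{i+1}(t_i))\circ \rho_i'$ and analogously at $i=n$ using $f(j\circ \rho_n') = f_y(j) \circ \rho_n'$, so by uniqueness $\rho_i''=\rho_i'$ for all $i$ and hence the resulting path component equals $\xi'$. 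Finally, the initial data give $\rho_0^{-1}\circ f(\tilde\gamma_1(0)) = f(\rho_0^{-1}\circ \tilde\gamma_1(0)) = f_x(j')$. Thus $J'_{\gamma}(\xi,f_y(j)) = (f_x(j'),\xi')$, matching the clockwise result.

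There is no real obstacle; the argument is essentially bookkeeping. The only point requiring a little care is to make sure that the various equivariances of $f$ (left/right groupoid actions and $\mor{\Gamma}$-action) allow one to pull $f$ past the compositions $\rho_i \circ (-)$, $(-) \circ \rho_i'$, and $j \circ (-)$ used in defining the target; these are exactly the defining axioms of a 2-morphism between anafunctors recalled in \cref{rem:anafunctor:a}, so the identification $(\rho_i'')^{f} = \rho_i'$ is automatic. Since the construction $J_{\gamma}$ is well-defined independently of the chosen horizontal lift (\cref{lem:compmorph:def}), both diagonal compositions produce the same element of $F'_{\gamma} \circ J'_x$, establishing commutativity of the square.
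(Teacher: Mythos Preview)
Your proof is correct and follows essentially the same approach as the paper: take a horizontal lift $(\{\rho_i\},\{\tilde\gamma_i\})$ of $\xi$ to $J$, push it forward via $f$ to a horizontal lift $(\{\rho_i\},\{f(\tilde\gamma_i)\})$ of $\xi$ to $J'$ (using $\nu=f^{*}\nu'$ and anchor-preservation), and verify that its $f(j)$-target is $(f(j'),\xi')$. The paper compresses all of this into two sentences; your version simply spells out the bookkeeping.
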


\begin{proof}
Suppose $(\xi,j)\in F_{\gamma} \ttimes{\alpha_r}{\alpha_l} J_y$. Let $(\{\rho_i\},\{\tilde\gamma_i\})$ be a horizontal lift of $\xi$ to $J$, and let $(j',\xi')$ be its $j$-target. Then, $(\{\rho_i\},\{f(\tilde\gamma_i)\})$ is obviously a horizontal lift of $\xi$ to $J'$, and it is easy to show that its $f(j)$-target is $(f(j'),\xi')$. This shows commutativity.
\begin{comment}
Indeed, horizontality of $\varphi(\tilde\gamma_i)$ follows because $\varphi$ is connection-preserving. Since $\varphi$ is anchor-preserving, we have $\alpha_r(\varphi(\tilde\gamma_i))=\alpha_r(\tilde\gamma_i) $, and since it is action-preserving we obtain the same $\rho_i'$, and $\varphi(j')$.
\end{comment} 
\end{proof}

\subsection{Naturality with respect to pullback}

\label{sec:natpullbackpaths}

Suppose $\inf P$ is a principal $\Gamma$-2-bundle over $N$ with connection $\Omega$,  $f:M \to N$ is a smooth map, and $\gamma:[0,1] \to M$ is a path. We denote by $\inf P' := f^{*} \inf P$ the pullback bundle,  obtain a $\Gamma$-equivariant smooth functor
$\tilde f: \inf P' \to \inf P$,
and $\Omega' := \tilde f^{*}\Omega$ is a connection on $\inf P'$. We construct a transformation
\begin{equation*}
\alxydim{@=\xyst}{\inf P'_x \ar[d]_{\tilde f_x} \ar[r]^{F'_{\gamma}} &  \inf P_y \ar@{=>}[dl]|*+{\tilde f_{\gamma}} \ar[d]^{\tilde f_y} \\ \inf P_{f(x)} \ar[r]_{F_{f \circ \gamma}} & \inf P_{f(y)}}
\end{equation*}
We first recall that $\ob{\inf P'} = M \ttimes{f}{\pi} \ob{\inf P}$ and $\mor{\inf P'}=M \ttimes{f}{\tilde\pi}\mor{\inf P}$, using that $\pi:\ob{\inf P} \to M$ and $\tilde\pi: \mor{\inf P} \to M$ (defined as $\tilde\pi = \pi \circ t = \pi \circ s$) are submersions. We can hence canonically identify $\inf P'_x = \inf P_{f(x)}$, so that the functor $\tilde f_x: \inf P_x' \to \inf P_{f(x)}$ is just the identity. It remains to construct a transformation
\begin{equation*}
\tilde f_{\gamma}: F_{\gamma}' \Rightarrow F_{f \circ \gamma}\text{.}
\end{equation*}
Suppose $\xi\in F'_{\gamma}$, i.e. $\xi'=\rho_n \ast \gamma_n \ast ... \ast \rho_1\ast \gamma_1\ast \rho_0$, with $\gamma_i$ horizontal paths in $\inf P'$ such that $\pi'(\gamma_i(t))=\gamma(t)$. It is clear that $\tilde f \circ \gamma_i$ are horizontal paths in $\inf P$ with $\pi((\tilde f \circ \gamma_i)(t))= (f \circ \gamma)(t)$. Thus, $\tilde f_{\gamma}(\xi) := \tilde f(\rho_n) \ast (\tilde f \circ \gamma_n)\ast ... \ast \tilde f(\rho_0) \in F_{f \circ \gamma}$. It is straightforward to show that this definition indeed defines a $\Gamma$-equivariant transformation.

\section{Parallel transport along bigons}

\label{sec:ptbigons}

 A \emph{bigon} in a smooth manifold $M$ is a smooth, fixed-ends homotopy between two paths $\gamma$ and $\gamma'$ with common end-points $x$ and $y$. More precisely, a bigon is a smooth map $\Sigma: [0,1]^2 \to M$ such that $\Sigma(s,0)=x$ and $\Sigma(s,1)=y$ for all $s\in [0,1]$, and  $\gamma(t) = \Sigma(0,t)$ and $\gamma'(t):= \Sigma(1,t)$ for all $t\in[0,1]$.  We use the notation $\Sigma:\gamma\Rightarrow \gamma'$, and the instructive picture
\begin{equation*}
\alxydim{@C=\xyst}{x \ar@/^1.3pc/[r]^{\gamma}="1" \ar@/_1.3pc/[r]_{\gamma'}="2" \ar@{=>}"1";"2"|*+{\Sigma} & y\text{.}}
\end{equation*}
Bigons represent directed pieces of surfaces, along which we are going to define parallel transport.

Let $\inf P$ be a principal $\Gamma$-bundle over $M$  with a fake-flat connection $\Omega$. That parallel transport along surfaces can only be defined for \emph{fake-flat} connections is a well-known phenomenon \cite{schreiber2}.
In this section we define for each bigon $\Sigma:\gamma \Rightarrow \gamma'$ a $\Gamma$-equivariant transformation
\begin{equation*}
\varphi_{\Sigma} : F_{\gamma} \to F_{\gamma'}
\end{equation*}
between the parallel transports along $\gamma$ and $\gamma'$.
For this purpose, we first introduce in \cref{sec:horliftbigon} the notion of a horizontal lift of a bigon to the total space of $\inf P$. In \cref{sec:defpartransbigon} we give a complete definition of the transformation $\varphi_{\Sigma}$. In \cref{sec:compbigoncomp,sec:natbundlemorphbigon,sec:natpullbackbigon} we derive several properties of $\varphi_{\Sigma}$ with respect to the composition of bigons, 1-morphisms between principal 2-bundles, and pullback. 

\subsection{Horizontal lifts of bigons}

\label{sec:horliftbigon}

Let $\inf P$ be a principal $\Gamma$-2-bundle over $M$ together with a fake-flat connection $\Omega$.

\begin{definition}
\label{def:horliftbigon}
Let $\Sigma:\gamma \Rightarrow \gamma'$ be a bigon and $\xi\in F_{\gamma}$. 
A \emph{horizontal lift} of  $\Sigma$ with source $\xi$ is a tuple 
$(n, t, \{\Phi_i\}_{i=1}^{n}, \{\rho_i\}_{i=0}^{n},\{g_i\}_{i=1}^{n})$
consisting of $n\in \N$, a subdivision $t\in T_n$ and
smooth maps\begin{itemize}

\item
 $\Phi_i:[0,1] \times [t_{i-1},t_i] \to \ob{\inf P}$

\item
 $\rho_{i}: [0,1] \to \mor{\inf P}$ with $\rho_0$ and $\rho_n$ constant

\item
 $g_{i}: [0,1] \to G$  with $g_i(0)=1$

\end{itemize}
such that
the following conditions are satisfied:
\begin{enumerate}[(a)]

\item 
\label{lem:bigoncon:a}
$\Phi_i$ is a  lift of  $\Sigma$, i.e., 
$\pi\circ \Phi_i = \Sigma|_{[0,1] \times [t_{i-1},t_i]}$ for all for $1\leq i \leq n$

\item
\label{lem:bigoncon:b}
$t(\rho_{i}(s)) = \Phi_{i+1}(s,t_i)$ for all $0\leq i < n$ and $s(\rho_{i}(s))=R(\Phi_i(s,t_{i}),g_{i}(s))$ for all  $1\leq i \leq n$.

\item
\label{lem:bigoncon:d}
The paths $\gamma_i'(t) := \Phi_i(1,t)$, $\nu_i(s) := \Phi_{i}(s,t_{i-1})$ and $\rho_i$ are horizontal for all $1\leq i \leq n$.

\item
\label{lem:bigoncon:c}
$\xi=\rho_n \ast \gamma_n \ast ... \ast \gamma_1\ast \rho_0$ with $\gamma_i(t):=\Phi_i(0,t)$ and $\rho_i := \rho_i(0)$

\end{enumerate}
\end{definition}

We begin with \quot{small} bigons: a bigon $\Sigma:\gamma \Rightarrow \gamma'$ is called \emph{small}, if there exist $n\in \N$, $t\in T_n$ and sections $\sigma_i:U_i \to \ob{\inf P}$ defined on  open sets $U_i$ such that 
\begin{equation*}
\Sigma(\{(s,t) \sep t_{i-1}\leq t \leq t_i,0\leq s\leq 1\})\subset U_i\text{.}
\end{equation*} 

\begin{lemma}
\label{lem:existencehorizontallifts}
For every small bigon  $\Sigma:\gamma \Rightarrow \gamma'$ and every $\xi\in F_{\gamma}$ there exists a horizontal lift with source $\xi$.
\end{lemma}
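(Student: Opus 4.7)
My plan is to build the horizontal lift strip by strip, using the local sections $\sigma_i$ supplied by smallness to obtain raw lifts and then twisting them by $G$- and $H$-valued functions produced as solutions of linear ODEs corresponding to the horizontality conditions of \cref{def:horliftbigon}. Fix $n,t\in T_n$ and sections $\sigma_i:U_i \to \ob{\inf P}$ as in the definition of small, and set $\tilde\Phi_i(s,t) := \sigma_i(\Sigma(s,t))$, which is a smooth raw lift of $\Sigma|_{[0,1]\times[t_{i-1},t_i]}$. The desired $\Phi_i$ will have the form $\Phi_i(s,t) = R(\tilde\Phi_i(s,t),\eta_i(s,t))$ for a smooth $\eta_i:[0,1]\times[t_{i-1},t_i]\to G$ to be determined.

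The function $\eta_i$ is fixed along three sides of the rectangle by successive horizontal lifting problems. First, the condition $\Phi_i(0,t)=\gamma_i(t)$ is equivalent to an equation $R(\tilde\Phi_i(0,t),\eta_i(0,t)) = \gamma_i(t)$; using that $\gamma_i$ is the given horizontal lift of $\gamma|_{[t_{i-1},t_i]}$ at $s=0$, one may, after adjusting the representative of $\xi$ inside its equivalence class (using $\sim_i$ with the horizontal morphism paths from \cref{lem:hormor:f}), assume $\gamma_i$ lies in the $G$-orbit of $\tilde\Phi_i(0,\cdot)$, so that a smooth $\eta_i(0,\cdot)$ exists. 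Next, I solve along $\{t=t_{i-1}\}$ the linear ODE derived from \cref{eq:conform:a} that makes $\nu_i(s) := \Phi_i(s,t_{i-1})$ horizontal, with initial condition $\eta_i(0,t_{i-1})$ already determined (this is the same ODE as in the proof of \cref{lem:obhorexists}). From the value $\eta_i(1,t_{i-1})$ thus obtained, I solve the analogous ODE along $\{s=1\}$ to make $\gamma_i'(t) := \Phi_i(1,t)$ horizontal. Finally, any smooth extension of $\eta_i$ to the interior of the rectangle that matches these three boundary values suffices, since conditions \erf{lem:bigoncon:a}, \erf{lem:bigoncon:c} and \erf{lem:bigoncon:d} put no further interior constraint.

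For the morphism paths $\rho_i$ and the functions $g_i$ with $1 \leq i < n$, I apply \cref{lem:hormor:f} to the pair $(\beta_1,\beta_2) := (\Phi_i(\cdot,t_i),\nu_{i+1})$, both of which are smooth lifts of $s\mapsto\Sigma(s,t_i)$ and the second of which is horizontal by the previous paragraph; with $\rho_0 := \rho_i\in\mor{\inf P}$ as the prescribed initial value (available since $\rho_i$ satisfies $s(\rho_i)=\gamma_i(t_i)=\Phi_i(0,t_i)$ and $t(\rho_i)=\gamma_{i+1}(t_i)=\nu_{i+1}(0)$), the lemma furnishes a horizontal $\rho_i:[0,1]\to\mor{\inf P}$ and $g_i:[0,1]\to G$ with $g_i(0)=1$ realizing conditions \erf{lem:bigoncon:b} and \erf{lem:bigoncon:d}. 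The boundary cases $i=0$ and $i=n$ are handled by using that $\Sigma$ is fixed-ends: on $\{t=0\}$ one has $\Sigma(s,0)=x$, hence $\partial_s\tilde\Phi_1(s,0)=0$, hence the ODE yields $\eta_1(s,0)$ constant and $\nu_1(s)\equiv\gamma_1(0)=t(\rho_0)$, so $\rho_0(s):=\rho_0$ does the job; symmetrically on $\{t=1\}$, I choose $g_n$ freely so as to make $R(\Phi_n(s,1),g_n(s))$ equal to the constant $s(\rho_n)$, which is possible because $\Phi_n(s,1)$ and $s(\rho_n)$ both lie in the fibre $\inf P_y$ and, by a further adjustment of $\eta_n$ on $\{t=t_n\}$ using the weak-equivalence property $(\pr_1,R):\inf P\times\Gamma \to \inf P\times_M\inf P$, can be arranged to lie in the same $G$-orbit.

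The main obstacle is the compatibility at the right edge of the last strip: constancy of $\rho_n$ forces $\Phi_n(s,1)$ into a fixed $G$-orbit, which requires that the freedom in $\eta_n$ on $\{t=t_n\}$ (already constrained at $(s,t)=(0,t_n)$ by $\gamma_n(t_n)=s(\rho_n)$ and at $(1,t_n)$ by the horizontality of $\gamma_n'$) be rich enough to accomplish this. This is exactly where the principality of the $\Gamma$-action on the fibre intervenes: since $\Sigma(s,1)$ is constant, the ODE defining $\eta_n(s,t_n)$ on the $t=t_n$ edge uncouples from the $s=1$ boundary data, and the required $g_n$ is obtained from \refactionfullyfaithful. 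Once all three ingredients $\{\Phi_i\},\{\rho_i\},\{g_i\}$ are in place, properties \erf{lem:bigoncon:a}--\erf{lem:bigoncon:c} are immediate from the construction, \erf{lem:bigoncon:d} from the ODEs and from \cref{lem:hormor:f}, and the recovered decomposition of $\xi$ at $s=0$ in the sense of \erf{lem:bigoncon:c} holds by design.
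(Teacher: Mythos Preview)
Your overall strategy---raw lifts via the sections $\sigma_i$, then twisting by $G$-valued functions to satisfy the horizontality conditions of \cref{def:horliftbigon}---is the same as the paper's, and the boundary-ODE scheme for $\eta_i$ is a reasonable compression of the paper's sequence of explicit modifications. However, there is a genuine gap in the step where you produce the morphism paths $\rho_i$.

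You invoke \cref{lem:hormor:f} for the pair $(\beta_1,\beta_2)=(\Phi_i(\cdot,t_i),\nu_{i+1})$, but that lemma requires \emph{both} input paths in $\ob{\inf P}$ to be horizontal and then yields a \emph{constant} $g\in G$. In your construction only $\nu_{i+1}$ is horizontal: the edge $\{t=t_i\}$ is precisely the one on which you took an arbitrary smooth extension of $\eta_i$, so $\mu_i=\Phi_i(\cdot,t_i)$ has no horizontality whatsoever. Hence \cref{lem:hormor:f} does not apply, and you cannot obtain a horizontal $\rho_i$ this way. Trying to repair this by also imposing horizontality along $\{t=t_i\}$ would force $\eta_i$ on all four edges, and the two ODE solutions meeting at the corner $(1,t_i)$ will generally disagree (this is the usual integrability obstruction), so that route is blocked too. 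The paper handles this by first producing a \emph{non}-horizontal transition span $\rho_i$ with a varying $g_i(s)$ via \reftransitionspan, and only afterwards making $\rho_i$ horizontal using \cref{lem:makemorhor} with a correction $(h_i,t(h_i)^{-1})$ that is absorbed into $g_i$. That extra step is exactly what your argument is missing; once you insert it, the rest of your outline goes through essentially as in the paper.
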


\begin{proof}
We choose for $1\leq i \leq n$ sections $\sigma_i: U_i \to \ob{\inf P}$, and für $1\leq i < n$ transition spans $\sigma_{i,i+1}:U_i \cap U_j \to \mor{\inf P}$ along $(\sigma_{i},\sigma_{i+1})$ together with transition functions $g_{i,i+1}: U_i \cap U_{i+1} \to G$. 
\begin{comment}
That means, we have
\begin{equation*}
t(\sigma_{i,i+1}(x))=\sigma_i(x)
\quand 
s(\sigma_{i,i+1}(x))=R(\sigma_{i+1}(x),g_{i,i+1}(x))\text{.}
\end{equation*}
\end{comment}
These choices can successively be adjusted such that $g_{i,i+1}(\Sigma(0,t_i))=1$.
\begin{comment}
Indeed, suppose $g:=g_{1,2}(\Sigma(0,t_1))\neq 1$. Then we choose $\sigma_2':=R(\sigma_{2},g)$, $\sigma_{1,2}':=\sigma_{1,2}$ and $g_{1,2}':=g^{-1}g_{1,2}$ instead of $\sigma_2$, $\sigma_{1,2}$, and $g_{1,2}$. We have
\begin{align*}
t(\sigma_{1,2}'(x)) &=t(\sigma_{1,2}(x))=\sigma_1(x)=\sigma_1'(x)
\\
s(\sigma_{1,2}'(x)) &=s(\sigma_{1,2})=R(\sigma_{2}(x),g_{1,2}(x))=R(R(\sigma_{2}(x),g),g^{-1}g_{1,2}(x))=R(\sigma_{2}'(x),g_{1,2}'(x))
\end{align*}
and $g_{1,2}'(\Sigma(0,t_1))=1$. We proceed with $\sigma_3$, and so on. 
\end{comment}

We set, for $1\leq i \leq n$,
$\Phi_i := \sigma_i \circ \Sigma|_{[0,1] \times [t_{i-1},t_i]}$;
this satisfies \cref{lem:bigoncon:a*}. We also set, for $1\leq i < n$,
\begin{equation*}
\rho_i(s) := R( \sigma_{i,i+1}(\Sigma(s,t_i))^{-1},g_{i,i+1}(\Sigma(s,t_i))^{-1})
\quand g_i (s):= g_{i,i+1}(\Sigma(s,t_i))^{-1}\text{;}
\end{equation*}
above adjustment achieves $g_i(0)=1$. 
Further, we set $\rho_0(s) := \id_{\sigma_{1}(x)}$, $\rho_n(s) := \id_{\sigma_n(y)}$ and $g_n(s) := 1$. This satisfies \cref{lem:bigoncon:b*} by definition of a transition span. 
\begin{comment}
Indeed,
\begin{equation*}
t(\rho_{i}(s))=R(s( \sigma_{i,i+1}(\Sigma(s,t_i))),g_{i,i+1}(\Sigma(s,t_i))^{-1}) =\sigma_{i+1}(\Sigma(s,t_i))= \Phi_{i+1}(s,t_i)
\end{equation*} 
and
\begin{equation*}
s(\rho_i(s))=R(t( \sigma_{i,i+1}(\Sigma(s,t_i))),g_{i,i+1}(\Sigma(s,t_i))^{-1}) =R( \sigma_{i}(\Sigma(s,t_i)),g_i(s))=R(\Phi_{i}(s,t_i),g_i(s))\text{.} \end{equation*}
\end{comment}
Next we perform some modifications. Let $\xi=\rho'_n \ast \gamma_n' \ast ... \ast \gamma_1'\ast \rho_0'$ be a representative.
We choose for $1\leq i \leq  n$ paths  $\tilde \rho_i: [t_{i-1},t_i] \to \mor{\inf P}$ and $\tilde g_i: [t_{i-1},t_i]\to G$ such that $s(\tilde\rho_i(t))=\gamma'_i(t)$ and $t(\tilde\rho_i(t))=R(\Phi_{i}(0,t),\tilde g_i(t))$. 
By \cref{lem:makemorhor} there exist a unique path  $h_i: [t_{i-1},t_i] \to H$ with $h_i(t_{i-1})=1$ such that $\tilde\rho_i^{hor}:=R(\tilde\rho_i,(h_i,1))$ is horizontal. Successively, this data can be arranged such that $ t(h_i(t_i))^{-1}\tilde g_i(t_i)^{-1}\tilde g_{i+1}(t_i)=1$. 
We 
define:
\begin{align*}
\Phi'_i(s,t) &:= R(\Phi_i(s,t),\tilde g_i(t)t(h_i(t))) &&\text{ for }1\leq i \leq n
\\
\rho_i'(s) &:= R(\rho_i(s),\tilde g_{i+1}(t_i)) &&\text{ for }0\leq i <n
\\
\rho_n'(s) & := \rho_n(s)
\\
g_i'(s) &:= t(h_i(t_i))^{-1}\tilde g_i(t_i)^{-1}g_i(s)\tilde g_{i+1}(t_i) &&\text{ for }1\leq i <n 
\\
g_n'(s) &:= t(h_n(t_n))^{-1}\tilde g_n(t_n)^{-1}g_n(s)
\end{align*}
This modification does not affect \cref{lem:bigoncon:a*} and still satisfies \cref{lem:bigoncon:b*}.
\begin{comment}
First of all, $\rho_0$ and $\rho_n$ remain constant, and $g_i'(0)=g_i(0)=1$.  
Indeed, we have for $0\leq i < n$:
\begin{equation*}
t(\rho_i'(s))=R(t(\rho_i(s)),\tilde g_{i+1}(t_i))=R( \Phi_{i+1}(s,t_i),\tilde g_{i+1}(t_i))=\Phi'_{i+1}(s,t_i)
\end{equation*}
and
 for $1\leq i < n$
 \begin{equation*}
s(\rho_i'(s))=R(s(\rho_i(s)),\tilde g_{i+1}(t_i))=R(\Phi_i(s,t_i),g_i(s)\tilde g_{i+1}(t_i))= R(\Phi_{i}'(s,t_i),g_i'(s))
\end{equation*}
and finally
 \begin{equation*}
s(\rho_n'(s))=s(\rho_n(s))=R(\Phi_n(s,t_n),g_n(s))= R(\Phi_{n}'(s,t_n),g_n'(s))
\end{equation*}
\end{comment}
Since
\begin{equation*}
s(\tilde\rho_i^{hor}(t))=\gamma'_i(t)
\quand
t(\tilde\rho_i^{hor}(t))=\Phi_{i}'(0,t)\text{,}
\end{equation*}
we can apply the equivalence relation to the horizontal paths $\tilde\rho_i^{hor}$, so that $\xi=\rho_n \ast \gamma_n \ast ... \ast \gamma_1\ast \rho_0$ with $\gamma_i(t) := \Phi'_i(0,t)$ and some new $\rho_0,...,\rho_n\in\mor{\inf P}$. This makes up the first part of \cref{lem:bigoncon:c*}; we have not yet achieved that $\rho_i=\rho_i'(0)$. Note that by \cref{lem:bigoncon:b*} we have $t(\rho_i)=\gamma_{i+1}(0)=t(\rho_i'(0))$ for $0\leq i <n$ and $s(\rho_i)=\gamma_i(1)=s(\rho_i'(0))$ for $1\leq i \leq n$. By \refactionfullyfaithful\ there exist, for $1\leq i < n$ unique $h_i\in H$ with $t(h_i)=1$ such that $\rho_i=R(\rho_i'(0),(h,1))$. We set $\rho_i''(s) := R(\rho_i'(s),(h_i,1))$. At the endpoints we define $\rho_0''(s) := \rho_0$ and $\rho_n''(s) := \rho_n$; this still satisfies \cref{lem:bigoncon:b*}. 
\begin{comment}
Indeed, we have
\begin{equation*}
t(\rho_0''(s)) = t(\rho_0)=\gamma_1(0)=\Phi_1'(0,0)\eqcref{lem:bigoncon:b*}t(\rho_0'(0))\eqtext{$\rho_0$ is constant}t(\rho_0'(s))\eqcref{lem:bigoncon:b*}\Phi'_1(s,0)
\end{equation*}
and similarly
\begin{equation*}
s(\rho''_n(s))=s(\rho_n)=\gamma_n(1)=\Phi'_n(0,1)=s(\rho_n'(0))=s(\rho_n'(s))=\Phi_n'(s,0)\text{.}
\end{equation*}
\end{comment}
Now $\{\Phi_i',\rho_i'',g_i'\}$ satisfy \cref{lem:bigoncon:a*}, \cref{lem:bigoncon:b*} and \cref{lem:bigoncon:c*}.

Next we look for the first part of \cref{lem:bigoncon:d*}, horizontality of the $\gamma_i'$. By \cref{lem:obhorexists} there exist, for $1\leq i \leq n$ smooth maps $\tilde g_i: [t_{i-1},t_i] \to G$   with $\tilde g_i(t_{i-1})=1$ and $t\mapsto R(\Phi_i(1,t),\tilde g_i(t))$ horizontal.  We define $\varphi_i: [0,1] \times [t_{i-1},t_i] \to G$ by $\varphi_i(s,t) := \tilde g_i(s(t-t_{i-1}) + t_{i-1})$. We put 
\begin{equation*}
\Phi_i''(s,t):=R(\Phi'_i(s,t),\varphi_i(s,t))
\quomma
\rho''_i := \rho_i' 
\quand
g_i'' :=  \tilde g_i^{-1} \cdot g_i'\text{.}
\end{equation*}
Obviously, \cref{lem:bigoncon:a*} is not affected. For \cref{lem:bigoncon:b*} we check
\begin{equation*}
t(\rho_{i}''(s)) = t(\rho_{i}'(s))=\Phi_{i+1}'(s,t_i) =  \Phi_{i+1}''(s,t_i)
\end{equation*}
since $\varphi_{i+1}(s,t_i)=\tilde g_{i+1}(0)=1$, and
\begin{equation*}
s(\rho_{i}''(s)) =  s(\rho_{i}'(s)) =R(\Phi_i'(s,t_{i}),g_{i}'(s))
=R(\Phi'_i(s,t_i),\tilde g_i(s)^{-1}\varphi_i(s,t_i)g_i'(s) )  =  R(\Phi_i''(s,t_{i}),g_{i}''(s))\text{.}
\end{equation*}
Since $\varphi_i(0,t)=1$, \cref{lem:bigoncon:c*} is also not affected, and the new $\gamma_i'$ are horizontal.

Next we look for horizontality of the $\nu_i$. There exist paths $\tilde g_i: [0,1] \to G$ with $\tilde g_i(0)=1$ and $s \mapsto R(\nu_i(s),\tilde g_i(s))$ horizontal. We set $\rho_n' := \rho_n$ and
\begin{equation*}
\Phi'_i(s,t) := R(\Phi_i(s,t),\tilde g_i(s))
\quomma
\rho_i'(s) := R(\rho_i(s),\tilde g_{i+1}(s))
\quand
g_i'(s) := \tilde g_i(s)^{-1}g_i(s)\tilde g_{i+1}(s)\text{.}
\end{equation*}
Obviously, \cref{lem:bigoncon:a*} is not affected. For \cref{lem:bigoncon:b*} we check
\begin{align*}
t(\rho_i'(s)) &= R(t(\rho_i(s)),\tilde g_{i+1}(s))=R(\Phi_{i+1}(s,t_i),\tilde g_{i+1}(s)) = \Phi'_{i+1}(s,t_i)
\\
s(\rho_i'(s))&=R(s(\rho_i(s)),\tilde g_{i+1}(s))=R(\Phi_{i}(s,t_i),g_i(s)\tilde g_{i+1}(s))=R(\Phi'_{i}(s,t_i),g_i'(s))\text{.}
\end{align*}
Further, since $\nu_1$ is constant, we have $\tilde g_1=1$, meaning that $\rho_0'$ remains constant.
Since $\tilde g_i(0)=1$,  \cref{lem:bigoncon:c*} is  not affected, and since $\gamma_i'$ is shifted by constant $\tilde g_i(1)$, horizontality of $\gamma_i'$ is not spoiled. 

Finally, we look for horizontality of the $\rho_i$. By \cref{lem:makemorhor} there exist paths $\tilde h_i: [0,1] \to H$ with $\tilde h_i(0)=1$ and $s \mapsto R(\rho_i(s),(\tilde h_i(s),1))$ horizontal. By \cref{lem:hormor:c} also $s\mapsto R(\rho_i(s),(\tilde h_i(s),t(\tilde h_i(s)^{-1})))$ is horizontal. We set
\begin{equation*}
\Phi_i':=\Phi_i
\quomma
\rho'_i(s) :=R(\rho_i(s),(\tilde h_i(s),t(\tilde h_i(s)^{-1}))) 
\quand
g_i'(s) := g_i(s)t(\tilde h_i(s))^{-1}\text{.}
\end{equation*}
Obviously, \cref{lem:bigoncon:a*} is not affected. For \cref{lem:bigoncon:b*} we  check
\begin{align*}
t(\rho_i'(s))&=t(\rho_i(s))=\Phi_{i+1}(s,t_i)=\Phi_{i+1}'(s,t_i)
\\
s(\rho_i'(s)) &= R(s(\rho_i(s)),t(\tilde h_i(s))^{-1}) =R(\Phi_{i}(s,t_i),g_i(s)t(\tilde h_i(s))^{-1})=R(\Phi'_{i}(s,t_i),g_i'(s))\text{.}
\end{align*}
Since $\tilde h_i(0)=1$,  \cref{lem:bigoncon:c*} is  not affected, and since $\Phi_i$ is unchanged, horizontality of $\gamma_i'$ and $\mu_i'$ persists.
\end{proof}

Next we define the target of a horizontal lift of a small bigon. 
We set $\mu_i(s) := \Phi_{i}(s,t_{i})$; then we can reformulate \cref{lem:bigoncon:b*} as: 
\begin{equation*}
s(\rho_{i})=R(\mu_i,g_{i}) \text{ for all }1\leq i \leq n 
\quand
t(\rho_{i}) = \nu_{i+1}\text{ for all $0\leq i < n$.}
\end{equation*}
We consider a bigon-parameterization $\Sigma_i$ of $\Phi_i$, see \cref{rem:bigonpar}, and the associated surface-ordered exponential $h_i :=\SE_{\Omega}(\Sigma_i) \in H$ defined in \cref{sec:surfaceordered}. 
\begin{comment}
We have
\begin{equation*}
\Sigma_i: \mu_i \ast \gamma_i \Rightarrow \gamma_i' \ast \nu_{i}\text{,}
\end{equation*}
up to a thin homotopy on the boundary paths.
\end{comment}

\begin{lemma}
\label{lem:thi}
We have $t(h_i)=g_{i}(1)^{-1}$ for all $1\leq i \leq n$.
\end{lemma}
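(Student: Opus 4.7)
The plan is to invoke the boundary relation between the surface-ordered exponential $\SE_\Omega$ and the path-ordered exponential $\PE_{\fa\Omega}$, which holds for fake-flat connections and is the main technical input recalled in the appendix. Applied to the bigon parameterization $\Sigma_i : \mu_i \ast \gamma_i \Rightarrow \gamma_i' \ast \nu_i$, it yields an identity of the shape
\begin{equation*}
\PE_{\fa\Omega}(\gamma_i' \ast \nu_i) = t(h_i) \cdot \PE_{\fa\Omega}(\mu_i \ast \gamma_i).
\end{equation*}
Condition \cref{lem:bigoncon:d*} asserts that $\gamma_i$, $\gamma_i'$ and $\nu_i$ are horizontal, so each of their path-ordered exponentials is trivial. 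Multiplicativity of $\PE_{\fa\Omega}$ under concatenation then collapses the identity to $t(h_i) \cdot \PE_{\fa\Omega}(\mu_i) = 1$.

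It remains to identify $\PE_{\fa\Omega}(\mu_i)$ with $g_i(1)$. By \cref{lem:bigoncon:d*} the morphism path $\rho_i$ is horizontal. For $1 \leq i < n$ the target $t(\rho_i) = \nu_{i+1}$ is horizontal as well, so \cref{lem:hormor:e} forces $s(\rho_i) = R(\mu_i, g_i)$ to be horizontal. For $i = n$ the constancy of $\rho_n$ built into \cref{def:horliftbigon} makes $s(\rho_n) = R(\mu_n, g_n)$ constant, hence trivially horizontal. Expanding the horizontality equation $\fa\Omega(\partial_s R(\mu_i, g_i)) = 0$ via the transformation law \cref{eq:conform:a} and the initial condition $g_i(0) = 1$ yields the initial value problem
\begin{equation*}
\dot g_i(s) = -\fa\Omega(\dot\mu_i(s)) \cdot g_i(s), \qquad g_i(0) = 1,
\end{equation*}
which is precisely the ODE defining the path-ordered exponential (as already used in the proof of \cref{lem:obhorexists}). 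Hence $g_i(1) = \PE_{\fa\Omega}(\mu_i)$, and substitution gives the claimed $t(h_i) = g_i(1)^{-1}$.

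The main obstacle lies in calibrating conventions: the precise form of the $\SE$/$\PE$ boundary relation, the choice of orientations, and the reparameterization procedure that turns $\Phi_i$ into a bigon $\Sigma_i$ all rely on details fixed in the appendix. Once these are aligned, the remainder is a direct matching of first-order ODEs, so no genuine computation beyond invoking \cref{lem:hormor:e} and \cref{eq:conform:a} is required.
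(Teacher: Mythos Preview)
Your argument is correct and follows essentially the same route as the paper: apply the target–source matching of $\SE_\Omega$ (\cref{lem:SE:b}) to $\Sigma_i$, kill three of the four boundary path-ordered exponentials by horizontality, then use horizontality of $s(\rho_i)=R(\mu_i,g_i)$ to identify $\PE_{\fa\Omega}(\mu_i)=g_i(1)$. Two minor remarks: the horizontality of $\gamma_i$ does not come from \cref{lem:bigoncon:d*} but from \cref{lem:bigoncon:c*} together with the definition of $F_\gamma$; and your direct ODE derivation of $g_i(1)=\PE_{\fa\Omega}(\mu_i)$ is exactly the content of \cref{lem:poeOmega:x}, which the paper invokes instead.
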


\begin{proof}
We have
$\Sigma_i: \tilde\mu_i \ast\tilde \gamma_i \Rightarrow \tilde\gamma_i' \ast\tilde \nu_{i}$, 
where $\tilde\mu_i$ is thin homotopic to $\mu_i$, and similar for the other paths. 
Since $\nu_i$, $\gamma_i$ and $\gamma_i'$ are horizontal, we have by \cref{lem:SE:b} $t(h_i)\PE_{\fa\Omega}(\mu_{i})=1$. Since $\rho_i$ and $t(\rho_i)$ are horizontal, 
\begin{comment}
($\rho_n$ is constant, so $t(\rho_n)$ is horizontal)
\end{comment}
$s(\rho_i)$ is horizontal by \cref{lem:hormor:e}. Together with \cref{lem:poeOmega:x} we obtain
\begin{equation*}
1=\PE_{\fa\Omega}(s(\rho_i))=\PE_{\fa\Omega}(R(\mu_i,g_{i})) = g_i(1)^{-1}\PE_{\fa\Omega}(\mu_i)\text{.} \eqendofproof
\end{equation*}
\end{proof}

We define  
\begin{equation}
\label{eq:def:targethorlift}
\rho_0' := \rho_0(1)
\quand
\rho_i' := R(\rho_i(1),(h_i^{-1},g_i(1)^{-1}))\text{.}
\end{equation}
It is straightforward to check that this gives an element $\xi' := \rho_n' \ast ... \ast \gamma_1' \ast \rho_0'$ in $F_{\gamma'}$, which we call the \emph{target} of the horizontal lift.  
\begin{comment}
First of all, the paths $\gamma_i''$ are horizontal by \cref{lem:bigoncon:d*} and \cref{lem:obhor}. 
For the well-definedness, we check
\begin{align*}
s(\rho_i')&= R(s(\rho_i(1)),g_i(1)^{-1})\eqcref{lem:bigoncon:b*} \mu_i(1)= \gamma_i'(t_i)
\end{align*}
and
\begin{align*}
t(\rho_i')&=R(t(\rho_i(1)),t(h_i)^{-1}g_i(1)^{-1})\eqcref{lem:bigoncon:b*} R(\nu_{i+1}(1),1)
= \gamma_{i+1}'(t_i)
\end{align*}
\end{comment}

\begin{lemma}
\label{lem:bigonmapdef}
If $\Sigma:\gamma \Rightarrow \gamma'$ is a small bigon and $\xi\in F_{\gamma}$, then the target of a horizontal lift of $\Sigma$ with source $\xi$ is independent of the choice of the horizontal lift.
\end{lemma}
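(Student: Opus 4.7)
The plan is to reduce to comparing two horizontal lifts on a common subdivision and then to translate the non-uniqueness of the lift into an instance of the equivalence relation $\sim$ defining $F_{\gamma'}$.

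First, I would argue that the target is invariant under refinement of the subdivision: if one inserts an additional point $t^{*}\in (t_{i-1},t_i)$ and splits $\Phi_i$ accordingly, then the new $\rho_i^{*}$ can be taken constant equal to $\id$ with $g_i^{*}\equiv 1$, and the surface-ordered exponential $\SE_\Omega$ is multiplicative under vertical composition of bigon-parameterizations. This reduces the problem to comparing two horizontal lifts $L=(\{\Phi_i\},\{\rho_i\},\{g_i\})$ and $L'=(\{\Phi_i'\},\{\rho_i'\},\{g_i'\})$ defined over the same subdivision $t\in T_n$.

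Next, on each strip $[0,1]\times [t_{i-1},t_i]$ the two surface lifts $\Phi_i$ and $\Phi_i'$ both cover $\Sigma|_{[0,1]\times[t_{i-1},t_i]}$, so by the principality of the $\Gamma$-action, applied fibre-wise as in \refactionfullyfaithful, there exist smooth maps $k_i:[0,1]\times[t_{i-1},t_i]\to\mor{\inf P}$ and $a_i:[0,1]\times[t_{i-1},t_i]\to G$ with $t(k_i)=\Phi_i'$ and $R(s(k_i),a_i^{-1})=\Phi_i$. The horizontality conditions on $\gamma_i,\gamma_i',\nu_i$ from \cref{lem:bigoncon:d*} together with the uniqueness statements in \cref{lem:makemorhor,lem:obhorexists,lem:hormor:g} force these maps to be constant in the relevant direction on the edges where both lifts are simultaneously horizontal. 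Using the additional freedom to reparameterize $k_i$ by an $H$-valued factor (another application of \cref{lem:makemorhor}), I can arrange that the restrictions $k_i|_{\{s\}\times[t_{i-1},t_i]}$ are horizontal morphism-paths interpolating the two lifts, and that the compatibilities at the interior nodes $t_i$ combine the $\rho_i,\rho_i'$ consistently with the action of $k_i(s,t_i)$.

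Now I would relate the two targets $\xi',\xi''\in F_{\gamma'}$ at $s=1$. The horizontal paths $k_i|_{\{1\}\times[t_{i-1},t_i]}$ satisfy $s(k_i(1,\cdot))=R(\gamma_i',\text{const})$ and $t(k_i(1,\cdot))=\gamma_i^{\prime\prime}$ after the necessary $G$-shifts, so they are precisely the data needed to apply the equivalence relation $\sim_i$ of \cref{eq:equivreldef} between the two representatives $\xi'$ and $\xi''$. The morphisms $\rho_i'$ and $\rho_i''$ defined by \cref{eq:def:targethorlift} match the jump conditions of \cref{eq:er:condrho:all} precisely when the surface-ordered exponentials $h_i$ and $h_i'$ from the two lifts are related by $h_i'=h_i\cdot (\text{correction from }k_i)$. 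This is exactly the conclusion of the path-/surface-ordered-exponential calculus of the appendix, which under the fake-flatness hypothesis guarantees that the surface-ordered exponential transforms covariantly under the gauge-like change given by $k_i$. At the source side $s=0$ the hypothesis that both lifts have source $\xi$ then pins down the initial data, so that the resulting chain of relations $\sim_1,\ldots,\sim_n$ connects $\xi'$ and $\xi''$ in $F_{\gamma'}$.

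The main obstacle will be the final bookkeeping step: keeping track of how the $G$- and $H$-valued correction factors $a_i$, $k_i$ and the mismatch between $h_i$ and $h_i'$ combine at each boundary $t=t_i$, so that they conspire to give precisely the equivalence relation and not merely \emph{some} relation in $F_{\gamma'}$. This is where fake-flatness is indispensable, since only then does $\SE_\Omega$ behave as a well-defined $H$-valued \quot{holonomy} compatible with horizontal gauge transformations; without it, the discrepancy between $h_i$ and $h_i'$ could fail to be absorbed into the equivalence relation.
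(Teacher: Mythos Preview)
Your approach is essentially the paper's approach: reduce to a common subdivision, choose transition spans $\Psi_i$ (your $k_i$) with transition functions $G_i$ (your $a_i$) between the two surface lifts, adjust these to be horizontal along suitable edges, and then use $\Psi_i(1,\cdot)$ as the horizontal morphism-paths implementing the equivalence relation $\sim$ on $F_{\gamma'}$.

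A few points where your sketch diverges from what is actually needed. You say you will arrange \emph{all} slices $k_i|_{\{s\}\times[t_{i-1},t_i]}$ to be horizontal; this is neither possible in general nor necessary. What the paper does is arrange horizontality only along three specific edges: $t\mapsto\Psi_i(0,t)$ (using that both lifts have the same source $\xi$), $s\mapsto\Psi_i(s,t_{i-1})$, and $t\mapsto\Psi_i(1,t)$. These choices, combined with the horizontality of the corresponding edges of $\Phi_i,\Phi_i'$, force $G_i\equiv 1$ on those three edges, which is exactly what is needed for $\Psi_i(1,\cdot)$ to have source $\gamma_i'$ and target $\tilde\gamma_i'$.

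The ``bookkeeping'' you flag splits into two separate identities, and it is worth naming them. First, applying \cref{prop:SE:gauge:b} to the bigon-parameterization of $\Psi_i$ yields $h_i\cdot h_{\Omega}(\Psi_i(\cdot,t_i))^{-1}=\tilde h_i$; this is the correction relating the two surface-ordered exponentials. Second, and independently, one must compare the morphism-paths $\rho_i$ and $\tilde\rho_i$: here one writes $\Psi_{i+1}(s,t_i)\circ\rho_i(s)$ and $\tilde\rho_i(s)\circ R(\Psi_i(s,t_i),\tilde g_i(s))$ as two transition spans along the same pair of points and extracts, via \refactionfullyfaithful, an $H$-valued path $\eta$ whose endpoint value $\eta(1)$ is computed by applying $h_{\Omega}$ to both sides (using \cref{co:h:b,co:hcalc:b}). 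The final step is to check that these two identities combine to give exactly the jump condition \cref{eq:er:condrho:all}. Fake-flatness enters precisely in the first identity, through \cref{prop:SE:gauge:b}.
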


\begin{proof}
Let $(n,t,\Phi_i,\rho_i,g_i)$ and $(\tilde n,\tilde t,\tilde \Phi_i,\tilde \rho_i,\tilde g_i)$ be horizontal lifts of $\Sigma$ with source $\xi$. First of all, we can assume that $\tilde n=n$ and $\tilde t=t$, since we can introduce new points $t_{i-1}<t'<t_{i}$ and then cut a horizontal lift at $t'$ (it is easy to see that one can arrange the new path $s\mapsto \Phi_i(s,t')$ to be horizontal by compensating with the map $g_i'$).

Next, we note that we have two sections $\Phi_i$ and $\tilde\Phi_i'$ into $\ob{\inf P}$ along $\Sigma|_{[0,1]\times[t_{i-1},t_i]}$. By \reftransitionspan\ they admit a transition span $\Psi_i$ with transition function $G_i$.
\begin{comment}
That is,
\begin{equation*}
s(\Psi_i) = R(\Phi_i,G_i)
\quand
t(\Psi_i)= \tilde\Phi_i\text{.}
\end{equation*}
\end{comment}
In the following we show that we can assume a couple of properties for $\Psi_i$ and $G_i$.

The condition that our horizontal lifts have the same source, $\xi$, means that there exist horizontal path $\eta_i:[t_{i-1},t_i] \to \mor{\inf P}$ with $s(\eta_i(t))= \Phi_i(0,t)$ and $t(\eta_i(t))=\tilde\Phi_i(0,t)$ and $\tilde\rho_{i}\circ \eta_{i}(t_{i})=\eta_{i+1}(t_{i})\circ \rho_{i}$. Comparing the transition spans $(\eta_i,1)$ with $(\Psi_i(0,-),G_i(0,-))$ we obtain by \reftransitionspan\ a smooth map $h_i:[t_{i-1},t_i] \to H$ such that $R(\Psi_i(0,t),(h_i(t),G_i(0,t)^{-1})=\eta_i(t)$ and $t(h_i(t))=G_i(0,t)$. We consider 
\begin{equation*}
\Psi'_i(s,t) :=R(\Psi_i(s,t),(h_i(t),G_i(0,t)^{-1})
\quand
G_i'(s,t) :=G_i(s,t)t(h_i(t))^{-1}\text{,}
\end{equation*}
which satisfy $t(\Psi'_i(s,t)) =\tilde\Phi_i(s,t)$ and $s(\Psi'_i(s,t)) = R(\Phi_i(s,t),G_i'(s,t))$.
\begin{comment}
Indeed,
\begin{align*}
t(\Psi'_i(s,t)) &= R(t(\Psi_i(s,t)),t(h_i(t))G_i(0,t)^{-1}) =\tilde\Phi_i(s,t)
\\
s(\Psi'_i(s,t)) &=R(s(\Psi_i(s,t)),G_i(0,t)^{-1})=R(\Phi_i(s,t),G_i(s,t)t(h_i(t))^{-1})=R(\Phi_i(s,t),G_i'(s,t))\text{.} \end{align*}
\end{comment}
This shows that  we can always choose our transition spans $\Psi_i$ such that $t \mapsto \Psi_i(0,t)$ is horizontal, $G_i(0,t)=1$ and 
\begin{equation}
\label{eq:initialspan}
\tilde\rho_{i}\circ \Psi_i(0,t_i)=\Psi_{i+1}(0,t_i)\circ \rho_{i}\text{.}
\end{equation}

Next we consider the path $s \mapsto \Psi_i(s,t_{i-1})$. By \cref{lem:makemorhor} there exist paths $h_i:[0,1] \to H$ with $h_i(0)=1$ such that $R(\Psi_i(-,t_{i-1}),(h_i,1))$ is horizontal. We consider 
\begin{equation*}
\Psi_i'(s,t) := R(\Psi_i(s,t),(h_i(s),t(h_i(s))^{-1}))
\quand
G_i'(s,t) :=G_i(s,t)t(h_i(s))^{-1}\text{,}
\end{equation*}
this gives by \cref{lem:hormor:c} a horizontal path,  satisfying $t(\Psi_i'(s,t))=\tilde\Phi_i(s,t)$ and $s(\Psi_i'(s,t))=R(\Phi_i(s,t),G_i'(s,t))$.
\begin{comment}
Indeed,
\begin{align*}
t(\Psi_i'(s,t)) &=t(\Psi_i(s,t))=\tilde\Phi_i(s,t) 
\\
s(\Psi_i'(s,t)) &=R(s(\Psi_i(s,t)),t(h_i(s))^{-1}) =R(\Phi_i(s,t),G_i(s,t)t(h_i(s))^{-1})=R(\Phi_i(s,t),G_i'(s,t))
\end{align*}  
\end{comment}
Since the quantities at $s=0$ are unchanged, we can add the horizontality of $s \mapsto \Psi_i(s,t_{i-1})$ to our assumptions.

Finally, we consider  the path $t \mapsto \Psi_i(1,t)$. By \cref{lem:makemorhor} there exists a path $h_i:[0,1] \to H$ with $h_i(0)=1$ such that $R(\Psi_i(1,-),(h_i,1))$ is horizontal. We consider 
\begin{align*}
\Psi_i'(s,t) &:=\textstyle R(\Psi_i(s,t),(h_i(s\frac{t-t_{i-1}}{t_{i}-t_{i-1}}),t(h_i(s\frac{t-t_{i-1}}{t_{i}-t_{i-1}}))^{-1}))
\\
G_i'(s,t) &:=\textstyle G_i(s,t)t(h_i(s\frac{t-t_{i-1}}{t_{i}-t_{i-1}}))^{-1}\text{,}
\end{align*}
satisfying $t(\Psi_i'(s,t)) =\tilde\Phi_i(s,t)$  and $s(\Psi_i'(s,t)) =R(\Phi_i(s,t),G_i'(s,t))$.
\begin{comment}
Indeed,
\begin{align*}
t(\Psi_i'(s,t)) &=t(\Psi_i(s,t))=\tilde\Phi_i(s,t) 
\\
s(\Psi_i'(s,t)) &=R(s(\Psi_i(s,t)),t(h_i(s\frac{t-t_{i-1}}{t_{i}-t_{i-1}}))^{-1}) \\&=R(\Phi_i(s,t),G_i(s,t)t(h_i(s\frac{t-t_{i-1}}{t_{i}-t_{i-1}}))^{-1})
\\&=R(\Phi_i(s,t),G_i'(s,t))
\end{align*}  
\end{comment}
Since the quantities at $s=0$ and $t=t_{i-1}$ are unchanged, we can add the horizontality of $t \mapsto \Psi_i(1,t)$ to our assumptions.

We continue with choices $\Psi_i$ and $G_i$ satisfying all assumptions collected above. We notice the following: since $\Psi_i(-,t_{i-1})$ is horizontal, and $t(\Psi_i(-,t_{i-1}))=\tilde\Phi_i(-,t_{i-1})=\tilde\nu_i$ is horizontal, we have by \cref{lem:hormor:e}  that $s(\Psi_i(-,t_{i-1}))=R(\nu_i,G_i(-,t_{i-1}))$ is horizontal, too. But since $\nu_i$ itself is horizontal, it follows that $s \mapsto G_i(s,t_{i-1})$ is constant, i.e. $G_i(s,t_{i-1})=1$. 
With the same argument, we have that $t\mapsto G_i(1,t)$  is constant, i.e. $G_i(1,t)=1$.

Next we consider bigon-parameterizations $\tilde\Psi_i: \beta_i \Rightarrow \beta_i'$ of $\Psi_i$ and $\tilde G_i: \mu_i \Rightarrow \mu_i'$ of $G_i$, see \cref{rem:bigonpar}.
\begin{comment}
We have, up to thin homotopies
\begin{align*}
\beta_i&:=\Psi_i(-,t_i)\circ \Psi_i(0,-)
&\quand
\beta_i'&:=\Psi_i(1,-)\circ \Psi_i(-,t_{i-1})
\\
\mu_i&:=G_i(-,t_i)\circ \id_1
&\quand
\mu_i'&:=\id_1
\end{align*}
\end{comment}
We have by \cref{prop:SE:gauge:z,co:hcalc:a}
\begin{align*}
h_{\Omega}(\beta_i) & = h_{\Omega}(\Psi_i(-,t_i))\cdot \alpha(\PE_{\fa\Omega}(s(\Psi_i(-,t_i))),h_{\Omega}(\Psi_i(0,-)))= h_{\Omega}(\Psi_i(-,t_i))
\\
h_{\Omega}(\beta_i') &= h_{\Omega}(\Psi_i(1,-))\cdot \alpha(\PE_{\fa\Omega}(s(\Psi_i(1,-))),h_{\Omega}(\Psi_i(-,t_{i-1})))=  1
\end{align*}
\begin{comment}
More explicitly,
\begin{align*}
h_{\Omega}(\beta_i) & \eqcref{prop:SE:gauge:z} h_{\Omega}(\Psi_i(-,t_i))\cdot \alpha(\PE_{\fa\Omega}(s(\Psi_i(-,t_i))),h_{\Omega}(\Psi_i(0,-)))\eqcref{co:hcalc:a} h_{\Omega}(\Psi_i(-,t_i))
\\
h_{\Omega}(\beta_i') &\eqcref{prop:SE:gauge:z} h_{\Omega}(\Psi_i(1,-))\cdot \alpha(\PE_{\fa\Omega}(s(\Psi_i(1,-))),h_{\Omega}(\Psi_i(-,t_{i-1})))\eqcref{co:hcalc:a}  1
\end{align*}
\end{comment}
Further, we have $\SE_{\Omega}(R(s(\tilde \Psi_i),\tilde G_i^{-1}))=h_i$ and $\SE_{\Omega}(t(\tilde \Psi_i))=\tilde h_i$. 
\begin{comment}
More explicitly,
\begin{align*}
\SE_{\Omega}(R(s(\tilde \Psi_i),\tilde G_i^{-1}))=\SE_{\Omega}(\Phi_i)=h_i
\\
\SE_{\Omega}(t(\tilde \Psi_i))=\SE_{\Omega}(\tilde \Phi_i)=\tilde h_i\text{,}
\end{align*}
\end{comment}
Now \cref{prop:SE:gauge:b} implies
\begin{equation}
\label{eq:hcomp}
h_i \cdot h_{\Omega}(\beta_i)^{-1}= \tilde h_i\text{.}
\end{equation}
\begin{comment}
We don't need this directly, but might want to compute $t(h_{g,\varphi}(\beta_i,\mu_i))$. 
We have from
\cref{lem:SE:gauge:a}
\begin{align*}
\PE_{\fa\Omega}(\chi_2(\beta_i)) &= t(h_{g,\varphi}(\beta_i)^{-1})\cdot G_i(1,t_i)\cdot \PE_{\fa\Omega}(\chi_1(\beta_i))
\\
\PE_{\fa\Omega}(\chi_2(\beta_i')) &= t(h_{g,\varphi}(\beta_i')^{-1})\cdot G_i(1,t_i)\cdot \PE_{\fa\Omega}(\chi_1(\beta_i'))
\end{align*}
Calculating
\begin{align*}
\PE_{\fa\Omega}(\chi_1(\beta_i)) &= \PE_{\fa\Omega}(\tilde\Phi_i(-,t_i))\cdot \PE_{\fa\Omega}(\tilde\Phi_i(0,-))=\PE_{\fa\Omega}(\tilde\mu_i)\eqcref{lem:thi} \tilde g_i(1) 
\\
\PE_{\fa\Omega}(\chi_1(\beta_i')) &= \PE_{\fa\Omega}(\tilde\Phi_i(1,-))\cdot \PE_{\fa\Omega}(\tilde\Phi_i(-,t_{i-1}))=1
\\
\PE_{\fa\Omega}(\chi_2(\beta_i)) &= \PE_{\fa\Omega}(\Phi_i(-,t_i))\cdot \PE_{\fa\Omega}(\Phi_i(0,-))=\PE_{\fa\Omega}(\mu_i)\eqcref{lem:thi} g_i(1)
\\
\PE_{\fa\Omega}(\chi_2(\beta_i')) &= \PE_{\fa\Omega}(\Phi_i(1,-))\cdot \PE_{\fa\Omega}(\Phi_i(-,t_{i-1}))=1
\end{align*}
we obtain
\begin{align*}
t(h_{g,\varphi}(\beta_i,\mu_i))= \tilde g_i(1)g_i(1)^{-1} 
\text{.}
\end{align*}
\end{comment}
We notice that
\begin{align*}
&\Psi_{i+1}(s,t_{i}) \circ \rho_i(s)
&&\quand
g_i(s)
\\
&\tilde\rho_i(s)\circ R(\Psi_i(s,t_i),\tilde g_i(s))
&&\quand
G_i(s,t_i) \tilde g_i(s)
\end{align*}
are two transition spans with transitions functions along $s \mapsto (\tilde\Phi(s,t_i),\Phi(s,t_i))$.
\begin{comment}
Indeed,
\begin{equation*}
t(\Psi_{i+1}(s,t_{i}))=\tilde\Phi_{i+1}(s,t_{i})=t(\tilde\rho_i(s))
\end{equation*}
and
\begin{align*}
R(s(\rho_i(s),g_i(s)^{-1})
&=\Phi_{i}(s,t_i)
\\& =R(s(\Psi_i(s,t_i)),G_i(s,t_i)^{-1})
\\&=R(s(R(\Psi_i(s,t_i),\tilde g_i(s))),\tilde g_i(s)^{-1}G_i(s,t_i)^{-1})
\end{align*}
\end{comment}
Hence, by \reftransitionspan\ there exists a unique path $\eta:[0,1] \to H$ with
\begin{align}
g_i(s)t(\eta(s)) &=G_i(s,t_i) \tilde g_i(s)
\nonumber
\\
\label{eq:etacomp}
R(\tilde\rho_i(s)\circ R(\Psi_i(s,t_i),\tilde g_i(s)),(\eta(s),t(\eta(s))^{-1})) &=\Psi_{i+1}(s,t_{i}) \circ \rho_i(s)
\end{align}
From \cref{eq:initialspan} we conclude that $\eta(0)=1$. \cref{eq:etacomp} is equivalent to: 
\begin{equation*}
 R(\Psi_i(s,t_i),G_i(s,t_i)^{-1}) = R(\tilde\rho_i(s)^{-1}\circ \Psi_{i+1}(s,t_{i}) \circ \rho_i(s),(\eta(s)^{-1},g_i(s)^{-1})\text{.}
\end{equation*}
In their dependence on $s$, this is an equality between two paths in $\mor{\inf P}$.
We compute $h_{\Omega}$ on both sides. On the left, we obtain $h_{\Omega}(\beta_i)$ via \cref{co:h:b}.
\begin{comment}
Indeed,
\begin{equation*}
h_{\Omega}(R(\Psi_i(-,t_i),G_i(-,t_i)^{-1}))\eqcref{co:h:b}  h_{\Omega}(\Psi_i(-,t_i))=h_{\Omega}(\beta_i)\text{.}
\end{equation*}
\end{comment}
On the right we compute:
\begin{align*}
&\mquad h_{\Omega}(R(\tilde\rho_i(s)^{-1}\circ \Psi_{i+1}(s,t_{i}) \circ \rho_i(s),(\eta(s)^{-1},g_i(s)^{-1}))
\\&=h_{\Omega}(R(R(\tilde\rho_i(s)^{-1}\circ \Psi_{i+1}(s,t_{i}) \circ \rho_i(s),(\eta(s)^{-1},1)),g_i(s)^{-1}))
\\&\eqcref{co:h:b} \alpha(g_i(1),h_{\Omega}(R(\tilde\rho_i(s)^{-1}\circ \Psi_{i+1}(s,t_{i}) \circ \rho_i(s),(\eta(s)^{-1},1)),1))
\\&\eqcref{co:hcalc:b} \alpha(g_i(1),\eta(1))\text{.}
\end{align*}
In the last step we have used that  $\tilde\rho_i(s)^{-1}\circ \Psi_{i+1}(s,t_{i}) \circ \rho_i(s)$ is horizontal (\cref{lem:hormor:d}) and has horizontal source $s(\rho_i)$ (since $\rho_i$ is horizontal and $t(\rho_i)=\nu_{i+1}$ is horizontal, see \cref{lem:hormor:e}). Summarizing, we have 
$\eta(1) = \alpha(g_i(1)^{-1},h_{\Omega}(\beta_i) )$.
Thus, we get from \cref{eq:etacomp}:
\begin{equation}
\label{eq:etacomp2}
\tilde\rho_i(1)\circ R(\Psi_i(1,t_i),(h_{\Omega}(\beta_i,\mu_i),g_i(1))) =\Psi_{i+1}(1,t_{i}) \circ \rho_i(1)\text{.}
\end{equation}

Now we consider the paths $t \mapsto \Psi_i'(t):= \Psi_{i}(1,t)$ in $\mor{\inf P}$ which are horizontal and have $t(\Psi_i')=\tilde\gamma_i'$ and $s(\Psi'_i)=\gamma_i'$.
\begin{comment}
Indeed, 
\begin{align*}
t(\Psi'_i(t))&= t(\Psi_{i}(1,t)) =  \tilde\Phi_i(1,t) = \tilde\gamma_i'(t)
\\
s(\Psi'_i(t))&=s(\Psi_{i}(1,t))=\Phi_i(1,t)=\gamma_i'(t)\text{.}
\end{align*} 
\end{comment}
We claim that they establish an equivalence between the targets of the two horizontal lifts. This is confirmed by the following calculation:
\begin{align*}
\tilde\rho_{i}'\circ \Psi'_{i}(t_{i})&= \tilde\rho_i(1)\circ R(\Psi_i(1,t_i),(\tilde h_{i}^{-1},1))
\\&\eqcref{eq:hcomp} \tilde\rho_i(1)\circ R(\Psi_i(1,t_i),(h_{\Omega}(\beta_i)h_{i}^{-1},1))
\\&= R(\tilde\rho_i(1)\circ R(\Psi_i(1,t_i),(h_{\Omega}(\beta_i),g_i(1))),(h_{i}^{-1},g_{i}(1)^{-1}))
\\&\eqcref{eq:etacomp2} R(\Psi_{i+1}(1,t_{i})\circ \rho_{i}(1),(h_{i}^{-1},g_{i}(1)^{-1}))
\\&=\Psi'_{i+1}(t_{i})\circ \rho'_{i}\text{.}
\\[-3.6em]
\end{align*}
\begin{comment}
An extended version of this calculation is:
\begin{align*}
\tilde\rho_{i}'\circ \Psi'_{i}(t_{i})&= R(\tilde\rho_{i}(1),(\tilde h_{i}^{-1},\tilde g_{i}(1)^{-1}))\circ \Psi_{i}(1,t_{i}) 
\\&= R(R(\tilde\rho_i(1),\tilde g_i(1)^{-1})\circ\Psi_i(1,t_i)),(\tilde h_{i}^{-1},1))
\\&= R(R(\tilde\rho_i(1),\tilde g_i(1)^{-1}),\tilde g_i(1))\circ R(\Psi_i(1,t_i),(\tilde h_{i}^{-1},1))
\\&= \tilde\rho_i(1)\circ R(\Psi_i(1,t_i),(\tilde h_{i}^{-1},1))
\\&\eqcref{eq:hcomp} \tilde\rho_i(1)\circ R(\Psi_i(1,t_i),(h_{\Omega}(\beta_i)h_{i}^{-1},1))
\\&= R(\tilde\rho_i(1)\circ R(\Psi_i(1,t_i),(h_{\Omega}(\beta_i),g_i(1))),(h_{i}^{-1},g_{i}(1)^{-1}))
\\&\eqcref{eq:etacomp2} R(\Psi_{i+1}(1,t_{i})\circ \rho_{i}(1),(h_{i}^{-1},g_{i}(1)^{-1}))
\\&= \Psi_{i+1}(1,t_{i})\circ R(\rho_{i}(1),(h_{i}^{-1},g_{i}(1)^{-1}))
\\&=\Psi'_{i+1}(t_{i})\circ \rho'_{i}
\end{align*}
\end{comment}
\end{proof}

\subsection{Definition of parallel transport along bigons}

\label{sec:defpartransbigon}

The results of the previous section show that choosing a horizontal lift of a small bigon and computing its target establishes a well-defined  map $\varphi_{\Sigma}^{small}:F_{\gamma} \to F_{\gamma'}$.
 Before we extend it to arbitrary bigons, we discuss some properties. 
\begin{lemma}
\label{lem:varphisigma}
The map $\varphi_{\Sigma}^{small}$ has the following properties:
\begin{enumerate}[(a)]

\item 
\label{lem:varphisigma:a}
It preserves the anchors $\alpha_l$ and $\alpha_r$.

\item
\label{lem:varphisigma:b}
It is equivariant with respect to the left $\inf P_{x}$-action and the right $\inf P_{y}$-action.

\item
\label{lem:varphisigma:c}
It is equivariant with respect to the $\mor\Gamma$-action. 
\item
\label{lem:varphisigma:e}
It is smooth.

\end{enumerate}
\end{lemma}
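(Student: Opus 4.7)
The plan is to verify the four properties by inspecting how the data of a horizontal lift $(n,t,\{\Phi_i\},\{\rho_i\},\{g_i\})$ of $\xi$ and the formulas \cref{eq:def:targethorlift} for the target $\xi'$ behave under the respective operations. Since \cref{lem:bigonmapdef} guarantees independence of the target on the choice of horizontal lift, it suffices for each operation to exhibit a single valid lift of the transformed source and to read off its target.

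For \cref{lem:varphisigma:a}, the constancy of $\rho_0$ gives $\rho_0'=\rho_0(1)=\rho_0$, so $\alpha_l(\xi')=s(\rho_0)=\alpha_l(\xi)$; for the right anchor, \cref{lem:thi} yields $t(h_n^{-1},g_n(1)^{-1})=t(h_n)^{-1}g_n(1)^{-1}=1$, hence $t(\rho_n')=R(t(\rho_n),1)=t(\rho_n)=\alpha_r(\xi)$. For \cref{lem:varphisigma:b}, given $\rho\in\mor{\inf P_x}$ with $s(\rho)=\alpha_l(\xi)$, replacing $\rho_0$ by $\rho_0\circ \rho^{-1}$ in the lift yields a valid horizontal lift of $\rho\circ\xi$ (every condition of \cref{def:horliftbigon} that does not involve $s(\rho_0)$ is untouched), and the target differs from $\xi'$ only in the replacement $\rho_0'\mapsto \rho_0'\circ\rho^{-1}$. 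Right equivariance is analogous, replacing $\rho_n$ by $\rho^{-1}\circ\rho_n$; the only non-trivial point is the identity
\begin{equation*}
R(\rho^{-1}\circ\rho_n,(h_n^{-1},g_n(1)^{-1}))=\rho^{-1}\circ \rho_n'\text{,}
\end{equation*}
which follows from the functoriality of the smooth action $R$ applied to the groupoid decomposition $(h_n^{-1},g_n(1)^{-1})=\id_1\circ (h_n^{-1},g_n(1)^{-1})$, legal precisely because \cref{lem:thi} forces the target of $(h_n^{-1},g_n(1)^{-1})$ to be $1\in G$; the identity axiom $R(\rho^{-1},\id_1)=\rho^{-1}$ finishes the check.

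For \cref{lem:varphisigma:c}, given $(h,g)\in\mor\Gamma$, I transport the whole lift by the $\Gamma$-action: set $\Phi_i^{new}:=R(\Phi_i,g)$, $\rho_i^{new}:=R(\rho_i,g)$ for $1\leq i \leq n$, $\rho_0^{new}:=R(\rho_0,(h^{-1},t(h)g))$, and $g_i^{new}:=g^{-1}g_ig$. The boundary conditions (b) of \cref{def:horliftbigon} are preserved by equivariance of source/target, and horizontality of $\gamma_i'^{new}, \nu_i^{new}$, and $\rho_i^{new}$ follows from \cref{lem:obhor,lem:hormor:c,lem:hormor:h}. The crucial input is the equivariance of the surface-ordered exponential $\SE_{\Omega}(R^{*}\Phi)=\alpha(g^{-1},\SE_{\Omega}(\Phi))$, which is a direct consequence of the transformation behaviour \cref{eq:conform:b,eq:conform:c} and is recorded in the appendix; it gives $h_i^{new}=\alpha(g^{-1},h_i)$. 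Substituting these into \cref{eq:def:targethorlift} and using the group law in $\mor\Gamma=H\ltimes G$, one verifies $\rho_i'^{new}=R(\rho_i',g)$ for $1\leq i\leq n$ and $\rho_0'^{new}=R(\rho_0',(h^{-1},t(h)g))$, matching exactly the definition of $\xi'\cdot(h,g)$ in \cref{lem:gammaaction}.

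For \cref{lem:varphisigma:e}, I work in a chart $\phi_{\xi_0,\rho,g}$ of $F_\gamma$ provided by \cref{lem:smoothstructure} and in a chart $\phi_{\xi_0',\rho',g'}$ of $F_{\gamma'}$ around $\xi_0':=\varphi_{\Sigma}^{small}(\xi_0)$. Using the equivariances established in \cref{lem:varphisigma:a,lem:varphisigma:b,lem:varphisigma:c}, the map $\varphi_{\Sigma}^{small}$ on a general point of the chart is determined by its value on $\xi_0$ combined with smooth operations in $(p,\tilde\rho)$, so that
\begin{equation*}
\varphi_{\Sigma}^{small}(\sigma_{\xi_0,\rho,g}(p)\circ\tilde\rho)=(\rho(p_0,p)^{-1}\circ \xi_0')\cdot(1,g(p_0,p)^{-1})\circ\tilde\rho\text{,}
\end{equation*}
which is smooth by \cref{prop:smoothpt}. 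I expect this last step to be the main technical obstacle: it requires a careful choice of horizontal lift adapted to the chart on $F_{\gamma'}$ together with smooth dependence of the surface-ordered exponential $\SE_\Omega$ on its bigon argument, a result that ultimately comes from the appendix.
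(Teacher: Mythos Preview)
Your argument is correct and follows the same route as the paper for parts \cref{lem:varphisigma:a*}--\cref{lem:varphisigma:c*}: same constancy argument for the anchors, same modification of $\rho_0$ resp.\ $\rho_n$ for the groupoid actions, and the same transported lift $(R(\Phi_i,g),R(\rho_i,g),g^{-1}g_ig)$ together with $h_i^{new}=\alpha(g^{-1},h_i)$ for the $\mor\Gamma$-equivariance.

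The one place where you overcomplicate is \cref{lem:varphisigma:e*}. Your displayed formula is already the whole story, but you do not quite recognise it. By \cref{lem:varphisigma:a*} we have $\alpha_l(\xi_0')=\alpha_l(\xi_0)=p_0$, so you may take the \emph{same} transition span $\rho$ and transition function $g$ for the chart of $F_{\gamma'}$, i.e.\ $\rho'=\rho$ and $g'=g$. Then your right-hand side is literally $\sigma_{\xi_0',\rho,g}(p)\circ\tilde\rho=\phi_{\xi_0',\rho,g}(p,\tilde\rho)$, so in these matched charts $\varphi_\Sigma^{small}$ is the \emph{identity} $(p,\tilde\rho)\mapsto(p,\tilde\rho)$. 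No smooth dependence of $\SE_\Omega$ on its bigon argument is needed, and there is no ``main technical obstacle''; the paper's proof is exactly this one-line observation.
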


\begin{proof}
\cref{lem:varphisigma:a*} is straightforward to check using that $\rho_0$ and $\rho_n$ are constant. \begin{comment}
Indeed, for $\alpha_l$ we have
\begin{equation*}
\alpha_l(\xi')=s(\rho_0')=s(\rho_0(1))=s(\rho_0(0))=s(\rho_0)=\alpha_l(\xi)\text{.}
\end{equation*}
For $\alpha_r$ we have
\begin{equation*}
\alpha_r(\xi') = t(\rho_n') =R(t(\rho_n(1)),t(h_n)^{-1}g_n(1)^{-1})=t(\rho_n(0))=t(\rho_n)= \alpha_r(\xi)
\end{equation*}
\end{comment}
In \cref{lem:varphisigma:b*} is even more obvious. In \cref{lem:varphisigma:c*} we have
to prove coincidence between\begin{equation*}
\varphi_{\Sigma}^{small}(\rho_n \ast \gamma_n \ast ... \ast \gamma_1\ast \rho_0) \cdot (h,g)=R(\rho_n',g) \ast R(\gamma_n',g) \ast ... \ast R(\gamma_1',g)\ast R(\rho_0',(h^{-1},t(h)g))
\end{equation*}
\begin{comment}
In fact,
\begin{align*}
\varphi_{\Sigma}^{small}(\rho_n \ast \gamma_n \ast ... \ast \gamma_1\ast \rho_0) \cdot (h,g)&=(\rho_n' \ast \gamma_n' \ast ... \ast \gamma_1'\ast \rho_0')\cdot (h,g)
\\&=R(\rho_n',g) \ast R(\gamma_n',g) \ast ... \ast R(\gamma_1',g)\ast R(\rho_0',(h^{-1},t(h)g))
\end{align*}
\end{comment}
and
\begin{align*}
&\mquad\varphi_{\Sigma}^{small}((\rho_n \ast \gamma_n \ast ... \ast \gamma_1\ast \rho_0) \cdot (h,g))
\\&=\varphi_{\Sigma} ^{small}(R(\rho_n,g) \ast R(\gamma_n,g) \ast ... \ast R(\gamma_1,g)\ast R(\rho_0,(h^{-1},t(h)g)))\text{.} 
\end{align*}
Let a horizontal lift of $\rho_n \ast \gamma_n \ast ... \ast \gamma_1\ast \rho_0$ consist of  $\Phi_i$, $\rho_i$ and $g_i$. We consider for $1\leq i \leq n$ the data of $\tilde\Phi_i := R(\Phi_i,g)$, $\tilde\rho_i := R(\rho_i,g)$ and $\tilde g_i := g^{-1}g_ig$, as well as $\tilde\rho_0:= R(\rho_0, (h^{-1},t(h)g))$. It is straightforward to check that this is a horizontal lift of $(\rho_n \ast \gamma_n \ast ... \ast \gamma_1\ast \rho_0) \cdot (h,g)$.
\begin{comment}
Indeed, \cref{lem:bigoncon:a*} and \cref{lem:bigoncon:d*} are obvious. For \cref{lem:bigoncon:b*} we check
\begin{equation*}
t(\tilde\rho_0(s))=R(t(\rho_0), g)=\Phi_{1}(s,g)=\tilde\Phi_1(s,t_i)\text{,}
\end{equation*}
for $1\leq i < n$
\begin{equation*}
t(\tilde\rho_{i}(s)) =R(t(\rho_i(s)),g)=R(\Phi_{i+1}(s,t_i),g) = \tilde\Phi_{i+1}(s,t_i)\text{,}
\end{equation*}
and for $1\leq i \leq n$:
\begin{equation*}
s(\tilde \rho_{i}(s))=R(s( \rho_{i}(s)),g)=R(\Phi_i(s,t_i),gg^{-1}g_i(s)g)=R(\tilde \Phi_i(s,t_{i}),\tilde g_{i}(s))\text{.}
\end{equation*}
For \cref{lem:bigoncon:c*} we have to check:
\begin{align*}
R(\gamma_i(t),g) &=R(\Phi_i(0,t),g)=\tilde \Phi_i(0,t)
\\
R(\rho_i,g) &=R(\rho_i(0),g)= \tilde \rho_i(0)
\\
R(\rho_0,(h^{-1},t(h)g)) &=R(\rho_0(0), (h^{-1},t(h)g))=\tilde\rho_0(0)
\end{align*}
\end{comment}
The target of $(\Phi_i,\rho_i, g_i)$ consists by definition of the paths $\gamma_i'(t) := \Phi_i(1,t)$ and the morphisms $\rho_0' := \rho_0(1)
$ and $
\rho_i' := R(\rho_i(1),(h_i^{-1},g_i(1)^{-1}))$, where $h_i:=\SE_{\Omega}(\Sigma_i)$ and $\Sigma_i$ is a bigon-parameterization of $\Phi_i$. Now we compute the target of $(\tilde\Phi_i,\tilde\rho_i, \tilde g_i)$. We have $\tilde\gamma_i'(t)=\tilde\Phi_i(1,t)=R(\Phi_i(1,t),g)=R(\gamma_i'(t),g)$. We  use the bigon-parameterization  $\tilde\Sigma_i(s,t) := R(\Sigma_i(s,t),g)$ and $\tilde h_i :=\SE_{\Omega}(\tilde\Sigma_i)$. By \cref{lem:soe2bun} we get
$\tilde h_i = \alpha(g^{-1}, h_i)$.
Then, a short calculation shows that $\tilde\rho_0'=R(\rho_0', (h^{-1},t(h)g))$ and $\tilde\rho_i'=R(\rho_i',g)$.
\begin{comment}
Indeed, $\tilde\rho_0'=\tilde\rho_0(1)=R(\rho_0(1), (h^{-1},t(h)g))=R(\rho_0', (h^{-1},t(h)g))$ and
\begin{align*}
\tilde\rho'_i&=R(\tilde \rho_i(1),(\tilde h_i^{-1},\tilde g_i(1)^{-1}))
\\&=R(\rho_i(1),(1,g)(\tilde h_i^{-1},\tilde g_i(1)^{-1}))
\\&=R(\rho_i(1),(\alpha(g,\tilde h_i^{-1}),g\tilde g_i(1)^{-1}))
\\&=R(\rho_i(1),(h_i^{-1},g_i(1)^{-1})(1,g))
\\&=R(\rho_i',g)\text{.}
\end{align*}
\end{comment}
This shows the required coincidence. 
For \cref{lem:varphisigma:e*} we consider a chart
$\phi_{\xi_0,\rho,g}$
of $F_{\gamma}$, and a chart $\phi_{\xi_0',\rho,g}$ of $F_{\gamma'}$ with $\xi_0':=  \varphi_{\Sigma}^{small}(\xi_0)$. In these charts, $\varphi_{\Sigma}^{small}$ is the identity, as one can see using \cref{lem:varphisigma:b*} and \cref{lem:varphisigma:c*};  in particular, it is smooth. \begin{comment}
Indeed, the map
\begin{equation*}
\alxydim{}{U \ttimes{\alpha_r\circ \sigma_{\xi_0,\rho,g}}{t} \mor{\inf P_y} \ar[r] & F_{\gamma} \ar[r]^{\varphi^{small}_{\Sigma}} & F_{\gamma'} \ar[r] & U \ttimes{\alpha_r\circ \sigma_{\xi_0',\rho',g'}}{t} \mor{\inf P_y}}
\end{equation*}
is given by
\begin{align*}
(p,\tilde\rho) &\mapsto \sigma_{\xi_0,\rho,g}(p)\circ \tilde\rho =  ((\rho(p_0,p)^{-1}\circ \xi_0)\cdot (1,g(p_0,p)^{-1}))\circ \tilde\rho 
\\&\mapsto ((\rho(p_0,p)^{-1}\circ \xi_0')\cdot (1,g(p_0,p)^{-1}))\circ \tilde\rho =\sigma_{\xi_0',\rho,g}(p)\circ \tilde\rho 
\\&\mapsto (p,\tilde\rho)
\end{align*}
\end{comment}
\end{proof}

Next we extend $\varphi^{small}_{\Sigma}$ to arbitrary bigons. For an arbitrary bigon $\Sigma:\gamma \Rightarrow \gamma'$ there exists a subdivision $s\in T_n$ (i.e., $0=s_0<...<s_n=1$) such that the  pieces 
$\Sigma_i(s,t):= \Sigma((s_i-s_{i-1})s+s_{i-1},t)$
are small. We define 
\begin{equation*}
\varphi_{\Sigma}(s) := \varphi^{small}_{\Sigma_n} \circ ... \circ \varphi^{small}_{\Sigma_1}\text{.}
\end{equation*}

\begin{lemma}
\label{lem:indepsshow}
The map $\varphi_{\Sigma}(s)$ is independent of the choice of $s$.
\end{lemma}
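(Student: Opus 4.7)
The plan is a standard common-refinement argument. Given two subdivisions $s,s'\in T$ of $[0,1]$ such that both decompose $\Sigma$ into small sub-bigons, any common refinement $s''\geq s,s'$ also has this property (smallness is inherited by sub-bigons). So it suffices to show that $\varphi_{\Sigma}(s)=\varphi_{\Sigma}(s'')$ whenever $s''$ refines $s$, and by induction on the number of added points it suffices to consider the insertion of a single point $s^{*}\in (s_{i-1},s_{i})$. This reduces the claim to the following local statement: \emph{if $\Sigma:\gamma\Rightarrow \gamma'$ is a small bigon and $s^{*}\in (0,1)$ splits it into two small bigons $\Sigma':\gamma\Rightarrow \gamma^{*}$ and $\Sigma'':\gamma^{*}\Rightarrow \gamma'$, then $\varphi_{\Sigma}^{small}=\varphi_{\Sigma''}^{small}\circ \varphi_{\Sigma'}^{small}$.}

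To prove this gluing property, fix $\xi\in F_{\gamma}$ and pick a horizontal lift $(n,t,\{\Phi_i\},\{\rho_i\},\{g_i\})$ of $\Sigma$ with source $\xi$, as in \cref{def:horliftbigon}. Cutting in the $s$-direction at $s^{*}$ produces two collections
\begin{equation*}
\Phi_i'(s,t):=\Phi_i(s^{*}s,t)\quomma \rho_i'(s):=\rho_i(s^{*}s)\quomma g_i'(s):=g_i(s^{*}s)
\end{equation*}
and the analogous reparametrisations $\Phi_i''$, $\rho_i''$, $g_i''$ on $[s^{*},1]$, the latter however requiring a normalisation so that $g_i''(0)=1$ and $\rho_0'',\rho_n''$ are constant. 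This normalisation is provided by the last two modification steps in the proof of \cref{lem:existencehorizontallifts}, using \cref{lem:obhorexists,lem:makemorhor} to remove the $g_i(s^{*})$- and $\rho_i(s^{*})$-dependence by shifting $\Phi_i''$ by a $G$-valued path and the $\rho_i''$ by an $H$-valued path; the collected data then forms a genuine horizontal lift of $\Sigma'$ with source $\xi$ and some target $\xi_{*}\in F_{\gamma^{*}}$, as well as a horizontal lift of $\Sigma''$ with source a specific representative of $\xi_{*}$ and some target $\xi''\in F_{\gamma'}$. By \cref{lem:bigonmapdef} these targets coincide with $\varphi_{\Sigma'}^{small}(\xi)$ and $\varphi_{\Sigma''}^{small}(\varphi_{\Sigma'}^{small}(\xi))$, respectively, independently of the representative chosen for $\xi_{*}$.

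It remains to identify $\xi''=\varphi_{\Sigma}^{small}(\xi)$. This is the computational heart of the argument. Both sides arise from the same paths $\gamma_i'(t)=\Phi_i(1,t)$ and the same $\rho_i(1)$, so the only thing to check is that the $H$- and $G$-correction factors in formula \cref{eq:def:targethorlift} agree. For $\varphi_{\Sigma}^{small}$ one uses $h_i=\SE_{\Omega}(\Sigma_i)$ and $g_i(1)$; for the composite one obtains, from $\Sigma_i'$ and $\Sigma_i''$ respectively, factors $h_i'=\SE_{\Omega}(\Sigma_i')$, $g_i'(1)=g_i(s^{*})$ and $h_i''=\SE_{\Omega}(\Sigma_i'')$, $g_i''(1)=g_i(s^{*})^{-1}g_i(1)$, together with the $H$-valued path used to renormalise the transition in the middle. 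The needed identity is the vertical multiplicativity of the surface-ordered exponential along $s$, i.e. $\SE_{\Omega}(\Sigma_i'' \ast \Sigma_i')= \SE_{\Omega}(\Sigma_i'')\cdot \SE_{\Omega}(\Sigma_i')$ modulo the appropriate $\alpha$-twist by $\PE_{\fa\Omega}$ of the boundary path at $s=s^{*}$; this is precisely the kind of gluing statement recorded in the appendix on path- and surface-ordered exponentials \cite{schreiber5}, and it matches the twist already encoded in \cref{eq:def:targethorlift} via the compensating $G$-factor. Substituting this into the definition shows that the two prescriptions for $\rho_i'$ coincide, hence $\xi''=\varphi_{\Sigma}^{small}(\xi)$.

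The main obstacle is the last bookkeeping step: keeping track of how the horizontal normalisations introduced when cutting at $s^{*}$ interact with the $\SE_{\Omega}$-factors. Everything else is an application of the tools already developed in \cref{sec:horliftbigon} and in the appendix, but the verification that all twists combine correctly is where the algebra has to be done carefully.
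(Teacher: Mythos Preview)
Your reduction via common refinement to the insertion of a single point, and the idea to split a chosen horizontal lift of $\Sigma$ at $s^{*}$, is exactly the paper's strategy. The gap is in how you normalise the upper piece.

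The problem with the restricted data $(\Phi_i'',\rho_i'',g_i'')$ is that $g_i''(0)=g_i(s^{*})\neq 1$; note that $\rho_0'',\rho_n''$ are already constant, being restrictions of constants, so that part of your diagnosis is off. The modification steps from \cref{lem:existencehorizontallifts} that you invoke do not cure this: they shift by paths starting at the identity and hence preserve all initial values. The paper's fix is different and is really the point of the argument: leave $\Phi_i^2$ untouched and set
\[
\rho_i^2(s):=R\bigl(\tilde\rho_i^2(s),((h_i^1)^{-1},g_i^1(1)^{-1})\bigr)\quomma
g_i^2(s):=\tilde g_i^2(s)\,g_i^1(1)^{-1},
\]
shifting by exactly the correction factors appearing in the target formula \cref{eq:def:targethorlift} for the lower piece. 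This gives $g_i^2(0)=1$, preserves horizontality (the shift is by a constant element of $\mor{\Gamma}$, \cref{lem:hormor:h}), and by \cref{lem:thi} leaves $t(\rho_i^2)$ unchanged; most importantly it forces the source of the second lift to be \emph{equal} to the target $\xi'$ of the first, not merely equivalent to it. With this choice the bookkeeping you postpone in your last paragraph becomes the single line
\[
h_i=h_i^2\,\alpha(g_i^2(1),h_i^1)=h_i^2\,\alpha(t(h_i^2)^{-1},h_i^1)=h_i^1h_i^2
\]
via \cref{lem:thi}, after which \cref{eq:def:targethorlift} gives $\xi''=\varphi_{\Sigma}^{small}(\xi)$ directly. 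One also needs the intermediate paths $\Phi_i(s^{*},-)$ to be horizontal so that the restricted lower piece is a valid horizontal lift at all; the paper arranges this beforehand by modifying the original lift, and you do not address it.
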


\begin{proof}
It suffices to prove that, for a small bigon $\Sigma$, $\varphi_{\Sigma}(s)=\varphi_{\Sigma}(s')$, where $s\in T_1$ and $s'\in T_2$ with $0=s_0'<s_1'<s_2'=1$. Thus, we have to show that
\begin{equation}
\label{eq:lemindepsshow}
\varphi^{small}_{\Sigma}= \varphi^{small}_{\Sigma_2} \circ \varphi^{small}_{\Sigma_1}\text{,}
\end{equation}
where $\Sigma_1$ and $\Sigma_2$ are (reparameterizations of) $\Sigma|_{[0,s_1'] \times [0,1]}$ and $\Sigma|_{[s_1',1] \times [0,1]}$, respectively. 
We choose a horizontal lift $(n,t,\Phi_i,\rho_i,g_i)$ of $\Sigma$ with source $\xi\in F_{\gamma}$. By a slight modification of the arguments in the proof of \cref{lem:existencehorizontallifts} we can assume that the paths $\gamma_i'(t):= \Phi_i(s_1',t)$ are horizontal.  We consider the elements $h_i\in H$ and $\xi'' :=\rho_n'' \ast \gamma_n'' \ast ... \ast \gamma_1''\ast \rho_0''$ with $\gamma_i''(t) := \Phi_i(1,t)$, $\rho_0'' := \rho_0(1)$, and $\rho_i'' := R(\rho_i(1),(h_i^{-1},g_i(1)^{-1}))$. 
By restriction of all parameters $s$ to $0=s_0'\leq s \leq s_1'$ and reparameterization to $[0,1]$, we obtain a horizontal lift $(n,t,\Phi_1^{1},\rho_i^1,g_i^1)$ of $\Sigma_1$ with source $\xi$.
\begin{comment}
It is is important that reparameterizations of horizontal paths are horizontal, and it is important that the restriction goes to $s=0$ on the left -- because the condition $g_i(0)=1$ has to be satisfied. 
\end{comment} 
We consider the elements $h_i^1 \in H$ and the target $\xi'=\rho_n' \ast \gamma_n' \ast ... \ast \gamma_1'\ast \rho_0'$ with $\gamma_i'(t) := \Phi^1_i(1,t)$, $\rho_0' := \rho_0^1(1)$, and $\rho_i' := R(\rho_i^1(1),((h_i^1)^{-1},g_i^1(1)^{-1}))$. Let $(\Phi_i^2,\tilde \rho_i^2,\tilde g_i^2)$ denote the restriction of $(\Phi_i,\rho_i,g_i)$ to $s_1'\leq s \leq s_2'=1$. Define the following modification:
\begin{equation*}
\rho^2_0(s) := \tilde \rho^2_0(s)
\quomma
\rho^2_i(s) := R(\tilde \rho^2_i(s),((h_i^1)^{-1},g_i^1(1)^{-1}))
\quand
g^2_i(s) :=  \tilde g_i^2 g_i^1(1)^{-1}
\text{.}
\end{equation*}
Then, $(n ,t,\Phi_i^2,\rho_i^2,g_i^2)$ is a horizontal lift of $\Sigma_2$ with source $\xi'$. 
\begin{comment}
We have $g_i^2(0)=\tilde g_i^2(0)g_i^1(1)^{-1}=g_i(s_1')g_i(s_1')^{-1}=1$. Further,
\begin{equation*}
t(\rho_i^2(s))=t(\tilde\rho_i^2(s))=\Phi_{i+1}^2(s,t_i)
\end{equation*} 
and
\begin{equation*}
s(\rho_i^2(s))=R(s(\tilde \rho^2_i(s)),g_i^1(1)^{-1})=R(\Phi^2_i(s,t_i),\tilde g_i^2(s) g_i^1(1)^{-1})=R(\Phi^2_i(s,t_i),g^2_i(s))\text{.}
\end{equation*}
\end{comment}
We consider again the corresponding elements $h_i^2\in H$.  We have $g_i^2(1) = g_i(1) g_i^1(1)^{-1}$ and 
\begin{equation*}
h_i = h_i^2\alpha(\PE_{\fa\Omega}(\mu_i),h_i^1)= h_i^2\alpha(g_i^2(1),h_i^1)=h_i^2\alpha(t(h_i^2)^{-1},h_i^1)=h_i^1h_i^2\text{.}
\end{equation*}
Then we obtain
$\Phi_i^2(1,t)=\gamma_i''(t)$, $\rho_0^2(1)=\rho_0''$ and $R(\rho_i^2(1),((h_i^2)^{-1},g_i^2(1)^{-1}))= \rho_i''$.
\begin{comment}
Indeed,
\begin{align*}
\Phi_i^2(1,t) &= \Phi_i(1,t)=\gamma_i''(t)
\\
\rho_0^2(1) &= \tilde \rho_0^2(1)=\rho_0(1)=\rho_0''
\\
R(\rho_i^2(1),((h_i^2)^{-1},g_i^2(1)^{-1})) &=R(\tilde \rho_i^2(1),((h_i^1)^{-1},g_i^1(1)^{-1})((h_i^2)^{-1},g_i^2(1)^{-1}))
\\&=R( \rho_i(1),((h_i^1)^{-1}\alpha(g_i^1(1)^{-1},(h_i^2)^{-1}),g_i^1(1)^{-1}g_i^2(1)^{-1})) \\&=R( \rho_i(1),((h_i^1)^{-1}\alpha(t(h_i^1),(h_i^2)^{-1}),g_i(1)^{-1})) \\&= R(\rho_i(1),(h_i^{-1},g_i(1)^{-1}))
\\&= \rho_i'' 
\end{align*}
\end{comment}
This shows \cref{eq:lemindepsshow}. 
\end{proof}

By \cref{lem:indepsshow}, we simply write $\varphi_{\Sigma}$ for any of the maps $\varphi_{\Sigma}(s)$, and summarize the properties of \cref{lem:varphisigma} as follows.

\begin{proposition}
\label{prop:varphiSigma}
The  map $\varphi_{\Sigma}:F_{\gamma} \to F_{\gamma'}$ is a $\Gamma$-equivariant transformation.
\end{proposition}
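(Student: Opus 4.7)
The plan is to reduce the statement to \cref{lem:varphisigma}, which already establishes everything we need in the case of small bigons. For a general bigon $\Sigma\maps\gamma\Rightarrow\gamma'$, pick a subdivision $s\in T_n$ as in the definition and intermediate paths $\gamma^{(i)}(t):=\Sigma(s_i,t)$; all of them run from $x$ to $y$ since $\Sigma(s,0)=x$ and $\Sigma(s,1)=y$. The small pieces $\Sigma_i$ are then bigons $\gamma^{(i-1)}\Rightarrow\gamma^{(i)}$, and by \cref{lem:indepsshow} we have the factorization
\begin{equation*}
\varphi_{\Sigma}=\varphi_{\Sigma_n}^{small}\circ\cdots\circ\varphi_{\Sigma_1}^{small}\maps F_{\gamma}\to F_{\gamma^{(1)}}\to\cdots\to F_{\gamma'}
\end{equation*}
as a finite composition of maps between anafunctors $\inf P_x\to\inf P_y$.

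Next I would invoke \cref{lem:varphisigma} for each factor $\varphi_{\Sigma_i}^{small}$: it is a smooth map that preserves the anchors $\alpha_l,\alpha_r$, is equivariant with respect to the left $\inf P_x$-action and the right $\inf P_y$-action, and is equivariant with respect to the $\mor\Gamma$-action. These four properties are precisely what is required for a $\Gamma$-equivariant transformation (see \cref{rem:anafunctor:a}).

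Finally, I would observe that all four of these properties are manifestly preserved under composition of maps: smoothness composes to smoothness, and equations such as $\alpha_l\circ f=\alpha_l$ or $f(\rho\circ\xi)=\rho\circ f(\xi)$ are trivially inherited by any finite composition. Therefore $\varphi_{\Sigma}$ itself is smooth, anchor-preserving, and equivariant in all the required senses, and hence a $\Gamma$-equivariant transformation $F_{\gamma}\Rightarrow F_{\gamma'}$. I do not foresee any real obstacle; the entire substance of the proposition is contained in \cref{lem:varphisigma} and \cref{lem:indepsshow}, and only the bookkeeping of composing these small-bigon transformations remains.
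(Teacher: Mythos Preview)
Your proposal is correct and matches the paper's approach exactly: the paper states the proposition immediately after \cref{lem:indepsshow} with the remark that it ``summarize[s] the properties of \cref{lem:varphisigma},'' leaving implicit precisely the observation you spell out, namely that a finite composition of maps each satisfying \cref{lem:varphisigma:a*}--\cref{lem:varphisigma:e*} again satisfies them.
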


\subsection{Compatibility with bigon composition}

\label{sec:compbigoncomp}

Bigons can be composed in two ways, vertically and horizontally, which can most easily be described by a picture:
\begin{equation*}
\Sigma' \bullet \Sigma = \alxydim{@C=4em}{x \ar@/^2.4pc/[r]^{\gamma_1}="1" \ar[r]|{\gamma_2}="2"  \ar@/_2.4pc/[r]_{\gamma_3}="3" & y \ar@{=>}"1";"2"|*+{\Sigma}\ar@{=>}"2";"3"|--*+{\Sigma'}}
\quand
\Sigma_2 \ast \Sigma_1 = \alxydim{@C=\xyst}{x \ar@/^1.3pc/[r]^{\gamma_1}="1"\ar@/_1.3pc/[r]_{\gamma_1'}="2" & y \ar@/^1.3pc/[r]^{\gamma_2}="3"\ar@/_1.3pc/[r]_{\gamma_2'}="4" & z \ar@{=>}"1";"2"|*+{\Sigma_1} \ar@{=>}"3";"4"|*+{\Sigma_2}}
\end{equation*}
A more detailed description of bigon composition can be found in \cite[Section 2.1]{schreiber5}.
The content of the following two propositions is that parallel transport along bigons is compatible with these two compositions. In the transport 2-functor formalism described in \cref{sec:orga} they prove the functoriality of the 2-functor on the level of 2-morphisms. 

\begin{proposition}
\label{lem:F1}
Let $\inf P$ be a principal $\Gamma$-2-bundle with fake-flat connection.
Suppose $\Sigma:\gamma_1\Rightarrow \gamma_2$ and $\Sigma':\gamma_2\Rightarrow \gamma_3$ are vertically composable bigons. Then, 
\begin{equation*}
\varphi_{\Sigma_2} \bullet \varphi_{\Sigma_1} = \varphi_{\Sigma_2 \bullet \Sigma_1}
\quand
\varphi_{\id_{\gamma}}=\id_{F_{\gamma}}\text{.}
\end{equation*}
\end{proposition}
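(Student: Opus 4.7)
The plan is to prove both identities using the definition of $\varphi_{\Sigma}$ via subdivisions and the independence result \cref{lem:indepsshow}.

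For the normalization $\varphi_{\id_{\gamma}} = \id_{F_{\gamma}}$, I would note that $\id_{\gamma}(s,t) = \gamma(t)$ is small and produce, for any representative $\xi = \rho_n \ast \gamma_n \ast \ldots \ast \rho_0 \in F_{\gamma}$, the \emph{constant} horizontal lift given by $\Phi_i(s,t) := \gamma_i(t)$, $\rho_i(s) := \rho_i$ and $g_i(s) := 1$. Conditions \cref{lem:bigoncon:a*}--\cref{lem:bigoncon:c*} of \cref{def:horliftbigon} are then immediate: horizontality of $\gamma_i'(t) = \Phi_i(1,t) = \gamma_i(t)$ is inherited from the representative of $\xi$, while both $\nu_i$ and $\rho_i$ are constant and hence horizontal by \cref{lem:hormor:a}. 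Because each bigon-parameterization $\Sigma_i$ of $\Phi_i$ has the pullback $\Phi_i^*\fc\Omega = 0$ (the map $\Phi_i$ factors through $\gamma_i$, so $\partial_s \Phi_i = 0$), the surface-ordered exponential collected in the appendix yields $h_i = \SE_{\Omega}(\Sigma_i) = 1$, and then $t(h_i) = 1 = g_i(1)$. By \cref{eq:def:targethorlift} the target is $\rho_0' = \rho_0$ and $\rho_i' = R(\rho_i,(1,1)) = \rho_i$, so $\varphi_{\id_{\gamma}}(\xi) = \xi$.

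For the vertical composition identity, the idea is to assemble a subdivision of $\Sigma_2 \bullet \Sigma_1$ from subdivisions making $\Sigma_1$ and $\Sigma_2$ small. Concretely, I would choose $s \in T_n$ so that the pieces $\Sigma_{1,i}$ of $\Sigma_1$ are small and $s' \in T_m$ so that the pieces $\Sigma_{2,j}$ of $\Sigma_2$ are small, and then form the subdivision $\{s_i/2\}_{i=0}^n \cup \{s'_j/2 + 1/2\}_{j=1}^m$ of $[0,1]$ for $\Sigma_2 \bullet \Sigma_1$. Each small piece of $\Sigma_2 \bullet \Sigma_1$ produced by this subdivision is, up to an orientation-preserving $s$-reparameterization, either some $\Sigma_{1,i}$ or some $\Sigma_{2,j}$. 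Granted that $\varphi^{small}$ is invariant under such reparameterizations, the definition of $\varphi_{\Sigma}$ and the identification of vertical composition of transformations with composition of underlying maps yield
\begin{equation*}
\varphi_{\Sigma_2 \bullet \Sigma_1} = \varphi^{small}_{\Sigma_{2,m}} \circ \ldots \circ \varphi^{small}_{\Sigma_{2,1}} \circ \varphi^{small}_{\Sigma_{1,n}} \circ \ldots \circ \varphi^{small}_{\Sigma_{1,1}} = \varphi_{\Sigma_2} \bullet \varphi_{\Sigma_1}\text{,}
\end{equation*}
and \cref{lem:indepsshow} removes the dependence on the specific subdivision chosen.

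The main obstacle is the required reparameterization invariance of $\varphi^{small}_{\Sigma}$. I would establish it by the following two observations. First, if $(n,t,\Phi_i,\rho_i,g_i)$ is a horizontal lift of $\Sigma$ with source $\xi$ and $\psi \colon [0,1] \to [0,1]$ is an orientation-preserving smooth reparameterization with $\psi(0)=0$ and $\psi(1)=1$, then precomposing all $s$-dependent data with $\psi$ produces a horizontal lift of $\Sigma \circ (\psi \times \id)$ with the same source and the same terminal data $\Phi_i(1,-)$, $\rho_i(1)$, $g_i(1)$. Second, the surface-ordered exponential $\SE_{\Omega}$ is invariant under reparameterization of the parameter square (a standard property recalled in the appendix), so the elements $h_i$ entering \cref{eq:def:targethorlift} are unchanged. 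Consequently the target of the reparameterized lift equals that of the original, so $\varphi^{small}$ descends to an invariant of the unparameterized small bigon, and the computation above goes through.
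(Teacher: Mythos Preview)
Your proposal is correct and follows the same approach as the paper, which disposes of the statement in a single sentence (``This follows immediately from the definition of $\varphi_{\Sigma}$''). Your detailed treatment of $\varphi_{\id_\gamma}=\id_{F_\gamma}$ via the constant horizontal lift is exactly the unpacking one would do of that sentence. Your worry about $s$-reparameterization invariance of $\varphi^{small}$, however, is unnecessary: both the vertical concatenation $\bullet$ and the cutting formula $\Sigma_i(s,t)=\Sigma((s_i-s_{i-1})s+s_{i-1},t)$ use \emph{affine} reparameterization in $s$, so the pieces of $\Sigma_2\bullet\Sigma_1$ for the combined subdivision $\{s_i/2\}\cup\{1/2+s_j'/2\}$ coincide \emph{exactly} with the $\Sigma_{1,i}$ and $\Sigma_{2,j}$, not just up to reparameterization. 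Once you observe this, the identity drops out of the definition and \cref{lem:indepsshow} without further work. Your reparameterization argument is nonetheless correct, just superfluous here.
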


\begin{proof}
This follows immediately from the definition of $\varphi_{\Sigma}$. 
\end{proof}

\begin{proposition}
\label{lem:F2}
Let $\inf P$ be a principal $\Gamma$-2-bundle with fake-flat connection.
Suppose $\Sigma_1: \gamma_1\Rightarrow \gamma_1'$ and $\Sigma_2: \gamma_2\Rightarrow \gamma_2'$ are horizontally composable bigons. Then, the following diagram is commutative:
\begin{equation*}
\alxydim{@C=5em@R=\xyst}{F_{\gamma_2} \circ F_{\gamma_1}\ar@{=>}[r]^{\varphi_{\Sigma2}\circ \varphi_{\Sigma_1}} \ar@{=>}[d]_{c_{\gamma_1,\gamma_2}} & F_{\gamma_2'}\circ F_{\gamma_1'} \ar@{=>}[d]^{c_{\gamma_1',\gamma_2'}} \\ F_{\gamma_2\ast \gamma_1}  \ar@{=>}[r]_{\varphi_{\Sigma_2 \ast \Sigma_1}}   & F_{\gamma_2'\ast \gamma_1'}  \\ }
\end{equation*}
\end{proposition}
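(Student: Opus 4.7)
The strategy is to construct, from a horizontal lift of $\Sigma_1$ with source $\xi_1\in F_{\gamma_1}$ and a horizontal lift of $\Sigma_2$ with source $\xi_2\in F_{\gamma_2}$ (with $\alpha_r(\xi_1)=\alpha_l(\xi_2)$), an explicit horizontal lift of the horizontally composed bigon $\Sigma_2\ast\Sigma_1$ whose source is $c_{\gamma_1,\gamma_2}(\xi_1,\xi_2)=\xi_2\ast\xi_1$, and to read off from its target the desired equality with $c_{\gamma_1',\gamma_2'}\circ(\varphi_{\Sigma_2}\circ\varphi_{\Sigma_1})(\xi_1,\xi_2)$. By the independence result \cref{lem:indepsshow}, after a sufficiently fine subdivision in the $s$-direction we may reduce to the case that both $\Sigma_1$ and $\Sigma_2$ are small; by \cref{lem:F1} the general case then follows from iterated vertical composition.

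First I would choose horizontal lifts $(n_1,t^{(1)},\{\Phi^{(1)}_i\},\{\rho^{(1)}_i\},\{g^{(1)}_i\})$ and $(n_2,t^{(2)},\{\Phi^{(2)}_i\},\{\rho^{(2)}_i\},\{g^{(2)}_i\})$ of $\Sigma_1$ resp.\ $\Sigma_2$ with the prescribed sources, as guaranteed by \cref{lem:existencehorizontallifts}. The combined datum is obtained in the standard way: reparameterize the $\Phi^{(1)}_i$ onto $[t^{(1)}_{i-1}/2,t^{(1)}_i/2]$ and the $\Phi^{(2)}_i$ onto $[\tfrac12(1+t^{(2)}_{i-1}),\tfrac12(1+t^{(2)}_i)]$, and at the central instant $t=\tfrac12$ replace the two adjacent morphisms $\rho^{(1)}_{n_1}$ and $\rho^{(2)}_0$ by their composition $\rho^{(2)}_0\circ\rho^{(1)}_{n_1}$. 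Since $\rho^{(1)}_{n_1}$ and $\rho^{(2)}_0$ are constant by the horizontal-lift definition, so is their composition; all remaining boundary identities in \cref{lem:bigoncon:b} are preserved, $g^{(j)}_i(0)=1$ still holds, and the horizontality conditions \cref{lem:bigoncon:d} persist verbatim. The source condition \cref{lem:bigoncon:c} for the combined lift reproduces exactly $\xi_2\ast\xi_1$, using the definition of $c_{\gamma_1,\gamma_2}$.

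Next I would compute the target of this combined lift. For $i\neq n_1$ in the $\Sigma_1$-block the surface-ordered exponentials $h_i$ of \cref{sec:horliftbigon} coincide with those of the original lift of $\Sigma_1$, because the bigon-parameterizations $\Sigma_i$ only depend on $\Phi^{(1)}_i$, and similarly on the $\Sigma_2$-block; the new \quot{joined} piece at $t=\tfrac12$ contributes no extra surface-ordered factor since the morphism there is constant. Consequently the target decomposes as the concatenation of the two individual targets, and equals $\xi_2'\ast \xi_1' = c_{\gamma_1',\gamma_2'}(\xi_1',\xi_2')$, where $\xi_j':=\varphi^{small}_{\Sigma_j}(\xi_j)$. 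This is exactly what the clockwise route of the diagram produces.

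The main obstacle is the bookkeeping at the junction: one must verify that the constant morphism $\rho^{(2)}_0\circ\rho^{(1)}_{n_1}$ really fits into the combined lift (in particular that the $\mu_i$-values on both sides glue to give a consistent boundary condition for the surface-ordered exponential) and that the shifts by $h_i^{-1}$ and $g_i(1)^{-1}$ in \cref{eq:def:targethorlift} for the combined lift split cleanly into the two partial shifts appearing in the individual targets. Both of these are consequences of the behaviour of $\SE_\Omega$ under horizontal concatenation of bigons, as recorded in \cref{sec:surfaceordered}, together with the fact that the junction morphism is constant and hence contributes trivially to any path-ordered or surface-ordered exponential.
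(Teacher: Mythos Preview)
Your approach is essentially identical to the paper's: combine the two horizontal lifts by reparameterization, replace the two boundary morphisms at the junction by their composite $\rho^{(2)}_0\circ\rho^{(1)}_{n_1}$, and read off that the target of the combined lift is $\xi_2'\ast\xi_1'$. The only remark is that your final paragraph overcomplicates the junction check: no behaviour of $\SE_\Omega$ under horizontal concatenation is needed, since the $h_i$ are computed piece-by-piece from the $\Phi_i$, and the middle morphism of the combined target is simply $R(\rho^{(2)}_0\circ\rho^{(1)}_{n_1},((h^{(1)}_{n_1})^{-1},g^{(1)}_{n_1}(1)^{-1}))=\rho_0^{(2)\prime}\circ\rho_{n_1}^{(1)\prime}$ by constancy of $\rho^{(2)}_0$.
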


\begin{proof}
Given $(\xi_1,\xi_2)\in F_{\gamma_1} \ttimes{\alpha_r}{\alpha_l} F_{\gamma_2}$ we choose horizontal lifts $(\Phi^1_i,\rho^1_i,g^1_i)$ and $(\Phi_i^2,\rho_i^2,g_i^2)$ of $\Sigma^1$ and $\Sigma^2$ with sources $\xi_1$ and $\xi_2$, respectively. We have
$t(\rho_{0}^2) = \Phi^2_{1}(s,0)$ and $s(\rho_{n}^1)=R(\Phi_n^1(s,1),g_{n}^1(s))$, as well as $s(\rho_0^2)=\alpha_l(\xi_2)=\alpha_r(\xi_1)=t(\rho_n^1)$, by \cref{def:horliftbigon}. It is now obvious that under the usual reparameterizations of paths, the collection consisting of the families $(\Phi_1^1,...,\Phi_n^1,\Phi_1^2,...,\Phi_n^2)$, $(\rho_0^1,...,\rho^1_{n-1},\rho_0^2\circ \rho_n^1,\rho_1^2,...,\rho_n^2)$ and $(g_1^1,...,g_n^1,g_1^2,...,g_n^2)$ is a horizontal lift of $c_{\gamma_1,\gamma_2}(\xi_1,\xi_2)$.
Computing the separate targets, we get from \cref{eq:def:targethorlift} $\rho_0^{k\prime} := \rho_0^k$ and $\rho^{k\prime}_i := R(\rho_i^k(1),((h_i^{k})^{-1},g_i^k(1)^{-1}))$, giving us $\xi_k'=\rho^{k\prime}_n\ast \gamma_i^k\ast...\ast \rho_0^{k\prime}$. For the target of the combined lift, we obtain in the middle the morphism
\begin{equation*}
R(\rho_0^2\circ \rho_n^1,((h_n^{1})^{-1},g_n^1(1)^{-1}))=\rho_0^{2\prime}\circ \rho_n^{1\prime}\text{.}
\end{equation*}
This shows that $\varphi_{\Sigma_2\ast \Sigma_1}(c_{\gamma_1,\gamma_2}(\xi_1,\xi_2))=c_{\gamma_1',\gamma_2'}(\xi_1',\xi_2')$.
\end{proof}

\subsection{Naturality with respect to bundle morphisms}

\label{sec:natbundlemorphbigon}

In this section we compare the parallel transports along a bigon in two isomorphic principal $\Gamma$-2-bundles. In the 2-functor formalism of \cref{sec:orga}, this is one axiom for a pseudonatural transformation associated to $J$.

\begin{proposition}
\label{lem:natbundlemorph}
Suppose $J:\inf P \to \inf P'$ is a 1-morphism in $\zweibunconff\Gamma M$. Let $\Sigma: \gamma_1 \Rightarrow \gamma_2$ be a bigon between paths $\gamma_1,\gamma_2$ with $x:=\gamma_1(0)=\gamma_2(0)$ and $y=\gamma_1(1):=\gamma_2(1)$. Let $J_{\gamma_1}$ and $J_{\gamma_2}$ be the transformations associated to $J$ defined  in \cref{sec:natbundlemorph}.   Then, the following diagram is commutative:
\begin{equation*}
\alxydim{@=\xyst}{J_y \circ F_{\gamma_1} \ar@{=>}[d]_{J_{\gamma_1}} \ar@{=>}[r]^{\id \circ \varphi_{\Sigma}} & J_y \circ F_{\gamma_2} \ar@{=>}[d]^{J_{\gamma_2}} \\ F'_{\gamma_1} \circ J_x \ar@{=>}[r]_{\varphi'_{\Sigma} \circ \id} & F'_{\gamma_2} \circ J_x}
\end{equation*}
\end{proposition}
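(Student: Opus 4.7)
Take $(\xi, j) \in F_{\gamma_1} \ttimes{\alpha_r}{\alpha_l} J_y$. By \cref{lem:F1} and vertical composition of bigons it suffices to verify commutativity for small bigons, so we may assume $\Sigma$ is small. The plan is to construct a single family of lifts that realizes both composites simultaneously.

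First, choose a horizontal lift $(n, t, \Phi_i, \rho_i, g_i)$ of $\Sigma$ in $\inf P$ with source $\xi$ as in \cref{def:horliftbigon}; by \cref{lem:bigonmapdef} its target is $\varphi_\Sigma(\xi) = \rho'_n \ast \gamma'_n \ast \ldots \ast \rho'_0$, obtained from formula \cref{eq:def:targethorlift} using $h_i := \SE_\Omega(\Sigma_i)$ for a bigon-parameterization $\Sigma_i$ of $\Phi_i$. Next, fix a horizontal lift $\tilde\xi = (\{\rho_i\}, \{\tilde\gamma_i\})$ of $\xi$ to $J$ reusing the same morphisms $\rho_i$, so that the $\tilde\gamma_i$ are horizontal in $J$ with $\alpha_l \circ \tilde\gamma_i = \Phi_i(0,-)$. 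This computes $J_{\gamma_1}(\xi, j) = (j', \xi'_1)$ with $j' = \rho_0^{-1} \circ \tilde\gamma_1(0)$.

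The central construction is a two-parameter lift $\tilde\Phi_i \colon [0,1] \times [t_{i-1}, t_i] \to J$ of $\Phi_i$. First horizontally lift $s \mapsto \Phi_i(s, t_{i-1}) = \nu_i(s)$ in $J$ starting at $\tilde\gamma_i(t_{i-1})$ to obtain $\tilde\nu_i$; then for each $s$ horizontally lift $t \mapsto \Phi_i(s, t)$ in $J$ starting at $\tilde\nu_i(s)$ to obtain $\tilde\Phi_i(s,-)$. Smooth dependence on $s$ follows from the usual ODE argument. By construction $\alpha_l \circ \tilde\Phi_i = \Phi_i$, $\tilde\Phi_i(0,-) = \tilde\gamma_i$ (by uniqueness of horizontal lifts), and the three boundary paths $\tilde\Phi_i(-, t_{i-1})$, $\tilde\Phi_i(0,-)$, and $\tilde\Phi_i(1,-)$ are horizontal in $J$. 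Projecting via $\alpha_r$ yields surfaces $\Phi'_i := \alpha_r \circ \tilde\Phi_i$ in $\ob{\inf P'}$ whose three corresponding boundary paths are horizontal in $\inf P'$ by \cref{lem:horF:a*}. Together with jumps $\rho'_{J,i}$ in $\mor{\inf P'}$ and functions $g'_{J,i}\colon [0,1]\to G$ extracted at each $s$ from the boundary equations in $J$ arising from $\rho_i(s)$ and $g_i(s)$ (solvable because $J$ is a principal $\inf P'$-bundle over $\inf P$), these data assemble into a horizontal lift of $\Sigma$ in $\inf P'$; a short check using \cref{lem:horF:c*,lem:horF:d*} and the definition of the $j$-target in \cref{sec:natbundlemorph} shows that its source is precisely $\xi'_1$.

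With this unified lift in hand, both composites can be read off. The target formula \cref{eq:def:targethorlift} applied to the $\inf P'$-lift gives $\varphi'_\Sigma(\xi'_1)$ in terms of $\rho'_{J,i}(1)$ and $h'_i := \SE_{\Omega'}(\Sigma'_i)$ for a bigon-parameterization $\Sigma'_i$ of $\Phi'_i$. On the other hand, the paths $\tilde\Phi_i(1,-)$, equipped with the jumps $\rho'_i$ from the target of $(\Phi_i, \rho_i, g_i)$ and with the morphisms $\rho'_{J,i}(1)$ recording the boundary gluings in $J$, form a horizontal lift of $\varphi_\Sigma(\xi)$ to $J$ whose $j$-target computes $J_{\gamma_2}(\varphi_\Sigma(\xi), j)$ in terms of $\rho_i(1)$, $h_i$, and $\rho'_{J,i}(1)$. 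The first coordinates of both composites coincide, both equal to $j'$. The principal obstacle is the equality of the second coordinates; this reduces to the identification of $h_i$ and $h'_i$ up to the explicit $G$-adjustments recorded by $g_i(1)$ and $g'_{J,i}(1)$. Since $\tilde\Phi_i$ is horizontal in $J$, the pullback of $\nu_0$ along a bigon-parameterization of $\tilde\Phi_i$ vanishes on the tangent directions entering the surface exponentials; combined with the connection-preserving equation $\Omega = J_\nu^*\Omega'$ (which relates $\fb\Omega$ and $\fc\Omega$ to $\fb{\Omega'}$ and $\fc{\Omega'}$ via $\nu_0$, $\nu_1$), this yields the required identification via the gauge-transformation property \cref{prop:SE:gauge:b} of the surface-ordered exponential. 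Substitution concludes the proof.
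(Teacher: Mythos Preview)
Your overall architecture matches the paper's: reduce to small bigons, build a two-parameter family $\tilde\Phi_i$ in $J$ over $\Phi_i$, use $\tilde\Phi_i(1,-)$ as a horizontal lift of $\varphi_\Sigma(\xi)$ to $J$, and compare surface-ordered exponentials $h_i$ and $h_i'$ via the gauge-transformation identity coming from $J$. The paper constructs the analogous family $\Psi_i$ from transition spans rather than by horizontal lifting, but that difference is inessential.

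There are, however, two genuine gaps.

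\textbf{Horizontality of the derived jumps in $\inf P'$.} Your $\tilde\Phi_i$ is horizontal in the $t$-direction for each $s$, and along the left edge $s\mapsto\tilde\Phi_i(s,t_{i-1})$, but the right edge $s\mapsto\tilde\Phi_i(s,t_i)$ is \emph{not} horizontal. The jumps $\rho'_{J,i}(s)$ you extract are determined by an equation of the form $\rho_i(s)\circ(\tilde\Phi_i(s,t_i)\cdot\id_{g_i(s)})=\tilde\Phi_{i+1}(s,t_i)\circ\rho'_{J,i}(s)$; since the left-hand factor $\tilde\Phi_i(-,t_i)$ is not horizontal in $J$, \cref{lem:horF:c*,lem:horF:d*} do \emph{not} yield horizontality of $\rho'_{J,i}$, so your $(\Phi'_i,\rho'_{J,i},g'_{J,i})$ does not satisfy \cref{def:horliftbigon}\,(d). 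The paper sidesteps this by choosing an $\inf P'$-lift $(\Phi'_i,\rho'_i,g'_i)$ of $\Sigma$ with source $\xi_1'$ \emph{independently} (so its jumps are horizontal by construction) and then using the $J$-family only as a comparison device between the two lifts. You should do the same, or else carry out an additional adjustment that forces horizontality of $\rho'_{J,i}$; either way this is not a ``short check''.

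\textbf{The identification of $h_i$ and $h_i'$.} The sentence ``the pullback of $\nu_0$ \dots\ vanishes on the tangent directions entering the surface exponentials'' is not correct as stated: $\nu_0(\partial_s\tilde\Phi_i)$ does not vanish away from the left edge. What one actually needs is the identity of \cref{lem:bigonF} (not \cref{prop:SE:gauge:b}, which concerns $\mor{\inf P}\times G$ rather than $J\times G$). Applying it to a bigon-parameterization $\Sigma_i^J:\lambda_i\Rightarrow\lambda_i'$ of $\tilde\Phi_i$ gives $h_i'\cdot h_\nu(\lambda_i)^{-1}=h_\nu(\lambda_i')^{-1}\cdot h_i$; one then computes $h_\nu(\lambda_i')=1$ from horizontality of $\tilde\Phi_i(1,-)$ and $\tilde\nu_i$, while $h_\nu(\lambda_i)=h_\nu(\tilde\Phi_i(-,t_i))$ is a nontrivial quantity that must be matched against the jumps. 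The paper carries out exactly this computation (its \cref{eq:claimtau}) by evaluating $\PE_{\nu_0}$ on both sides of the boundary equation in $J$; this is the step you are missing, and it cannot be replaced by the vanishing claim.
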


\begin{comment}
The statement of the lemma can be visualized as a\quot{tin can} identity
\begin{equation*}
\alxydim{@=1.7cm}{\inf P_x \ar@/_1.2pc/[r]_{F_{\gamma_2}}="2" \ar@/^1.2pc/[r]^{F_{\gamma_1}}="1" \ar@{=>}"1";"2"|*+{\varphi_{\Sigma}} \ar[d]_{J_x} & \inf P_y \ar@{=>}[dl]|>>>>>>>>*{J_{\gamma_2}} \ar[d]^{J_y} \\ \inf P_x' \ar@/_1.2pc/[r]_{F_{\gamma_2}'} & \inf P_y'}
=
\alxydim{@=1.7cm}{\inf P_x  \ar@/^1.2pc/[r]^{F_{\gamma_1}}  \ar[d]_{J_x} & \inf P_y \ar@{=>}[dl]|<<<<<<<<*{J_{\gamma_1}} \ar[d]^{J_y} \\ \inf P_x' \ar@/_1.2pc/[r]_{F'_{\gamma_2}}="2" \ar@/^1.2pc/[r]^{F'_{\gamma_1}}="1" \ar@{=>}"1";"2"|*+{\varphi'_{\Sigma}} & \inf P_y'}
\end{equation*}
\end{comment}

\begin{proof}
We start with $(\xi_1,j)\in F_{\gamma_1} \ttimes{\alpha_r}{\alpha_l} J_y$. Let  $( \{\Phi_i\}, \{\rho_i\},\{g_i\})$ be a horizontal lift of $\Sigma$ to $\inf P$ with source $\xi_1$, and let $\xi_2$ be its target.
\begin{comment}
Thus, $\xi_2 = \zeta_n \ast \beta_n \ast ... \ast \beta_1\ast \zeta_0$ with $\beta_i(t) = \Phi_i(1,t)$ as well as
\begin{equation*}
\zeta_0 := \rho_0(1)
\quand
\zeta_i := R(\rho_i(1),(h_i^{-1},g_i(1)^{-1}))\text{,}
\end{equation*}
where $h_i$ is the surface-ordered exponential of a bigon representative of $\Phi_i$. 
\end{comment}
Let $\tilde\xi_1=(\{\tilde \rho_i\},\{\tilde\gamma_i\})$ be a horizontal lift of $\xi_1$ to $J$, and let $(j',\xi_1')$ be its $j$-target. 
We can assume that the induced representatives for $\xi_1$ coincide, i.e. $\alpha_l(\tilde\gamma_i)=\Phi_i(0,-)$ and $\rho_i(0)=\tilde\rho_i$, and  $j':=\tilde \rho_0^{-1}\circ \tilde\gamma_1(0)  $. 
Finally, let  $( \{\Phi_i'\}, \{\rho_i'\},\{g_i'\})$ be a horizontal lift of $\Sigma$ to $\inf P'$ with source $\xi_1'$, and let $\xi_2'$ be its target.
We can assume that the induced representatives for $\xi_1'$ coincide, i.e. the path pieces of $\xi_1'$ are $\alpha_r(\tilde\gamma_i)=\Phi_i'(0,-)$, the the jumps $\rho_i'(0)$ satisfy $\rho_0'(0)=\id$ and  $\tilde \rho_i \circ \tilde\gamma_i(t_i) = \tilde\gamma_{i+1}(t_i)\circ \rho_i'(0)$ for $1\leq i < n$
 and $\tilde\rho_n \circ \tilde\gamma_n(1) = j\circ \rho_n'(0)$.
\begin{comment}
We also have $\xi_2' = \zeta_n' \ast \beta_n' \ast ... \ast \beta_1'\ast \zeta_0'$ with $\beta_i'(t) = \Phi_i'(1,t)$ as well as
\begin{equation*}
\zeta_0' := \rho_0'(1)
\quand
\zeta_i' := R(\rho_i'(1),(h_i^{\prime-1},g'_i(1)^{-1}))\text{,}
\end{equation*}
where $h_i'$ is the surface-ordered exponential of a bigon representative of $\Phi_i'$. 
\end{comment}
We have to prove that
$J_{\gamma_2}(\xi_2,j)=(j',\xi_2')$. For this purpose we provide a horizontal lift $\tilde\xi_2$ of $\xi_2$ to $J$ with $j$-target $(j',\xi_2')$.

Due to \cref{lem:F1} it suffices to discuss small bigons. We can even assume by \reftransitionspananafunctor\ that the image of $\Phi_i\times\Phi_i'$ is contained in an open subset $V_i \subset \ob{\inf P} \times_M \ob{\inf P'}$ that supports a transition spans $\tau_i:V_i \to J$, with  transition functions $p_i$. 
\begin{comment}
That is, 
\begin{equation*}
\alpha_l(\tau_i(x_1,x_2))=x_1
\quand
\alpha_r(\tau_i(x_1,x_2))=R(x_2,p_i(x_1,x_2))\text{.}
\end{equation*} 
\end{comment}
We define $\Psi_i: [0,1]\times [t_{i-1},t_i] \to J$ by $\Psi_i(s,t) := \tau_i(\Phi_i(s,t),\Phi_i'(s,t))$ and similarly $G_i(s,t):= p_i(\Phi_i(s,t),\Phi_i'(s,t))$; these satisfy $\alpha_l(\Psi_i(s,t))=\Phi_i(s,t)$ and $\alpha_r(\Psi_i(s,t))=R(\Phi_i'(s,t),G_i(s,t))$.  After performing several adjustments analogously to the ones of \cref{lem:existencehorizontallifts} we can assume that $\Psi_i(0,t)=\tilde\gamma_i(t)$; in particular, $t \mapsto \Psi_i(0,t)$ is horizontal, and we can assume that  $s\mapsto  \Psi_i(s,0)$ and $t \mapsto \Psi_i(1,t)$  are horizontal. 
Since the left anchors of these three paths are horizontal, their right anchors are also horizontal by \cref{lem:horF:a}. But since the corresponding three paths in $\Phi_i'(s,t)$ are horizontal, too, it follows from the uniqueness of \cref{lem:obhorexists} that $G_i(0,t)=G_i(1,t)=G_i(s,t_{i-1})=1$.

We use this in the following way. We write $\xi_2 = \zeta_n \ast \beta_n \ast ... \ast \beta_1\ast \zeta_0$, and obtain from the definition of $\xi_2$ as the target of the chosen horizontal lift $\beta_i(t) = \Phi_i(1,t)$, $\zeta_0 := \rho_0(1)$ and $\zeta_i :=  R(\rho_i(1),(h_i^{-1},g_i(1)^{-1}))$, where $h_i$ is the surface-ordered exponential of a bigon-parameterization of $\Phi_i$. We define  $\tilde\gamma_i'(t) := \Psi_i(1,t)$.  Then, $\tilde\xi_2 := (\{\zeta_i\},\{\tilde\gamma_i'\})$ is a horizontal lift of $\xi_2$ to $J$. 
\begin{comment}
Indeed, $\tilde\gamma_i'$ are  by construction horizontal, and since $\alpha_l(\tilde\gamma_i(t))=\Phi_i(1,t)=\beta_i(t)$, the jumps $\zeta_i$ fit. 
\end{comment}
It remains to prove that its  $j$-target is $(j',\xi_2')$. There are three parts: the path pieces of $\xi_2'$, the morphisms pieces, and the element $j'$. 
First, the paths are $\alpha_r(\tilde\gamma_i')=\alpha_r(\Psi_i(1,-))=R(\Phi_i'(1,t),G_i(1,t))=\Phi_i'(1,t)$; these are indeed the paths of $\xi_2'$. 

Second, the morphisms $\zeta_i'$ are characterized by $\zeta_0'=\id$ and $\zeta_i \circ \tilde\gamma_i'(t_i) = \tilde\gamma_{i+1}'(t_i)\circ \zeta_i'$ (for $1\leq i < n$) and $\zeta_n \circ \tilde\gamma_n'(1) = j\circ \zeta_n'$. We have to show that they coincide with the result of computing the target of the horizontal lift of $\Sigma$ to $\inf P'$, namely
\begin{equation*}
\zeta_0' := \rho_0'(1)
\quand
\zeta_i' := R(\rho_i'(1),(h_i^{\prime-1},g'_i(1)^{-1}))\text{,}
\end{equation*}
where $h_i'$ is the surface-ordered exponential of a bigon-parameterization of $\Phi_i'$. 
We have $\zeta_0'=\rho_0'(1)=\rho_0'(0)=\id$  at the beginning. For the pieces in the middle, let $\tau:[0,1] \to \mor{\inf P'}$ be the unique path such that
\begin{equation}
\label{eq:idtau}
R(\rho_i(s),g_i(s)^{-1}) \circ \Psi_i(s,t_i) \circ \tau(s) = (\Psi_{i+1}(s,t_i) \circ \rho_i'(s) )\cdot \id_{g_i(s)^{-1}}\text{.}
\end{equation}
\begin{comment}
This makes sense:
\begin{align*}
\alpha_l(R(\rho_i(s),g_i(s)^{-1}) \circ \Psi_i(s,t_i) )
&= t(R(\rho_i(s),g_i(s)^{-1})
\\&= R(t(\rho_i(s)),g_i(s)^{-1})
\\&=R(\Phi_{i+1}(s,t_i),g_i(s)^{-1})
\\&= R(\alpha_l(\Psi_{i+1}(s,t_i)), g_i(s)^{-1})
\\&= \alpha_l((\Psi_{i+1}(s,t_i) \circ \rho_i'(s) )\cdot \id_{g_i(s)^{-1}})
\end{align*}
\end{comment}
It is straightforward to check using \refactionfullyfaithful\ that \begin{equation*}
\tau(s)=R(\id_{\Phi'_i(s,t_i)},(\eta(s),g_i'(s)g_i(s)^{-1} ))
\end{equation*}
for a unique smooth map $\eta:[0,1] \to H$ with $t(\eta(s)) =G_i(s,t_i)g_i(s)g_i'(s)^{-1}$.
\begin{comment}
We have
\begin{align*}
s(\tau(s)) &= \alpha_r((\Psi_{i+1}(s,t_i) \circ \rho_i'(s) )\cdot \id_{g_i(s)^{-1}})
\\&=R(s(\rho_i'(s)),g_i(s)^{-1})
\\&=R(\Phi_i'(s,t_i),g_i'(s)g_i(s)^{-1}) \end{align*}
and
\begin{equation*}
t(\tau(s))= \alpha_r(\Psi_i(s,t_i)) =R(\Phi'_{i}(s,t_i),G_i(s,t_i))
\end{equation*}
We set $\rho_2 := \tau(s)$ and $\rho_1 := \id_{\Phi'_i(s,t_i)}$, as well as $g_1 :=g_i'(s)g_i(s)^{-1}$ and $g_2 :=G_i(s,t_i)$. These satisfy
\begin{align*}
s(\rho_2) &=R(\Phi_i'(s,t_i),g_i'(s)g_i(s)^{-1})=R(s(\rho_1),g_1)
\\
t(\rho_2) &=R(\Phi'_{i}(s,t_i),G_i(s,t_i))=R(t(\rho_1),g_2)
\end{align*}
Thus, by \refactionfullyfaithful\ there exists a unique $\eta:[0,1] \to H$ such that $R(\rho_1,(h,g_1))=\rho_2$ and $t(h)g_1=g_2$, i.e.
\begin{equation*}
R(\id_{\Phi'_i(s,t_i)},(\eta(s),g_i'(s)g_i(s)^{-1} ))=  \tau(s)
\quand
t(\eta(s))g_i'(s)g_i(s)^{-1} =G_i(s,t_i)\text{.}
\end{equation*}
\end{comment}
At $s=0$, \cref{eq:idtau} results by construction in $\tau(0)=\id$, i.e. $\eta(0)=1$.
\begin{comment}
At $s=0$ we have
\begin{equation*}
\rho_i(0) \circ \tilde\gamma_i(t_i) \circ \tau(s) = \tilde\gamma_{i+1}(t_i) \circ \rho_i'(0)\text{;}
\end{equation*}
this was assumed at the beginning. 
\end{comment}
We claim that 
\begin{equation}
\label{eq:claimtau}
\eta(1)=h_i^{-1}h_i'\text{.}
\end{equation}
Using this claim we get
\begin{align*}
\zeta_i \circ \tilde\gamma_i'(t_i) &=R(\rho_i(1),(h_i^{-1},g_i(1)^{-1})) \circ \Psi_i(1,t_i)
\\&\eqcref{eq:claimtau}(R(\rho_i(1),g_i(1)^{-1}) \circ \Psi_i(1,t_i)\circ \tau(1)) \cdot (\alpha(g_i(1),h_i'^{-1}),g_i(1)g_i'(1)^{-1})
\\&\eqcref{eq:idtau} (\Psi_{i+1}(1,t_i) \circ \rho_i'(1) )\cdot (1,g_i(1)^{-1}) \cdot (\alpha(g_i(1),h_i'^{-1}),g_i(1)g_i'(1)^{-1})
\\&=\Psi_{i+1}(1,t_i)\circ R(\rho_i'(1),(h_i^{\prime-1},g'_i(1)^{-1}))
\\&= \tilde\gamma_{i+1}'(t_i)\circ \zeta_i'\text{;}
\end{align*}
\begin{comment}
More explicitly, this calculation is:
\begin{align*}
\zeta_i \circ \tilde\gamma_i'(t_i) &=R(\rho_i(1),(h_i^{-1},g_i(1)^{-1})) \circ \Psi_i(1,t_i)
\\&= R(\rho_i(1),(1,g_i(1)^{-1})(h_i^{-1},1)) \circ \Psi_i(1,t_i)
\\&= (R(\rho_i(1),g_i(1)^{-1}) \circ \Psi_i(1,t_i)) \cdot (h_i^{-1},1)
\\&= (R(\rho_i(1),g_i(1)^{-1}) \circ \Psi_i(1,t_i)\circ \id) \cdot(h_i^{-1}h_i',g_i'(1)g_i(1)^{-1})\cdot (\alpha(g_i(1),h_i'^{-1}),g_i(1)g_i'(1)^{-1})
\\&\eqcref{eq:claimtau} (R(\rho_i(1),g_i(1)^{-1}) \circ \Psi_i(1,t_i)\circ \tau(1)) \cdot (\alpha(g_i(1),h_i'^{-1}),g_i(1)g_i'(1)^{-1})
\\&\eqcref{eq:idtau} (\Psi_{i+1}(1,t_i) \circ \rho_i'(1) )\cdot (1,g_i(1)^{-1}) \cdot (\alpha(g_i(1),h_i'^{-1}),g_i(1)g_i'(1)^{-1})
\\&=(\Psi_{i+1}(1,t_i)\circ \rho_i'(1))\cdot (h_i^{\prime-1},g'_i(1)^{-1})
\\&=\Psi_{i+1}(1,t_i)\circ R(\rho_i'(1),(h_i^{\prime-1},g'_i(1)^{-1}))
\\&= \tilde\gamma_{i+1}'(t_i)\circ \zeta_i'\text{;}
\end{align*}
\end{comment}
this proves the desired property of the $\zeta_i'$. In order to prove \cref{eq:claimtau} we write \cref{eq:idtau} in the equivalent form
\begin{equation*}
\lambda_{lhs}:=\Psi_i(s,t_i)\cdot G_i(s,t_i)^{-1}  g_i'(s)=(\rho_i(s)^{-1} \circ \Psi_{i+1}(s,t_i) \circ \rho_i'(s))\cdot (\alpha(g_i'(s)^{-1},\eta(s)^{-1}),1)=:\lambda_{rhs}
\end{equation*}
\begin{comment}
Indeed,  first we have
\begin{align*}
\Psi_i(s,t_i)\cdot G_i(s,t_i)^{-1}\cdot (\eta(s),g_i'(s)) 
&= \Psi_i(s,t_i) \cdot G_i(s,t_i)^{-1}\cdot (\eta(s),g_i'(s)g_i(s)^{-1} )\cdot \id_{g_i(s)} 
\\&= (\Psi_i(s,t_i) \circ R(\id_{\Phi'_i(s,t_i)},(1,G_i(s,t_i))\cdot (1,G_i(s,t_i)^{-1})\cdot (\eta(s),g_i'(s)g_i(s)^{-1} )))\cdot \id_{g_i(s)} 
\\&= (\Psi_i(s,t_i) \circ R(\id_{\Phi'_i(s,t_i)},(\eta(s),g_i'(s)g_i(s)^{-1} )))\cdot \id_{g_i(s)} 
\\&= \rho_i(s)^{-1} \circ \rho_i(s) \circ ((\Psi_i(s,t_i) \circ \tau(s))\cdot \id_{g_i(s)})
\\&=\rho_i(s)^{-1} \circ (R(\rho_i(s),g_i(s)^{-1}) \circ \Psi_i(s,t_i) \circ \tau(s))\cdot \id_{g_i(s)})
\\&\eqcref{eq:idtau} \rho_i(s)^{-1} \circ (((\Psi_{i+1}(s,t_i) \circ \rho_i'(s) )\cdot \id_{g_i(s)^{-1}})\cdot \id_{g_i(s)})
\\&= \rho_i(s)^{-1} \circ \Psi_{i+1}(s,t_i) \circ \rho_i'(s) 
\end{align*}
We have
\begin{equation*}
(\eta(s),g_i'(s))^{-1}=(\alpha(g_i'(s)^{-1},\eta(s)^{-1}),g_i'(s)^{-1})=(\alpha(g_i'(s)^{-1},\eta(s)^{-1}),1)\cdot \id_{g_i'(s)^{-1}}\text{;}
\end{equation*}
this shows the claim.
\end{comment}
This is an equality between two paths $\lambda_{lhs}$ and $\lambda_{rhs}$ in $J$. We compute the  path-ordered exponential $\PE_{\nu_0}$ separately on both sides. On the right hand side, the path $s\mapsto \rho_i(s)^{-1} \circ \Psi_{i+1}(s,t_i) \circ \rho_i'(s)$ is horizontal by \cref{lem:horF:c,lem:horF:d*}. Its right anchor is $s(\rho_i')$; since $\rho_i'$ and $t(\rho_i')$ are horizontal by \cref{def:horliftbigon}, this is horizontal by \cref{lem:hormor:e}. Hence, by \cref{lem:pathJ:a} $\PE_{\nu_0}(\lambda_{rhs})=\alpha(g_i'(1)^{-1},\eta(1))$.

On the left hand side, the right anchor of $\lambda_{lhs}$ is again $s(\rho_i')$ and thus horizontal.
Hence, by  \cref{lem:calchgvarphiJ} we have $\PE_{\nu_0}(\lambda_{lhs})=\alpha(g_i'(1),h_{\nu}(\lambda_{lhs}\cdot g_i'^{-1}))$. 
Let $\Sigma_i:\lambda_i \Rightarrow \lambda_i'$ be a bigon-parameterization for $\Psi_i$, where $\lambda_i=\Psi_i(-,t_i) \circ \Psi_i(0,-)$ and $\lambda_i'=\Psi_i(1,-)\circ\Psi_i(-,0)$ up to thin homotopy, and let $\Theta_i:\gamma_i \Rightarrow \gamma_i'$ be a  bigon-parameterization of $G_i$ with analogous  $\gamma_i$ and $\gamma_i'$. Then, $\alpha_l(\Sigma_i)$ is a bigon-parameterization for $\Phi_i$ and $R(\alpha_r(\Sigma_i),\Theta_i^{-1})$ is one for $\Phi_i'$.
Now \cref{lem:bigonF} implies
\begin{equation*}
h_i' \cdot h_{\nu}(\lambda_i\cdot\gamma_i^{-1})^{-1}= h_{\nu}(\lambda_i'\cdot\gamma_i'^{-1})^{-1} \cdot h_i\text{.}
\end{equation*}
Note that $\lambda_i'$ is horizontal with horizontal right anchor. By  \cref{lem:calchgvarphiJ} we get $h_{\nu}(\lambda'_i\cdot\gamma_i^{\prime-1})=\alpha(\gamma'_i(1),\PE_{\nu_0}(\lambda'_i))=1$. The same applies to the first half of the path $\lambda_i$; hence by \cref{lem:SE:gauge:z} we have
\begin{equation*}
h_{\nu}(\lambda_i \cdot \gamma_i^{-1}) = h_{\nu}(\Psi_i(-,t_i)G_i(-,t_i))=h_{\nu}(\lambda_{lhs}\cdot g_i'^{-1})\text{.}
\end{equation*} 
Summarizing collected identities, we obtain $\PE_{\nu_0}(\lambda_{lhs})=\alpha(g_i'(1),h_i^{-1}h_i')$.
\begin{comment}
Indeed,
\begin{align*}
\PE_{\nu_0}(\lambda_{lhs})&=\alpha(g_i'(1),h_{\nu}(\lambda_{lhs} \cdot g_i'^{-1}))
\\&=\alpha(g_i'(1),h_{\nu}(\Psi_i(-,t_i)G_i(-,t_i)))
\\&=\alpha(g_i'(1),h_{\nu}(\lambda_i \cdot \gamma_i^{-1}))
\\&=\alpha(g_i'(1),h_i^{-1}h_i')
\end{align*}
\end{comment}
Equating with the result of the right hand side yields the claim \cref{eq:claimtau}.

Third, for $i=n$, we obtain from \cref{lem:bigonF}, similarly as above, $h_n'=h_n$, and we have $g_n(1)=g_n'(1)=1$. Using this it is straightforward to show that $\zeta_n \circ \tilde\gamma_n'(1) = j\circ R(\rho_n'(1),(h_n^{\prime-1},1))$; this is the correct characterization for $\zeta_n'$.
\begin{comment}
Indeed,
\begin{align*}
\zeta_n \circ \tilde\gamma_n'(1) &=R(\rho_n(1),(h_n^{-1},1))\circ \Psi_n(1,1)
\\&= (\rho_n(0) \circ \Psi_n(0,1)) \cdot (h_n^{\prime-1},1)
\\&= (\tilde\rho_n \circ \tilde\gamma_n(1)) \cdot (h_n^{\prime-1},1)
\\&= (j\circ \rho_n'(0)) \cdot (h_n^{\prime-1},1)
\\&= (j\circ \rho_n'(1)) \cdot (h_n^{\prime-1},1)
\\&= j\circ R(\rho_n'(1),(h_n^{\prime-1},1))
\end{align*}
\end{comment}
Third, we show that the element $j'$ is reproduced:
\begin{equation*}
\zeta_0^{-1}\circ \tilde\gamma'_1(0)=\rho_0(1)^{-1}\circ \Psi_1(1,0)=\rho_0(0)^{-1}\circ \Psi_1(0,0)=\tilde \rho_0^{-1}\circ \tilde\gamma_1(0)=j'\text{.}
\end{equation*}
This completes the proof. 
\end{proof}

\subsection{Naturality with respect to pullback}

\label{sec:natpullbackbigon}

Suppose $\inf P$ is a principal $\Gamma$-2-bundle over $N$ with fake-flat connection $\Omega$, and  $f:M \to N$ is a smooth map. We denote by $\inf P' := f^{*} \inf P$ the pullback bundle,  obtain a $\Gamma$-equivariant smooth functor
$\tilde f: \inf P' \to \inf P$, and $\Omega' := \tilde f^{*}\Omega$ is a connection on $\inf P'$. We recall from \cref{sec:natpullbackpaths} that we have associated to each path $\gamma:x \to y$ a $\Gamma$-equivariant transformation $\tilde f_{\gamma}: \tilde f_y \circ F'_{\gamma} \Rightarrow  F_{f(\gamma)}\circ \tilde f_{x}$.

\begin{proposition}
\label{lem:natpullback}
Suppose $\inf P$ is a principal $\Gamma$-2-bundle over $N$ with fake-flat connection $\Omega$, and  $f:M \to N$ is a smooth map. Let $\Sigma: \gamma_1 \Rightarrow \gamma_2$ be a bigon in $M$ with $x:=\gamma_1(0)=\gamma_2(0)$ and $y:=\gamma_1(1)=\gamma_2(1)$. Let $F_{\gamma}'$ and $\varphi_{\Sigma}'$ denote the parallel transport in the pullback bundle $f^{*}\inf P$. Then, the following diagram is commutative:
\begin{equation*}
\alxydim{@C=4em@R=\xyst}{\tilde f_{y} \circ F_{\gamma_1}' \ar@{=>}[d]_{\tilde f_{\gamma_1}} \ar@{=>}[r]^{\id \circ \varphi'_{\Sigma}} & \tilde f_y \circ F'_{\gamma_2} \ar@{=>}[d]^{\tilde f_{\gamma_2}} \\ F_{f(\gamma_1)} \circ \tilde f_x \ar@{=>}[r]_{\varphi_{f(\Sigma)} \circ \id} & F_{f(\gamma_2)} \circ \tilde f_x}
\end{equation*}
\end{proposition}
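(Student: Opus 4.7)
The strategy is to reduce to the case of a small bigon and then exhibit, for each horizontal lift of $\Sigma$ in $\inf P'$, a horizontal lift of $f(\Sigma)$ in $\inf P$ obtained simply by pushing forward along $\tilde f$. Using \cref{lem:F1} we may refine a common subdivision so that each piece of $\Sigma$ is small in $\inf P'$ and its image under $f$ is small in $\inf P$; vertically composing the resulting squares then yields the general case. Fix therefore a small bigon $\Sigma$ and an element $\xi = \rho_n \ast \gamma_n \ast \ldots \ast \rho_0 \in F'_{\gamma_1}$, and pick a horizontal lift $(n,t,\{\Phi_i\},\{\rho_i\},\{g_i\})$ of $\Sigma$ in $\inf P'$ with source $\xi$, whose target we call $\xi'$.

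The main observation is that applying $\tilde f$ componentwise produces admissible data $(n,t,\{\tilde f \circ \Phi_i\},\{\tilde f \circ \rho_i\},\{g_i\})$ for $f(\Sigma)$ in $\inf P$. Indeed, $\pi \circ \tilde f = f \circ \pi'$ yields \cref{lem:bigoncon:a*}; $\tilde f$ is a smooth functor, so it preserves source, target and composition, giving \cref{lem:bigoncon:b*}; and since $\Omega' = \tilde f^{*}\Omega$, the pullback functor sends horizontal paths in $\ob{\inf P'}$ and $\mor{\inf P'}$ to horizontal ones in $\ob{\inf P}$ and $\mor{\inf P}$, giving \cref{lem:bigoncon:d*}. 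Condition \cref{lem:bigoncon:c*} reads off from $\xi$ by applying $\tilde f$ to each piece, and the resulting source is exactly $\tilde f_{\gamma_1}(\xi)$ in the sense of \cref{sec:natpullbackpaths}.

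Next I compare targets. For the target of the pushed-forward lift one must evaluate $\SE_{\Omega}$ on a bigon-parameterization of $\tilde f \circ \Phi_i$. But a bigon-parameterization of $\tilde f \circ \Phi_i$ is obtained by post-composing a bigon-parameterization $\Sigma_i$ of $\Phi_i$ with $\tilde f$, and since $\SE$ depends only on the pulled-back connection along the bigon, the identity $\Omega' = \tilde f^{*}\Omega$ forces $\SE_{\Omega}(\tilde f \circ \Sigma_i) = \SE_{\Omega'}(\Sigma_i) = h_i$. Plugging this into \cref{eq:def:targethorlift} shows that the target of the pushed-forward lift is precisely $\tilde f_{\gamma_2}(\xi')$. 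Travelling counter-clockwise through the diagram therefore produces $\tilde f_{\gamma_2}(\varphi'_{\Sigma}(\xi))$, whereas travelling clockwise produces $\varphi_{f(\Sigma)}(\tilde f_{\gamma_1}(\xi))$, and both coincide with the just-computed target; commutativity follows.

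The main subtlety is the reduction to small bigons: a common refinement must be chosen that makes each restriction of $\Sigma$ small in $\inf P'$ and each restriction of $f(\Sigma)$ small in $\inf P$. This is handled exactly as in \cref{lem:existencehorizontallifts}: pull back sections from $\inf P$ to $\inf P'$ via $\tilde f$ and refine. Once this is in place, the naturality of $\tilde f_{\gamma_i}$ with respect to vertical composition (an immediate consequence of its definition) combined with \cref{lem:F1} assembles the square for a general bigon from the squares for its small pieces. The $\Gamma$-equivariance and smoothness of the resulting identity are automatic, since all constructions are $\Gamma$-equivariant and smooth by \cref{prop:varphiSigma} and \cref{sec:natpullbackpaths}.
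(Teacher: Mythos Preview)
Your proof is correct and follows essentially the same approach as the paper: push forward a horizontal lift along $\tilde f$, use $\Omega'=\tilde f^{*}\Omega$ to preserve horizontality, and invoke naturality of the surface-ordered exponential under pullback (the paper's \cref{lem:SE:e}) to match targets. You are more explicit than the paper about the reduction to small bigons and the need for a common refinement, which is a reasonable point of care but not a different idea.
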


\begin{comment}
The statement of the lemma can be visualized as a\quot{tin can} identity
\begin{equation*}
\alxydim{@=1.7cm}{\inf P_x' \ar@/_1.2pc/[r]_{F'_{\gamma_2}}="2" \ar@/^1.2pc/[r]^{F'_{\gamma_1}}="1" \ar@{=>}"1";"2"|*+{\varphi'_{\Sigma}} \ar[d]_{\tilde f_x} & \inf P_y' \ar@{=>}[dl]|>>>>>>>>*{\tilde f_{\gamma_2}} \ar[d]^{\tilde f_y} \\ \inf P_{f(x)} \ar@/_1.2pc/[r]_{F_{f \circ \gamma_2}} & \inf P_{f(y)}}
=
\alxydim{@=1.7cm}{\inf P_x'  \ar@/^1.2pc/[r]^{F_{\gamma_1}'}  \ar[d]_{\tilde f_x} & \inf P_y' \ar@{=>}[dl]|<<<<<<<<*{\tilde f_{\gamma_1}} \ar[d]^{\tilde f_y} \\ \inf P_{f(x)} \ar@/_1.2pc/[r]_{F_{f \circ \gamma_2}}="2" \ar@/^1.2pc/[r]^{F_{f \circ \gamma_1}}="1" \ar@{=>}"1";"2"|*+{\varphi_{f \circ \Sigma}} & \inf P_{f(y)}}
\end{equation*}
\end{comment}

\begin{proof}
Like in \cref{sec:natpullbackpaths} we identify canonically $\inf P'_x = \inf P_{f(x)}$ so that $\tilde f_x = \id$, and $\tilde f_{\gamma}: F_{\gamma}'\Rightarrow F_{f(\gamma)}$  is given by $\rho_n \ast \gamma_n \ast ... \ast \rho_0 \mapsto \tilde f(\rho_n) \ast \tilde f(\gamma_n) \ast ... \ast \tilde f(\rho_0)$. Suppose we have a horizontal lift $( \{\Phi_i\}, \{\rho_i\},\{g_i\})$  of $\Sigma$ to $\inf P'$ with source $\xi$. Let $\xi'$ be its target, so that $\xi' = \varphi'_{\Sigma}(\xi)$. Since $\Omega' = \tilde f^{*}\Omega$ and $\tilde f$ is $\Gamma$-equivariant it is clear that $( \{ \tilde f\circ \Phi_i\}, \{ \tilde f \circ \rho_i\},\{g_i\})$ is a horizontal lift of $f(\Sigma)$ to $\inf P$ with source $\tilde f_{\gamma}(\xi)$. Using the naturality of the surface ordered exponential under pullbacks (\cref{lem:SE:e}), its target is $\tilde f_{\gamma}(\xi')$; this shows that commutativity of the diagram. \end{proof}

\section{Backwards compatibility}

\label{sec:backcomp}

We exhibit our constructions of \cref{sec:ptpaths,sec:ptbigons} for two particular classes of principal $\Gamma$-2-bundles: trivial 2-bundles and 2-bundles induced from ordinary principal bundles. 

\subsection{Trivial principal 2-bundles}

\label{ex:ptpath:trivbun}

It is certainly important to see  what the parallel transport constructions of \cref{sec:ptpaths,sec:ptbigons} reduce to in case of the trivial bundle. Also, we will need the results of this section in the proofs in \cref{sec:orga,sec:main}. 

In the following remark we relate (in  a  functorial way) $\Gamma$-connections on $M$ to connections on the trivial principal $\Gamma$-2-bundle. The next remark identifies what that relation is over a one-point-manifold.

\begin{remark}
\label{rem:trivbun}
Let $\inf I := \idmorph{M} \times \Gamma$ be the trivial bundle. 
We summarize three constructions of \reftrivialbundle; also see \cref{def:gammacon} for the categorical structure of $\Gamma$-connections:
\begin{enumerate}[(a)]

\item 
\label{rem:trivbun:a}
Every (fake-flat) $\Gamma$-connection $(A,B)$ on $M$ defines a (fake-flat) connection $\Omega_{A,B}$ on $\inf I$; we denote by $\inf I_{A,B}$ the trivial bundle equipped with that connection. In more detail, we have
\begin{equation*}
\fa\Omega_{A,B}= \mathrm{Ad}_{g}^{-1}(p^{*}A) + g^{*}\theta
\!\!\!\quomma\!\!\!
\fb\Omega_{A,B} =(\alpha_{g^{-1}})_{*} ((\tilde\alpha_{h})_{*}(p^{*}A)+h^{*}\theta)
\!\!\!\quand\!\!\!
\fc\Omega_{A,B} =-(\alpha_{g^{-1}})_{*}(p^{*}B)\text{,}
\end{equation*}  
where $g$, $h$ and $p$ denote the projections to $G$, $H$, and $M$, respectively.

\item
\label{rem:trivbun:b}
Every gauge transformation $(g,\varphi)$ between $\Gamma$-connections $(A,B)$ and $(A',B')$ on $M$ defines a 1-morphism $J_{g,\varphi}:\inf I_{A,B} \to \inf I_{A',B'}$ in $\zweibuncon\Gamma M$. If the $\Gamma$-connections are fake-flat, this is a 1-morphism in $\zweibunconff\Gamma M$.

\item
\label{rem:trivbun:c}
Every gauge 2-transformation $a$ between gauge transformations $(g_1,\varphi_1)$  and $(g_2,\varphi_2)$ defines a 2-morphism $f_a:J_{g_1,\varphi_1}\Rightarrow J_{g_2,\varphi_2}$.

\item
\label{rem:trivbun:d}
By \reffunctorL,\  \cref{rem:trivbun:a*,rem:trivbun:b*,rem:trivbun:c*} yield a 2-functor
$L^{\ff}_M:\conff\Gamma M \to \zweibunconff\Gamma M$.

\end{enumerate}
\end{remark}

\begin{remark}
\label{sec:gammator}
We reduce the structure of \cref{rem:trivbun} to the one-point manifold $M=\ast$. It is easy to see that $\conff\Gamma \ast=\con\Gamma \ast = B\Gamma$, the delooping of $\Gamma$: this bigroupoid has a single object, whose Hom-groupoid is $\Gamma$. In order to identify $\zweibunconff\Gamma\ast$ we fix the following definition:  a \emph{$\Gamma$-torsor} is a Lie groupoid $\inf P$ together with a smooth right $\Gamma$-action $R$ of $\Gamma$ on $\inf P$ such that the functor
\begin{equation*}
\tau :=(\pr_1,R):\inf P \times \Gamma \to \inf P \times \inf P 
\end{equation*}
is a weak equivalence. The bicategory $\tor\Gamma$ is the full sub-bicategory of the bicategory of Lie groupoids with smooth $\Gamma$-action. Then we have $\zweibunconff\Gamma\ast=\zweibun\Gamma\ast=\tor\Gamma$.
\begin{enumerate}[(a)]

\item
\label{rem:i:obj}
The canonical $\Gamma$-torsor is $\inf P:=\Gamma$ with $R$ given by the 2-group structure. 
\begin{comment}
In this case the functor $\tau$ is invertible as a smooth functor, via  $\tau^{-1}=(\pr_1,m \circ (i \times \id))$.
\end{comment}

\item
\label{rem:i:morph}
For every $g\in G$ there is a 1-morphism $i_g:\Gamma\to\Gamma$ in $\tor\Gamma$, which can be given as a smooth functor: we set  $i_g(g'):=gg'$ and $i_g(h,g'):=(\alpha(g,h),gg')$. \begin{comment}
This is a functor: 
\begin{equation*}
s(i_g(h,g'))= s(\alpha(g,h),gg')=gg'=i_g(g')=i_g(s(h,g'))
\end{equation*}
and
\begin{equation*}
t(i_g(h,g'))= t(\alpha(g,h),gg')=t(\alpha(g,h))gg'=gt(h)g'=i_g(t(h)g')=i_g(t(h,g'))
\end{equation*}
The composition is respected:
if $g_2 = t(h_1)g_1$ then
\begin{multline*}
i_g((h_2,g_2)\circ (h_1,g_1)) =i_g(h_2h_1,g_1)=(\alpha(g,h_2h_1),gg_1)
\\
=(\alpha(g,h_2),gg_2)\circ (\alpha(g,h_1),gg_1)=i_g(h_2,g_2) \circ i_g(h_1,g_1)\text{.}
\end{multline*}
\end{comment}
This is a functor and strictly equivariant with respect  to  the $\Gamma$-action.
\begin{comment}
I.e. the diagram of functors
\begin{equation*}
\alxydim{}{\Gamma \times \Gamma \ar[d]_{i_g \times \id} \ar[r]^-{m} & \Gamma \ar[d]^{i_g} \\ \Gamma \times \Gamma \ar[r]_-{m} & \Gamma}
\end{equation*}  
is strictly commutative. 
\end{comment}
Hence it induces a 1-morphism in $\tor\Gamma$. 
\begin{comment}
Indeed, for objects we have
\begin{equation*}
i(g)(g'\cdot g'')=gg'g''=i(g)(g')\cdot g''\text{.}
\end{equation*}
For morphisms we have
\begin{multline*}
i(g)((h_1,g_1)\cdot(h_2,g_2)) =i(g)(h_1\alpha(g_1,h_2),g_1g_2)=(\alpha(g,h_1\alpha(g_1,h_2)),gg_1g_2)
\\
=(\alpha(g,h_1)\alpha(gg_1,h_2),gg_1g_2)=(\alpha(g,h_1),gg_1)\cdot (h_2,g_2)= i(g)(h_1,g_1)\cdot (h_2,g_2)\text{.}
\end{multline*}
\end{comment}

\item
\label{rem:i:twomorph}
For every $(h,g)\in \mor{\Gamma}$ we define a natural transformation $i_{(h,g)}: i_g \Rightarrow i_{t(h)g}$ whose component at $g'$ is $i_{(h,g)}(g'):=(h,gg')$. \begin{comment}
The naturality condition at a morphism $(h',g')$ in $\Gamma$ is equivalent to the commutativity of the diagram
\begin{equation*}
\alxydim{@C=2.5cm}{i_g(g') \ar[r]^-{i_g(h',g')}\ar[d]_{i_{(h,g)}(g')} & i_g(t(h')g') \ar[d]^{i_{(h,g)}(t(h')g')} \\ i_{t(h)g}(g') \ar[r]_-{i_{t(h)g}(h',g')} & i_{t(h)g}(t(h')g')\text{,} }
\end{equation*}
which is easily verified.
\end{comment}
\begin{comment}
We check this:
\begin{multline*}
i_{(h,g)}(t(h')g') \circ i_g(h',g')  = (h,gt(h')g')\circ (\alpha(g,h'),gg')=(h\alpha(g,h'),gg')\\=(\alpha(t(h)g,h'),t(h)gg') \circ (h,gg')=i_{t(h)g}(h',g') \circ i_{(h,g)}(g')\text{.}
 \end{multline*}
\end{comment}
The natural transformation $i_{(g,h)}$ is $\Gamma$-equivariant in the sense that
\begin{equation*}
i_{(h,g)}(g'g'')=i_{(h,g)}(g')\cdot \id_{g''}\text{.}
\end{equation*}
Hence we can regard it as a 2-morphism in $\tor\Gamma$.
\begin{comment}
Indeed,
\begin{equation*}
i_{(h,g)}(g'g'')=(h,gg'g'')=(h,gg'g'')=(h,gg')\cdot (1,g'')=i_{(h,g)}(g')\cdot \id_{g''}\text{.}
\end{equation*}
\end{comment} 

\item
\label{rem:Lresi}
It is straightforward to verify directly that \cref{rem:i:obj*,rem:i:morph*,rem:i:twomorph*} form a (strict) 2-functor $i: B\Gamma \to \tor\Gamma$.
Further it is easy to check that under the identification of $\inf I_x = \{x\} \times \Gamma \cong \Gamma$ the restriction of the 2-functor $L^{\ff}_M$ to $M=\ast$ is exactly $i$. Finally, one can show that $i$ is an equivalence of bicategories.

\end{enumerate}
\end{remark}

Now we start to identify the parallel transport along a path $\gamma:x \to y$ in the trivial principal $\Gamma$-2-bundle $\inf I_{A,B}$, where $(A,B)$ is a $\Gamma$-connection.
We show that the anafunctor $F_{\gamma}$ is canonically 2-isomorphic to (the anafunctor induced by) the functor $i_{\PE_{A}(\gamma)}$ of \cref{rem:i:morph}\text{,}
where $\PE_A(\gamma)\in G$ is the path-ordered exponential of $A$ along $\gamma$, see \cref{sec:pathordered}.
 For this purpose we define a $\Gamma$-equivariant transformation
\begin{equation}
\label{eq:transeta}
\eta_{\gamma}: i_{\PE_{A}(\gamma)} \Rightarrow
 F_{\gamma}\text{.}
 \end{equation}
 For simplicity we set  $g:= \PE_{A}(\gamma)$. We define $\eta_{\gamma}$ using \cref{rem:indtrans}; the underlying smooth map
$\tilde \eta_{\gamma}: G \to F_{\gamma}$ is defined as follows. Let $\kappa:[0,1]\to G$ be the solution of the initial value problem
\begin{equation}
\label{eq:initialkappa}
\dot\kappa(t)=-A(\dot\gamma(t))\kappa(t)
\quand
\kappa(0)=1\text{,}
\end{equation} 
so that $g=\kappa(1)$. Consider the path $(\gamma,\kappa)$ in $\ob{\inf P}=M \times G$. It is horizontal:
\begin{equation*}
\fa\Omega_{A,B}(\dot\gamma(t),\dot\kappa(t))=\mathrm{Ad}^{-1}_{\kappa(t)}(A(\dot\gamma(t)))+\theta(\dot\kappa(t))=0\text{.}
\end{equation*} 
For $g'\in G$ the path $(\gamma,\kappa g')$ is then horizontal, too, by \cref{lem:obhor}.
Thus, we obtain an element $\xi_{g'}:=\id_{(y,gg')} \ast (\gamma,\kappa g') \ast \id_{(x,g')} \in F_{\gamma}$. We set $\tilde \eta_{\gamma}(g'):=\xi_{g'}$.

\begin{lemma}
The map $\tilde\eta_{\gamma}$ satisfies \cref{T1*,T2*,T3*}.
\end{lemma}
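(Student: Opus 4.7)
The plan is to verify the three conditions in turn, exploiting the fact that (T1) and (T3) reduce to direct unpacking of the definitions while (T2) will be handled by the equivalence relation $\sim_1$ applied to an explicitly chosen horizontal path in $\mor{\inf I}$.

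First I would handle (T1) by reading off the anchors $\alpha_l(\xi_{g'}) = s(\id_{(x,g')}) = (x, g')$ and $\alpha_r(\xi_{g'}) = t(\id_{(y,gg')}) = (y, gg')$, which under the identifications $\inf I_x \cong \Gamma \cong \inf I_y$ are exactly $g'$ and $i_g(g')$. For (T3), applying the $\mor\Gamma$-action formula of \cref{lem:gammaaction} with $h=1$ gives
\[
\xi_{g'} \cdot \id_{g''} = \id_{(y,gg'g'')} \ast (\gamma, \kappa g'g'') \ast \id_{(x,g'g'')} = \xi_{g'g''}\text{,}
\]
since the $\Gamma$-action on $\inf I = \idmorph M \times \Gamma$ is right multiplication in the $\Gamma$-factor on both objects and morphisms.

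For (T2), I would first compute both sides explicitly. Writing $\alpha = (h_\alpha, g') \in \mor{\inf I_x}$ with $s(\alpha) = g'$ and fixing $\beta \in \mor{\inf I_y}$ with $t(\beta) = gg'$, the left and right actions of \cref{lem:laction,lem:action} give
\[
\alpha \circ \xi_{g'} \circ \beta = \beta^{-1} \ast (\gamma, \kappa g') \ast \alpha^{-1}\text{,}
\]
while $i_g(\alpha) = (\alpha(g, h_\alpha), gg')$ yields
\[
\tilde\eta_\gamma(t(\alpha)) \circ i_g(\alpha) \circ \beta = (i_g(\alpha) \circ \beta)^{-1} \ast (\gamma, \kappa t(h_\alpha) g') \ast \id_{(x, t(h_\alpha) g')}\text{.}
\]
These two representatives will be related via $\sim_1$ using the path
\[
\tilde\rho(t) := \bigl(\gamma(t), (\alpha(\kappa(t), h_\alpha), \kappa(t) g')\bigr) \in \mor{\inf I}\text{,}
\]
for which $\tilde\rho(0) = \alpha$, $\tilde\rho(1) = i_g(\alpha)$, $s(\tilde\rho) = (\gamma, \kappa g')$ and $t(\tilde\rho) = (\gamma, \kappa t(h_\alpha) g')$. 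The prescribed modifications $\rho_0' = \tilde\rho(0) \circ \rho_0$ and $\rho_1' = \rho_1 \circ \tilde\rho(1)^{-1}$ then transform the left-hand side exactly into the right-hand side.

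The hard part will be verifying that $\tilde\rho$ is horizontal. Substituting into the formula of \cref{rem:trivbun} reduces $\fb\Omega_{A,B}(\dot{\tilde\rho}) = 0$ to the identity $(\tilde\alpha_h)_*(A(\dot\gamma)) + h^{-1} \dot h = 0$ with $h(t) = \alpha(\kappa(t), h_\alpha)$. Differentiating $h$ with the action law $\alpha(g_1 g_2, h_\alpha) = \alpha(g_1, \alpha(g_2, h_\alpha))$ will give $h^{-1} \dot h = (\alpha_{\kappa})_*((\tilde\alpha_{h_\alpha})_*(\kappa^{-1} \dot\kappa))$, and the same law, rewritten as $\tilde\alpha_{\alpha(g, h_\alpha)}(g') = \alpha(g, \tilde\alpha_{h_\alpha}(\mathrm{Ad}_g^{-1}(g')))$ and differentiated at $g' = 1$, yields $(\tilde\alpha_h)_*(X) = (\alpha_{\kappa})_*((\tilde\alpha_{h_\alpha})_*(\mathrm{Ad}_{\kappa}^{-1}(X)))$. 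Combined with the ODE $\kappa^{-1} \dot\kappa = -\mathrm{Ad}_{\kappa}^{-1}(A(\dot\gamma))$ from \eqref{eq:initialkappa}, these two expressions cancel and horizontality follows, completing the verification.
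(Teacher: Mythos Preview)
Your proof is correct and follows essentially the same route as the paper: (T1) and (T3) are identical to the paper's argument, and for (T2) you use exactly the same horizontal path $\tilde\rho$ (the paper writes it as $\rho=(\gamma,\id_{\kappa}\cdot\alpha)$, which unpacks to your formula). The only difference is in verifying horizontality of $\tilde\rho$: the paper observes that $\tilde\rho = R(\id_{(\gamma,\kappa)},\alpha)$ and invokes the general lemmas that identity paths are horizontal and that the $\Gamma$-action by a constant morphism preserves horizontality, whereas you carry out the explicit computation of $\fb\Omega_{A,B}(\dot{\tilde\rho})$ directly. Your computation is correct, but the structural shortcut is worth knowing since it avoids the Lie-theoretic identities.
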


\begin{proof}
 We have $\alpha_l(\xi_{g'})=(x,g')$ and $\alpha_r(\xi_{g'})=(y,gg')$, this is \cref{T1*}.  For  morphisms $\alpha\in \mor{\inf P_x}=\mor{\Gamma}$ (with $s(\alpha)=g'$) and $\beta\in \mor{\inf P_y}=\mor{\Gamma}$ (with $t(\beta)=gg'$) we have
\begin{equation*}
\alpha\circ \tilde \eta_{\gamma}(g')\circ \beta =\beta^{-1}\ast (\gamma,\kappa g')\ast\alpha^{-1}\sim(\beta^{-1}\circ i_g(\alpha)^{-1})\ast (\gamma,\kappa t(\alpha))\ast \id=\tilde \eta_{\gamma}(t(\alpha))\circ i_g(\alpha)\circ \beta\text{,}
\end{equation*}
where $\sim$ denotes one application of the equivalence relation on $F_{\gamma}$, performed as follows. 
Consider the path $\rho=(\gamma,\id_{\kappa}\cdot \alpha)$ in $\mor{\inf P}$  satisfying $s(\rho)=(\gamma,\kappa g')$ and $t(\rho)=(\gamma,\kappa t(\alpha))$. It is horizontal by \cref{lem:hormor:a,lem:hormor:c}.
\begin{comment}
Indeed, $\rho=\id_{\gamma,\kappa}\cdot \alpha$.
\end{comment}
Thus, $\id\ast (\gamma,\kappa g')\ast\alpha^{-1}$ is equivalent to 
\begin{align*}
\rho(1)^{-1}\ast (\gamma,\kappa t(\alpha))\ast (\rho(0) \circ \alpha^{-1}) 
&= (\id_{g}\cdot \alpha)^{-1}\ast (\gamma,\kappa t(\alpha))\ast ((\id_{1}\cdot \alpha) \circ \alpha^{-1})
\\&=  i_g(\alpha)^{-1}\ast (\gamma,\kappa t(\alpha))\ast \id\text{.}
\end{align*}
This shows \cref{T2*}.  Finally, we have  $\xi_{g'}\cdot \id_g=\xi_{g'g}$, this is \cref{T3*}.
\end{proof}

\begin{proposition}
\label{eq:T1:eta}
Let $(A,B)$ be a $\Gamma$-connection on $M$, and let $F_{\gamma}$ denote the parallel transport in the associated trivial principal $\Gamma$-2-bundle $\inf I_{A,B}$.  Then, the transformation $\eta_{\gamma}: i_{\PE_{A}(\gamma)} \Rightarrow
 F_{\gamma}$ is compatible with path composition: if $\gamma_2$ and $\gamma_1$ are composable paths, 
then we have
\begin{equation*}
\eta_{\gamma_2\ast \gamma_1}= c_{\gamma_1,\gamma_2} \bullet ( \eta_{\gamma_2}\circ \eta_{\gamma_1})\text{,}
\end{equation*}
where $c_{\gamma_1,\gamma_2}: F_{\gamma_2} \circ F_{\gamma_1} \Rightarrow F_{\gamma_2\ast \gamma_1}$ was defined in \cref{sec:compcomppaths}.
\end{proposition}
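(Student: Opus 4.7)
The plan is to reduce the statement to an equality of smooth maps $G\to F_{\gamma_2\ast \gamma_1}$ and then compute both sides explicitly. Since $i_{\PE_{A}(\gamma_2\ast\gamma_1)}=i_{g_2g_1}=i_{g_2}\circ i_{g_1}$ (using that the horizontal path-ordered exponential is multiplicative, $\PE_A(\gamma_2\ast\gamma_1)=g_2 g_1$, together with the 2-functoriality of $i$ established in \cref{rem:Lresi}), both 2-morphisms in the claimed identity have the same source $i_{g_2g_1}$ and the same target $F_{\gamma_2\ast\gamma_1}$. Furthermore both are 2-morphisms out of a functor-induced anafunctor, so by the bijection of \cref{rem:indtrans} it suffices to verify that the underlying smooth maps $G\to F_{\gamma_2\ast \gamma_1}$ coincide.

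For the left-hand side I would first record that $\kappa$, the solution of the initial value problem \cref{eq:initialkappa} for $\gamma_2\ast\gamma_1$, is obtained by glueing: if $\kappa_1$ and $\kappa_2$ are the corresponding solutions for $\gamma_1$ and $\gamma_2$, then $\kappa$ restricts on the $\gamma_1$-half to $\kappa_1$ and on the $\gamma_2$-half to $\kappa_2\cdot g_1$, because \cref{eq:initialkappa} is linear in $\kappa$ and the value at $t=1/2$ equals $g_1$. Hence $\tilde\eta_{\gamma_2\ast\gamma_1}(g')$ admits the representative (with respect to the subdivision $\{0,1/2,1\}$)
\[
\id_{(z,g_2g_1g')}\ast(\gamma_2,\kappa_2g_1g')_{\mathrm{r}}\ast\id_{(y,g_1g')}\ast(\gamma_1,\kappa_1g')_{\mathrm{r}}\ast\id_{(x,g')},
\]
where the subscript $\mathrm{r}$ indicates the canonical reparametrization used in the direct limit structure on $F_{\gamma_2\ast \gamma_1}$.

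For the right-hand side I would unwind the definition of horizontal composition of anafunctor transformations from a functor-induced source: the underlying smooth map of $\eta_{\gamma_2}\circ\eta_{\gamma_1}$ at $g'$ is the pair $(\tilde\eta_{\gamma_1}(g'),\tilde\eta_{\gamma_2}(i_{g_1}(g')))=(\tilde\eta_{\gamma_1}(g'),\tilde\eta_{\gamma_2}(g_1g'))$, whose anchor condition is satisfied because $\alpha_r(\tilde\eta_{\gamma_1}(g'))=(y,g_1g')=\alpha_l(\tilde\eta_{\gamma_2}(g_1g'))$. Then $c_{\gamma_1,\gamma_2}$ concatenates these via $\ast$ as in \cref{sec:compcomppaths}, composing the last morphism $\id_{(y,g_1g')}$ of $\tilde\eta_{\gamma_1}(g')$ with the first morphism $\id_{(y,g_1g')}$ of $\tilde\eta_{\gamma_2}(g_1g')$. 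The result is precisely the representative displayed above.

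The two representatives coincide verbatim, so the claimed equality holds in the direct limit $F_{\gamma_2\ast\gamma_1}$. The only delicate bookkeeping, which I expect to be the main (but essentially combinatorial) obstacle, is to make explicit that the horizontal composition of anafunctor 2-morphisms out of functor-induced sources really takes the form $g'\mapsto(\tilde\eta_{\gamma_1}(g'),\tilde\eta_{\gamma_2}(i_{g_1}(g')))$ in the composed anafunctor, and that the reparametrizations implicit in $c_{\gamma_1,\gamma_2}$ agree with those implicit in subdividing the single piece $(\gamma_2\ast\gamma_1,\kappa g')$ at $t=1/2$; both are the canonical reparametrizations built into the definition of $F_{\gamma_2\ast \gamma_1}$.
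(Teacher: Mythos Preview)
Your proposal is correct and follows essentially the same approach as the paper: reduce via \cref{rem:indtrans} to the underlying smooth maps $G\to F_{\gamma_2\ast\gamma_1}$, use that the solution $\kappa$ for $\gamma_2\ast\gamma_1$ is the concatenation $\kappa_2 g_1\ast\kappa_1$, and then check that both sides yield the same representative in the direct limit. The paper's proof is slightly terser but structurally identical.
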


\begin{comment}
Now, this is an equality between transformations $i_{\PE_{A}(\gamma_2 \ast \gamma_1)} \Rightarrow F_{\gamma_2\circ\gamma_1}$. 
\end{comment}

\begin{proof}
On the level of the corresponding smooth maps $\tilde\eta_{\gamma}$, the claim becomes
\begin{equation*}
\tilde\eta_{\gamma_2\ast\gamma_1}(g')=c_{\gamma_1,\gamma_2}(\tilde\eta_{\gamma_1}(g'),\tilde\eta_{\gamma_2}(g_1g'))
\end{equation*}
for all $g'\in G$.
\begin{comment}
Note that $i_{g_2}\circ i_{g_1} = i_{g_2g_1}=i_{\PE_{A}(\gamma_2 \ast \gamma_1)}$ by \cref{lem:PE:b}, where $g_i := \PE_A(\gamma_i)$. On the level of associated anafunctors, this equality becomes an identification \begin{equation*}
((g_1',\alpha_1),(g_2',\alpha_2)) \mapsto (g_1',i_{g_2}(\alpha_1) \circ \alpha_2)\text{.}
\end{equation*}
The transformation $ \eta_{\gamma_2}\circ \eta_{\gamma_1}:i_{g_2}\circ i_{g_1} \Rightarrow F_{\gamma_2}\circ F_{\gamma_1}$ is given by 
\begin{equation*}
((g_1',\id_{g_1g_1'})),(g_2',\id_{g_2g_2'})) \mapsto (\eta_{\gamma_1}(g_1',\id_{g_1g_1'}),\eta_{\gamma_2}(g_2',\id_{g_2g_2'})) = (\tilde\eta_{\gamma_1}(g_1'),\tilde\eta_{\gamma_2}(g_2'))\text{.}
\end{equation*} 
Here, $g_2'=\alpha_l(g_2',\id_{g_2g_2'})=\alpha_r(g_1',\id_{g_1g_1'})=g_1g_1'$.
\end{comment}
Let $\kappa_1,\kappa_2:[0,1] \to G$ be the solutions to the initial value problems \cref{eq:initialkappa} corresponding to $\gamma_1$ and $\gamma_2$, respectively, so that $\tilde\eta_{\gamma_i}(g')=\id \ast (\gamma_i,\kappa_i g') \ast \id$. Then,  $\tilde \kappa := \kappa_2g_1 \ast \kappa_1$ (composition of paths in $G$) is the solution for $\gamma_2\ast\gamma_1$, i.e. $\tilde\eta_{\gamma_2\ast\gamma_1}(g')=\id \ast (\gamma_2\ast\gamma_1,\tilde \kappa g') \ast \id \in F_{\gamma_2\ast\gamma_1}$. In the direct limit definition of $F_{\gamma_2\ast\gamma_1}$, this is equivalent to $\id \ast (\gamma_2,\tilde \kappa_2g_1g')\ast \id \ast (\gamma_1,\kappa_1g') \ast \id$, which is precisely $c_{\gamma_1,\gamma_2}(\tilde\eta_{\gamma_1}(g'),\tilde\eta_{\gamma_2}(g_1g'))$. \end{proof}

\begin{remark}
\label{ex:ptpath:trivbarbun}
Let $\inf P$ be a principal $\Gamma$-bundle with a   connection $\Omega$, $(A,B)$ be a $\Gamma$-connection on $M$, and $J:\inf I_{A,B} \to \inf P$ be a 1-morphism in $\zweibuncon\Gamma{M}$. Such \quot{trivializations} always exist locally.  Combining the transformation $\eta_{\gamma}$ with the transformation $J_{\gamma}$ from \cref{sec:natbundlemorph} we obtain a transformation
\begin{equation*}
\alxydim{@=\xyst}{\Gamma \ar[r]^{i_{\PE_{A}(\gamma)}}\ar[d]_{J_x} & \Gamma \ar@{=>}[dl] \ar[d]^{J_y} \\ \inf P_x \ar[r]_{F_{\gamma}} & \inf P_y }
\end{equation*}
In this sense, parallel transport in any principal $\Gamma$-2-bundle is -- locally -- multiplication with the path-ordered exponential of a local connection 1-form $A$ along the path.  
\end{remark}

Suppose $(A,B)$ and $(A',B')$ are  $\Gamma$-connections on $M$ and $(g,\varphi)$ is a gauge transformation. By \cref{rem:trivbun:b} there is a 1-morphism $J:= J_{g,\varphi}:\inf I_{A,B} \to \inf I_{A',B'}$ in $\zweibuncon\Gamma M$. It is induced from a smooth functor $\phi_g$, whose restriction to a point $x$ is the functor $i_{g(x)}$ determined by the gauge transformation $g$ and
 \cref{rem:i:morph}. Thus, we have $J_x=i_{g(x)}$. According to \cref{sec:natbundlemorph}, $J$ determines a transformation $J_{\gamma}: J_y \circ F_{\gamma} \Rightarrow F'_{\gamma} \circ J_x$ for each path $\gamma:x \to y$.
The goal of the following proposition is to determine $J_{\gamma}$  in the present case of $J=J_{g,\varphi}$.

We consider  $h_{g,\varphi}(\gamma)\in H$ explained in \cref{sec:surfaceordered}. By \cref{lem:SE:gauge:a} it satisfies \begin{equation}
\label{eq:triv:gaugetrans:a}
\PE_{A'}(\gamma)\cdot g(x)=t(h_{g,\varphi}(\gamma))^{-1}\cdot g(y)\cdot \PE_{A}(\gamma)\text{.} 
\end{equation}
In other words, $\alpha_{g,\varphi}(\gamma):=(h_{g,\varphi}(\gamma)^{-1},g(y) \PE_{A}(\gamma))\in H \times G =\mor{\Gamma}$ is a morphism with source
$g(y) \PE_{A}(\gamma)$ and target $\PE_{A'}(\gamma) g(x)$. Associated to $a_{g,\varphi}(\gamma)$ is by \cref{rem:i:twomorph}  a natural transformation
\begin{equation*}
i_{\alpha_{g,\varphi}(\gamma)}:i_{g(y) \PE_{A}(\gamma)}\Rightarrow i_{\PE_{A'}(\gamma) g(x)}\text{.}
\end{equation*}
The following proposition shows that $i_{a_{g,\varphi}(\gamma)}$ corresponds to $J_{\gamma}$ under the transformation of \cref{eq:transeta}.

\begin{proposition}
\label{ex:gt}
Let $(g,\varphi):(A,B) \to (A',B')$ be a gauge transformation between $\Gamma$-connections, and let $J:\inf I_{A,B} \to \inf I_{A',B'}$ be the associated 1-morphism in $\zweibuncon\Gamma M$. For every path $\gamma:x \to y$ the diagram
\begin{equation*}
\alxydim{@=\xyst}{i_{g(y)\PE_A(\gamma)}\ar@{=}[d] \ar@{=>}[r]^-{i_{\alpha_{g,\varphi}(\gamma)}} &  i_{\PE_{A'}(\gamma)g(x)} \ar@{=}[d] \\i_{g(y)} \circ i_{\PE_A(\gamma)}  \ar@{=>}[d]_{\id \circ \eta_{\gamma}}  & i_{\PE_{A'}(\gamma)}\circ i_{g(x)} \ar@{=>}[d]^{\eta_{\gamma}'\circ \id} \\J_y \circ F_{\gamma} \ar@{=>}[r]_{J_{\gamma}} & F_{\gamma}' \circ J_x\text{,}}
\end{equation*}
is commutative, where $F_{\gamma}$, $\eta_{\gamma}$ and $F_{\gamma}'$, $\eta'_{\gamma}$ denote the parallel transports and the transformations of \cref{eq:transeta}  for $\inf I_{A,B}$ and $\inf I_{A',B'}$, respectively.
\end{proposition}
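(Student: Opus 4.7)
The plan is to reduce the claimed commutativity to an equality of smooth maps $G \to J_x \ttimes{\alpha_r}{\alpha_l} F'_\gamma$ via \cref{rem:indtrans}, and then to verify this equality by tracking an element $g_1\in G$ through both legs of the diagram. Write $\kappa,\kappa':[0,1]\to G$ for the solutions of the horizontality ODE \cref{eq:initialkappa} for the connections $A$ and $A'$, so that $\PE_A(\gamma)=\kappa(1)$ and $\PE_{A'}(\gamma)=\kappa'(1)$. Using the explicit description of $\eta_\gamma$ and $\eta'_\gamma$ from \cref{eq:transeta} and of $i_{\alpha_{g,\varphi}(\gamma)}$ from \cref{rem:i:twomorph}, the top-right leg sends $g_1$ to the element of $J_x \ttimes{\alpha_r}{\alpha_l} F'_\gamma$ whose $F'_\gamma$-component is the class of $\id\ast(\gamma,\kappa'(t)g(x)g_1)\ast\id$ and whose $J_x$-component is the pair $(g_1,(h_{g,\varphi}(\gamma)^{-1},g(y)\PE_A(\gamma)g_1))$.

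For the other leg I would feed $\tilde\eta_\gamma(g_1)=\id\ast(\gamma,\kappa(t)g_1)\ast\id\in F_\gamma$, paired with $j:=(y,\id_{g(y)\PE_A(\gamma)g_1})\in J_y$, into $J_\gamma$. Since $J=J_{g,\varphi}$ is induced from the smooth $\Gamma$-equivariant functor $\phi_g$ with the canonical $\Omega_{A',B'}$-pullback shifted by the pair $(\sigma_0,\sigma_1)$ prescribed by $\varphi$ (see \reftrivialbundleconnection\ and \refdefshiftedpullbacktrivialbundle), I would apply \cref{re:Jfunctor} directly: it gives $J_\gamma(\xi,(p,\rho'))=((p',\id_{\phi_g(p')}),\phi_g^{\sigma}(\xi)\circ\rho')$, where the single relevant path piece of $\phi_g^{\sigma}(\xi)$ is produced by the solution $h:[0,1]\to H$ of the initial value problem $\dot h(t)=-h(t)\sigma_0((\dot\gamma(t),\dot\kappa(t)g_1))$ with $h(0)=1$.

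The decisive and most technical step is the identification $h(1)=h_{g,\varphi}(\gamma)$. Inserting the explicit form of $\sigma_0$ determined by $\varphi$, the above ODE for $h$ along the horizontal path $(\gamma,\kappa(t)g_1)$ matches, via horizontality of $\kappa$ and a direct application of the gauge transformation identity $A'=\mathrm{Ad}_g(A)+t_{*}(\varphi)+g^{*}\theta$, the ODE from the appendix that characterises $h_{g,\varphi}(\gamma)$; this identification of ODEs is where all the technical bookkeeping concentrates. Plugging this into the formula of \cref{re:Jfunctor} and noting that $t\mapsto R(\kappa(t)g_1,t(h(t))^{-1})$ is then horizontal with respect to $A'$ and starts at $g(x)g_1$, uniqueness in \cref{lem:obhorexists} forces it to equal $\kappa'(t)g(x)g_1$. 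A final bookkeeping check on the morphism components, using $t(h(1))^{-1}\cdot g(y)\PE_A(\gamma)=\PE_{A'}(\gamma)g(x)$, matches the two legs term by term; smoothness in $g_1$ is automatic.
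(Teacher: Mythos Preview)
Your overall strategy is exactly that of the paper: reduce to smooth maps $G\to J_x\ttimes{\alpha_r}{\alpha_l}F'_\gamma$ via \cref{rem:indtrans}, compute $J_\gamma$ by means of \cref{re:Jfunctor} with the shifted canonical pullback, and compare with the clockwise leg. The gap is in the ``decisive step'': the identification $h(1)=h_{g,\varphi}(\gamma)$ is \emph{false}. With the shift $\sigma_0=(\alpha_{\pr_G^{-1}\cdot g^{-1}})_{*}(\pr_M^{*}\varphi)$ from \refdefshiftedpullbacktrivialbundle, your ODE reads
\[
\dot h(t)=-h(t)\,(\alpha_{g_1^{-1}\kappa(t)^{-1}g(\gamma(t))^{-1}})_{*}\bigl(\varphi(\dot\gamma(t))\bigr),
\]
whereas the path $\tilde h$ with $\tilde h(1)=h_{g,\varphi}(\gamma)$ satisfies (see the appendix, or \cite[Lemma~2.18]{schreiber2})
\[
\tilde h^{-1}\dot{\tilde h}=-\mathrm{Ad}_{\tilde h}^{-1}\bigl(\varphi(\dot\gamma)\bigr)+(\tilde\alpha_{\tilde h})_{*}\bigl(\dot\kappa'\kappa'^{-1}\bigr).
\]
These are not the same ODE; the correct relation, which the paper verifies directly, is the twist
\[
h(t)=\alpha\bigl(g_1^{-1}\kappa(t)^{-1}g(\gamma(t))^{-1},\tilde h(t)\bigr).
\]
This twist is not cosmetic: at the jump morphism produced by \cref{re:Jfunctor} one obtains $R(\id_{(y,g(y)\kappa(1)g_1)},(h(1),t(h(1))^{-1}))$, whose $H$-component is $\alpha(g(y)\kappa(1)g_1,h(1))$. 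With the twist this equals $h_{g,\varphi}(\gamma)$ and matches the clockwise leg; with your claimed $h(1)=h_{g,\varphi}(\gamma)$ it would give $\alpha(g(y)\kappa(1)g_1,h_{g,\varphi}(\gamma))$, which does not. Your observation that $t\mapsto g(\gamma(t))\kappa(t)g_1\,t(h(t))^{-1}$ equals $\kappa'(t)g(x)g_1$ is correct, but it only detects $t(h)$ and therefore cannot distinguish the two candidates for $h$.

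A smaller point: your description of the clockwise leg is not well-typed. The pair $(g_1,(h_{g,\varphi}(\gamma)^{-1},g(y)\PE_A(\gamma)g_1))$ is not an element of $J_x$, since the target of the second component is $\PE_{A'}(\gamma)g(x)g_1$, not $\phi_g(g_1)=g(x)g_1$; and its right anchor does not match $\alpha_l$ of your $F'_\gamma$-component. In the paper's normalisation the $J_x$-component is $(g_1,\id_{g(x)g_1})$ and the contribution of $i_{\alpha_{g,\varphi}(\gamma)}$ sits in the terminal jump of the $F'_\gamma$-component. (Also, the gauge identity is $A'+t_{*}(\varphi)=\mathrm{Ad}_g(A)-g^{*}\bar\theta$, not the version you quote.)
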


\begin{comment}
As a pasting diagram, this is:
\begin{equation*}
\alxydim{@=1.7cm}{\Gamma \ar@/_1.2pc/[r]_{F_{\gamma}}="2" \ar@/^1.2pc/[r]^{i_{F_{A,B}(\gamma)}}="1" \ar@{=>}"1";"2"|*+{\eta_{\gamma}} \ar[d]_{J_x} & \Gamma \ar@{=>}[dl]|>>>>>>>>*{J_{\gamma}} \ar[d]^{J_y} \\ \Gamma \ar@/_1.2pc/[r]_{F_{\gamma}'} & \Gamma}
=
\alxydim{@=1.7cm}{\Gamma  \ar@/^1.2pc/[r]^{i_{F_{A,B}(\gamma)}}  \ar[d]_{i_{\rho_{g,\varphi}(x)}} & \Gamma \ar@{=>}[dl]|<<<<<<<<*{i_{\rho_{g,\varphi}(\gamma)}} \ar[d]^{i_{\rho_{g,\varphi}(y)}} \\ \Gamma \ar@/_1.2pc/[r]_{F'_{\gamma}}="2" \ar@/^1.2pc/[r]^{i_{F_{A',B'}(\gamma)}}="1" \ar@{=>}"1";"2"|*+{\eta'_{\gamma}} & \Gamma}
\end{equation*}
\end{comment}

\begin{proof}
The diagram is an equality between two transformations from a smooth functor to an anafunctor. We express them under the correspondence of \cref{rem:indtrans}, getting counter-clockwise the smooth map
\begin{equation}
\label{eq:trivbun:11}
g'\mapsto J_{\gamma}(\eta_{\gamma}(g'),(y,\PE_A(\gamma)g',\id_{g(y)\PE_A(\gamma)g'}))
\end{equation}
and clockwise the smooth map
\begin{align}
\label{eq:trivbun:12}
g' \mapsto  ((x,g',\id_{g(x)g'}),\tilde\eta'_{\gamma}(g(x)g'))\circ i_{\alpha_{g,\varphi}(\gamma)}(g')\text{.}
\end{align}
We  show that both expressions coincide. In the clockwise direction, we employ the definition of $\eta_{\gamma'}$ and obtain after some straightforward manipulations
\begin{equation}
\label{eq:trivbun:13}
((x,g',\id_{g(x)g'}),((y,h_{g,\varphi}(\gamma),\PE_{A'}(\gamma) g(x)g') \ast (\gamma,\kappa' g(x)g') \ast \id_{(x,g(x)g')})\text{.}
\end{equation}
\begin{comment}
Indeed,
\begin{align*}
g' \mapsto  &((x,g',\id_{g(x)g'}),\tilde\eta'_{\gamma}(g(x)g'))\circ i_{\alpha_{g,\varphi}(\gamma)}(g')
\\&=((x,g',\id_{g(x)g'}),(\id \ast (\gamma,\kappa' g(x)g') \ast \id)\circ (h_{g,\varphi}(\gamma)^{-1},g(y) \PE_{A}(\gamma)g'))
\\&=((x,g',\id_{g(x)g'}),((h_{g,\varphi}(\gamma),t(h_{g,\varphi}(\gamma))^{-1}g(y) \PE_{A}(\gamma)g') \ast (\gamma,\kappa' g(x)g') \ast \id)
\\&=((x,g',\id_{g(x)g'}),((h_{g,\varphi}(\gamma),\PE_{A'}(\gamma) g(x)g') \ast (\gamma,\kappa' g(x)g') \ast \id)
\end{align*}
\end{comment}
Counter-clockwise, we write $\xi_{g'} := \eta_{\gamma}(g')=\id_{(y,\kappa(1)g')} \ast (\gamma,\kappa g') \ast \id_{(x,g')}$. The result of $J_{\gamma}$ will be computed using  \cref{re:Jfunctor} and the following facts about $J=J_{g,\varphi}$, which can be looked up in \reftrivialbundle. The first fact is that $J$ has an underlying functor $\phi_g$, and the second fact is that the canonical $\Omega_{A',B'}$-pullback on $J$ is shifted by a pair of forms $(\varphi_0,\varphi_1)$, with $\varphi_0 :=(\alpha_{\pr_G^{-1}\cdot g^{-1}})_{*}(\pr_M^{*}\varphi)$, see \refdefshiftedpullbacktrivialbundle.
Now,   \cref{re:Jfunctor}  implies

 , 
\begin{equation}
\label{eq:trivbun:14}
J_{\gamma}(\xi_{g'} ,(y,\PE_A(\gamma)g',\id_{g(y)\PE_A(\gamma)g'}))=((x,g',\id_{g(x)g'}), \phi_g^{\varphi}(\xi_{g'}))\text{,}
\end{equation}
where 
\begin{align*}
\phi_g^{\varphi}(\xi_{g'}) &= (y,(\alpha(g(y)\kappa(1)g',\tilde h(1)),g(y)\kappa(1)g't(\tilde h(1))^{-1})) \ast (\gamma,g(\gamma)\kappa g't(\tilde h)^{-1})\ast \id_{(x,g(x)g')}\text{,}
\end{align*}
and $\tilde h$ is the solution to the initial value problem
\begin{equation}
\label{eq:trivbun:17}
\partial_t \tilde h(t) = -\tilde h(t) \varphi_{0}(\dot\gamma(t),\dot\kappa(t)g') \quand \tilde   h(0)=1\text{.}
\end{equation}
\begin{comment}
Indeed,
\begin{align*}
\phi_g^{\varphi}(\xi_{g'}) &= R(\id_{\phi_g(y,\kappa(1)g')},(\tilde h(1),t(\tilde h(1))^{-1})) \ast R(\phi_g(\gamma,\kappa g'),t(\tilde h)^{-1})\ast \id
\\&= R((y,1,g(y)\kappa(1)g'),(\tilde h(1),t(\tilde h(1))^{-1})) \ast (\gamma,g(\gamma)\kappa g't(\tilde h)^{-1})\ast \id
\\&= (y,(\alpha(g(y)\kappa(1)g',\tilde h(1)),g(y)\kappa(1)g't(\tilde h(1))^{-1})) \ast (\gamma,g(\gamma)\kappa g't(\tilde h)^{-1})\ast \id
\end{align*}
\end{comment}
The key to the proof that \cref{eq:trivbun:13,eq:trivbun:14} coincide  is to understand the relation between $\kappa$, $\kappa$, and $\tilde h$. The relation between $\kappa$ and $\kappa'$ is established by the gauge transformation, which gives $\mathrm{Ad}_{g}(A) - g^{*}\bar \theta =A'+t_{*}(\varphi)$. From the proof of \cite[Lemma 2.18]{schreiber2} we have
\begin{equation}
\label{eq:triv:gaugetrans:b}
\kappa'(t)=t(h(t))^{-1} g(\gamma(t))\kappa(t)g(x)^{-1}
\end{equation} 
where $h:[0,1] \to H$ is a smooth map such that the pair $(h,\kappa')$ solves the initial value problem
\begin{equation*}
\partial_t (h(t),\kappa'(t))=-(\varphi(\partial_t\gamma(t)),A'(\partial_t\gamma(t)))\cdot (h(t),\kappa'(t))
\quand
h(0)=1\text{, }\kappa'(0)=1\text{.}
\end{equation*}
Splitting this into components, one obtains as an equivalent characterization that $h$ solves the initial value problem
\begin{equation}
\label{eq:trivbun:20}
h(t)^{-1}\partial h_t(t)=-\mathrm{Ad}_{h(t)}^{-1}(\varphi(\partial_t\gamma(t)))+ (\tilde\alpha_{h(t)})_{*}(\partial_t\kappa'(t)\kappa'(t)^{-1})
\quand h(0)=1\text{.}
\end{equation}
\begin{comment}
This means:
\begin{align*}
\partial_t (h(t),\kappa'(t))&=-(\varphi(\partial_t\gamma(t)),A'(\partial_t\gamma(t)))\cdot (h(t),\kappa'(t))
\\&=-(h(t)(\mathrm{Ad}_{h(t)}^{-1}(\varphi(\partial_t\gamma(t)))+ (\tilde\alpha_{h(t)})_{*}(A'(\partial_t\gamma(t)))),A'(\partial_t\gamma(t))\kappa'(t))
\end{align*}
and hence
\begin{equation*}
h(t)^{-1}\partial h_t(t)=-\mathrm{Ad}_{h(t)}^{-1}(\varphi(\partial_t\gamma(t)))+ (\tilde\alpha_{h(t)})_{*}(\partial_t\kappa'(t)\kappa'(t)^{-1})\text{.}
\end{equation*}
Here we have used
\begin{equation*}
(Y,X)\cdot (h,g) = \partial_t((y(t),x(t))\cdot (h,g))=\partial_t(y(t)\alpha(x(t),h),x(t)g)=(h(\mathrm{Ad}_h^{-1}(Y)+\tilde \alpha_h(X)),Xg)
\end{equation*}
\end{comment}
By construction, $\kappa(1)=\PE_A(\gamma)$, $\kappa'(1)=\PE_{A'}(\gamma)$, and $h(1)=h_{g,\varphi}(\gamma)$. Evaluating at $t=1$, \cref{eq:triv:gaugetrans:b} implies  \cref{eq:triv:gaugetrans:a}. We claim that
\begin{equation}
\label{eq:trivbun:15}
\tilde h=\alpha(g'^{-1}\kappa^{-1}  g(\gamma)^{-1},h)\text{.}
\end{equation} 
Given that claim, we have coincidence of \cref{eq:trivbun:13,eq:trivbun:14}, established by the two equalities
\begin{align*}
(\gamma,g(\gamma)\kappa g't(\tilde h)^{-1})&=(\gamma,\kappa' g(x)g')
\\
(\alpha(g(y)\kappa(1)g',\tilde h(1)),g(y)\kappa(1)g't(\tilde h(1))^{-1})&=(h_{g,\varphi}(\gamma),\PE_{A'}(\gamma) g(x)g')\text{,}
\end{align*}
which can easily be deduced from \cref{eq:trivbun:15}. 
\begin{comment}
Indeed,
we have\begin{equation*}
t(\tilde h)=g'^{-1}\kappa^{-1}  g(\gamma)^{-1}t(h)g(\gamma)\kappa g'
\end{equation*}
and then 
\begin{equation*}
g(\gamma)\kappa g't(\tilde h)^{-1}=t(h)^{-1}g(\gamma)\kappa g'\eqcref{eq:triv:gaugetrans:b} \kappa' g(x)g'\text{.}
\end{equation*}
For the second equality, we first note that
\begin{equation*}
g(y)\kappa(1)g't(\tilde h(1))^{-1}
=t(h(1))^{-1}g(y)\kappa(1) g' \eqcref{eq:triv:gaugetrans:b} \kappa'(1)g(x)g'=\PE_{A'}(\gamma) g(x)g'
\end{equation*}
and compute then
\begin{equation*}
\alpha(g(y)\kappa(1)g',\tilde h(1))=\alpha(g(y)\kappa(1)g',\alpha(g'^{-1}\kappa(1)^{-1}  g(y)^{-1},h(1)))=h(1)=h_{g,\varphi}(\gamma)\text{.}
\end{equation*}
\end{comment}
It remains to prove the claim, \cref{eq:trivbun:15}. For this purpose we prove that $\tilde h$ as defined in \cref{eq:trivbun:15} solves the initial value problem \cref{eq:trivbun:17}.
We have $\tilde  h(0)=1$ and obtain
\begin{equation*}
\partial_t \tilde h(t) = (\alpha_{h(t)})_{*}(g'^{-1}\partial_t\kappa(t)^{-1}  g(\gamma(t))^{-1}+g'^{-1}\kappa(t)^{-1}  \partial_tg(\gamma(t))^{-1})+(\alpha_{g'^{-1}\kappa(t)^{-1}  g(\gamma(t))^{-1}})_{*}(\partial_th(t))\text{.}
\end{equation*}
Taking derivative in the inverse of \cref{eq:triv:gaugetrans:b} gives
\begin{multline}
\label{eq:trivbun:18}
g'^{-1}g(x)^{-1}\partial_t\kappa'(t)^{-1}t(h(t))^{-1}=g'^{-1}\partial_t\kappa(t)^{-1}g(\gamma(t))^{-1}+g'^{-1}\kappa(t)^{-1}\partial_tg(\gamma(t))^{-1}
\\+g'^{-1}\kappa(t)^{-1}g(\gamma(t))^{-1}t_{*}(\partial_t h(t)h(t)^{-1})
\end{multline}
Using \cref{eq:trivbun:18,eq:trivbun:18} the differential equation of  \cref{eq:trivbun:17} follows.
\end{proof}

We continue our discussion of  parallel transport in the trivial principal $\Gamma$-bundle $\inf I_{A,B}$ with the parallel transport along bigons,  now assuming that $(A,B)$ is fake-flat. 

\begin{proposition}
\label{eq:T2:eta}
Let $(A,B)$ be a fake-flat $\Gamma$-connection on $M$. For a bigon $\Sigma: \gamma \Rightarrow \gamma'$ we let $\varphi_{\Sigma}:F_{\gamma}\Rightarrow F_{\gamma'}$ denote the parallel transport in the associated trivial principal $\Gamma$-2-bundle $\inf I_{A,B}$. We set $g := \PE_{A}(\gamma)$, $g' := \PE_{A}(\gamma')$ and $h :=\SE_{A,B}(\Sigma)$. Then, the  diagram
\begin{equation*}
\alxydim{@=\xyst}{i_{g} \ar@{=>}[d]_{i_{h,g}} \ar@{=>}[r]^-{\eta_{\gamma}} & F_{\gamma} \ar@{=>}[d]^{\varphi_{\Sigma}}  \\  i_{g'} \ar@{=>}[r]_-{\eta_{\gamma'}} & F_{\gamma'}}
\end{equation*}
is commutative,
where $i_g$ and $i_{g'}$ are the  functors of \cref{rem:i:morph} and $i_{g,h}$ is the natural transformation of \cref{rem:i:twomorph}. 
\end{proposition}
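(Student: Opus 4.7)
The plan is to translate the commutativity claim into an equality of smooth maps $G \to F_{\gamma'}$ via the correspondence of \cref{rem:indtrans}, applied to the smooth functors $i_g, i_{g'}$ and the anafunctors $F_\gamma, F_{\gamma'}$. Denote the argument of these maps by $k\in G$ to avoid collision with the notation $g'=\PE_A(\gamma')$. Going clockwise: first, by $\tilde\eta_{\gamma'}(k) = \id_{(y,g'k)}\ast(\gamma',\kappa' k)\ast \id_{(x,k)}$, where $\kappa'$ solves $\dot\kappa' = -A(\dot\gamma')\kappa'$ with $\kappa'(0)=1$; second, $i_{(h,g)}(k)=(h,gk)\in\mor\Gamma$ with inverse $(h^{-1},g'k)$ since $t(h)=g'g^{-1}$. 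Using the right $\inf P_y$-action of \cref{lem:action}, the clockwise underlying map is therefore
\begin{equation*}
k\;\longmapsto\;(y,(h^{-1},g'k))\;\ast\;(\gamma',\kappa' k)\;\ast\;\id_{(x,k)}\;\in\;F_{\gamma'}.
\end{equation*}

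For the counter-clockwise direction, I construct an explicit horizontal lift of $\Sigma$ with source $\tilde\eta_\gamma(k)$ and read off its target via \cref{eq:def:targethorlift}. Take $n=1$, $t=(0,1)$, and set
\begin{equation*}
\Phi(s,t):=(\Sigma(s,t),\kappa_s(t)k), \quad \rho_0(s):=\id_{(x,k)},\quad \rho_1(s):=\id_{(y,gk)},\quad g_1(s):=k^{-1}\kappa_s(1)^{-1}gk,
\end{equation*}
where $\kappa_s$ solves $\partial_t\kappa_s = -A(\partial_t\Sigma(s,\cdot))\kappa_s$ with $\kappa_s(0)=1$. I verify \cref{def:horliftbigon}: horizontality of $\Phi$ with respect to $\fa\Omega_{A,B}$ follows from \cref{eq:conform:a} and the ODE for $\kappa_s$, precisely as in the paragraph preceding \cref{eq:T1:eta}; horizontality of $\nu_1$ and $\rho_1$ is trivial since both are constant; horizontality of $\gamma_1'(t)=(\gamma'(t),\kappa_1(t)k)$ is the $s=1$ version of the horizontality of $\Phi$; the boundary conditions  in \cref{lem:bigoncon:b*} reduce at $s=0$ to $g_1(0)=k^{-1}g^{-1}gk=1$; and the source condition \cref{lem:bigoncon:c*} reproduces $\tilde\eta_\gamma(k)$. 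Applying \cref{eq:def:targethorlift} and simplifying the semidirect product product yields
\begin{equation*}
\xi'\;=\;(y,(\alpha(gk,h_1^{-1}),g'k))\;\ast\;(\gamma',\kappa_1k)\;\ast\;\id_{(x,k)},
\end{equation*}
where $h_1=\SE_\Omega(\Sigma_1)$ for a bigon-parameterization $\Sigma_1$ of $\Phi$, and where $\kappa_1=\kappa'$ because both satisfy the same ODE along $\gamma'$.

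Comparing the two expressions, equality reduces to the single identity $\alpha(gk,h_1^{-1})=h^{-1}$, equivalently $h_1=\alpha(k^{-1}g^{-1},h)$. This is the content of the main computational step and the only non-routine point. The lifted bigon $\Phi$ differs from the constant lift $(\Sigma,1)$ only by the right $G$-action by the horizontal holonomy $(s,t)\mapsto\kappa_s(t)k$; accordingly, the surface-ordered exponential transforms by the adjoint-type action $\alpha$ of the endpoint value $(gk)^{-1}$. This is exactly the statement of \cref{lem:soe2bun} when the $G$-factor is constant, and extends to the present situation because $(s,t)\mapsto\kappa_s(t)k$ is itself horizontal, so that its variation is compensated by the Maurer--Cartan term in $\fa\Omega_{A,B}$ (and hence does not contribute to the $\fc\Omega_{A,B}$-integral computing $\SE_\Omega$). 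The main obstacle is this last claim: pinning down that the horizontal lift $\kappa_s(t)k$ does not contribute additional terms to $\SE_\Omega(\Sigma_1)$ beyond the constant-$G$ formula $\alpha((gk)^{-1},\SE_{A,B}(\Sigma))=\alpha(k^{-1}g^{-1},h)$. Granting this, the two underlying maps agree pointwise in $k$, which by \cref{rem:indtrans} completes the proof.
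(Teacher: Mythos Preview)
Your approach is correct and essentially identical to the paper's. The one place where you hesitate is not actually a gap: you write that \cref{lem:soe2bun} only covers the case of a \emph{constant} $G$-factor and then try to argue an extension by hand. But \cref{lem:soe2bun} is stated for an arbitrary bigon $\Theta$ in $G$; the conclusion $\SE_{\Omega}(R(\Sigma,\Theta))=\alpha(\gamma(1)^{-1},\SE_{\Omega}(\Sigma))$ depends only on the endpoint $\gamma(1)$ of the source path of $\Theta$, with no constancy assumption. The paper applies it directly to $\Sigma_1=R((\Sigma,1),\Xi)$, where $\Xi$ is a bigon-parameterization of $(s,t)\mapsto \kappa_s(t)k$, and then uses the section $s:x\mapsto (x,1)$ together with \cref{lem:SE:e} to identify $\SE_{\Omega}(\Sigma,1)=\SE_{A,B}(\Sigma)=h$. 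Your informal justification via ``the horizontal lift does not contribute to the $\fc\Omega$-integral'' is therefore unnecessary and should be replaced by a straight citation of \cref{lem:soe2bun}.

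One small point you elided: the source path of $\Xi$ ends at $\kappa_1(1)k=g'k$, not $gk$, so \cref{lem:soe2bun} actually yields $h_1=\alpha((g'k)^{-1},h)$. The paper then notes (using \cref{lem:SE:b}, i.e.\ $t(h)g=g'$, together with the Peiffer identity $\alpha(t(h)^{-1},h)=h$) that $\alpha(g'^{-1},h)=\alpha(g^{-1},h)$, which gives the form $h_1=\alpha((gk)^{-1},h)$ you need. This extra line is routine but should be made explicit.
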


\begin{proof}
The diagram is an equality between transformations from a functor to an anafunctor. In terms of the corresponding smooth maps of \cref{rem:indtrans}  the commutativity means $\varphi_{\Sigma}(\tilde\eta_{\gamma}(\tilde g)) = \tilde\eta_{\gamma'}(\tilde g) \circ i_{h,g}(\tilde g)$.
We recall that in order to compute $\tilde\eta_{\gamma}$ and $\tilde\eta_{\gamma'}$ we have the paths $\kappa$ and $\kappa'$. In fact, since $\gamma$ and $\gamma'$ are homotopic via the bigon $\Sigma$, there is a family $\kappa_s$ of paths such that $\kappa_0=\kappa$ and $\kappa_1=\kappa'$.

We construct a horizontal lift of $\Sigma$ with source $\tilde\eta_{\gamma}(\tilde g)=\id_{(y,g\tilde g)} \ast (\gamma,\kappa\tilde  g)\ast \id_{(x,\tilde g)}$, in the sense of \cref{def:horliftbigon}. It is given by $\Phi_1:[0,1]^2 \to M \times G$ defined by $\Phi_1(s,t):=(\Sigma(s,t),\kappa_s(t)\tilde   g)$, $\rho_0=\id_{(x,\tilde g)}$, $\rho_1=\id_{(y,g\tilde g)}$ and $g_1(s):=\tilde g^{-1}\kappa_s(1)^{-1}g\tilde g$, and all other data trivial.
\begin{comment}
\begin{enumerate}[(a)]

\item
is obviously true

\item
the only non-trivial point is $s(\rho_{1}(s))  =(y,g\tilde g)=(y,\kappa_s(1)\tilde g g_1(s))=R(\Phi_1(s,1),g_{1}(s))$.

\item
we have $\nu_1(s)=\Phi_{1}(s,0)=(x,\tilde   g)$; this is horizontal, and 
\begin{equation*}
  \gamma_1'(t) = \Phi_1(1,t)=(\gamma'(t),\kappa'(t)\tilde   g)
\end{equation*}
this is horizontal by definition of $\kappa'$.

\item
is clear.

\end{enumerate}

\end{comment}
Its target is
$\varphi_{\Sigma}(\tilde\eta_{\gamma}(\tilde g))=\rho_1'\ast (\gamma',\kappa' \tilde g)\ast \id_{(x,\tilde g)}$
where $\rho_1'$ has to be determined.
We claim that $\rho_1'=(h,g\tilde g)^{-1}$; given this claim we have
\begin{equation*}
\varphi_{\Sigma}(\tilde\eta_{\gamma}(\tilde g))=\rho_1'\ast (\gamma',\kappa' \tilde g)\ast \id_{(x,\tilde g)} =(\id_{(y,g'\tilde g)} \ast (\gamma',\kappa' \tilde g)\ast \id_{(x,\tilde g)}) \circ (h,g\tilde g)=\tilde\eta_{\gamma'}(\tilde g)\circ i_{h,g}(\tilde g)\text{;}
\end{equation*}
this proves the commutativity of the diagram.

In order to prove the claim, we recall the definition of the target in \cref{eq:def:targethorlift}, resulting in $\rho_1':= R(\id_{(y,g\tilde g)},(h_1^{-1},g_1(1)^{-1}))$. In order to compute $h_1$ we have to choose a bigon-parameterization  $\Sigma_1$  of $\Phi_1$. 
\begin{comment}
We have
\begin{equation*}
\Sigma_1 : \mu_1\circ\gamma_1\Rightarrow \gamma_1'\circ \nu_1\text{,}
\end{equation*}
where $\mu_1(s) := \Phi_{1}(s,1)= (y,\kappa_s(1)\tilde   g)$. 
We have
\begin{equation*}
\PE_{\fa\Omega}(R(\mu_1,g_1))=\PE_{\fa\Omega}(y,g\tilde g)=1\text{;}
\end{equation*}
this is consistent with \cref{lem:thi}.
Going further through that lemma, we have
\begin{equation*}
1=\PE_{\fa\Omega}(R(\mu_1,g_1))=\PE_{R^{*}\fa\Omega}(\mu_1,g_1)=g_1(1)^{-1} \PE_A(\mu_1) g_1(0)=g_1(1)^{-1} \PE_A(\mu_1)\text{.}
\end{equation*}
\end{comment}
It will suffice to chose a bigon-parameterization $\Xi$ of  $(s,t) \mapsto \kappa_s(t)\tilde g$, so that $\Xi:g \tilde g g_1^{-1} \ast \kappa\tilde g \Rightarrow \kappa'\tilde g \ast \id_{\tilde g}$ is a bigon in $G$. Then we may choose $\Sigma_1:= (\Sigma,\Xi)=R((\Sigma,1),\Xi)$. The canonical section $s:x \mapsto (x,1)$ satisfies $s^{*}\fa\Omega=A$ and $s^{*}\fc\Omega=-B$; thus \cref{lem:SE:e} gives
\begin{equation}
\label{eq:se:section}
\SE_{\Omega}(\Sigma,1) =\SE_{\Omega}(s(\Sigma))=\SE_{A,B}(\Sigma)\text{.}
\end{equation}
Now we obtain
\begin{equation*}
h_1:=\SE_{\Omega}(\Sigma_1)\eqcref{lem:soe2bun}\alpha((g'\tilde g)^{-1},\SE_{\Omega}(\Sigma,1))\eqcref{eq:se:section}\alpha(\tilde g^{-1}g'^{-1},\SE_{A,B}(\Sigma))\eqcref{lem:SE:b}\alpha(\tilde g^{-1}g^{-1},h)\text{.}
\end{equation*}
\begin{comment}
The last step uses \cref{lem:SE:b}: $g'^{-1}t(h)=g^{-1}$, so that $\alpha(g^{-1},h)=\alpha(g'^{-1},h)$.
\end{comment}
\begin{comment}
This is consistent with
\begin{equation*}
g_1(1)^{-1}=\tilde g^{-1}g^{-1}g'\tilde g=\tilde g^{-1}g^{-1}t(h)g\tilde g=t(h_1)\text{.}
\end{equation*}
\end{comment}
Now a straightforward computation shows the claim. 
\begin{comment}
We can conclude with the computation
\begin{align*}
\rho_1'&=R(\id_{(y,g\tilde g)},(h_1^{-1},g_1(1)^{-1}))
\\&=(y,(1,g\tilde g)\cdot(h_1^{-1},g_1(1)^{-1}) )
\\&=(y,(\alpha(g\tilde g,h_1^{-1}),g\tilde gg_1(1)^{-1}) )
\\&=(\alpha(g\tilde g,h_1),g\tilde g)^{-1} 
\\&=(h,g\tilde g)^{-1} 
\end{align*}
\end{comment}
\end{proof}

\subsection{Ordinary principal bundles}

\label{ex:gbunred:paths}
Consider an ordinary principal $G$-bundle $P$ over $M$ with connection $\omega\in \Omega^1(P,\mathfrak{g})$. As discussed in \refactiontwobundle\ the action groupoid $\act PH$ for the right $H$-action on $P$ induced via $t: H \to G$ is a principal $\Gamma$-2-bundle over $M$, and it is equipped with a  connection $\Omega$ induced by $\omega$. For a point $x\in M$ we have $(\act PH)_x = \act{P_x}H$. For a path $\gamma:x \to y$ in $M$, we have the ordinary parallel transport map $\tau_{\gamma}: P_x \to P_y$. It is $G$-equivariant, hence $H$-equivariant, and thus induces a smooth functor 
\begin{equation*}
\phi_{\gamma}: \act {P_x}H \to \act {P_y}H
\end{equation*}
between action groupoids. 
\begin{comment}
We have $\phi_{\gamma}(p):=\tau_{\gamma}(p)$ and $\phi_{\gamma}(p,h) := (\tau_{\gamma}(p),h)$. 
\end{comment}
It is straightforward to check that it is $\Gamma$-equivariant.
\begin{comment}
Indeed, on the level of objects we have
\begin{equation*}
\phi_{\gamma}(R((p,g))=\phi_{\gamma}(pg)=\tau_{\gamma}(pg)=\tau_{\gamma}(p)g=R(\phi_{\gamma}(p),g)
\end{equation*}
and on the level of morphisms we have
\begin{multline*}
\phi_{\gamma}(R((p,h),(h',g)))=\phi_{\gamma}(pg,\alpha(g^{-1},hh'))\\=(\tau_{\gamma}(p)g,\alpha(g^{-1},hh'))=R((\tau_{\gamma}(p),h),(h',g))=R(\phi_{\gamma}(p,h),(h',g))\text{.}
\end{multline*}
\end{comment}
We claim that there exists a canonical $\Gamma$-equivariant transformation
\begin{equation*}
f_{\gamma}: J_{\phi_{\gamma}} \Rightarrow F_{\gamma}
\end{equation*}
between the anafunctor induced by $\phi_{\gamma}$ and $F_{\gamma}$. We construct $f_{\gamma}$ using \cref{rem:indtrans}; the underlying smooth map $\tilde f_{\gamma}: P_x \to F_{\gamma}$ is defined by $\tilde f_{\gamma}(p):= \id_{\phi_{\gamma}(p)} \ast \tilde\gamma_p \ast \id_p$, where $\tilde\gamma_p$ is the unique horizontal lift of $\gamma$ with initial point $p$.

\begin{lemma}
The map $\tilde f_{\gamma}$ satisfies \cref{T1*,T2*,T3*}.
\end{lemma}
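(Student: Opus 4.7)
The three conditions to be verified are those of \cref{rem:indtrans}. Condition \cref{T1*} is immediate from the explicit formula $\tilde f_\gamma(p) = \id_{\phi_\gamma(p)} \ast \tilde\gamma_p \ast \id_p$: the rightmost identity $\id_p$ has source $p$, so $\alpha_l(\tilde f_\gamma(p)) = p$, and the leftmost identity $\id_{\phi_\gamma(p)}$ has target $\phi_\gamma(p)$, so $\alpha_r(\tilde f_\gamma(p)) = \phi_\gamma(p)$.

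For condition \cref{T3*} I will use the $\mor{\Gamma}$-action formula of \cref{lem:gammaaction} specialized to $(1,g) \in \mor{\Gamma}$, which gives $\tilde f_\gamma(p) \cdot \id_g = R(\id_{\phi_\gamma(p)},g) \ast R(\tilde\gamma_p,g) \ast R(\id_p,(1,g))$. Applying the two classical compatibilities of ordinary parallel transport with the $G$-action, namely $\tau_\gamma(R(p,g)) = R(\tau_\gamma(p),g)$ and $\tilde\gamma_{R(p,g)} = R(\tilde\gamma_p,g)$, together with functoriality of identities ($R(\id_p,(1,g)) = \id_{R(p,g)}$), this expression collapses to $\id_{\phi_\gamma(R(p,g))} \ast \tilde\gamma_{R(p,g)} \ast \id_{R(p,g)} = \tilde f_\gamma(R(p,g))$, as required.

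Condition \cref{T2*} is the main point. Using \cref{lem:laction,lem:action}, the left-hand side reduces to $\alpha \circ \tilde f_\gamma(p) \circ \beta = \beta^{-1} \ast \tilde\gamma_p \ast \alpha^{-1}$, while the right-hand side unfolds to $\tilde f_\gamma(t(\alpha)) \circ \phi_\gamma(\alpha) \circ \beta = (\beta^{-1} \circ \phi_\gamma(\alpha)^{-1}) \ast \tilde\gamma_{t(\alpha)} \ast \id_{t(\alpha)}$. To see these are equivalent in $F_\gamma$, I plan to apply $\sim_1$ via a single horizontal path $\tilde\rho:[0,1] \to \mor{\act PH}$ satisfying $s(\tilde\rho) = \tilde\gamma_p$, $t(\tilde\rho) = \tilde\gamma_{t(\alpha)}$, $\tilde\rho(0) = \alpha$, and $\tilde\rho(1) = \phi_\gamma(\alpha)$; indeed, given such a $\tilde\rho$, the relations $\rho_0' = \tilde\rho(0) \circ \alpha^{-1} = \id_{t(\alpha)}$ and $\rho_1' = \beta^{-1} \circ \tilde\rho(1)^{-1} = \beta^{-1} \circ \phi_\gamma(\alpha)^{-1}$ transform the left-hand representative into the right-hand one.

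Writing $\alpha = (p,h_\alpha) \in P \times H = \mor{\act PH}$, so that $\phi_\gamma(\alpha) = (\tau_\gamma(p),h_\alpha)$, the natural candidate is the constant-$H$ path $\tilde\rho(t) := (\tilde\gamma_p(t), h_\alpha)$: its source is $\tilde\gamma_p$, its target is $t \mapsto \tilde\gamma_p(t) \cdot t(h_\alpha) = \tilde\gamma_{t(\alpha)}(t)$ by $G$-equivariance of horizontal lifts, and its endpoints match $\alpha$ and $\phi_\gamma(\alpha)$ by construction. The principal obstacle is proving horizontality of $\tilde\rho$ with respect to $\fb\Omega$ on $\act PH$; for this I will invoke the explicit description of the induced connection from \refactiontwobundle, noting that on a curve with constant $H$-component $\fb\Omega(\dot{\tilde\rho})$ reduces (after the standard crossed-module identifications) to a $G$-equivariant modification of $\omega(\dot{\tilde\gamma}_p)$, which vanishes by horizontality of the ordinary lift $\tilde\gamma_p$. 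Once horizontality is established, $\sim_1$ via $\tilde\rho$ directly yields the required equivalence, completing \cref{T2*}.
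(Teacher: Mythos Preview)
Your proof is correct and follows essentially the same approach as the paper. Both arguments use the same horizontal path $\tilde\rho(t) = (\tilde\gamma_p(t), h_\alpha)$ for \cref{T2*}, and the paper's formula $\fb\Omega = (\tilde\alpha_{\pr_H})_{*}(\pr_P^{*}\omega)+\pr_H^{*}\theta$ from \refactiontwobundle\ makes your horizontality claim precise: with constant $H$-component the $\theta$-term vanishes, and horizontality of $\tilde\gamma_p$ kills the $\omega$-term.
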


\begin{proof}
\cref{T1*} is obvious. For \cref{T2*} we compute for $\alpha=(p,h)\in P \times H = \mor{\act {P_x}H}$:
\begin{align*}
\alpha\circ \tilde f_{\gamma}(p)\circ \beta &=  \beta^{-1}\ast \tilde\gamma_p \ast  \alpha^{-1}
\\&\sim (\beta^{-1}\circ(\tilde\gamma_p(1),h)^{-1} )\ast \tilde\gamma_{t(\alpha)}\ast (\alpha\circ \alpha^{-1}) 
\\&= (\beta^{-1}\circ \phi_{\gamma}(p,h)^{-1} ) \ast \tilde\gamma_{t(\alpha)} \ast \id_{t(\alpha)}
\\&= \tilde f_{\gamma}(t(\alpha))\circ \phi_{\gamma}(\alpha)\circ \beta
\end{align*}
Here we have applied the equivalence relation in $F_{\gamma}$ to the path $\rho(t):=(\tilde\gamma_p(t),h)\in P \times H$, which is horizontal: following \refactiontwobundle\ we have $\fb\Omega = (\tilde\alpha_{\pr_H})_{*}(\pr_P^{*}\omega)+\pr_H^{*}\theta$ and hence $\fb\Omega(\dot\rho(t))=0$. 
\begin{comment}
Further, $s(\rho)=\tilde\gamma_p$  and $t(\rho)=\tilde\gamma_p t(h)=\tilde\gamma_{t(\alpha)}$, because $\tilde\gamma_p t(h)$ is horizontal and has initial point $\tilde\gamma_p(0)t(h)=pt(h)=t(\alpha)$. \end{comment} 
Finally, \cref{T3*} is a straightforward calculation. 
\begin{comment}
Indeed,
\begin{equation*}
\tilde f_{\gamma}(pg)= \id_{\phi_{\gamma}(pg)} \ast \tilde\gamma_{pg} \ast \id_{pg}= (\id_{\phi_{\gamma}(p)} \ast \tilde\gamma_p \ast \id_p)\cdot \id_g=\tilde f_{\gamma}(p)\cdot \id_g
\end{equation*}
\end{comment}
\end{proof}

Summarizing, in the principal $\Gamma$-2-bundle $\act PH$, parallel transport along a path $\gamma$ is given, up to canonical isomorphism of $\Gamma$-equivariant anafunctors, by the smooth functor $\phi_{\gamma}$. It is obvious that this identification is compatible with pullbacks, bundle morphisms, and  path composition. 

\begin{proposition}
\label{ex:gbunred:bigons}
Let $P$ be a principal $G$-bundle with flat connection $\omega$. Let $\Sigma:\gamma_0 \Rightarrow \gamma_1$ be a bigon. The diagram 
\begin{equation*}
\alxydim{@=\xyst}{J_{\phi_{\gamma_0}} \ar@{=}[r] \ar@{=>}[d]_{f_{\gamma_0}} & J_{\phi_{\gamma_1}} \ar@{=>}[d]^{f_{\gamma_1}} \\F_{\gamma_0} \ar@{=>}[r]_{\varphi_{\Sigma}} & F_{\gamma_1} }
\end{equation*}
is commutative, where $\varphi_{\Sigma}:F_{\gamma_0}\Rightarrow F_{\gamma_1}$ denotes the parallel transport in the principal $\Gamma$-2-bundle $\act PH$.
In particular, $\varphi_{\Sigma}$ only depends on $\gamma_0$ and $\gamma_1$ but not on the bigon $\Sigma$. 
\end{proposition}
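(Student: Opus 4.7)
The commutativity reduces, via \cref{rem:indtrans}, to the identity $\varphi_\Sigma(\tilde f_{\gamma_0}(p))=\tilde f_{\gamma_1}(p)$ for every $p\in P_x$, where $\tilde f_\gamma(p)=\id_{\phi_\gamma(p)}\ast\tilde\gamma_p\ast\id_p$ and $\tilde\gamma_p$ is the $\omega$-horizontal lift of $\gamma$ starting at $p$. Flatness of $\omega$ makes ordinary parallel transport homotopy-invariant, so $\tau_{\gamma_0}=\tau_{\gamma_1}$ and hence $\phi_{\gamma_0}=\phi_{\gamma_1}=:\phi$, which renders the equality on the top row of the diagram tautological.

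My plan is to exhibit an explicit horizontal lift of $\Sigma$ with source $\tilde f_{\gamma_0}(p)$ and read off its target. Take $n=1$, $t=(0,1)$, and define $\Phi_1\colon[0,1]^2\to P$ by letting $t\mapsto\Phi_1(s,t)$ be the $\omega$-horizontal lift of the path $\Sigma(s,-)$ starting at $p$; set $\rho_0(s):=\id_p$, $\rho_1(s):=\id_{\phi(p)}$ and $g_1(s):=1$. Flatness of $\omega$ both makes $\Phi_1$ smooth and forces $\Phi_1(s,1)=\phi(p)$ for all $s$. All conditions of \cref{def:horliftbigon} are immediate: horizontality in \cref{lem:bigoncon:d*} holds because $\gamma_1'=\tilde\gamma_{1,p}$ is $\omega$-horizontal, $\nu_1\equiv p$ is constant, and the $\rho_i$ are constant. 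The source of this lift is $\tilde f_{\gamma_0}(p)$, and by \cref{eq:def:targethorlift} its target equals $R(\id_{\phi(p)},(h_1^{-1},1))\ast\tilde\gamma_{1,p}\ast\id_p$ with $h_1=\SE_\Omega(\Sigma_1)$ for a bigon-parameterization $\Sigma_1$ of $\Phi_1$. The final clause of the statement, that $\varphi_\Sigma$ depends only on $\gamma_0$ and $\gamma_1$, will follow automatically from this description, since $\tilde f_{\gamma_1}(p)$ involves no reference to $\Sigma$.

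The sole remaining obstacle is the identity $h_1=1$. From \cref{lem:thi} only $t(h_1)=g_1(1)^{-1}=1$ is immediate, so $h_1\in\ker t$, and the sharper statement requires more input. To establish it, I would invoke the explicit description of the connection $\Omega$ from \refactiontwobundle: its 2-form component $\fc\Omega$ is built from the curvature of $\omega$ and therefore vanishes under the flatness hypothesis. Because every boundary path of $\Sigma_1$ is $\omega$-horizontal, the path-ordered exponentials $\PE_{\fa\Omega}$ along them are trivial, and the formulas for $\SE_\Omega$ collected in the appendix collapse to $\SE_\Omega(\Sigma_1)=1$. Alternatively, one may proceed locally: pick a trivializing section of $P$ around $x$, apply \cref{lem:natbundlemorph} to reduce the problem to parallel transport in the trivial principal $\Gamma$-2-bundle $\inf I_{A,B}$ for the $\Gamma$-connection $(A,B)$ obtained from $\omega$ in the chosen gauge, and observe that $B=0$ while $A$ is flat, so \cref{eq:T2:eta} together with the vanishing of $\SE_{A,B}(\Sigma)$ delivers $\varphi_\Sigma=\id$ in the corresponding charts.
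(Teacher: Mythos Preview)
Your proposal is correct and follows essentially the same route as the paper: construct the obvious horizontal lift via $\omega$-horizontal lifts of the slices $\Sigma(s,-)$, use flatness to pin the right endpoint, and conclude $h_1=1$ from $\fc\Omega=0$. The paper obtains this last step in one stroke by citing \cref{lem:soetriv}, which needs only the vanishing of the 2-form component and not the horizontality of the boundary paths you invoke; your alternative local-trivialization argument is unnecessary.
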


\begin{proof}
Since $\omega$ is flat, the induced connection $\Omega$ on $\act PH$ is fake-flat. Further, since the parallel transport of a flat connection only depends on the homotopy class of the path, we have  $\tau_{\gamma_0}=\tau_{\gamma_1}$, thus $\phi_{\gamma_0}=\phi_{\gamma_1}$, and in turn $J_{\phi_{\gamma_0}}=J_{\phi_{\gamma_1}}$.  In order to prove commutativity, we specify a horizontal lift of $\Sigma$ in the sense of \cref{def:horliftbigon}, with source $\tilde f_{\gamma_0}(p)$ for some $p\in P_x$. Let $\gamma_s:[0,1] \to M$ be defined by $\gamma_s(t):=\Sigma(s,t)$, and let $\tilde\gamma_{s,p}$ be the unique horizontal lift into $P$ of $\gamma_s$ with initial point $p$. Let $\Phi(s,t) := \tilde\gamma_{s,p}(t)$. Because $\omega$ is flat, we have $\Phi(s,1)=q$ for some constant point $q\in P_y$. Taking all other data trivial, $\Phi$ is indeed a horizontal lift of $\Sigma$ with source  $\tilde f_{\gamma_0}(p)$. Since $\fc\Omega=0$, we have  $\SE_{\Omega}(\Phi)=1$ by \cref{lem:soetriv}; hence, the target of this horizontal lift is $\tilde\gamma_{1,p}=\tilde f_{\gamma_1}(p)$. \end{proof}

\section{The parallel transport 2-functor}

\label{sec:pt2functor}

In this section we prove the main result of this article, namely that the parallel transport constructions of \cref{sec:ptpaths,sec:ptbigons} fit in the axiomatic framework of  transport 2-functors. This framework is formulated for \emph{thin homotopy classes} of paths and bigons. In \cref{sec:thinhomo} we provide a way to push our constructions into the setting of thin homotopy classes. In \cref{sec:orga} we show that the various properties we have proved in \cref{sec:ptpaths,sec:ptbigons} show that parallel transport is a 2-functor, and in \cref{sec:main} we show that this 2-functor is a \emph{transport} 2-functor.

\subsection{Thin homotopy invariance}

\label{sec:thinhomo}

We study the dependence of the parallel transports along paths and bigons under thin homotopies, i.e. smooth homotopies with non-maximal rank. Here, by rank of a smooth map we mean the supremum of the rank of its differential over all points. All kinds of reparameterizations are special cases of thin homotopies.

Two bigons $\Sigma,\Sigma':\gamma \Rightarrow \gamma'$ between paths $\gamma,\gamma':x \to y$ are called \emph{homotopic}, if there exists a smooth homotopy $h: [0,1]^3 \to M$ (i.e., $h(0,s,t)=\Sigma(s,t)$ and $h(1,s,t)=\Sigma'(s,t)$) that fixes all boundaries, i.e. $h(r,s,0)=x$, $\Sigma(r,s,1)=y$, $h(r,0,t)=\gamma(t)$ and $h(r,1,t)=\gamma'(t)$ for all $r,s,t\in[0,1]$. Two bigons are called \emph{thin homotopic}, if they are homotopic by a homotopy of rank \emph{less} than 3. 

\begin{proposition}
\label{lem:varphisigma:d}
\label{lem:varphisigma:d:a}
Let $\inf P$ be a principal $\Gamma$-2-bundle with fake-flat connection.
Then, the parallel transport along bigons depends only on the thin homotopy class of the bigon, i.e., if  $\Sigma,\Sigma':\gamma \Rightarrow \gamma'$ are thin homotopic bigons, then  $\varphi_{\Sigma}=\varphi_{\Sigma'}$.
\end{proposition}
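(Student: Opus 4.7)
My plan is to first reduce the statement to the case of small bigons connected by a small thin homotopy, then construct a smooth parametric family of horizontal lifts, and finally invoke the thin-homotopy invariance of the surface- and path-ordered exponentials from the appendix.

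For the reduction, I would apply the vertical composition compatibility of \cref{lem:F1} together with subdivisions of $[0,1]^3$ to assume that $\Sigma, \Sigma'$ are small bigons and that the thin homotopy $h:[0,1]^3 \to M$ factors through a single coordinate patch supporting a section $\sigma$ of $\pi$. I would then construct a smooth family $(\Phi_r, \rho_r, g_r)_{r\in[0,1]}$ of horizontal lifts of $\Sigma_r(s,t) := h(r,s,t)$ with common source $\xi \in F_{\gamma}$, by adapting the proof of \cref{lem:existencehorizontallifts} parameter-wise. Starting with $\Phi_i(r,s,t) := \sigma(h(r,s,t))$, which has the same rank as $h$ and hence rank less than $3$, the successive adjustments enforcing horizontality of $\gamma_i'$, $\nu_i$ and $\rho_i$ (each a smoothly parametrized linear ODE in a single direction) preserve the rank bound because the adjustment functions depend only on the same thin base data together with group-valued corrections solving ODEs along thin sources. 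At $r=0$ and $r=1$ the family restricts to horizontal lifts of $\Sigma$ and $\Sigma'$ respectively, each with source $\xi$, because the thin homotopy fixes the boundary paths $\gamma$ and $\gamma'$.

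The target of each horizontal lift in the family is given by \cref{eq:def:targethorlift}, involving $h_i(r) = \SE_{\Omega}(\Sigma_{i,r})$ for bigon-parameterizations $\Sigma_{i,r}$ of $\Phi_{i,r}$, the group-valued functions $g_i(r,\cdot)$, and the paths $\gamma_i'(r,t) = \Phi_i(r,1,t)$. Since the lifted family $(r,s,t) \mapsto \Phi_i(r,s,t)$ has rank less than $3$, the maps $\Sigma_{i,r}$ form a thin homotopy in $\ob{\inf P}$, and the thin-homotopy invariance of $\SE_{\Omega}$ recalled in the appendix from \cite{schreiber5} yields $h_i(0) = h_i(1)$; the analogous invariance of $\PE_{\fa\Omega}$ yields $g_i(0,1) = g_i(1,1)$. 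Finally, the $r$-independent horizontal path $\gamma'|_{[t_{i-1},t_i]}$ admits a unique horizontal lift once an initial point is fixed, so $\gamma_i'(r,t)$ is $r$-constant up to a $G$-shift that is absorbed into the $g_i$ data. Combining these invariants shows that the target element in $F_{\gamma'}$ is $r$-constant, proving $\varphi_{\Sigma}(\xi) = \varphi_{\Sigma'}(\xi)$.

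The main obstacle will be the rank-preservation bookkeeping in the parametric version of \cref{lem:existencehorizontallifts}: one must verify that each of the several successive adjustments there — multiplications by $G$- and $H$-valued functions obtained from linear initial value problems — depends on $(r,s,t)$ only through data of rank at most $2$, so that the composite surface family remains thin. Once this is checked, thin-homotopy invariance of the surface- and path-ordered exponentials delivers the conclusion immediately, without any further direct manipulation of the equivalence relation on $F_{\gamma'}$.
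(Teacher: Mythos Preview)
Your overall strategy matches the paper's: reduce to a small thin homotopy, build a parametric family $(\Phi_i^r,\rho_i^r,g_i^r)$ of horizontal lifts with common source, and compare the targets at $r=0,1$. The gap is in the final comparison.

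You invoke \cref{lem:SE:a} to conclude $h_i(0)=h_i(1)$, but that lemma applies to bigons with \emph{identical} boundary paths. The bigon-parameterizations $\Sigma_{i,0}$ and $\Sigma_{i,1}$ of $\Phi_{i,0}$ and $\Phi_{i,1}$ do not share their boundary edges: the paths $\mu_i^r=\Phi_i^r(-,t_i)$, $\nu_i^r=\Phi_i^r(-,t_{i-1})$ and $\gamma_i'^r=\Phi_i^r(1,-)$ genuinely vary with $r$ (the last one precisely by the $G$-shift $k_i^r$ you mention). The thin 3-parameter family therefore relates $h_i^0$ and $h_i^1$ only up to boundary correction terms: in the paper's notation one gets $h_i^0\,\psi_i'=h_i^1\,\psi_i$, where $\psi_i,\psi_i'$ are surface-ordered exponentials of the bigons swept out by the moving edges. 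These corrections are not trivial, and neither is the analogous discrepancy between $\rho_i^0(1)$ and $\rho_i^1(1)$, which the paper encodes by an element $\eta_i^1\in H$.

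Because of this, the targets at $r=0$ and $r=1$ are different \emph{representatives} in $F_{\gamma'}$, and one must verify the equivalence relation directly---contrary to your last sentence. The paper constructs explicit horizontal paths $\tilde\rho_i(t)=R(\id_{\gamma_i'^0(t)},(\psi_i',1))$ in $\mor{\inf P}$ and checks the compatibility $\rho_i'^1\circ\tilde\rho_i(t_i)=\tilde\rho_{i+1}(t_i)\circ\rho_i'^0$. This check rests on the identity $\psi_{i+1}'=\eta_i^1 h_i^0\psi_i'(h_i^1)^{-1}$, obtained by combining the boundary-corrected relation above with a computation of $\eta_i^1$ via \cref{prop:SE:gauge:b} applied to the $(r,s)$-family $\rho_i^r(s)$. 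So the essential analytic input is not bare thin-homotopy invariance of $\SE_\Omega$ but rather the gauge-transformation formula of \cref{prop:SE:gauge:b}, which tracks exactly how the varying boundary edges contribute; and the conclusion does require manipulating the equivalence relation on $F_{\gamma'}$.
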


\begin{proof}
We first note that every thin homotopy can be split into  finitely many small ones, so that it suffices to prove the claim for a small thin homotopy. By \quot{small} we mean that  there exist $n\in \N$, $t\in T_n$ and sections $\sigma_i:U_i \to \ob{\inf P}$ defined on  open sets $U_i$ such that 
\begin{equation*}
h(\{(r,s,t) \sep t_{i-1}\leq t \leq t_i,0\leq r,s\leq 1\})\subset U_i\text{.}
\end{equation*} 

We think of $\Sigma^{r} :=h(r,-,-)$ as a smooth family of small bigons. We claim  that we can consistently choose a smooth family of horizontal lifts with a common source $\xi\in F_{\gamma}$. This means, there exist $\Phi_i^{r}:[0,1] \times [t_{i-1},t_i] \to \ob{\inf P}$, $\rho_i^r: [0,1] \to \mor{\inf P}$, and $g_i^r:[0,1] \to G$ depending smoothly on $r$ (this means, for instance, that $[0,1] \times [0,1] \times [t_{i-1},t_i] \to\ \ob{\inf P}:(r,s,t) \mapsto \Phi_i^{r}(s,t)$ is smooth), such that $(\Phi_i^{r},\rho_i^{r},g_i^{r})$  is a horizontal lift of $\Sigma^{r}$ with source $\xi$, and $\Phi^{r}_i(s,t)$ has rank less than 3. Additionally, we can require $\Phi_i^{r}(0,t)$ and $\rho_i^{r}(0)$ are independent of $r$, and we require that there exist smooth maps $k_i:[0,1] \to G$, denoted $k_i^{r}$, such that $k_i^0=1$ and $\Phi^{r}_i(1,t)=R(\Phi_i^0(1,t),(k_i^{r})^{-1})$. This claim can be proved by repeating the proof of \cref{lem:existencehorizontallifts} in families.

We remark that $\rho_i^0(1)$ and $\rho_i^r(1)$ satisfy 
\begin{align*}
R(t(\rho_i^0(1)),(k_{i+1}^{r})^{-1})&=R(\Phi _{i+1}^0(1,t_{i-1}),(k_{i+1}^{r})^{-1})=\Phi^{r}_{i+1}(1,t_{i-1})=t(\rho_i^r(1))
\\
R(s(\rho_i^0(1)),g_i^0(1)^{-1}(k_{i}^{r})^{-1}g_i^r(1))&=R(\Phi _{i}^0(1,t_{i}),(k_{i}^{r})^{-1}g_i^r(1))=R(\Phi _{i}^r(1,t_{i}),g_i^r(1))=s(\rho_i^r(1))
\end{align*}
By \refactionfullyfaithful, there exist unique $\eta_i^{r}\in H$, smoothly depending on $r$, with $\eta_i^0=1$ such that 
\begin{equation}
\label{eq:thin:3}
\rho_i^r(1)=R(\rho_i^0(1),(\eta_i^{r},g_i^0(1)^{-1}(k_{i}^{r})^{-1}g_i^r(1)))
\;\text{ and }\;
t(\eta_i^{r})g_i^0(1)^{-1}(k_{i}^{r})^{-1}g_i^r(1)=(k_{i+1}^{r})^{-1}\text{.}
\end{equation} 
Next we perform the following pre-computations (recall the notion of a bigon-parameterization from \cref{rem:bigonpar}):
\begin{enumerate}[(a)]
\item 
Consider a bigon-parameterization $\Theta: \zeta_i \Rightarrow\ \zeta_i'$ in $G$ of $(r,t) \mapsto (k_i^r)^{-1}$ with $\zeta_i = (k_i^{-})^{-1} \ast \id_1$ and $\zeta_i' = \id_1 \ast (k_i^{-})^{-1}$. Then, $\Xi_i := R(\id_{\gamma_i^{\prime0}},\Theta)$ is a bigon-parameterization of $(r,t) \mapsto \Phi^{r}_i(1,t)$, going between  the paths $\kappa_i(r) := R(\gamma_i^{\prime 0}(t_i),(k_i^{r})^{-1})$ and $\kappa_i'(r) := R(\gamma_i^{\prime 0}(t_{i-1}),(k_i^{r})^{-1})$.   By \cref{lem:soe2bun} we have 
\begin{equation*}
\SE_{\Omega}(\Xi_i)=\SE_{\Omega}(R(\id_{\gamma_i^{\prime0}},\Theta))=\alpha(\zeta_i(1)^{-1},\SE_{\Omega}(\id_{\gamma_i^{\prime0}}))=1\text{.}
\end{equation*}

\item
We note that $r \mapsto \Phi^{r}_i(s,t)$ is a homotopy between bigons
\begin{equation*}
(\id_{\gamma_i^{\prime 1}}\circ \Psi_i') \bullet (\Xi_i\circ \id_{\nu_i^0}) \bullet (\id_{\kappa_i}\circ \Sigma_i^{0})
\quand
\Sigma_i^1 \bullet (\Psi_i \circ \id_{\gamma_i})\text{,}
\end{equation*}
both going from $\kappa_i \circ \mu_i^0 \circ \gamma_i$ to $\gamma_i^{\prime 1}\circ \nu_i^1$. Since $\Phi^{r}_i(s,t)$ is thin, the surface-ordered exponentials of both bigons coincide (\cref{lem:SE:a}).
Using \cref{lem:SE:b,lem:SE:c} we get
$\psi_i'\cdot \alpha(k_i^1,h_i^0)  =h_i^1\cdot \psi_i$. 
With \cref{eq:thin:1} we can rewrite this as
\begin{equation}
\label{eq:thin:2}
h_i^0\cdot  \psi_i'=h_i^1\cdot \psi_i
\end{equation}

\item
We consider bigon-parameterizations $\Upsilon_i:\rho_i^-(1) \ast \rho_i^0 \Rightarrow \rho_i^1$ in $\mor{\inf P}$ of $(r,s) \mapsto \rho_i^r(s)$ and $\Theta_i: g_i^-(1) \ast g_i^0 \Rightarrow g_i^1$ in $G$ of $(r,s) \mapsto g_i^r(s)$. Note that  $\Psi_{i+1} :=t(\Upsilon_i):\kappa_{i+1}\circ \mu_{i+1}^0 \Rightarrow \mu_{i+1}^{1}$  is a bigon-parameterization of $(r,s) \mapsto \Phi_{i+1}^{r}(s,t_{i+1})$, and that $\Psi_i ':= R(s(\Upsilon_i),\Theta_i^{-1}): \kappa_i'\circ \nu_i^0 \Rightarrow \nu_i^{1}$ is a bigon-parameterizations of $(r,s) \mapsto \Phi_i^{r}(s,t_{i-1})$. We set $\psi_i' := \SE_{\Omega}(\Psi_i')$ and  $\psi_i := \SE_{\Omega}(\Psi_i)$. 
We compute the quantity $h_{\Omega}$ of the source and target paths of $\Upsilon_i$. Since $\rho_i^{r}$ are horizontal with horizontal target $\nu_{i+1}^{r}$ (and hence also horizontal source), we have by \cref{co:hcalc:b} $h_{\Omega}(\rho_i^r)=1$. Further, we calculate \begin{align*}
h_{\Omega}(\rho_i^-(1))
&\eqcref{eq:thin:3}h_{\Omega}(R(\rho_i^0(1),(\eta_i^{r},g_i^0(1)^{-1}(k_{i}^{r})^{-1}g_i^r(1))))
\\&= h_{\Omega}(R(R(\rho_i^0(1),(\eta_i^{r},1)),(1,g_i^0(1)^{-1}(k_{i}^{r})^{-1}g_i^r(1)))))
\\&\eqcref{co:h:b}\alpha(g_i^1(1)^{-1}k_{i}^{1}g_i^0(1),h_{\Omega}(R(\rho_i^0(1),(\eta_i^{r},1)),1))
\\&\eqcref{co:hcalc:a}\alpha(g_i^1(1)^{-1}k_i^1g_i^0(1),(\eta^1_i)^{-1})
\end{align*}

Now, \cref{prop:SE:gauge:b} gives
\begin{equation}
\label{eq:thin:5}
\psi_i \cdot \alpha(k_i^1g_i^0(1),\eta^1_i)=\alpha(g_i^1(1),\psi'_{i+1})\text{.}
\end{equation}

\item
Since $\nu_i^0$ and $\nu_i^1$ are horizontal, we have from \cref{lem:poeOmega:x}
\begin{equation}
\label{eq:thin:1}
1=t(\psi_i') \PE_{\fa\Omega}(\kappa_i') = t(\psi_i')k_i^1\text{.} 
\end{equation}
\end{enumerate}
Summarizing our pre-calculations, we obtain:
\begin{align}
\nonumber
\eta_i^1h_i^0\psi_i'(h^{1}_i)^{-1}&\eqcref{eq:thin:2}h_i^1\psi_i\psi_i'^{-1}(h_i^0)^{-1}\eta^1_ih_i^0\psi_i'(h_i^1)^{-1}
\\&\eqcref{eq:thin:1}h_i^1\psi_i\alpha(k_i^1,(h_i^0)^{-1}\eta^1_ih_i^0)(h_i^1)^{-1\nonumber}
\\&\eqcref{lem:thi}\alpha(g_i^1(1)^{-1},\psi_i)\alpha(g_i^1(1)^{-1}k_i^1g_i^0(1),\eta^1_i)\nonumber
\\&\eqcref{eq:thin:5}\psi_{i+1}'
\label{eq:thin:4}
\end{align}
In order to show the statement of the proposition, we have to prove that the elements
\begin{equation*}
\rho_n^{\prime 0} \ast \gamma_n^{\prime 0} \ast ... \ast \gamma_1^{\prime 0} \ast \rho_0^{\prime 0} 
\quand
\rho_n^{\prime 1} \ast \gamma_n^{\prime 1} \ast ... \ast \gamma_1^{\prime 1} \ast \rho_0^{\prime 1} 
\end{equation*}
are equivalent, where $\rho_i^{\prime r} = R(\rho_i^{r}(1),((h^{r}_i)^{-1},g_i^{r}(1)^{-1}))$ according to the definition of the target of a horizontal lift. We consider the paths $\tilde\rho_i:[t_{i-1},t_i] \to \mor{\inf P}$ defined by $\tilde\rho_i(t) := R(\id_{\gamma_i'^0(t)},(\psi_i',1))$. They are horizontal by \cref{lem:hormor:a,lem:hormor:h}, and have $s(\tilde\rho_i)=\gamma_i'^0$ and $t(\tilde\rho_i)=\gamma_i^{\prime 1}$ using \cref{eq:thin:1}.
\begin{comment}
Indeed,
\begin{equation*}
t(\tilde\rho_i)=R(\gamma_i^{\prime 0},t(\psi_i'))\eqcref{eq:thin:1}R(\gamma_i^{\prime 0},(k_i^1)^{-1})=\gamma_i^{\prime 1}\text{.}
\end{equation*} 
\end{comment}
It remains to check that the paths $\tilde\rho_i$  convey the required equivalence: \begin{align*}
\rho_{i}'^1\circ \tilde\rho_{i}(t_{i})&\eqcref{eq:thin:3} R(R(\rho_i^0(1),(\eta_i^1,g_i^0(1)^{-1}(k_{i}^{1})^{-1}g_i^1(1))),((h^{1}_i)^{-1},g_i^{1}(1)^{-1}))\circ R(\id_{\gamma_i'^0(t_i)},(\psi_i',1))
\\&=R(\rho_i^0(1),(\eta_i^1\alpha(g_i^0(1)^{-1}(k_{i}^{1})^{-1},(h^{1}_i)^{-1})\alpha(g_i^0(1)^{-1},\psi_i'),g_i^0(1)^{-1}))
\\&\eqcref{eq:thin:1,lem:thi}R(\rho_i^0(1),(\eta_i^1\alpha(t(h_i^0)t(\psi_i'),(h^{1}_i)^{-1})\alpha(t(h_i^0),\psi_i'),g_i^0(1)^{-1}))
\\&= R(\rho_i^0(1),(\eta_i^1h_i^0\psi_i'(h^{1}_i)^{-1}(h_i^0)^{-1},g_i^0(1)^{-1}))
\\&\eqcref{eq:thin:4}R(\rho_i^{0}(1),(\psi_{i+1}'(h^{0}_i)^{-1},g_i^{0}(1)^{-1}))
\\&=\tilde\rho_{i+1}(t_{i})\circ \rho_{i}^{\prime 0}
\\[-3.8em]
\end{align*}
\begin{comment}
In full detail, this is
\begin{align*}
\rho_{i}'^1\circ \tilde\rho_{i}(t_{i})&=  R(\rho_i^{1}(1),((h^{1}_i)^{-1},g_i^{1}(1)^{-1}))\circ R(\id_{\gamma_i'^0(t_i)},(\psi_i',1))
\\&\eqcref{eq:thin:3} R(R(\rho_i^0(1),(\eta_i^1,g_i^0(1)^{-1}(k_{i}^{1})^{-1}g_i^1(1))),((h^{1}_i)^{-1},g_i^{1}(1)^{-1}))\circ R(\id_{\gamma_i'^0(t_i)},(\psi_i',1))
\\&=R(\rho_i^0(1),(\eta_i^1\alpha(g_i^0(1)^{-1}(k_{i}^{1})^{-1},(h^{1}_i)^{-1}),g_i^0(1)^{-1}(k_{i}^{1})^{-1}))\circ R(\id_{R(\gamma_i'^0(t_i),g_i^0(1))},(1,g_i^0(1)^{-1})(\psi_i',1))
\\&=R(\rho_i^0(1),(\eta_i^1\alpha(g_i^0(1)^{-1}(k_{i}^{1})^{-1},(h^{1}_i)^{-1}),g_i^0(1)^{-1}(k_{i}^{1})^{-1}))\circ R(\id_{R(\gamma_i'^0(t_i),g_i^0(1))},(\alpha(g_i^0(1)^{-1},\psi_i'),g_i^0(1)^{-1}))
\\&=R(\rho_i^0(1),(\eta_i^1\alpha(g_i^0(1)^{-1}(k_{i}^{1})^{-1},(h^{1}_i)^{-1})\alpha(g_i^0(1)^{-1},\psi_i'),g_i^0(1)^{-1}))
\\&\eqcref{eq:thin:1,lem:thi}R(\rho_i^0(1),(\eta_i^1\alpha(t(h_i^0)t(\psi_i'),(h^{1}_i)^{-1})\alpha(t(h_i^0),\psi_i'),g_i^0(1)^{-1}))
\\&= R(\rho_i^0(1),(\eta_i^1h_i^0\psi_i'(h^{1}_i)^{-1}(h_i^0)^{-1},g_i^0(1)^{-1}))
\\&\eqcref{eq:thin:4}R(\rho_i^{0}(1),(\psi_{i+1}'(h^{0}_i)^{-1},g_i^{0}(1)^{-1}))
\\&=R(\id_{\gamma_{i+1}'^0(t_i)},(\psi_{i+1}',1))\circ R(\rho_i^{0}(1),((h^{0}_i)^{-1},g_i^{0}(1)^{-1}))
\\&=\tilde\rho_{i+1}(t_{i})\circ \rho_{i}^{\prime 0}
\end{align*}
\end{comment}
\end{proof}

Next we come to the paths, where the situation is more complicated. First of all, two paths $\gamma,\gamma':x \to y$ are called \emph{thin homotopic}, if there exists a bigon $ \Sigma:\gamma \Rightarrow\ \gamma'$ of rank less than two. The complications arise because the anafunctors $F_{\gamma}$ and $F_{\gamma'}$ associated to thin homotopic paths are not equal. The following proposition shows that they are canonically 2-isomorphic, which is the best that we can expect. The 2-isomorphism is the $\Gamma$-equivariant transformation $\varphi_{\Sigma}$ associated to a thin homotopy $\Sigma:\gamma\Rightarrow \gamma'$. The important point is that this does not depend on the choice of the bigon. 

\begin{proposition}
\label{lem:canidthin}
Let $\inf P$ be a principal $\Gamma$-2-bundle with fake-flat connection. Then, the parallel transport along a thin bigon is independent of the bigon, i.e., 
if $\Sigma,\Sigma':\gamma_1 \Rightarrow \gamma_2$ are  bigons of rank less than two, then $\varphi_{\Sigma}=\varphi_{\Sigma'}$. 
\end{proposition}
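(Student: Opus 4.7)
The plan is to reduce the claim to the special case that any thin bigon $\Theta:\gamma\Rightarrow \gamma$ from a path to itself satisfies $\varphi_\Theta=\id_{F_\gamma}$, and then to exhibit a three-dimensional thin homotopy between such a $\Theta$ and the identity bigon $\id_\gamma$, so that \cref{lem:varphisigma:d} applies.

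For the reduction, let $\bar{\Sigma'}:\gamma_2\Rightarrow\gamma_1$ denote the bigon obtained from $\Sigma'$ by reversing the $s$-direction; it is again thin. The vertical composites $\bar{\Sigma'}\bullet\Sigma$ and $\bar{\Sigma'}\bullet\Sigma'$ are bigons $\gamma_1\Rightarrow\gamma_1$, and they are thin as well: at every point in $[0,1]^2$ the rank of a vertical composite agrees with the rank of one of its two halves, so the rank bound of $2$ is preserved. Granting the special case, \cref{lem:F1} then gives
\begin{equation*}
\varphi_{\bar{\Sigma'}}\bullet\varphi_\Sigma=\varphi_{\bar{\Sigma'}\bullet\Sigma}=\id=\varphi_{\bar{\Sigma'}\bullet\Sigma'}=\varphi_{\bar{\Sigma'}}\bullet\varphi_{\Sigma'}\text{,}
\end{equation*}
and, since $\varphi_{\bar{\Sigma'}}$ is invertible (all 2-morphisms in the bigroupoid $\tor\Gamma$ are), cancellation yields $\varphi_\Sigma=\varphi_{\Sigma'}$.

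It remains to establish the special case. For a thin bigon $\Theta:\gamma\Rightarrow\gamma$, I would construct a smooth homotopy $h:[0,1]^3\to M$ of rank strictly less than $3$ with $h(0,s,t)=\Theta(s,t)$, $h(1,s,t)=\gamma(t)$, and with the other four faces of the cube fixed to $\gamma(t)$ or $x$ or $y$ as appropriate. Once $h$ is in hand, \cref{lem:varphisigma:d} gives $\varphi_\Theta=\varphi_{\id_\gamma}=\id$, where the second equality follows from \cref{lem:F1}. Thinness of $\Theta$ means that its image is a locally one-dimensional subset of $M$, so for every $t$ the loop $s\mapsto\Theta(s,t)$ based at $\gamma(t)$ traces out a $1$-dimensional path that can be retracted back to the constant loop while remaining inside the image of $\Theta$. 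Parameterizing this retraction smoothly in $r$ produces an $h$ whose image is contained in the image of $\Theta$, so that $h$ factors through a set of dimension at most $2$ and hence has rank at most $2$.

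The main technical obstacle is to execute the retraction smoothly and globally. The image of $\Theta$ need not be a submanifold, because the rank of $\Theta$ may drop from $1$ to $0$, so the rank theorem applies only on an open locus. One handles the rank-zero locus by a partition-of-unity argument: near such a point, $\Theta$ is locally constant in one direction, and the retraction can be taken trivially there; across the boundary with the rank-one locus, the local retractions are glued using bump functions in the parameter $r$. This adaptation of the Caetano--Picken technique for thin paths produces the required $h$, and \cref{lem:varphisigma:d} concludes.
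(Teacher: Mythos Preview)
Your reduction to showing that a thin bigon $\Theta:\gamma\Rightarrow\gamma$ satisfies $\varphi_\Theta=\id_{F_\gamma}$ matches the paper exactly. The divergence is in how this special case is handled, and there your argument has a genuine gap.

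You attempt to construct, directly in $M$, a rank-$\leq 2$ homotopy from $\Theta$ to $\id_\gamma$ and then invoke \cref{lem:varphisigma:d}. The existence of such a homotopy is a purely differential-topological statement about $\mathcal{P}_2(M)$, and your sketch does not establish it. The assertion that each loop $s\mapsto\Theta(s,t)$ can be retracted \emph{within the image of $\Theta$} is not justified: that image is in general not a submanifold, and even on the rank-one locus the loops need not admit retractions that vary smoothly in $t$. Gluing local retractions with bump functions in $r$ does not control the rank of $h$: the partial derivative $\partial_r h$ picks up the derivatives of the bump functions multiplied by differences of the local pieces, and nothing forces these to lie in the span of $\partial_s h,\partial_t h$. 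The Caetano--Picken technique you cite treats rank-$\leq 1$ homotopies between \emph{paths}; lifting it one categorical level to bigons is real work that the proposal does not supply.

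The paper sidesteps this problem entirely by exploiting the bundle structure rather than the topology of $M$. After using \cref{lem:varphisigma:d} to give $\Theta$ sitting instants on all four sides, it wraps $\Theta$ onto a rank-$\leq 1$ map $f:S^2\to M$, so that $\Theta=f\circ\Xi$ for a standard bigon $\Xi$ on $S^2$. Over $S^2$ the pullback $f^{*}\inf P$ admits, by \refactiontwobundlereduction, a connection-preserving 1-morphism from an action groupoid $\act{P}{H}$ where $P$ is an ordinary principal $G$-bundle with \emph{flat} connection (the curvature pulls back to zero along a rank-one map). The naturality statements \cref{lem:natbundlemorph,lem:natpullback} assemble into a tin-can identity that transports the question to $\act{P}{H}$, and there \cref{ex:gbunred:bigons} gives $\varphi''_{\Xi}=\id$ in one line, because ordinary flat parallel transport is homotopy-invariant. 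So rather than a delicate thin-homotopy construction in $M$, the paper reduces to a setting where the answer is an immediate consequence of flatness.
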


\begin{proof}
We prove the equivalent statement that a thin homotopy $\Sigma: \gamma \Rightarrow \gamma$ induces the identity $\varphi_{\Sigma}=\id_{F_{\gamma}}$.  \cref{lem:varphisigma:d} allows us to change $\Sigma$ within its thin homotopy class, so that we can assume that $\Sigma$ has the following properties:
\begin{enumerate}[(a)]

\item 
There exists $\varepsilon>0$ such that $\Sigma(s,t)=x$  for $0\leq t <\varepsilon$ and $\Sigma(s,t)=y$ for  $1-\varepsilon<t\leq 1$.

\item 
There exists $\varepsilon>0$ such that $\Sigma(s,t)=\gamma(t)$ for all $0\leq s <\varepsilon$ and all $1-\varepsilon<s\leq 1$.  

\end{enumerate}
We define a smooth map $f:S^2 \to M$ by $f(\vartheta,\varphi) := \Sigma(\frac{\varphi}{2\pi},\frac{\vartheta}{\pi})$, where $0\leq \vartheta\leq \pi$ and $0\leq \varphi \leq 2\pi$ are spherical coordinates. This is well-defined due to (a) and  smooth due to (b). Obviously $f$ has rank one. By \refactiontwobundlereduction\ there exists a  principal $G$-bundle $P$ over $S^2$ with flat connection $\omega$, together with a 1-morphism $J: \act PH \to  f^{*}\inf P$ equipped with a fake-flat, connective, connection-preserving pullback. We define the map $\Xi:[0,1] \to S^2$ by $\Xi(s,t) := (2\pi t,\pi s)$. This is a bigon $\Xi:\mu \Rightarrow \mu$, where $\mu$ is the $\varphi=0$ meridian passing from north pole $N$ ($\vartheta=0$) to south pole ($\vartheta=1$). We have $\gamma=f \circ \mu$ and $\Sigma = f \circ \Xi$. Combining \cref{lem:natbundlemorph,lem:natpullback} we obtain the following the \quot{tin can} equation between transformations:
 \begin{equation*}
\alxydim{@=1.6cm}{\act{P_N}H \ar@/_1.2pc/[r]_{F''_{\mu}}="2" \ar@/^1.2pc/[r]^{F''_{\mu}}="1" \ar@{=>}"1";"2"|*+{\varphi''_{\Xi}}         \ar[d]_{J_N} & \act{P_S}H \ar@{=>}[dl]|>>>>>>>>*{J_{\mu}} \ar[d]^{J_S} \\ f^{*}\inf P_{N} \ar[d]_{\tilde f_{N}}\ar@/_1.2pc/[r]^{F'_{\mu}} & f^{*}\inf P_{S}\ar@{=>}[dl]|*+{\tilde f_{\mu}} \ar[d]^{\tilde f_{S}}\\ \inf P_x \ar@/_1.2pc/[r]_{F_{\gamma}} & \inf P_y}
=
\alxydim{@=1.6cm}{\act{P_N}H  \ar@/^1.2pc/[r]^{F''_{\mu}}  \ar[d]_{J_{N}} & \act{P_S}H \ar@{=>}[dl]|*{J_{\mu}} \ar[d]^{J_{S}} \\ f^{*}\inf P_{N} \ar@/^1.2pc/[r]_{F'_{\mu}}\ar[d]_{\tilde f_{N}} & f^{*}\inf P_{S}\ar@{=>}[dl]|<<<<<<<<<*+{\tilde f_{\mu}} \ar[d]^{\tilde f_{S}} \\ \inf P_x \ar@/_1.2pc/[r]_{F_{\gamma}}="2" \ar@/^1.2pc/[r]^{F_{\gamma}}="1" \ar@{=>}"1";"2"|*+{\varphi_{\Sigma}} & \inf P_y}
\end{equation*}
Here, $F'_{\mu}$ denotes the parallel transport along $\mu$ in $f^{*}\inf P$, and $F''_{\mu}$ denotes the parallel transport in $\act PH$.
From \cref{ex:gbunred:bigons} we conclude that $\varphi''_{\Xi}=\id_{F_{\mu}''}$. Thus,  $\varphi_{\Sigma}=\id_{F_{\gamma}}$.
\end{proof}

Let $[\gamma]$ be a thin homotopy class of paths. We define the set
\begin{equation*}
F_{[\gamma]} := \left (\bigsqcup_{\gamma\in [\gamma]} F_{\gamma} \right )/\sim\text{,}
\end{equation*}
where $\xi\in F_{\gamma}$ and $\xi'\in F_{\gamma'}$ are defined to be equivalent if $\xi'=\varphi_{\Sigma}(\xi)$ for some (and hence by \cref{lem:canidthin} all) thin homotopy $\Sigma:\gamma \Rightarrow \gamma'$. By \cref{lem:varphisigma:a,lem:varphisigma:b,lem:varphisigma:c} it is clear that anchors and actions are well-defined on the set $F_{[\gamma]}$.

\begin{lemma}
\label{prop:quotanathin}
There exists a unique smooth manifold structure on $F_{[\gamma]}$ such that $F_{[\gamma]}$ is a $\Gamma$-equivariant anafunctor, the projections $i_{\gamma}: F_{\gamma} \to F_{[\gamma]}$ are $\Gamma$-equivariant transformations, and the diagram
\begin{equation*}
\alxydim{@=\xyst}{F_{\gamma} \ar[rr]^{\varphi_{\Sigma}} \ar[dr]_{i_{\gamma}} && F_{\gamma'} \ar[dl]^{i_{\gamma'}} \\ & F_{[\gamma]} }
\end{equation*}
is commutative for all thin bigons $\Sigma:\gamma \Rightarrow \gamma'$
\end{lemma}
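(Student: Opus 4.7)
The plan is to transport the smooth structure from a chosen representative. Fix any $\gamma_0 \in [\gamma]$ and declare $F_{[\gamma]}$ to carry the unique smooth structure making $i_{\gamma_0}: F_{\gamma_0} \to F_{[\gamma]}$ a diffeomorphism; then verify that the result is independent of this choice and has all the required structure.

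First I would show that for every thin bigon $\Sigma:\gamma \Rightarrow \gamma'$ the transformation $\varphi_\Sigma$ is in fact a diffeomorphism of anafunctors. Smoothness is part of \cref{lem:varphisigma,prop:varphiSigma}. Invertibility follows by composing with the reversed bigon: by \cref{lem:F1} we have $\varphi_{\Sigma^{-1}}\bullet\varphi_\Sigma = \varphi_{\Sigma^{-1}\bullet\Sigma}$ and $\varphi_\Sigma\bullet\varphi_{\Sigma^{-1}} = \varphi_{\Sigma\bullet\Sigma^{-1}}$, and both composite bigons have image contained in $\mathrm{Im}(\Sigma)$, so they are thin and therefore equal $\mathrm{id}$ by \cref{lem:canidthin}.

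Next I would verify that $i_{\gamma_0}$ is a bijection so that we can transport the smooth structure along it. Surjectivity follows directly from the definition of $F_{[\gamma]}$ as a disjoint union modulo $\sim$: any element of $F_{[\gamma]}$ can be pulled back to $F_{\gamma_0}$ by a thin bigon connecting the given representative to $\gamma_0$. For injectivity, two elements $\xi,\xi'\in F_{\gamma_0}$ become equivalent in $F_{[\gamma]}$ only if they are connected by a finite chain of $\varphi_\Sigma$'s; by composing these chains vertically one obtains a single thin bigon $\Sigma:\gamma_0\Rightarrow\gamma_0$ with $\varphi_\Sigma(\xi)=\xi'$, and again \cref{lem:canidthin} forces $\xi=\xi'$. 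Independence of the chosen representative then follows because for any other $\gamma_1\in[\gamma]$ and any thin bigon $\Sigma:\gamma_0\Rightarrow\gamma_1$ the identity $i_{\gamma_1}\circ\varphi_\Sigma = i_{\gamma_0}$ holds by the very definition of $\sim$, and the first step showed $\varphi_\Sigma$ to be a diffeomorphism. As a byproduct, every $i_\gamma$ is a diffeomorphism, hence in particular a smooth map.

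Finally I would transport the anchors $\alpha_l,\alpha_r$, the two groupoid actions of $\inf P_x$ and $\inf P_y$, and the $\mor{\Gamma}$-action from $F_{\gamma_0}$ to $F_{[\gamma]}$ via $i_{\gamma_0}$. These descend to $F_{[\gamma]}$ because $\varphi_\Sigma$ preserves all of them (\cref{lem:varphisigma}), and they satisfy the $\Gamma$-equivariant anafunctor axioms because the corresponding properties on $F_{\gamma_0}$ (\cref{prop:smoothpt}) are transported by the diffeomorphism $i_{\gamma_0}$. The commutativity of the displayed triangle with $\varphi_\Sigma$ is tautological from the definition of $\sim$, and the $\Gamma$-equivariance of each $i_\gamma$ is inherited from the one of $i_{\gamma_0}$ together with the $\Gamma$-equivariance of the connecting $\varphi_\Sigma$'s. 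The main obstacle is the coherence step in the injectivity argument: making precise how a zig-zag chain of thin bigons through unrelated intermediate paths can be assembled, via vertical composition and insertion of reversed bigons, into a single thin bigon $\gamma_0\Rightarrow\gamma_0$ to which \cref{lem:canidthin} applies.
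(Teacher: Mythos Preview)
Your argument is correct and in fact more elementary than the paper's. The paper phrases the construction as a descent problem: it views $\{F_{\gamma}\}_{\gamma\in[\gamma]}$ as a family of principal $\inf P_y$-bundles over the (trivial) cover $\{U_\gamma=\ob{\inf P_x}\}$, with the $\varphi_\Sigma$ as gluing isomorphisms satisfying the cocycle condition (again by \cref{lem:canidthin}), and then invokes the stack property of principal $\inf P_y$-bundles to obtain $F_{[\gamma]}$. Since every open set in this cover is the whole base, the descent construction collapses to exactly what you do---pick one chart and transport. Your approach avoids the stack machinery entirely; the paper's has the advantage of making clear why the glued object is automatically a principal bundle, something you have to verify by hand after transporting.

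One remark on your ``main obstacle'': the worry about zig-zags is unnecessary. The relation $\sim$ as defined in the paper is \emph{already} an equivalence relation, not merely generated by single steps. Transitivity comes from \cref{lem:F1}: if $\xi'=\varphi_{\Sigma_1}(\xi)$ and $\xi''=\varphi_{\Sigma_2}(\xi')$ with $\Sigma_1,\Sigma_2$ thin, then after a rank-preserving reparameterization $\Sigma_2\bullet\Sigma_1$ is a thin bigon and $\xi''=\varphi_{\Sigma_2\bullet\Sigma_1}(\xi)$. So injectivity of $i_{\gamma_0}$ reduces immediately to a single application of \cref{lem:canidthin}, with no chain-assembly needed.
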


\begin{proof}
We consider the open cover $\{U_{\gamma}\}_{\gamma\in [\gamma]}$ of $\inf P_x$, where  $U_{\gamma}:= \ob{\inf P_x}$.  Over each open set $U_{\gamma}$ we have the principal $\inf P_y$-bundle $F_{\gamma}$. Over each double overlap ($U_{\gamma}\cap U_{\gamma'}=\ob{\inf P_x}$) we have a bundle isomorphism $\varphi_{\Sigma}: F_{\gamma} \to F_{\gamma'}$, for some choice of a thin homotopy $\Sigma$. Over each triple overlap, these satisfy the cocycle condition due to \cref{lem:canidthin}. Our definition of  $F_{[\gamma]}$ realizes the descend construction for the stack of principal $\inf P_y$-bundles over $\inf P_x$; hence $F_{[\gamma]}$ is a principal $\inf P_y$-bundle. Now, the remaining statements follow.
\end{proof}

Now we are in position to re-define parallel transport in a setting of thin homotopy classes of paths and bigons. To a thin homotopy class $[\gamma]$ of paths between $x$ and $y$ we associate the $\Gamma$-equivariant anafunctor
\begin{equation*}
F_{[\gamma]}: \inf P_x \to  \inf P_y\text{.}
\end{equation*}
Two bigons  $\Sigma:\gamma_0\Rightarrow\gamma_1$ and  $\Sigma':\gamma_0'\Rightarrow \gamma_1'$ will now be called thin homotopic, if there exists a homotopy $h$ between them of rank less than three, that fixes the endpoints $x$ and $y$, and restricts to homotopies $h_0:\gamma_0 \Rightarrow \gamma_0'$ and $h_1:\gamma_1\Rightarrow \gamma_1'$ of rank less than 2. This generalizes the relation introduced at the beginning of this section in that the bounding paths do not have to be equal but can be thin homotopic themselves. 
\begin{comment}
More precisely, this is
a homotopy $h:[0,1] \times [0,1]^2 \to M$ between $\Sigma$ and $\Sigma'$ with the following properties:
\begin{enumerate}[(a)]

\item 
$h(r,s,0)=x$ and $h(r,s,1)=y$ for all $r,s\in[0,1]$.

\item
$h_i:[0,1] \times [0,1] \to M$ defined by $h_i(r,t):= h(r,i,t)$ satisfies $\mathrm{rk}(\mathrm{d}h_i)\leq 1$

\item
$\mathrm{rk}(\mathrm{d}h)\leq 2$

\end{enumerate}
Conditions (a) and (b) imply that $h$ restricts to thin bigons $h_i:\gamma_i\Rightarrow \gamma_i'$. Condition (c) implies that the bigons $h_1^{-1}\bullet \Sigma' \bullet h_0$ and $\Sigma$ are thin homotopic. 
\end{comment}
For a thin homotopy class $[\Sigma]:[\gamma_0] \Rightarrow [\gamma_1]$ of bigons we define
\begin{equation*}
\varphi_{[\Sigma]} = i_{\gamma_1}\circ\varphi_{\Sigma}\circ  i_{\gamma_0}^{-1}\text{.}
\end{equation*}
It is straightforward to check using \cref{lem:varphisigma:d,prop:quotanathin} that this definition is independent of the choice of the representative $\Sigma$. \begin{comment}
Indeed,
\begin{align*}
\varphi_{[\Sigma]} &= i_{\gamma_1}\circ\varphi_{\Sigma}\circ  i_{\gamma_0}^{-1} 
 \\&\eqcref{lem:varphisigma:d}i_{\gamma_1}\circ\varphi_{h_1^{-1}\bullet \Sigma' \bullet h_0}\circ  i_{\gamma_0}^{-1}
\\&=i_{\gamma_1}\circ\varphi_{h_1^{-1}}\circ \varphi_{\Sigma'}\circ \varphi_{h_0}\circ  i_{\gamma_0}^{-1}
 \\&\eqcref{prop:quotanathin} i_{\gamma_1'}\circ \varphi_{\Sigma'}\circ  i_{\gamma_0'}^{-1}
\\&= \varphi_{[\Sigma']}\text{.}
\end{align*}
\end{comment}
Similarly, the transformations $u_x$ and $c_{\gamma_1,\gamma_2}$ of \cref{sec:compcomppaths} induce well-defined transformations
\begin{equation*}
u_x: F_{[\id_x]} \Rightarrow \id_{\inf P_x}
\quand
c_{[\gamma_1],[\gamma_2]}: F_{[\gamma_2]} \circ F_{[\gamma_1]} \Rightarrow F_{[\gamma_2]\circ [\gamma_1]}\text{.}
\end{equation*}
\begin{comment}
Indeed, the first is just $u_x \circ i_{\id_x}^{-1}$, and the second is defined by
\begin{equation*}
c_{[\gamma_1],[\gamma_2]} := i_{\gamma_2\circ\gamma_1} \bullet  c_{\gamma_1,\gamma_2} \bullet (i_{\gamma_2}^{-1} \circ i_{\gamma_1}^{-1})\text{.}
\end{equation*}
We have to verify that this is well-defined under going to thin homotopy classes of paths. If $\Sigma_1:\gamma_1\Rightarrow \gamma_1'$ and $\Sigma_2:\gamma_2\Rightarrow \gamma_2'$ are thin homotopies, then (under a reparameterization) they are horizontally composable, and we have\begin{align*}
c_{[\gamma_1],[\gamma_2]} &= i_{\gamma_2\circ\gamma_1} \bullet  c_{\gamma_1,\gamma_2} \bullet (i_{\gamma_2}^{-1} \circ i_{\gamma_1}^{-1})
\\&\eqcref{lem:F2} i_{\gamma_2\circ\gamma_1} \bullet \varphi^{-1}_{\Sigma_2 \circledast \Sigma_1} \bullet c_{\gamma_1',\gamma_2'}\bullet  (\varphi_{\Sigma_2} \circ \varphi_{\Sigma_1}) \bullet (i_{\gamma_2}^{-1} \circ i_{\gamma_1}^{-1})
\\&\eqcref{prop:quotanathin} i_{\gamma_2'\circ\gamma_1'} \bullet  c_{\gamma_1',\gamma_2'} \bullet (i_{\gamma_2'}^{-1} \circ i_{\gamma_1'}^{-1}) \\ &=c_{[\gamma_1'],[\gamma_2']} \text{.}
\end{align*}
\end{comment}
Finally, suppose $J: \inf P_1 \to \inf P_2$ is a 1-morphism in $\zweibunconff\Gamma M$.  Then we define a transformation
\begin{equation*}
J_{[\gamma]}:= (i_{\gamma} \circ \id_{J_x}) \bullet J_{\gamma}\bullet (\id_{J_y} \circ i^{-1}_{\gamma})  :J_y \circ F_{[\gamma]}  \Rightarrow  F'_{[\gamma]} \circ J_x\text{.}
\end{equation*}
Using \cref{lem:natbundlemorph} one can check that this definition is independent of the choice of the representative $\gamma$.
\begin{comment}
Indeed, if $\Sigma:\gamma_1\Rightarrow \gamma_2$ is thin, then we have
\begin{align*}
J_{[\gamma_1]} &= (i_{\gamma_1} \circ \id_{J_x}) \bullet J_{\gamma_1}\bullet (\id_{J_y} \circ i^{-1}_{\gamma_1})
\\&\eqcref{lem:natbundlemorph}(i_{\gamma_1} \circ \id_{J_x}) \bullet (\varphi'_{\Sigma} \circ \id)^{-1} \bullet  J_{\gamma_2} \bullet (\id \circ \varphi_{\Sigma})  \bullet (\id_{J_y} \circ i^{-1}_{\gamma_1})
\\&=(i_{\gamma} \circ \id_{J_x}) \bullet J_{\gamma}\bullet (\id_{J_y} \circ i^{-1}_{\gamma})
\\&= J_{[\gamma_2]}\text{.}
\end{align*} 
\end{comment} 

Summarizing, all our definitions of \cref{sec:ptpaths,sec:ptbigons} persist under the passage to thin homotopy classes. In the following section we will see the main advantage of this passage, namely that it allows an organization in  bicategories.   

\subsection{Organization in bicategories}

\label{sec:orga}

The path 2-groupoid of $M$ is the 2-groupoid $\mathcal{P}_2(M)$ whose objects are the points of $M$, 1-morphisms  are thin homotopy classes of  paths in $M$, and 2-morphisms  are  thin homotopy classes of bigons in $M$. A  detailed definition is in \cite[Section 2.1]{schreiber5}.  
In this subsection we assemble the parallel transports of the previous subsection into a 2-functor $\mathrm{tra}_{\inf P}: \mathcal{P}_2(M) \to \tor\Gamma$, where $\tor\Gamma$ is the bicategory of $\Gamma$-torsors, see \cref{sec:gammator}.
For the terminology of bicategories we refer to \cite[Appendix A]{schreiber6}.

\begin{proposition}
\label{prop:fun}
Let $\inf P$ be a principal $\Gamma$-2-bundle $\inf P$ with fake-flat connection $\Omega$. Then, the as\-sign\-ments
$x \mapsto \inf P_x$, $[\gamma]\mapsto F_{[\gamma]}$, and $[\Sigma] \mapsto \varphi_{[\Sigma]}$ 
form a 2-functor 
\begin{equation*}
\mathrm{tra}_{\inf P}:\mathcal{P}_2(M) \to \tor\Gamma
\end{equation*}
with unitors $u_x$ and compositors $c_{[\gamma_1],[\gamma_2]}$.
\end{proposition}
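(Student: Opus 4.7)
The plan is to verify the axioms of a 2-functor one at a time, relying almost entirely on results already established in Sections \ref{sec:ptpaths}, \ref{sec:ptbigons}, and \ref{sec:thinhomo}. The key observation is that essentially all the data and coherence constraints of a 2-functor have been prepared separately, and it only remains to collect them.

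First I would check the object assignment: for each $x\in M$, the fibre $\inf P_{x}$ carries the restricted $\Gamma$-action, and since $\pi\maps \inf P \to \idmorph M$ is a submersive weak-principal functor (Definition \ref{def:zwoabun}), it is immediate that $\tau=(\pr_1,R)\maps \inf P_x \times \Gamma \to \inf P_x \times \inf P_x$ is a weak equivalence, so $\inf P_{x}\in \tor\Gamma$. Next, on 1-morphisms, I would recall that $F_{[\gamma]}$ is a $\Gamma$-equivariant anafunctor by Proposition \ref{prop:smoothpt} and \cref{prop:quotanathin}, and on 2-morphisms $\varphi_{[\Sigma]}$ is a $\Gamma$-equivariant transformation by Proposition \ref{prop:varphiSigma} and the discussion at the end of \cref{sec:thinhomo}; both are well-defined on thin homotopy classes by Propositions \ref{lem:varphisigma:d} and \ref{lem:canidthin}.

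Second, I would verify functoriality on each hom-groupoid: vertical composition and identity 2-morphisms are handled by Proposition \ref{lem:F1} (applied to representatives and then transported to thin homotopy classes via \cref{lem:canidthin}), and horizontal composition of 2-morphisms is exactly the content of Proposition \ref{lem:F2}, whose commutative square states the naturality of the compositor $c_{[\gamma_1],[\gamma_2]}$ with respect to 2-morphisms. The 2-functoriality axioms on composable 1-morphisms reduce to showing that $c_{[\gamma_1],[\gamma_2]}$ is indeed a natural transformation, which is precisely Proposition \ref{lem:F2}, and that $u_x$ is natural, which is trivial since $\id_x$ has only trivial bigons modulo thin homotopy.

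Third, I would verify the coherence axioms. The associativity pentagon (which for $\tor\Gamma$ with its strictly-associative composition of anafunctors reduces to a triangle) is supplied by Proposition \ref{lem:F3}: note that both $c$-transformations and the reparameterization 2-morphism $a_{\gamma_1,\gamma_2,\gamma_3}$ descend canonically to thin homotopy classes, where $a_{\gamma_1,\gamma_2,\gamma_3}$ becomes the identity since $\gamma_3 \ast(\gamma_2\ast\gamma_1)$ and $(\gamma_3\ast \gamma_2)\ast\gamma_1$ are thin homotopic. Similarly, the two unit triangles follow from Proposition \ref{lem:F4}, since the transformations $l_{\gamma}$ and $r_{\gamma}$ there become identities once we pass to thin homotopy classes.

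I do not expect a serious obstacle: the genuinely difficult work was done in \cref{sec:ptpaths,sec:ptbigons,sec:thinhomo}. The only mild subtlety is that the 2-functor axioms in $\tor\Gamma$ involve the associator and unitors of that bicategory, so one has to check that the bookkeeping reparameterization 2-morphisms in $\mathcal{P}_2(M)$ match them under the assignments — but after passing to thin homotopy classes this matching is automatic, because all such reparameterizations are by definition thin.
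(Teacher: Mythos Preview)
Your proposal is correct and follows essentially the same approach as the paper's proof, which simply observes that the four 2-functor axioms (F1)--(F4) are exactly \cref{lem:F1,lem:F2,lem:F3,lem:F4}. You are more thorough in also spelling out the well-definedness of the object, 1-morphism, and 2-morphism assignments, but the core argument is identical.
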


\begin{proof}
There are four axioms to check, see, e.g. \cite[Def. A.5]{schreiber6}. Axiom (F1) is functoriality with respect to vertical composition; this is \cref{lem:F1}. Axiom (F2) is the compatibility with the horizontal composition; this is \cref{lem:F2}. Axioms (F3) and  (F4) concern the coherence of compositors and unitors; these are \cref{lem:F3,lem:F4}.
\end{proof}

\begin{example}
\label{ex:trivbuntra}
Let $\inf I_{A,B}$ be the trivial principal $\Gamma$-2-bundle over $M$ whose connection is induced from a fake-flat $\Gamma$-connection $(A,B)$. We obtain from $(A,B)$ the smooth 2-functor $F_{A,B}: \mathcal{P}_2(M) \to B\Gamma$ (see \cref{sec:smoothfunctors}). Its composition $i(F_{A,B})$ with the 2-functor $i: B\Gamma \to \tor\Gamma$  of \cref{rem:Lresi} is a \quot{trivial} transport 2-functor only depending on the $\Gamma$-connection $(A,B)$. On the other hand, we have the 2-functor $\mathrm{tra}_{\mathcal{I}_{A,B}}$ of \cref{prop:fun}. The two 2-functors are equivalent via a pseudonatural transformation
\begin{equation*}
\eta_{A,B}: i (F_{A,B}) \to \mathrm{tra}_{\inf I_{A,B}}\text{,}
\end{equation*}
whose components are the assignments $x\mapsto\ \id_{\Gamma}$ and $\gamma \mapsto \eta_{\gamma}$ of \cref{ex:ptpath:trivbun}. There are two axioms to check \cite[Def. A.6]{schreiber6}; these are precisely \cref{eq:T1:eta,eq:T2:eta}. This \quot{computes} the parallel transport 2-functor of a connection on the trivial principal $\Gamma$-2-bundle. 
\end{example}

\begin{proposition}
\label{prop:pt}
Let $J: \inf P_1 \to \inf P_2$ be a 1-morphism in $\zweibunconff\Gamma M$. Then, the assignments $x \mapsto J_x$ and $[\gamma] \mapsto J_{[\gamma]}$ 
form a pseudonatural transformation
\begin{equation*}
\rho_J: \mathrm{tra}_{\inf P_1} \to \mathrm{tra}_{\inf P_2}\text{.}
\end{equation*} 
\end{proposition}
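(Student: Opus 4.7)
The plan is to verify the three coherence axioms of a pseudonatural transformation (see \cite[Def. A.6]{schreiber6}) for the proposed data $x \mapsto J_x$, $[\gamma] \mapsto J_{[\gamma]}$. First, I need to confirm that the assignments are well-defined on thin homotopy classes; this was already arranged at the end of \cref{sec:thinhomo}, where $J_{[\gamma]}$ was defined via the descent isomorphisms $i_{\gamma}$ and shown to be independent of the chosen representative using \cref{lem:natbundlemorph}. Thus the data at hand is unambiguous.

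The naturality axiom with respect to 2-morphisms requires that for every bigon $[\Sigma]:[\gamma_1]\Rightarrow[\gamma_2]$, the diagram obtained by whiskering $\rho_J$ with $\varphi_{[\Sigma]}$ and $\varphi'_{[\Sigma]}$ is commutative. This is immediate from \cref{lem:natbundlemorph}: that lemma was proved at the level of ordinary bigons, and translating it through the projections $i_{\gamma_1}, i_{\gamma_2}$ using the definitions of $\varphi_{[\Sigma]}$ and $J_{[\gamma]}$ given in \cref{sec:thinhomo} yields exactly the required square. The compatibility with composition of 1-morphisms is the compositor axiom; for each pair of composable paths $[\gamma_1]:x\to y$ and $[\gamma_2]:y\to z$ one must compare $J_{[\gamma_2 \ast \gamma_1]} \bullet (\id \circ c_{[\gamma_1],[\gamma_2]})$ with $(c'_{[\gamma_1],[\gamma_2]}\circ \id) \bullet (\id \circ J_{[\gamma_1]})\bullet(J_{[\gamma_2]}\circ \id)$. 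Unfolding the definitions of $J_{[\gamma]}$ and $c_{[\gamma_1],[\gamma_2]}$ in terms of their representatives $J_{\gamma}$ and $c_{\gamma_1,\gamma_2}$, this reduces to the diagram in \cref{lem:T1}, with the projections $i_{\gamma_i}$ canceling against their inverses.

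The remaining axiom is compatibility with units: the compositor evaluated on the constant path $[\id_x]$ must match, via the unitor $u_x$, the identity transformation $\id_{J_x}$. Concretely, one needs to show that $J_{[\id_x]}$ is compatible with the transformation $u_x: \id_{\inf P_x}\Rightarrow F_{[\id_x]}$ so that the two 2-cell pastings involving the constant path coincide. This is a direct computation: a horizontal lift of a constant path to $J$ can be chosen to be itself constant (using \cref{lem:horF:a} and the fact that constant paths are horizontal by \cref{lem:hormor:a}), and the definition of the $j$-target in \cref{sec:natbundlemorph} then gives back $j$ itself. Comparing with the definition of $u_x$ in \cref{sec:compcomppaths}, the required coherence triangle commutes on the nose.

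The main obstacle I expect is purely bookkeeping: all three axioms at the level of thin homotopy classes must be translated back to representatives, where the quotient maps $i_{\gamma}$ must be inserted and then cancelled consistently, while verifying that none of the choices made in \cref{sec:thinhomo} introduce ambiguity. Once the formal manipulations with the $i_{\gamma}$'s are organized, the content reduces verbatim to \cref{lem:T1} and \cref{lem:natbundlemorph} together with the straightforward unit calculation above, so no new geometric input is required.
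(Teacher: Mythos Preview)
Your proposal is correct and follows the same approach as the paper: the two substantive axioms reduce precisely to \cref{lem:T1} and \cref{lem:natbundlemorph}, as you identify. Note, however, that the definition of pseudonatural transformation used here (\cite[Def.~A.6]{schreiber6}) has only two axioms, (T1) and (T2), so your separate verification of unit compatibility is not required; the paper's proof omits it entirely.
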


\begin{proof}
There are two axioms to check, see \cite[Def. A.6]{schreiber6}. Axiom (T1) is the compatibility with path composition; this is \cref{lem:T1}. Axiom (T2) is naturality with respect to 2-morphisms; this is \cref{lem:natbundlemorph}.
\end{proof}

\begin{example}
\label{ex:trivbunpt}
Suppose fake-flat $\Gamma$-connections $(A,B)$ and $(A',B')$ are related by a gauge transformation $(g,\varphi)$. On one side, we have a smooth pseudonatural transformation
\begin{equation*}
\rho_{g,\varphi}:F_{A,B} \to F_{A',B'}\text{,}
\end{equation*}
see \cref{sec:smoothfunctors}. On the other side, we have a 1-morphism $J=J_{g,\varphi}:\inf I_{A,B} \to \inf I_{A',B'}$ in $\zweibunconff\Gamma M$ (\cref{rem:trivbun:b}), to which  \cref{prop:pt} associates  a pseudonatural transformation $\rho_J: \mathrm{tra}_{\inf I_{A,B}} \to \mathrm{tra}_{\inf I_{A',B'}}$. 
We find a commutative diagram
\begin{equation*}
\alxydim{@=\xyst}{i(F_{A,B})   \ar[d]_{i ( \rho_{g,\varphi})} \ar[r]^-{\eta_{A,B}} & \mathrm{tra}_{\inf I_{A,B}} \ar[d]^{\rho_J} \\  i( F_{A',B'}) \ar[r]_-{\eta_{A',B'}} & \mathrm{tra}_{\inf I_{A',B'}}}
\end{equation*}
of pseudonatural transformations, where $\eta_{A,B}$ and $\eta_{A',B'}$ are the pseudonatural transformations of \cref{ex:trivbuntra}. 
Commutativity
means that clockwise and counter-clockwise compositions have the same assignments to points and paths.
Coincidence for points follows from the definition of  $J_{g,\varphi}$, which has an underlying smooth functor $\phi_g$. Coincidence for paths follows from \cref{ex:gt}.
\begin{comment}
This means that clockwise and counter-clockwise compositions have the same assignments to points and paths. To points, they associated 1-morphisms in $\tor\Gamma$ between $\mathrm{tra}_{\inf I_{\Omega}}(x)=\Gamma$ and $(i \circ F_{A',B'})(x)=\Gamma$. Note that $\eta(x)=\id$. We compute clock-wise. At a point $x$ we have
\begin{equation*}
\rho_J(x) \circ \eta(x)=J_x=\phi_g|_x=i_{g(x)}
\end{equation*}
We compute counter-clockwise. At a point $x$ we have
\begin{equation*}
\eta'_x\circ i_{\rho_{g,\varphi}(x)}=i_{g(x)}\text{.}
\end{equation*}
Thus we have coincidence. To path a path $\gamma:x \to y$, they assign  2-morphisms in $\tor\Gamma$ between $i(F_{A,B}(\gamma)) \circ i_{g(y)}$ and  $\mathrm{tra}_{\inf I_{\Omega'}}(\gamma) \circ i_{g(x)}$. Clockwise, this is
\begin{equation*}
\rho_J(\gamma) \bullet (\id_{J_y} \circ \eta_{\gamma})=J_{\gamma} \bullet (\id_{J_y} \circ \eta_{\gamma})
\text{.}
\end{equation*}
Counter-clockwise, it is
\begin{equation*}
(\id_{i_{g(x)}} \circ \eta'_{\gamma}) \bullet i_{\rho_{g,\varphi}(\gamma)}\text{.}
\end{equation*}
Coincidence is exactly \cref{ex:gt}.
\end{comment}
This \quot{computes} the transformation between parallel transport 2-functors of trivial bundles with gauge equivalent connections. \end{example}

\begin{proposition}
\label{prop:mod}
Let $f: J \Rightarrow J'$ be a 2-morphism in $\zweibunconff\Gamma M$. Then, the assignment
$x \mapsto f_x$
forms a modification 
\begin{equation*}
\mathcal{A}_{f}: \rho_{J} \Rightarrow \rho_{J'}\text{.}
\end{equation*}
\end{proposition}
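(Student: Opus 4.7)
The plan is to verify the single defining axiom of a modification, namely that for every 1-morphism $[\gamma]:x \to y$ in $\mathcal{P}_2(M)$ the square
\begin{equation*}
\alxydim{@=\xyst}{J_y \circ F_{[\gamma]} \ar@{=>}[d]_{f_y \circ \id} \ar@{=>}[r]^{J_{[\gamma]}} & F'_{[\gamma]} \circ J_x \ar@{=>}[d]^{\id \circ f_x} \\ J'_y \circ F_{[\gamma]} \ar@{=>}[r]_{J'_{[\gamma]}} & F'_{[\gamma]} \circ J'_x}
\end{equation*}
commutes. This is the only axiom one needs to check for a modification between pseudonatural transformations.

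The strategy is to reduce from thin homotopy classes to individual path representatives. I would pick a representative $\gamma$ of $[\gamma]$ and unfold the definition from \cref{sec:thinhomo},
\begin{equation*}
J_{[\gamma]} = (i_{\gamma} \circ \id_{J_x}) \bullet J_{\gamma} \bullet (\id_{J_y} \circ i_{\gamma}^{-1}),
\end{equation*}
and similarly for $J'_{[\gamma]}$. Plugging these into the two paths around the square and applying the interchange law to slide the whiskerings by $i_{\gamma}$ and $i_{\gamma}^{-1}$ past $f_x\circ\id$ and $f_y\circ\id$, the two $i_{\gamma}$-factors cancel from both sides and the desired identity collapses to
\begin{equation*}
(\id_{F'_{\gamma}} \circ f_x) \bullet J_{\gamma} = J'_{\gamma} \bullet (f_y \circ \id_{F_{\gamma}}).
\end{equation*}
This is exactly \cref{lem:modification}, so the modification axiom holds.

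No further checks are necessary: independence of the chosen representative $\gamma\in[\gamma]$ is automatic, since both $J_{[\gamma]}$ and $J'_{[\gamma]}$ were already shown in \cref{sec:thinhomo} to descend to thin homotopy classes, while $f_x, f_y$ involve no paths at all. I do not expect any genuine obstacle in this proof: the substantive geometric work was done in \cref{lem:modification}, where one verified that if $(\{\rho_i\},\{\tilde\gamma_i\})$ is a horizontal lift of $\xi\in F_\gamma$ to $J$, then applying the connection-preserving, anchor- and action-preserving 2-morphism $f$ pointwise yields a horizontal lift $(\{\rho_i\},\{f(\tilde\gamma_i)\})$ of $\xi$ to $J'$ whose $f(j)$-target is the image under $f$ of the $j$-target of the original lift.
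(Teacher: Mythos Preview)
Your proof is correct and follows the same approach as the paper: the paper's proof simply states that there is a single modification axiom and that it is established by \cref{lem:modification}. You have made explicit the (routine) reduction from the thin-homotopy-class level to a chosen representative via the $i_{\gamma}$'s, which the paper leaves implicit, but the substance is identical.
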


\begin{proof}
There is only one axiom (\cite[Definition A.8]{schreiber6}); proved by \cref{lem:modification}.
\end{proof}

\begin{example}
\label{ex:trivbunmod}
Suppose we have two fake-flat $\Gamma$-connections $(A,B)$ and $(A',B')$, two gauge transformations $(g_1,\varphi_1)$ and $(g_2,\varphi_2)$, and a gauge 2-transformation $a$ between $(g_1,\varphi_1)$ and $(g_2,\varphi_2)$. Then we have a smooth modification
\begin{equation*}
\mathcal{A}_a: \rho_{g_1,\varphi_1}\Rightarrow \rho_{g_2,\varphi_2}\text{,}
\end{equation*}
see \cref{sec:smoothfunctors}. On the other side, we have a 2-morphism $f_a: J_{g_1,\varphi_1} \Rightarrow J_{g_2,\varphi_2}$ between the 1-morphisms associated to the gauge transformations, see \cref{rem:trivbun:c}. In turn, we obtain a modification $\mathcal{A}_{f_a}: \rho_{J_{g_1,\varphi_1}}\Rightarrow \rho_{J_{g_2,\varphi_2}}$.
Then we have a commutative diagram
\begin{equation*}
\alxydim{@=\xyst}{i(\rho_{g_{1},\varphi_{1}})  \ar@{=>}[d]_{i(\mathcal{A}_{a})} \ar@{=}[r] &  \eta_k^{-1} \circ \rho_{J_{g_1,\varphi_1} } \circ \eta_i \ar@{=>}[d]^{\id \circ \mathcal{A}_{f_a} \circ \id} \\ i(\rho_{g_{2},\varphi_{2}}) \ar@{=}[r] &  \eta_k^{-1} \circ \rho_{J_{g_2,\varphi_2}} \circ \eta_i}
\end{equation*}
of modifications between pseudonatural transformations between 2-functors from $\mathcal{P}_2(M)$ to $\tor\Gamma$. 
\begin{comment}
The equalities are due to \cref{ex:trivbunpt}.  
\end{comment}
Indeed, evaluating at a point $x$ gives a 2-morphism in $\tor\Gamma$, in fact between 1-morphisms that are smooth functors (not anafunctors). Thus, the diagram is, for each $x\in M$, an equality between natural transformations between functors from $\Gamma$ to $\Gamma$. We compare its components at an object $g$, only using the given definitions: \begin{equation*}
\mathcal{A}_{f_{a}}(x)(g)=f_{a}|_x(g)=f_{a}(x,g)
=(\id_x,(a(x),g_{1}(x)g)) = i_{a(x),g_{1}(x)}(g)=i(\mathcal{A}_{a})(g)\text{.}
\end{equation*}
This shows commutativity.
\end{example}

\begin{theorem}
\label{th:2funct}
\cref{prop:fun,prop:pt,prop:mod} furnish a (strict) 2-functor
\begin{equation*}
\mathrm{tra}:\zweibunconff\Gamma M \to \fun(\mathcal{P}_2(M),\tor\Gamma)\text{.}
\end{equation*}
\end{theorem}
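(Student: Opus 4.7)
The plan is to verify in turn the four defining equations of a strict 2-functor between bicategories: preservation of identity 1-morphisms, strict preservation of horizontal composition of 1-morphisms, preservation of identity 2-morphisms, and preservation of both the vertical and horizontal composition of 2-morphisms. The assignments on the three levels are already known to land in the right kinds of objects thanks to \cref{prop:fun,prop:pt,prop:mod}, so the task is to match compositions on the nose.

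First I would handle identity 1-morphisms. The identity 1-morphism on $\inf P$ in $\zweibunconff\Gamma M$ is induced by the identity smooth functor $\id_{\inf P}$ equipped with its canonical $\Omega$-pullback. Applying \cref{re:Jfunctor} with $\phi = \id_{\inf P}$ and $\kappa = 0$ reduces $J_{\gamma}(\xi,(p,\rho'))$ to $((\alpha_l(\xi),\id),\xi \circ \rho')$, which, after passing to thin homotopy classes as in \cref{sec:thinhomo}, is precisely the identity 2-morphism in $\tor\Gamma$ on $F_{[\gamma]}$. Thus $\rho_{\id_{\inf P}}$ is the identity pseudonatural transformation on $\mathrm{tra}_{\inf P}$. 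Next, for horizontal composition of 1-morphisms, \cref{lem:morphcompcomp} states exactly the commutativity
\begin{equation*}
(K \circ J)_{\gamma} = (K_{\gamma} \circ \id_{J_x}) \bullet (\id_{K_y} \circ J_{\gamma})
\end{equation*}
and descends to thin homotopy classes by the constructions of \cref{sec:thinhomo}. Combined with the strict equality $(K \circ J)_x = K_x \circ J_x$ on fibres, this is the defining equation for the horizontal composition of pseudonatural transformations, so $\rho_{K \circ J} = \rho_K \circ \rho_J$ strictly.

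On the level of 2-morphisms the verification is immediate. The modification $\mathcal{A}_f$ is defined pointwise by $\mathcal{A}_f(x) = f_x$, and both the vertical and horizontal compositions of 2-morphisms in $\zweibunconff\Gamma M$ and of modifications in $\fun(\mathcal{P}_2(M),\tor\Gamma)$ are defined pointwise in the fibre. Hence $(g \bullet f)_x = g_x \bullet f_x$ and $(f_2 \circ f_1)_x = (f_2)_x \circ (f_1)_x$ give $\mathcal{A}_{g \bullet f} = \mathcal{A}_g \bullet \mathcal{A}_f$ and $\mathcal{A}_{f_2 \circ f_1} = \mathcal{A}_{f_2} \circ \mathcal{A}_{f_1}$, while $(\id_J)_x = \id_{J_x}$ gives $\mathcal{A}_{\id_J} = \id_{\rho_J}$. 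Strictness follows because $\tor\Gamma$ composes anafunctors and their 2-morphisms in a way compatible with these fibrewise identifications.

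The main obstacle is conceptual rather than computational: one must be confident that the equalities delivered by \cref{lem:morphcompcomp,lem:modification} survive the passage from paths to thin homotopy classes of paths carried out in \cref{sec:thinhomo}, so that the compositors $c_{[\gamma_1],[\gamma_2]}$ and unitors $u_x$ do not produce additional coherence data when composing pseudonatural transformations. Since those compositors are themselves inherited from the path-level constructions through the universal property of \cref{prop:quotanathin}, this compatibility is automatic, which is why the 2-functor can indeed be strict.
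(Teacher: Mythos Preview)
Your proof is correct and follows essentially the same approach as the paper's own proof: the key ingredient for preservation of 1-morphism composition is \cref{lem:morphcompcomp}, and the 2-morphism level is handled by observing that $\mathcal{A}_f$ is just pointwise restriction to fibres, which trivially respects both compositions. You supply more detail than the paper does---in particular an explicit argument for identity 1-morphisms via \cref{re:Jfunctor} and a discussion of why the identities survive the passage to thin homotopy classes---but the logical skeleton is the same.
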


\begin{proof}
 That the composition of 1-morphisms is respected is the content of \cref{lem:morphcompcomp}. On the level of 2-morphisms, the 2-functor is just restriction to points (see \cref{prop:mod}); this clearly preserves horizontal and vertical composition. 
\end{proof}

\begin{remark}
\cref{ex:trivbuntra,ex:trivbunpt,ex:trivbunmod} can be interpreted as follows. The constructions of \cref{rem:trivbun} relating $\Gamma$-connections to trivial 2-bundles form a 2-functor
\begin{equation*}
L^{\ff}:\conff\Gamma M \to \zweibunconff\Gamma M
\end{equation*}
relating $\Gamma$-connections to connections on trivial $\Gamma$-2-bundles.
The constructions of \cref{sec:smoothfunctors} form another 2-functor
\begin{equation*}
\mathcal{P}: \conff\Gamma M \to \fun(\mathcal{P}_2(M),B\Gamma)
\end{equation*}
relating $\Gamma$-connections to 2-functors on the path 2-groupoid. We have a pseudonatural equivalence \begin{equation*}
\mathrm{tra} \circ L^{\ff} \cong i\circ \mathcal{P}\text{,}
\end{equation*}
established by assigning $(A,B)\mapsto \eta_{A,B}$ and $(g,\varphi)\mapsto \id$.
It expresses the fact that trivial principal 2-bundles have  trivial (more precisely: canonically trivializable) parallel transport 2-functors.
\end{remark}

\subsection{The transport 2-functor formalism}

\label{sec:main}

The  transport 2-functor formalism \cite{schreiber2} axiomatically specifies a sub-bicategory 
\begin{equation*}
\mathrm{Trans}_{\Gamma}(M,\tor\Gamma) \subset \fun(\mathcal{P}_2(M),\tor\Gamma)
\end{equation*}
of 2-functors, pseudonatural transformations, and modifications that are supposed to implement higher-dimensional parallel transport. Essentially, the axioms require that a transport 2-functor can locally be described by path-ordered and surface-ordered exponentials of $\Gamma$-connections.
We will give more details in the proof of the following result.

\begin{theorem}
\label{th:pt2fun}
The image of the 2-functor $\mathrm{tra}$
of \cref{th:2funct} is contained in the sub-bicategory $\mathrm{Trans}_{\Gamma}(M,\tor\Gamma)$, and hence induces a 2-functor 
\begin{equation*}
\mathrm{tra}:\zweibunconff\Gamma M \to \mathrm{Trans}_{\Gamma}(M,\tor\Gamma)\text{.}
\end{equation*}
In other words, parallel transport in principal $\Gamma$-2-bundles fits into the axiomatic framework for higher-dimensional parallel transport. 
\end{theorem}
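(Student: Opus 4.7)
The plan is to verify the axioms of the sub-bicategory $\mathrm{Trans}_{\Gamma}(M,\tor\Gamma)$ of \cite{schreiber2}, which essentially require that a 2-functor admits, on an open cover, a smooth trivialization by a 2-functor of the form $i \circ F_{A,B}$ coming from a $\Gamma$-connection $(A,B)$, and that the transition data are given by local gauge transformations and gauge 2-transformations. The overall strategy is to bootstrap from the local-to-global constructions we have already assembled: the key is that $\zweibunconff\Gamma M$ is a presheaf/stack of bicategories, so every principal $\Gamma$-2-bundle with fake-flat connection trivializes over a suitable open cover by 1-morphisms in $\zweibunconff\Gamma{U_i}$, and the 2-functor $\mathrm{tra}$ of \cref{th:2funct} will transport these local trivializations into pseudonatural equivalences between $\mathrm{tra}_{\inf P}|_{U_i}$ and trivial transport 2-functors.

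Concretely, first I would fix $\inf P \in \zweibunconff\Gamma M$ and choose an open cover $\{U_i\}$ of $M$ together with 1-morphisms $J_i \colon \inf I_{A_i,B_i} \to \inf P|_{U_i}$ in $\zweibunconff\Gamma{U_i}$, where $(A_i,B_i)$ are fake-flat $\Gamma$-connections; the existence of such trivializations follows from the bundle structure of $\inf P$ together with the classification in \refclassification. Applying \cref{th:2funct} locally and then invoking \cref{ex:trivbuntra}, each $\rho_{J_i}\colon \mathrm{tra}_{\inf I_{A_i,B_i}} \to \mathrm{tra}_{\inf P}|_{U_i}$ composes with $\eta_{A_i,B_i}\colon i(F_{A_i,B_i}) \to \mathrm{tra}_{\inf I_{A_i,B_i}}$ to produce a pseudonatural equivalence $i(F_{A_i,B_i}) \to \mathrm{tra}_{\inf P}|_{U_i}$. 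Since $i(F_{A_i,B_i})$ is by definition a \emph{smooth} 2-functor on $\mathcal{P}_2(U_i)$ induced from a $\Gamma$-connection, this exactly provides the local trivialization required by the definition of $\mathrm{Trans}_{\Gamma}(M,\tor\Gamma)$.

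Second, I would verify the cocycle/descent data on double and triple overlaps. On $U_i \cap U_j$, the composition $J_i^{-1} \circ J_j$ (in the bicategorical sense, using that 1-morphisms between principal $\Gamma$-2-bundles are invertible up to 2-isomorphism) yields a 1-morphism between trivial bundles $\inf I_{A_j,B_j}|_{U_i \cap U_j} \to \inf I_{A_i,B_i}|_{U_i \cap U_j}$, which by \refclassification\ corresponds to a gauge transformation $(g_{ij},\varphi_{ij})$; \cref{ex:trivbunpt} then identifies the transition $\eta_{A_i,B_i}^{-1} \bullet \rho_{J_i}^{-1} \bullet \rho_{J_j} \bullet \eta_{A_j,B_j}$ with the smooth pseudonatural transformation $i(\rho_{g_{ij},\varphi_{ij}})$ coming from the $\Gamma$-connections. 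On triple overlaps, \cref{ex:trivbunmod} supplies the analogous statement at the level of gauge 2-transformations. This shows that pseudonatural transformations and modifications between the local trivializations live in the expected smooth subcategory, fulfilling the remaining axioms for transport 2-functors.

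Finally, to extend to 1-morphisms and 2-morphisms in $\zweibunconff\Gamma M$, I would observe that any 1-morphism $J \colon \inf P \to \inf P'$ locally pulls back to a 1-morphism of trivial bundles, hence (again by the classification) to a gauge transformation of local $\Gamma$-connections, and \cref{ex:trivbunpt} identifies the induced pseudonatural transformation $\rho_J$ with the one coming from a gauge transformation, up to the local equivalences of step one; analogously for 2-morphisms via \cref{ex:trivbunmod}. The main obstacle will be organizing the local/global bookkeeping, namely: arranging a sufficiently fine common cover refining the trivializations of $\inf P$, $\inf P'$, and $J$; checking that the smoothness conditions in the definition of $\mathrm{Trans}_{\Gamma}(M,\tor\Gamma)$ of \cite{schreiber2} are exactly those captured by our $\Gamma$-connection data; and ensuring that the coherence 2-isomorphisms produced by \cref{ex:trivbuntra,ex:trivbunpt,ex:trivbunmod} assemble consistently across overlaps. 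Once this bookkeeping is settled, the statement follows formally from the three examples and the presheaf property of $\zweibunconff\Gamma{-}$.
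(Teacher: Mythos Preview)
Your proposal is correct and follows essentially the same approach as the paper: the paper splits the proof into three propositions (for objects, 1-morphisms, 2-morphisms), each proved by choosing local trivializations $T_i:\inf I_{A_i,B_i}\to\inf P|_{U_i}$, forming $\tau_i:=\rho_{T_i}\circ\eta_{A_i,B_i}$, extracting descent data on overlaps, and identifying this data (up to explicit modifications) with smooth data via \cref{ex:trivbuntra,ex:trivbunpt,ex:trivbunmod} and the results of \reffunctorLproperties. One small correction: the transition $\gamma_{ij}$ is not \emph{equal} to $i(\rho_{g_{ij},\varphi_{ij}})$ but only 2-isomorphic to it via a modification $\mathcal{A}_{ij}$ built from a chosen 2-morphism $\sigma_{ij}:T_j^{-1}\circ T_i\Rightarrow J_{g_{ij},\varphi_{ij}}$; the \quot{bookkeeping} you flag consists precisely of verifying that the modifications $\mathcal{F}_{ijk}$, $\mathcal{E}_{ij}$, $\mathcal{F}$ on higher overlaps are conjugate under these $\mathcal{A}_{ij}$ to $i$ applied to smooth modifications, which the paper does by assembling commutative diagrams whose inner squares are instances of \cref{ex:trivbunmod} and \cref{eq:re:examples:obj:c}.
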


One nice consequence of \cref{th:pt2fun} is the following general result about transport 2-functors, see \cite[Proposition 3.3.6]{schreiber2}.

\begin{corollary}
\label{co:flat}
If $\Omega$ is a  flat connection on a principal $\Gamma$-2-bundle, then the parallel transport along bigons only depends on the homotopy class of the bigon.
\end{corollary}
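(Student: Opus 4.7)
The plan is to invoke the transport 2-functor framework established in \cref{th:pt2fun}. Since flatness implies fake-flatness, $\Omega$ gives rise to a transport 2-functor $\mathrm{tra}_{\inf P}:\mathcal{P}_2(M)\to\tor\Gamma$, and the claim is a statement purely about this 2-functor: for any two homotopic bigons $\Sigma,\Sigma':\gamma\Rightarrow\gamma'$, the equality $\varphi_{\Sigma}=\varphi_{\Sigma'}$ should hold. By the local triviality axiom built into $\mathrm{Trans}_{\Gamma}(M,\tor\Gamma)$, the 2-functor $\mathrm{tra}_{\inf P}$ is locally equivalent to $i\circ F_{A,B}$ for some $\Gamma$-connection $(A,B)$ on an open set $U\subset M$, via pseudonatural transformations of the kind constructed in \cref{ex:trivbuntra}. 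The flatness of $\Omega$ transports under such an equivalence to flatness of the local $\Gamma$-connection $(A,B)$.

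Next I would use the fact that for a \emph{flat} $\Gamma$-connection the surface-ordered exponential $\SE_{A,B}(\Sigma)$ depends only on the homotopy class of $\Sigma$ rel.\ boundary (this is the well-known non-abelian Stokes-type statement for flat 2-connections, collected in the appendix of \cite{schreiber5}). Combined with \cref{eq:T2:eta}, this shows that in the local trivialization over $U$ the bigon-transport $\varphi_{\Sigma}$ depends only on $[\Sigma]_{\mathrm{htp}}$. Thus $\mathrm{tra}_{\inf P}|_{\mathcal{P}_2(U)}$ descends through the canonical 2-functor $\mathcal{P}_2(U)\to\Pi_2(U)$ to the fundamental 2-groupoid.

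To globalize, I would proceed as in the proof of \cref{lem:canidthin}: given a homotopy $h:[0,1]\times[0,1]^2\to M$ between $\Sigma$ and $\Sigma'$, use compactness to subdivide $h$ so that each small sub-cube lands in an open set $U$ supporting such a local trivialization. On each piece the local factorization through $\Pi_2(U)$ yields equality of the corresponding bigon-transports, and the pseudonaturality of the trivializing transformations (axioms (T1)--(T2) used in \cref{prop:pt}) allows these local equalities to be glued across overlaps. Horizontal and vertical composition respected by $\mathrm{tra}_{\inf P}$ (\cref{lem:F1,lem:F2}) then assembles the global identity $\varphi_{\Sigma}=\varphi_{\Sigma'}$.

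The main obstacle is the gluing step: the local trivializations are only pseudonatural equivalences, not strict, so the compatibility of transported bigon-data across overlaps involves interpolating 2-cells and requires that the surface-ordered exponential behave correctly under the gauge transformations relating two local $\Gamma$-connections. This is exactly what is packaged into the axioms of $\mathrm{Trans}_{\Gamma}(M,\tor\Gamma)$ (and proved in detail in \cite[Prop.~3.3.6]{schreiber2}), so once flatness has been translated into a local statement about $(A,B)$, the cited abstract result can be applied to conclude.
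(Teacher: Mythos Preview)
Your proposal is correct and lands on essentially the same argument as the paper: both reduce the claim to \cref{th:pt2fun} together with the general result \cite[Prop.~3.3.6]{schreiber2} about transport 2-functors. The paper's proof is in fact nothing more than that one-line citation; all the local-trivialization, flat-$(A,B)$, and gluing considerations you sketch are precisely the content of that cited proposition, so there is no need to unpack them here.
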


As a further consequence, the discussion of surface holonomy given in \cite[Section 5]{schreiber2} applies to principal $\Gamma$-2-bundles. 
In the remainder of this subsection we prove \cref{th:pt2fun}, split into \cref{prop:transfun,prop:transpseudo,prop:transmodi}.

\begin{proposition}
\label{prop:transfun}
If $\inf P$ is a principal $\Gamma$-2-bundle over $M$ with fake-flat connection, then the 2-functor $\mathrm{tra}_{\inf P}$ of \cref{prop:fun} is a  transport 2-functor with $B\Gamma$-structure.
\end{proposition}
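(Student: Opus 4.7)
The plan is to exhibit a local $\pi$-trivialization of $\mathrm{tra}_{\inf P}$ with $B\Gamma$-structure, in the sense of \cite{schreiber2}. Recall that this requires a surjective submersion $\pi: Y \to M$, a smooth 2-functor $\mathrm{triv}: \mathcal{P}_2(Y) \to B\Gamma$, and a pseudonatural equivalence $t: \pi^{*}\mathrm{tra}_{\inf P} \Rightarrow i \circ \mathrm{triv}$ whose descent data on $Y^{[2]}$ and $Y^{[3]}$ are smooth (i.e. come from $\Gamma$-connections, gauge transformations, and gauge 2-transformations under the 2-functor $i$).

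First I would choose a good open cover $\{U_{\alpha}\}$ of $M$ and set $Y := \bigsqcup_{\alpha} U_{\alpha}$, with $\pi$ the canonical projection. Since $\zweibunconff\Gamma{-}$ is a stack, on each $U_{\alpha}$ one can choose a 1-morphism $J_{\alpha}: \inf I_{A_{\alpha},B_{\alpha}} \to \pi^{*}\inf P|_{U_{\alpha}}$ in $\zweibunconff\Gamma{U_{\alpha}}$ for some fake-flat $\Gamma$-connection $(A_{\alpha},B_{\alpha})$ on $U_{\alpha}$; such local trivializations exist because $\pi^{*}\inf P|_{U_{\alpha}}$ can be refined to the action groupoid of a principal $G$-bundle and classified as in \refclassification. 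Composing the pseudonatural equivalences $\rho_{J_{\alpha}}$ from \cref{prop:pt} with the pseudonatural equivalences $\eta_{A_{\alpha},B_{\alpha}}$ from \cref{ex:trivbuntra} gives, on each piece, an equivalence
\begin{equation*}
t_{\alpha}: \pi^{*}\mathrm{tra}_{\inf P}|_{U_{\alpha}} \Rightarrow i(F_{A_{\alpha},B_{\alpha}})\text{,}
\end{equation*}
and I would assemble these into the  required pseudonatural equivalence $t$, with $\mathrm{triv} := \bigsqcup_{\alpha} F_{A_{\alpha},B_{\alpha}}: \mathcal{P}_2(Y) \to B\Gamma$. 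Smoothness of $\mathrm{triv}$ follows from \cref{sec:smoothfunctors} since it is built directly from smooth $\Gamma$-connections.

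Next I would verify the descent condition on $Y^{[2]}=\bigsqcup_{\alpha,\beta}U_{\alpha}\cap U_{\beta}$. On an overlap $U_{\alpha\beta}$, comparing the two trivializations produces a 1-morphism
\begin{equation*}
J_{\beta}^{-1} \circ J_{\alpha}: \inf I_{A_{\alpha},B_{\alpha}} \to \inf I_{A_{\beta},B_{\beta}}
\end{equation*}
in $\zweibunconff\Gamma{U_{\alpha\beta}}$, which by \refclassification\ corresponds (up to a 2-morphism in $\zweibunconff\Gamma{U_{\alpha\beta}}$) to a 1-morphism $J_{g_{\alpha\beta},\varphi_{\alpha\beta}}$ coming from a gauge transformation. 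Applying $\mathrm{tra}$ and using \cref{ex:trivbunpt}, the induced pseudonatural transformation on the overlap is the image under $i$ of the smooth pseudonatural transformation $\rho_{g_{\alpha\beta},\varphi_{\alpha\beta}}$ associated to the $\Gamma$-connection gauge transformation, up to a 2-morphism; this is precisely the smoothness condition on the transition data. The analogous verification on triple overlaps $Y^{[3]}$ uses the existence of gauge 2-transformations together with \cref{ex:trivbunmod}.

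The main obstacle will be  handling the fact that the comparison 1-morphisms $J_{\beta}^{-1}\circ J_{\alpha}$ need not come literally from smooth functors on the nose, but only up to connection-preserving 2-morphisms. One must therefore choose, coherently on all overlaps, gauge transformations $(g_{\alpha\beta},\varphi_{\alpha\beta})$ together with connection-preserving 2-morphisms $f_{\alpha\beta}: J_{g_{\alpha\beta},\varphi_{\alpha\beta}} \Rightarrow J_{\beta}^{-1}\circ J_{\alpha}$, refining $Y$ if necessary, and then appeal to \cref{prop:mod} and \cref{ex:trivbunmod} to absorb these 2-morphisms into the descent datum. Once this is in place, all smoothness conditions of \cite{schreiber2} for a $\pi$-local $i$-trivialization are immediate consequences of the smoothness of $F_{A_{\alpha},B_{\alpha}}$, $\rho_{g_{\alpha\beta},\varphi_{\alpha\beta}}$, and $\mathcal{A}_{a_{\alpha\beta\gamma}}$ from \cref{sec:smoothfunctors}, which completes the proof.
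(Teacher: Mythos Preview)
Your proposal is correct and follows essentially the same strategy as the paper: local trivializations are built from the composites $\rho_{T_i}\circ\eta_{A_i,B_i}$ (note the direction: these go from $i(F_{A_i,B_i})$ to $\mathrm{tra}_{\inf P}|_{U_i}$, so your $t_\alpha$ should be the inverse), and smoothness of the descent data on double and triple overlaps is verified via gauge transformations and gauge 2-transformations using \cref{ex:trivbunpt,ex:trivbunmod}. The paper carries out the triple-overlap step more explicitly---it constructs the modification $\mathcal{F}_{ijk}$ and checks a concrete commutative diagram using the gauge 2-transformation $a_{ijk}$ furnished by \reffunctorLproperties---whereas you treat this step only schematically, but the ingredients you name are the correct ones.
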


\begin{remark}
For the proof we extract and slightly reformulate the following  results of \reffunctorLproperties.
Let $\inf P$ be a principal $\Gamma$-2-bundle over $M$ with fake-flat connection. 
\begin{enumerate}[(a)]

\item 
\label{re:examples:obj:a}
Every point $x\in M$ has an open neighborhood $x\in U \subset M$ together with a $\Gamma$-connection $(A,B)$ on $U$ and a 1-morphism $T: \inf I_{A,B} \to \inf P$ in $\zweibunconff\Gamma M$.

\item
\label{re:examples:obj:b}
Suppose $U\subset M$ is a contractible open set. If $(A,B)$ and $(A',B')$ are $\Gamma$-connections on $U$, and $T: \inf I_{A,B} \to \inf P|_U$ and $T': \inf I_{A',B'} \to \inf P|_U$ are 1-morphisms in $\zweibunconff\Gamma U$, then there exists a gauge transformation $(g,\varphi):(A,B) \to (A',B')$ and a 2-morphism
$\tilde\sigma:T'^{-1}\circ T \Rightarrow J_{g,\varphi}$
in $\zweibunconff\Gamma M$.
\begin{comment}
We use that the Hom-Functor
\begin{equation*}
\hom_{\conff\Gamma U}((A,B),(A',B')) \to \hom_{\zweibunconff \Gamma U}(\inf I_{A,B},\inf I_{A',B'})
\end{equation*}
is essentially surjective, applied to the object $T'^{-1} \circ T:\inf I_{A,B} \to \inf I_{A',B'}$. 
\end{comment}

\item
\label{re:examples:obj:c}
Suppose $U\subset M$ is  an open set, and $I$ is some (index) set. If, for each $i\in I$,  $(A_i,B_i)$ are $\Gamma$-connections on $U$, $T_i:\inf I_{A_i,B_i} \to \inf P|_U$ are 1-morphisms in  $\zweibunconff\Gamma U$, $(g_{ij},\varphi_{ij}):(A_i,B_i) \to (A_j,B_j)$ are gauge transformations, and $\sigma_{ij}:T_j^{-1}\circ T_i \Rightarrow J_{g_{ij},\varphi_{ij}}$ are 2-morphisms in   $\zweibunconff\Gamma U$, then there exists a unique gauge 2-transformation
\begin{equation*} 
a_{ijk}: (g_{jk},\varphi_{jk}) \circ (g_{ij},\varphi_{ij}) \Rightarrow (g_{ik},\varphi_{ik})\text{,}
\end{equation*}
and a commutative diagram
\begin{equation}
\label{eq:re:examples:obj:c}
\alxydim{@R=\xyst@C=4em}{ T_k^{-1} \circ T_j \circ T_j^{-1} \circ T_i \ar@{=>}[r]^-{\sigma_{jk} \circ \sigma_{ij}} \ar@{=>}[d]_-{\id \circ d_{T_j} \circ \id} & J_{g_{jk},\varphi_{jk}} \circ J_{g_{ij},\varphi_{ij}}  \ar@{=>}[d]^-{f_{a_{ijk}}}  \\  T_k^{-1} \circ T_i\ar@{=>}[r]_{\sigma_{ik}} & J_{g_{ik},\varphi_{ik}}}
\end{equation}
where $d_{F}: J \circ J^{-1} \Rightarrow \id$ stands for the canonical \quot{death} transformation expressing the invertibility of an anafunctor $J$. 
\begin{comment}
We use that the Hom-functor
\begin{equation*}
\hom_{\conff\Gamma U}((A_1,B_1),(A_3,B_3)) \to \hom_{\zweibunconff \Gamma U}(\inf I_{A_1,B_1},\inf I_{A_3,B_3})
\end{equation*}
is full and faithful, applied to the 2-morphism
\begin{equation*}
\alxydim{@C=1.6cm}{J_{g_{jk},\varphi_{jk}} \circ J_{g_{ij},\varphi_{ij}} \ar@{=>}[r]^-{\sigma_{jk}^{-1} \circ \sigma_{ij}^{-1}} & T_k^{-1} \circ T_j \circ T_j^{-1} \circ T_i \ar@{=>}[r]^-{\id \circ d_{T_j} \circ \id} & T_k^{-1} \circ T_i \ar@{=>}[r]^-{\sigma_{ik}} & J_{g_{ik},\varphi_{ik}} }
\end{equation*}
\end{comment}

\end{enumerate}
\end{remark}

\begin{proof}[Proof of \cref{prop:transfun}]
The first step  is to specify local trivializations. 
Consider an open set $U \subset M$ as in \cref{re:examples:obj:a}, a $\Gamma$-connection $(A,B)$ and a 1-morphism $T:\inf I_{A,B} \to \inf P|_{U}$.  By \cref{ex:trivbuntra,prop:pt} we obtain a pseudonatural transformation
\begin{equation}
\label{eq:pseudotau}
\tau:= \rho_{T} \circ \eta : i( F_{A,B} )\to \mathrm{tra}_{\inf P}|_{U}\text{.} 
\end{equation}
We also have to fix a \quot{weak inverse} of $\tau$, and choose $\tau^{-1} :=\eta^{-1}\circ \rho_{T^{-1}}$. Here it is important that $\eta$ is \quot{strictly invertible} because $\eta_x=\id$. Finally, we need to fix
 modifications $\mathcal{D}_{\tau}: \tau \circ \tau^{-1} \to \id_{i(F_{A,B})}$ and $\mathcal{B}_{\tau}: \id_{\mathrm{tra}_{\inf P}|_U} \Rightarrow \tau^{-1}\circ \tau$, which can be induced from the canonical transformations $d_T: T \circ T^{-1} \Rightarrow \id_{\inf I_{\Psi}}$ and $b_T: \inf P|_U \Rightarrow T^{-1} \circ T$ via \cref{prop:mod}.
\begin{comment}
Indeed, $\mathcal{B}_{\tau}$ is defined by
\begin{equation*}
\alxydim{@C=2.5cm}{\id=\eta^{-1} \circ \eta =\eta^{-1} \circ \rho_{\id}\circ \eta^{-1} \ar@{=>}[r]^-{\id_{\eta^{-1}} \circ \mathcal{A}_{b_T} \circ \id_{\eta}} & \eta^{-1} \circ \rho_{T^{-1}\circ T}\circ \eta}\eqtext{Strictness in \cref{th:2funct}} \eta^{-1}\circ \rho_{T^{-1}}\circ \rho_T \circ \eta=\tau^{-1} \circ \tau
\end{equation*}
and $\mathcal{D}_{\tau}$ is defined by
\begin{equation*}
\tau \circ \tau^{-1} =\eta^{-1}\circ \rho_{T}\circ \rho_{T^{-1}} \circ \eta \eqtext{Strictness in \cref{th:2funct}} \eta^{-1} \circ \rho_{T\circ T^{-1}}\circ \eta \alxydim{@C=2.5cm}{ \ar@{=>}[r]^-{\id_{\eta^{-1}} \circ \mathcal{A}_{d_{T}} \circ \id_{\eta}} &\eta^{-1}\circ \rho_{\id}\circ\eta=\eta^{-1}\circ \eta=\id\text{.}} \end{equation*}
\end{comment} 

In the second step we form an open cover $\{U_i\}_{i\in M}$ of $M$ composed of open sets as above, with contractible double  intersections. Over each open set $U_i$ we choose a $\Gamma$-connection $(A_i,B_i)$ and a 1-morphism $T_i: \inf I_{A_i,B_i} \to \inf P|_{U_i}$,  and consider the induced  local trivialization $(\tau_i,\tau_{i}^{-1},\mathcal{D}_i,\mathcal{B}_i)$. Now we have to  extract descent data. The first descent datum are the 2-functors $F_{A_i,B_i}:\mathcal{P}_2(U_i) \to B\Gamma$. The second descent datum are the pseudonatural transformations
\begin{equation*}
\gamma_{ij} :=\tau_j^{-1}\circ \tau_i: i(F_{A_i,B_i}) \to i( F_{A_j,B_j})
\end{equation*}
between 2-functors $\mathcal{P}_2(U_i \cap U_j) \to \tor\Gamma$. The third descent datum consists of the modifications $\mathcal{B}_i : \id_{i(F_{A_i,B_i})} \Rightarrow \gamma_{ii}$  and  $\mathcal{F}_{ijk}: \gamma_{jk} \circ \gamma_{ij} \Rightarrow \gamma_{ik}$ defined by
\begin{equation}
\label{eq:def:f}
\alxydim{@C=2.5cm}{\gamma_{jk} \circ \gamma_{ij} =\tau_k^{-1}\circ \tau_j \circ \tau_j^{-1}\circ \tau_i \ar@{=>}[r]^-{\id \circ \mathcal{D}_j  \circ \id} & \tau_k^{-1}\circ \tau_i= \gamma_{ik}\text{.}} 
\end{equation}

The third step is to show that all this descent data is smooth in a certain sense. For the 2-functors $F_{A_i,B_i}$ this simply means that they have to be smooth, which is the case. For the pseudonatural transformation $\gamma_{ij}$ it suffices to show that it factors through a \emph{smooth} pseudonatural transformation $\tilde \gamma_{ij}:F_{A_i,B_i} \to  F_{A_j,B_j}$, i.e. $\gamma_{ij}\cong i(\tilde \gamma_{ij})$. We construct $\tilde \gamma_{ij}$ as follows. Since $U_i \cap U_j$ is contractible,  there exist  gauge transformations $(g_{ij},\varphi_{ij}):(A_i,B_i) \to (A_j,B_j)$ and  2-morphisms $\sigma_{ij}:T_j^{-1}\circ T_i \Rightarrow  J_{ij}$, where we write $J_{ij} := J_{g_{ij},\varphi_{ij}}$ for short; see \cref{re:examples:obj:b}. We let $\tilde \gamma_{ij} := \rho_{g_{ij},\varphi_{ij}}$ be the smooth pseudonatural transformation (\cref{sec:smoothfunctors}).
 We define a modification $\mathcal{A}_{ij}: \gamma_{ij} \Rightarrow i(\tilde \gamma_{ij})$  as follows:
\begin{equation}
\label{eq:def:alpha}
\alxydim{@C=1.7cm}{\gamma_{ij} =\eta_j^{-1}\circ \rho_{T_j}^{-1} \circ  \rho_{T_i} \circ \eta_i = \eta_j^{-1} \circ \rho_{T_j^{-1}\circ T_i} \circ \eta_i \ar@{=>}[r]^-{\id \circ \mathcal{A}_{\sigma_{ij}} \circ \id} &} \eta_j^{-1}\circ \rho_{J_{ij}} \circ \eta_i \eqcref{ex:trivbunpt} i (\tilde\gamma_{ij})\text{.}
\end{equation}

Finally, we have to verify the smoothness of the modifications $\mathcal{B}_i$ and $\mathcal{F}_{ijk}$. For this we have to show that there exist smooth modifications $\widetilde {\mathcal{B}}_i: \id_{F_{A_i,B_i}} \Rightarrow \tilde \gamma_{ii}$ and $\widetilde {\mathcal{F}}_{ijk}:\tilde \gamma_{jk} \circ \tilde \gamma_{ij} \Rightarrow \tilde \gamma_{ik}$ such that
\begin{equation}
\label{eq:tpf:show}
\mathcal{B}_i=\mathcal{A}_{ii}^{-1} \bullet i(\widetilde {\mathcal{B}}_i)
\quand
\mathcal{F}_{ijk} = \mathcal{A}_{ik}^{-1}\bullet i(\widetilde {\mathcal{F}}_{ijk})\bullet (\mathcal{A}_{jk} \circ \mathcal{A}_{ij})\text{.}
\end{equation}
Without loss of generality we can assume that $g_{ii}=1$, $\varphi_{ii}=0$, and $\sigma_{ii}=b_{T_i}^{-1}$, so that  $\tilde \gamma_{ii}=\id_{F_{A_i,B_i}}$ and $J_{ii}=\id_{\inf I_{A_i,B_i}}$. This shows $\mathcal{B}_i = \mathcal{A}_{ii}^{-1}$, i.e. $\widetilde {\mathcal{B}}_i:=\id$ does the job. On a triple overlap $U_i \cap U_j \cap U_k$, we obtain via \cref{re:examples:obj:c} a gauge 2-transformation
\begin{equation*}
a_{ijk}: (g_{jk},\varphi_{jk}) \circ (g_{ij},\varphi_{ij}) \Rightarrow (g_{ik},\varphi_{ik})\text{.}
\end{equation*} 
We let $\widetilde {\mathcal{F}}_{ijk} := \mathcal{A}_{a_{ijk}}$ be the smooth modification (\cref{sec:smoothfunctors}).
We have a diagram
\begin{equation*}
\alxydim{@C=1cm@R=\xyst}{ \gamma_{jk} \circ \gamma_{ij}\ar@{=}[dr] \ar@{=>}[ddd]_{\mathcal{F}_{ijk}} \ar@{=>}[rrrr]^{\mathcal{A}_{jk} \circ \mathcal{A}_{ij}} &&&& i(\tilde \gamma_{jk}) \circ i(\tilde \gamma_{ij}) \ar@{=}[dl]\ar@{=>}[ddd]^{i(\widetilde {\mathcal{F}}_{ijk})} \\ & \mquad\mquad\eta_k^{-1} \circ \rho_{T_k^{-1}}\circ \rho_{T_j} \circ \rho_{T_j^{-1}} \circ \rho_{T_i} \circ \eta_i \ar@{=>}[d]_{\id \circ \id \circ \mathcal{A}_{d_j} \circ \id \circ \id} \ar@{=>}[rr]^-{  \id \circ \mathcal{A}_{\tilde\sigma_{jk}} \circ \mathcal{A}_{\tilde\sigma_{ij}} \circ \id} &&  \eta_k^{-1} \circ \rho_{J_{jk}} \circ \rho_{J_{ij}} \circ \eta_i\mquad\mquad \ar@{=>}[d]^{\id \circ \mathcal{A}_{\eta_{a_{ijk}}} \circ \id}
\\ & \eta_k^{-1} \circ \rho_{T_k^{-1}}\circ \rho_{T_i} \circ \eta_i \ar@{=}[dl] \ar@{=>}[rr]_{\id \circ \mathcal{A}_{\tilde\sigma_{ik}} \circ \id} && \eta_k^{-1} \circ \rho_{J_{ik}} \circ \eta_i \ar@{=}[dr]
\\      \gamma_{ik} \ar@{=>}[rrrr]_{\mathcal{A}_{ik}} &&&& i(\tilde \gamma_{ik})}
\end{equation*}
The subdiagrams at the top and at the bottom are commutative due to the definition of $\mathcal{A}_{ij}$ (\cref{eq:def:alpha}). The subdiagram on the left  is the definition of $\mathcal{F}_{ijk}$ (\cref{eq:def:f}). The subdiagram on the right is the one of \cref{ex:trivbunmod}. The subdiagram in the middle is induced from  the diagram of \cref{eq:re:examples:obj:c} and hence commutative.
Thus, the whole diagram is commutative; this is the second equation in \cref{eq:tpf:show}.
\end{proof}

\begin{proposition}
\label{prop:transpseudo}
If $J:\inf P \to \inf P'$ is a 1-morphism in $\zweibunconff \Gamma M$, then the pseudonatural transformation $\rho_J:\mathrm{tra}_{\inf P}\to \mathrm{tra}_{\inf P'}$ of \cref{prop:pt} is a 1-morphism between  transport 2-functors.
\end{proposition}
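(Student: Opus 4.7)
The plan is to adapt the local-trivialization argument used for \cref{prop:transfun}. Fix trivializing data for $\inf P$ as in that proof: an open cover $\{U_i\}$ of $M$ with contractible double intersections, $\Gamma$-connections $(A_i,B_i)$, 1-morphisms $T_i: \inf I_{A_i,B_i} \to \inf P|_{U_i}$ in $\zweibunconff\Gamma{U_i}$, with associated local trivializations $\tau_i = \rho_{T_i} \circ \eta_i$. Choose analogous trivializing data for $\inf P'$, with connections $(A_i',B_i')$, 1-morphisms $T'_i: \inf I_{A_i',B_i'} \to \inf P'|_{U_i}$, and trivializations $\tau_i'$. For each $i$, the composition $T_i^{\prime -1} \circ (J|_{U_i})\circ T_i:\inf I_{A_i,B_i} \to \inf I_{A_i',B_i'}$ is a 1-morphism in $\zweibunconff\Gamma{U_i}$ between trivial bundles over the contractible set $U_i$, so by \cref{re:examples:obj:b} there exist a gauge transformation $(g_i,\varphi_i):(A_i,B_i) \to (A_i',B_i')$ and a 2-morphism $\sigma_i : T_i^{\prime -1}\circ J \circ T_i \Rightarrow J_{g_i,\varphi_i}$.

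Next I would use \cref{rem:trivbun} and \cref{sec:smoothfunctors} to define the smooth pseudonatural transformation $\tilde\epsilon_i := \rho_{g_i,\varphi_i}: F_{A_i,B_i} \to F_{A_i',B_i'}$, and introduce a modification $\mathcal{E}_i : \tau_i'^{-1} \circ \rho_J \circ \tau_i \Rightarrow i(\tilde\epsilon_i)$ by pasting $\eta_i'^{-1} \circ \mathcal{A}_{\sigma_i} \circ \eta_i$ with the equivalence of \cref{ex:trivbunpt}, exactly as $\mathcal{A}_{ij}$ was built from $\sigma_{ij}$ in the proof of \cref{prop:transfun}. These $(\tilde\epsilon_i,\mathcal{E}_i)$ together with the $(\tilde\gamma_{ij},\mathcal{A}_{ij})$ and $(\tilde\gamma'_{ij},\mathcal{A}'_{ij})$ already fixed for the source and target 2-functors make up the descent data  describing $\rho_J$ locally in terms of smooth objects in $\conff\Gamma{U_i}$.

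The substantive step is to verify the coherence axiom relating the local descriptions on a double overlap $U_i \cap U_j$. Concretely, over $U_i \cap U_j$ we have two gauge transformations from $(A_i,B_i)$ to $(A_j',B_j')$, namely $(g_j,\varphi_j) \circ (g_{ij},\varphi_{ij})$ and $(g'_{ij},\varphi'_{ij}) \circ (g_i,\varphi_i)$. Applying \cref{re:examples:obj:c} (in a version with two trivializations on each side, playing off $T_j \circ J_{ij}$ against $J'_{ij}\circ T_i'$) yields a unique gauge 2-transformation
\begin{equation*}
b_{ij}:(g_j,\varphi_j) \circ (g_{ij},\varphi_{ij}) \Rightarrow (g'_{ij},\varphi'_{ij}) \circ (g_i,\varphi_i)
\end{equation*}
together with a commutative diagram analogous to \cref{eq:re:examples:obj:c}. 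One then constructs the required smooth modification $\widetilde{\mathcal{E}}_{ij}: \tilde\epsilon_j \circ \tilde\gamma_{ij} \Rightarrow \tilde\gamma'_{ij} \circ \tilde\epsilon_i$ as $\mathcal{A}_{b_{ij}}$ from \cref{sec:smoothfunctors}, and checks that $i(\widetilde{\mathcal{E}}_{ij})$ fits into the pentagon obtained by pasting the $\mathcal{E}_i,\mathcal{E}_j,\mathcal{A}_{ij},\mathcal{A}'_{ij}$ with the canonical death-transformation modifications. That pentagon reduces, exactly as in the proof of \cref{prop:transfun}, to the image under $i\circ(-)$ of a diagram in $\zweibunconff\Gamma{U_i\cap U_j}$ that commutes by construction of $b_{ij}$ through \cref{re:examples:obj:c}.

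The main obstacle is this last pentagon: one has to carefully track how $\mathcal{A}_{\sigma_i}$, $\mathcal{A}_{\sigma_j}$, $\mathcal{A}_{\sigma_{ij}}$, $\mathcal{A}_{\sigma'_{ij}}$ and the canonical death transformations $d_{T_j}, d_{T_j'}$ interact, and insert the various strictness statements from \cref{th:2funct} to collapse composites $\rho_{T} \circ \rho_{T'}$ to $\rho_{T\circ T'}$. Modulo that bookkeeping, everything else is formal: the checks on triple overlaps reduce to (and follow from) the uniqueness clause in \cref{re:examples:obj:c} applied to the gauge 2-transformations $a_{ijk}, a'_{ijk}, b_{ij}, b_{ik}, b_{jk}$, exactly as the cocycle compatibility of the $\widetilde{\mathcal{F}}_{ijk}$ was established in the proof of \cref{prop:transfun}. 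Once the smoothness of $(\tilde\epsilon_i,\widetilde{\mathcal{E}}_{ij})$ is verified in this form, $\rho_J$ satisfies the defining smoothness axiom of a 1-morphism of transport 2-functors with $B\Gamma$-structure.
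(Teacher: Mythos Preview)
Your strategy is essentially the paper's own proof: choose local trivializations $T_i,T_i'$ on a common cover, pull $J$ down to gauge transformations $(g_i,\varphi_i)$ via 2-morphisms $\sigma_i$, take $\tilde\epsilon_i=\rho_{g_i,\varphi_i}$ as the smooth local model for $\tau_i'^{-1}\circ\rho_J\circ\tau_i$, and on double overlaps produce a gauge 2-transformation whose associated smooth modification $\widetilde{\mathcal E}_{ij}$ closes the pentagon. The paper packages exactly these steps using a dedicated remark (labelled \cref{re:examples:morph:a}/\cref{re:examples:morph:b} there) rather than the object-level \cref{re:examples:obj:b}/\cref{re:examples:obj:c} you cite; your parenthetical ``version with two trivializations on each side'' is precisely what \cref{re:examples:morph:b} states, so you are re-deriving that remark rather than invoking a misapplied one.

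Two small corrections. First, \cref{re:examples:obj:b} as stated requires both trivializations to land in the \emph{same} bundle, so it does not literally give you $\sigma_i$; what you actually need (and what the paper uses) is essential surjectivity of the Hom-functor of $L^{\ff}$ applied to $T_i'^{-1}\circ J\circ T_i$, which is \cref{re:examples:morph:a}. Second, there is no triple-overlap condition to check here: the axioms for a 1-morphism of transport 2-functors only demand smoothness of the $\lambda_i$ and of the $\mathcal E_{ij}$, so your closing paragraph about $a_{ijk},a'_{ijk},b_{ik},b_{jk}$ is superfluous. With those adjustments your outline matches the paper's argument.
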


\begin{remark}
For the proof we extract and slightly reformulate the following  results of   \reffunctorLproperties.
Suppose $J:\inf P \to \inf P'$ is a 1-morphism in $\zweibunconff \Gamma M$. \begin{enumerate}[(a)]

\item 
\label{re:examples:morph:a}
Every point $x\in M$ has an open neighborhood $x\in U \subset M$ such that there exist $\Gamma$-connections $(A,B)$ and $(A',B')$ on $U$, 1-morphisms $T:\inf I_{A,B} \to \inf P|_U$ and $T':\inf I_{A',B'} \to \inf P'|_U$, a gauge transformation $(h,\phi):(A,B) \to (A',B')$, and a 2-morphism
$\tau:  T'^{-1} \circ J \circ T\Rightarrow J_{h,\phi}$
in $\zweibunconff \Gamma M$.
\begin{comment}
The existence of $U$, $(A,B)$, $(A',B')$, $T$ and $T'$ follows from the essential surjectivity. If then $U$ is further refined to be contractible, essential surjectivity of the Hom-functor
\begin{equation*}
\hom_{\conff\Gamma U}((A,B),(A',B')) \to \hom_{\zweibunconff \Gamma U}(\inf I_{A,B},\inf I_{A',B'})
\end{equation*}
applied to the 1-morphism $T'^{-1} \circ J \circ T$ in $\zweibunconff \Gamma U$  yields the claim. 
\end{comment}

\item
\label{re:examples:morph:b}
Suppose $U \subset M$ is open, and we
 have a diagram of $\Gamma$-connections and gauge transformations
\begin{equation*}
\alxydim{@=\xyst}{(A_1,B_1) \ar[r]^-{(g,\varphi)} \ar[d]_{(h_1,\phi_1)} & (A_2,B_2)\ar[d]^{(h_2,\phi_2)} \\ (A_1',B_1') \ar[r]_-{(g',\varphi')} & (A_2',B_2')}
\end{equation*}
together with 1-morphisms  $T_i: \inf I_{A_i,B_i} \to \inf P|_U$ and $T_i': \inf I_{A'_i,B_i'} \to \inf P'|_U$, 2-morphisms  $\sigma:T_2^{-1}\circ T_1 \Rightarrow J_{g,\varphi}$, $\sigma':T_2^{\prime-1}\circ T'_1 \Rightarrow J_{g',\varphi'}$ and $\tau_i:  T_i^{\prime-1} \circ J \circ T_i\Rightarrow J_{h_i,\phi_i}$. Then, there exists a unique gauge 2-transformation 
\begin{equation*}
e: (h_2,\phi_2) \circ (g,\varphi) \Rightarrow (g',\varphi') \circ\ (h_1,\phi_1)\text{,}
\end{equation*}
and a commutative diagram
\begin{equation*}
\alxydim{@=\xyst}{T_j'^{-1}\circ J \circ T_j\circ T_j^{-1}\circ T_i \ar@{=>}[r]^-{\tau_j \circ \sigma}\ar@{=>}[d]_{\id \circ d_{T_i'}^{-1} \circ d_{T_j} \circ \id} & J_{h_j,\phi_j} \circ J_{g,\varphi} \ar@{=>}[d]^{\mathcal{A}_{e}} \\ T_j'^{-1} \circ T_i' \circ T_i'^{-1} \circ J \circ T_i\ar@{=>}[r]_-{\sigma' \circ \tau_i} & J_{g',\varphi'} \circ J_{h_i,\phi_i}\text{.} }
\end{equation*}
\begin{comment}
Indeed, this is the fully faithfulness of the Hom-functor
\begin{equation*}
\hom_{\conff\Gamma U}((A_1,B_1),(A'_2,B'_2)) \to \hom_{\zweibunconff \Gamma U}(\inf I_{A_1,B_1},\inf I_{A'_2,B'_2})
\end{equation*}
applied to the morphism
\begin{equation*}
\alxydim{@C=1.4cm}{J_{h_j,\phi_j} \circ J_{g,\varphi} \ar@{=>}[r]^-{\tau_j^{-1} \circ \sigma^{-1}} & T_j'^{-1}\circ J \circ T_j\circ T_j^{-1}\circ T_i \ar@{=>}[d]^-{\id \circ d_{T_i'}^{-1} \circ d_{T_j} \circ \id} \\ & T_j'^{-1} \circ T_i' \circ T_i'^{-1} \circ J \circ T_i \ar@{=>}[r]_-{\sigma' \circ \tau_i} & J_{g',\varphi'} \circ J_{h_i,\phi_i}}
\end{equation*}
in $\hom_{\zweibunconff \Gamma U}(\inf I_{A_1,B_1},\inf I_{A'_2,B'_2})$. \end{comment}

\end{enumerate}
\end{remark}

\begin{proof}[Proof of \cref{prop:transpseudo}]
We choose an open cover $\{ U_i \}_{i\in I}$ and over each open set the data of \cref{re:examples:morph:a}. We form the  pseudonatural transformations $\tau_i$ and $\tau'_i$  for $\inf P|_{U_i}$ and $\inf P'|_{U_i}$, respectively, as in \cref{eq:pseudotau}. We define the pseudonatural transformation 
\begin{equation*}
\lambda_i := \tau_i'^{-1}\circ  \rho_J \circ\tau_i:i ( F_{A_i,B_i})\to i( F_{A'_i,B'_i})\text{,}
\end{equation*}
which is the first descent datum. The second decent datum is over double intersections; it is the modification $\mathcal{E}_{ij}:\lambda_j \circ \gamma_{ij} \Rightarrow  \gamma'_{ij}\circ \lambda_i$ defined by
\begin{equation*}
\alxydim{@C=2.3cm}{\tau_j^{\prime-1}\circ \rho_J \circ \tau_j \circ \tau_j^{-1}\circ \tau_i \ar@{=>}[r]^-{\id \circ \id \circ \mathcal{D}_j \circ \id} & \tau_j^{\prime-1}\circ \rho_J \circ \tau_i \ar@{=>}[r]^-{\id \circ  \mathcal{D}_i^{-1} \circ\id \circ \id} &  \tau_j^{\prime-1}\circ \tau_i' \circ\tau^{\prime-1}_i\circ  \rho_J \circ\tau_i}\text{.}
\end{equation*}
For the first smoothness condition it suffices to show that $\lambda_i$ factors through a \emph{smooth} pseudonatural transformation $\tilde \lambda_{i}:F_{A_i,B_i} \to  F_{A'_i,B'_i}$, i.e. $\lambda_{i}\cong i(\tilde \lambda_{i})$. We construct $\tilde \lambda_{i}$ as follows. Using the gauge transformations $(h_i,\phi_i)$ of \cref{re:examples:morph:a} we let $\tilde \lambda_{i} := \rho_{h_{i},\phi_{i}}$ be the smooth pseudonatural transformation associated to $(h_i,\phi_i)$, see \cref{sec:smoothfunctors}.
We obtain a modification $\mathcal{L}_{i}: \lambda_{i} \Rightarrow i(\tilde \lambda_{i})$ defined as:
\begin{equation}
\label{eq:modiL}
\alxydim{@C=1.7cm}{\lambda_{i} = \eta_i'^{-1}\circ \rho_{T_i'^{-1}}\circ  \rho_J \circ \rho_{T_i} \circ \eta_i= \eta_i'^{-1} \circ \rho_{T_i'^{-1}\circ J\circ T_i} \circ \eta_i \ar@{=>}[r]^-{\id \circ \mathcal{A}_{\tilde\tau_{ij}} \circ \id} &} \eta_i'^{-1}\circ \rho_{J_{h_i,\phi_i}} \circ \eta_i \eqcref{ex:trivbunpt} i (\tilde\lambda_i)\text{.}
\end{equation}
For the second smoothness condition we have to show that there exists a smooth modification $\widetilde{\mathcal{E}}_{ij}:\tilde \lambda_j \circ \tilde \gamma_{ij} \Rightarrow  \tilde \gamma'_{ij}\circ \tilde \lambda_i$ such that
\begin{equation}
\label{eq:prooftrans}
i(\widetilde{\mathcal{E}}_{ij}) \bullet (\mathcal{L}_j  \circ \mathcal{A}_{ij})=(\mathcal{A}'_{ij} \circ \mathcal{L}_i)\bullet \mathcal{E}_{ij}\text{.}
\end{equation}
Indeed, over double intersections we find by \cref{re:examples:morph:b} a gauge-2-transformation
\begin{equation*}
e_{ij}: (h_j,\phi_j) \bullet (g_{ij},\varphi_{ij}) \Rightarrow (g'_{ij},\varphi'_{ij}) \bullet (h_i,\phi_i)\text{,}
\end{equation*}
from which we induce $\widetilde{\mathcal{E}}_{ij} := \mathcal{A}_{a_{ij}}$ via \cref{sec:smoothfunctors}. We have a diagram:\begin{equation*}
\alxydim{@C=2.2cm@R=\xyst}{\lambda_j \circ \gamma_{ij}\ar@{=}[dr] \ar@{=>}[ddd]_{\mathcal{E}_{ij}} \ar@{=>}[rrr]^{\mathcal{L}_j \circ \mathcal{A}_{ij}} &&& i(\tilde \lambda_j) \circ i(\tilde \gamma_{ij}) \ar@{=>}[ddd]^{i(\widetilde{\mathcal{E}_{ij}})} \\&\mquad\mquad\mquad \eta_j^{\prime-1} \circ \rho_{T_j^{\prime-1}\circ J\circ T_j} \circ \rho_{T_j^{-1}\circ T_i} \circ \eta_i \ar@{=>}[d]_{\mathcal{A}_{d_i^{-1}} \circ \mathcal{A}_{d_j}} \ar@{=>}[r]^-{\id \circ \mathcal{A}_{\tilde\tau_j} \circ \mathcal{A}_{\tilde \sigma_{ij}}\circ \id} &\eta_j'^{-1} \circ \rho_{K_j} \circ \rho_{F_{ij}} \circ \eta_i \ar@{=>}[d]^{\id \circ \mathcal{A}_{a_{ij}}\circ \id}\ar@{=}[ur] \mquad\mquad &\\&\mquad\mquad\mquad\eta_j^{\prime-1}\circ \rho_{T_j^{\prime-1}\circ T'_i} \circ \rho_{T_i'^{-1}\circ J\circ T_i} \circ \eta_i \ar@{=>}[r]_-{\id \circ \mathcal{A}_{\tilde\sigma_{ij}'}\circ \mathcal{A}_{\tilde\tau_i} \circ \id}&\eta_j^{\prime-1}\circ \rho_{F_{ij}'} \circ \rho_{K_i} \circ \eta_i \ar@{=}[dr]\mquad\mquad\mquad&\\\gamma'_{ij} \circ \lambda_i \ar@{=}[ur]\ar@{=>}[rrr]_{\mathcal{A}'_{ij} \circ \mathcal{L}_i} &&& i(\tilde \gamma'_{ij})\circ i(\tilde \lambda_i) }
\end{equation*}
The subdiagrams at the bottom and at the top are the definitions of $\mathcal{A}_{ij}$, $\mathcal{A}'_{ij}$, and $\mathcal{L}_i$. The subdiagram on the left is the definition of $\mathcal{E}_{ij}$, and the subdiagram on the right is commutative due to \cref{ex:trivbunmod}. The subdiagram in the middle is commutative by \cref{re:examples:morph:b}. Hence, the whole diagram is commutative; this is \cref{eq:prooftrans}. \end{proof}

\begin{proposition}
\label{prop:transmodi}
If $f:J_1 \Rightarrow J_2$ is a 2-morphism in $\zweibunconff\Gamma M$, then the modification $\mathcal{A}_{f}:\rho_{J_1}\Rightarrow \rho_{J_2}$ of \cref{prop:mod} is a 2-morphism of transport 2-functors.
\end{proposition}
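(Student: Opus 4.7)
The plan is to verify the smoothness axiom for a modification between transport 2-functors, which requires that locally, in the trivializations constructed in the proofs of \cref{prop:transfun,prop:transpseudo}, the modification $\mathcal{A}_f$ factors through a smooth modification between smooth pseudonatural transformations between smooth 2-functors into $B\Gamma$. I will reuse the data chosen in the proofs of \cref{prop:transfun,prop:transpseudo} separately for $J_1$ and $J_2$: for a sufficiently refined open cover $\{U_i\}_{i\in I}$ of $M$, I choose over each $U_i$ a common pair of 1-morphisms $T_i:\inf I_{A_i,B_i} \to \inf P|_{U_i}$ and $T_i':\inf I_{A_i',B_i'} \to \inf P'|_{U_i}$, together with gauge transformations $(h_i^{(k)},\phi_i^{(k)})$ and 2-morphisms $\tau_i^{(k)}: T_i'^{-1}\circ J_k \circ T_i \Rightarrow J_{h_i^{(k)},\phi_i^{(k)}}$ for $k\in\{1,2\}$.

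First I would apply the fully faithfulness of the Hom-functor of \reffunctorLproperties\ (used in the same form as in \cref{re:examples:morph:b}) on each $U_i$ to the 2-morphism
\begin{equation*}
\alxydim{@C=1.8cm}{J_{h_i^{(1)},\phi_i^{(1)}} \ar@{=>}[r]^-{(\tau_i^{(1)})^{-1}} & T_i'^{-1}\circ J_1 \circ T_i \ar@{=>}[r]^-{\id\circ f \circ \id} & T_i'^{-1}\circ J_2 \circ T_i \ar@{=>}[r]^-{\tau_i^{(2)}} & J_{h_i^{(2)},\phi_i^{(2)}}}
\end{equation*}
to produce a unique gauge 2-transformation $e_i:(h_i^{(1)},\phi_i^{(1)}) \Rightarrow (h_i^{(2)},\phi_i^{(2)})$ such that $f_{e_i}$ agrees with the composite displayed above. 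Via \cref{sec:smoothfunctors} this induces a smooth modification $\widetilde{\mathcal{A}}_{f,i} := \mathcal{A}_{e_i}:\tilde\lambda_i^{(1)} \Rightarrow \tilde\lambda_i^{(2)}$ between the smooth pseudonatural transformations that locally represent $\rho_{J_1}$ and $\rho_{J_2}$ under the equivalences $\mathcal{L}_i^{(k)}:\lambda_i^{(k)} \Rightarrow i(\tilde\lambda_i^{(k)})$ constructed in \cref{eq:modiL}. The smoothness condition to be checked is the identity
\begin{equation*}
i(\widetilde{\mathcal{A}}_{f,i}) \bullet \mathcal{L}_i^{(1)} = \mathcal{L}_i^{(2)} \bullet (\id_{\tau_i'^{-1}}\circ \mathcal{A}_f \circ \id_{\tau_i})
\end{equation*}
of modifications $\lambda_i^{(1)} \Rightarrow i(\tilde\lambda_i^{(2)})$.

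Finally, this identity is established by a diagram chase analogous to the one used at the end of the proof of \cref{prop:transpseudo}. One expands each side using the definitions of $\mathcal{L}_i^{(k)}$ in \cref{eq:modiL} and of $\mathcal{A}_f$ from \cref{prop:mod}, obtaining a pentagon-shaped diagram whose outer perimeter is the claim, and whose inner region is subdivided by: the two definitions of the $\mathcal{L}_i^{(k)}$; the naturality of horizontal composition with $\eta_i,\eta_i'^{-1}$; the compatibility of $i$ with 2-morphism composition via the 2-functor of \cref{rem:Lresi}; the diagram of \cref{ex:trivbunmod} comparing $i(\mathcal{A}_{e_i})$ with $\mathcal{A}_{f_{e_i}}$; and the defining identity of $e_i$. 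The main obstacle is purely bookkeeping: making sure that the various instances of the canonical death/birth transformations $d_{T_i},d_{T_i'}$ and of the strictifications coming from \cref{th:2funct} cancel in the correct order. Once this is verified, the smoothness axiom holds with the chosen local data, so $\mathcal{A}_f$ is indeed a 2-morphism in $\mathrm{Trans}_{\Gamma}(M,\tor\Gamma)$.
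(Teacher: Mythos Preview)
Your approach is essentially the same as the paper's: you extract the gauge 2-transformation $e_i$ via the fully faithfulness of the Hom-functor (which the paper packages as \cref{re:examples:2morph}), set $\widetilde{\mathcal{A}}_{f,i}:=\mathcal{A}_{e_i}$, and verify the required compatibility $i(\widetilde{\mathcal{A}}_{f,i})\bullet\mathcal{L}_i^{(1)}=\mathcal{L}_i^{(2)}\bullet(\id\circ\mathcal{A}_f\circ\id)$ by a diagram built from the definitions of $\mathcal{L}_i^{(k)}$, the defining identity of $e_i$, and \cref{ex:trivbunmod}. One small overcomplication: no death/birth transformations $d_{T_i},d_{T_i'}$ actually enter this particular diagram (unlike in \cref{prop:transpseudo}), since here one works on a single open set without comparing across overlaps; the paper's diagram has only four cells and the bookkeeping is lighter than you suggest.
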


\begin{remark}
\label{re:examples:2morph}
For the proof we extract and slightly reformulate the following  results of   \reffunctorLproperties. 
Suppose $J_1,J_2:\inf P \to \inf P'$ are 1-morphisms in $\zweibunconff \Gamma M$ and $f:J_1 \Rightarrow J_2$ is a 2-morphism. Suppose $U \subset M$ is an open set with $\Gamma$-connections $(A,B)$ and $(A',B')$, 1-morphisms $T:\inf I_{A,B} \to \inf P$ and $T':\inf I_{A',B'} \to \inf P'$, for $i=1,2$ gauge transformations $(h_i,\phi_i)$ with 2-morphism $\tilde\tau_i: T'^{-1}\circ J_i \circ T \Rightarrow J_{h_i,\phi_i}$. In other words, we have for $J_1$ and $J_2$ the structure of \cref{re:examples:morph:a}. Then, there exists a unique  gauge 2-transformation $a:(h_1,\phi_1) \Rightarrow (h_2,\phi_2)$ such that the diagram
\begin{equation*}
\alxydim{@=\xyst}{T'^{-1}\circ J_1 \circ T \ar@{=>}[r]^-{\tilde\tau_1} \ar@{=>}[d]_{\id \circ f \circ \id} & J_{h_1,\phi_1} \ar@{=>}[d]^{f_a} \\ T'^{-1}\circ J_2 \circ T \ar@{=>}[r]_-{\tilde\tau_2} & J_{h_2,\phi_2}}
\end{equation*}
is commutative.
\end{remark}

\begin{proof}[Proof of \cref{prop:transmodi}]
Let $U \subset M$ be an open set over which we have the pseudonatural transformations $\tau$ and $\tau'$ of \cref{eq:pseudotau} for the two principal $\Gamma$-2-bundles $\inf P$ and $\inf P'$, respectively. 
We form the modification $\mathcal{F} : \lambda_1 \Rightarrow \lambda_2$ by
\begin{equation*}
\alxydim{@C=2cm}{ \lambda_1:=  \tau'^{-1}\circ  \rho_{J_1} \circ\tau\ar@{=>}[r]^-{\id \circ \mathcal{A}_{f} \circ \id} & \tau'^{-1}\circ  \rho_{J_2} \circ\tau =:\lambda_2\text{.}} \end{equation*}
We let $\mathcal{L}_1$ and $\mathcal{L}_2$ be the modifications \cref{eq:modiL} associated to $J_1$ and $J_2$. The smoothness condition we have to check is that there exists a smooth modification $\widetilde{\mathcal{F}}:\tilde\lambda_1\Rightarrow \tilde\lambda_2$ such that $\mathcal{F}= \mathcal{L}_2^{-1}\bullet i(\widetilde{\mathcal{F}}) \bullet \mathcal{L}_1$.
Let $a:(h_1,\phi_1)\Rightarrow (h_2,\phi_2)$ be the gauge 2-transformation of \cref{re:examples:2morph}, and let $\widetilde{\mathcal{F}} := \mathcal{A}_{a}$ using \cref{sec:smoothfunctors}, which gives a smooth modification between $\tilde\lambda_1$ and $\tilde\lambda_2$. We have a diagram: 
\begin{equation*}
\alxydim{@C=1.8cm@R=\xyst}{\lambda_1 \ar@{=}[dr] \ar@{=>}[rrr]^-{\mathcal{L}_1}\ar@{=>}[ddd]_-{\mathcal{F}} &&& i(\tilde\lambda_1)\ar@{=}[dl] \ar@{=>}[ddd]^-{i(\widetilde{\mathcal{F}})} \\ &\eta \circ \rho_{T'^{-1}}\circ  \rho_{J_1} \circ \rho_{T} \circ \eta^{-1} \ar@{=>}[r]^-{\id \circ \mathcal{A}_{\tilde\tau_1} \circ\id} \ar@{=>}[d]_{\id \circ\id\circ \mathcal{A}_f \circ\id\circ \id}&\eta \circ   \rho_{K_1} \circ \eta^{-1} \ar@{=>}[d]^{\id \circ \mathcal{A}_{f_a} \circ \id}&\\&\eta \circ \rho_{T'^{-1}}\circ  \rho_{J_2} \circ \rho_{T} \circ \eta^{-1}\ar@{=>}[r]_-{\id \circ \mathcal{A}_{\tilde\tau_2} \circ\id}&\eta \circ   \rho_{K_2} \circ \eta^{-1}&\\ \lambda_2 \ar@{=}[ur] \ar@{=>}[rrr]_-{\mathcal{L}_2} &&& i(\tilde\lambda_2)\ar@{=}[ul]}
\end{equation*}
The subdiagram in the middle is induced from the commutative diagram of \cref{re:examples:2morph}, and the subdiagram on the right hand side commutes by \cref{ex:trivbunmod}. The other subdiagrams commute by definition. Hence, the whole diagram is commutative; this is what we had to show. \end{proof}

\begin{appendix}

\section{Appendix}

\subsection{2-group connections and gauge transformations}

\label{def:gammacon}
\label{def8}

We summarize the bicategory of $\Gamma$-connections following \cite{schreiber5}.
Let $X$ be a smooth manifold and $\Gamma$ be a Lie 2-group, given by a crossed module $(G,H,t,\alpha)$.
A  \emph{$\Gamma$-connection on $X$ }is a pair $(A,B)$ of a 1-form $A\in\Omega^1(X,\mathfrak{g})$
and a 2-form $B \in \Omega^2(X,\mathfrak{h})$. The 2-form   
\begin{equation*}
\mathrm{fcurv}(A,B) := \mathrm{d}A +\frac{1}{2} [A
\wedge A] - t_{*} ( B)\in\Omega^2(X,\mathfrak{g})
\end{equation*}
is called the \emph{fake-curvature}, and the 3-form
\begin{equation*}
\mathrm{curv}(A,B) := \mathrm{d}B + \alpha_{*}( A\wedge B)\in\Omega^3
(X,\mathfrak{h})
\end{equation*}
is called the \emph{curvature}. 
A connection $(A,B)$ is called \emph{fake-flat}, if $\mathrm{fcurv}(A,B)=0$, and it is called \emph{flat}, if it is fake-flat and $\mathrm{curv}(A,B)=0$. 
Let $(A,B)$ and $(A',B')$ be $\Gamma$-connections on $X$.
A \emph{gauge transformation}
\begin{equation*}
(g,\varphi): (A,B) \to (A',B')
\end{equation*}
is a smooth map $g:X \to G$ and a 1-form $\varphi\in \Omega^1(X,\mathfrak{h})$
such that:
\begin{align}
A' + t_{*} (\varphi) &= \mathrm{Ad}_g(A) - g^{*}\bar\theta   
\label{gt1}
\\
B' + \alpha_{*}(A' \wedge \varphi) + \mathrm{d}\varphi + \frac{1}{2}[\varphi \wedge \varphi]&= (\alpha_g)_{*} (B)  \text{.}
\label{gt2}
\end{align}
Here, $\bar\theta$ is the right-invariant Maurer-Cartan form.
The identity gauge transformation is given by $g=1$ and $\varphi=0$.
The composition of gauge transformations
\begin{equation*}
\alxydim{@C=1.5cm}{(A,B) \ar[r]^-{(g_1,\varphi_1)} & (A',B') \ar[r]^-{g_2,\varphi_2} & (A'',B'')}
\end{equation*}
is given by the map $g_2g_1:X \to G$ and the 1-form $\varphi_2+(\alpha_{g_{2}})_{*} (\varphi_1)$. 
A \emph{gauge 2-transformation} $a:(g_1,\varphi_1) \Rightarrow (g_2,\varphi_2)$ is a smooth map $a:X \to H$ such that
\begin{equation*}
g_2 = (t \circ a) \cdot g_1
\quad\text{ and }\quad
\varphi_2 +(r_{a}^{-1} \circ \alpha_{a})_{*}(A') =  \mathrm{Ad}_a (\varphi_1) -a^{*}\bar\theta\text{.}
\end{equation*}
The vertical composition 
\begin{equation*}
\alxydim{}{(g,\varphi) \ar@{=>}[r]^-{a_1} & (g',\varphi') \ar@{=>}[r]^{a_2} & (g'',\varphi'')}
\end{equation*}
is given by $a_2a_1$. The horizontal composition is
\begin{equation*}
\alxydim{}{(A,B) \ar@/^2pc/[r]^{(g_1,\varphi_1)}="1" \ar@/_2pc/[r]_{(g_1',\varphi_1')}="2" \ar@{=>}"1";"2"|{a_1} & (A',B') \ar@/^2pc/[r]^{(g_2,\varphi_2)}="1" \ar@/_2pc/[r]_{(g_2',\varphi_2')}="2" \ar@{=>}"1";"2"|{a_2} & (A'',B'')}
=
\alxydim{@C=2.3cm}{(A,B) \ar@/^2pc/[r]^{(g_2g_1, (\alpha_{g_2})_{*}(\varphi_1) + \varphi_2)}="1" \ar@/_2pc/[r]_{(g_2'g_1', (\alpha_{g_2'})_{*}(\varphi'_1) +\varphi'_2)}="2" \ar@{=>}"1";"2"|{a_2\alpha(g_2,a_1)} & (A'',B'')\text{,}}
\end{equation*}
and the identity gauge 2-transformation is given by $a=1$.
$\Gamma$-connections on $X$, gauge transformations, and gauge 2-transformations form a strict bicategory $\con\Gamma X$. The restriction to fake-flat $\Gamma$-connections forms a full sub-bicategory $\conff\Gamma X$.

\subsection{Path-ordered exponentials}

\label{sec:pathordered}

For a 1-form $\omega\in \Omega^1(X,\mathfrak{g})$ with values in the Lie algebra of a Lie group $G$ and a path $\gamma:[0,1] \to X$ we denote by $\PE_{\omega}(\gamma) \in G$ the path ordered exponential of $\omega$ along $\gamma$. That is, we let $g:[0,1] \to G$ be the  unique solution of the initial value problem
\begin{equation*}
\dot g(\tau) = - \omega(\dot\gamma(\tau)) g(\tau)
\quith g(0) = 1\text{,}
\end{equation*}
and put $\PE_{\omega}(\gamma):= g(1)$. 
\begin{comment}
The  conventions are so that if $g:[0,1] \to G$ is a path in $G$ with $g(0)=1$ we get $\PE_{\theta}(g)=g(1)^{-1}$. 
\end{comment}
We need the following well-known general properties of the path ordered exponential.

\begin{lemma}
Let $\omega\in \Omega^1(X,\mathfrak{g})$.
\begin{enumerate}[(a)]

\item
\label{lem:PE:a}
It depends only on the thin homotopy class of the path: if there is a fixed-ends homotopy between $\gamma,\gamma':x \to y$ whose rank is less than two, then $\PE_{\omega}(\gamma)=\PE_{\omega}(\gamma')$. 

\item
\label{lem:PE:b}
It is compatible with path composition: if $\gamma:x \to y$ and $\gamma':y \to z$ are composable paths, then $\PE_{\omega}(\gamma'\ast \gamma) = \PE_{\omega}(\gamma')\cdot \PE_{\omega}(\gamma)$.
\begin{comment}
If $g,g':[0,1] \to G$ are the two solutions, then $\tilde g := g'g(1) \circ g$ is smooth and a solution for the initial value problem for $\gamma'\circ\gamma$: we have $\tilde g(0)=g(0)=1$ and check
\begin{equation*}
\frac{\mathrm{d}}{\mathrm{d}\tau} \tilde g(\tau) = - \omega_{(\gamma'\circ\gamma)(\tau)}(\frac{\mathrm{d}}{\mathrm{d}\tau}(\gamma'\circ\gamma)(\tau)) \tilde g(\tau)
\end{equation*}
separately for $0\leq\tau\leq \frac{1}{2}$, where it becomes the equation for $g$,
\begin{equation*}
\frac{\mathrm{d}}{\mathrm{d}\tau}  g(2\tau) =  - \omega_{\gamma(2\tau)}(\frac{\mathrm{d}}{\mathrm{d}\tau}\gamma(2\tau))  g(2\tau)
\end{equation*}
and for $\frac{1}{2}\leq \tau \leq 1$,
\begin{equation*}
\frac{\mathrm{d}}{\mathrm{d}\tau}  g'(2\tau-1)g(1)= - \omega_{\gamma'(2\tau-1)}(\frac{\mathrm{d}}{\mathrm{d}\tau}\gamma'(2\tau-1))  g'(2\tau-1)g(1)\text{.}
\end{equation*}
\end{comment}

\item
\label{lem:PE:c}
It is natural under the pullback of differential forms:
if $f:W \to X$ is a smooth map, then
$\PE_{f^{*}\omega}(\gamma) = \PE_{\omega}(f(\gamma))$.
\begin{comment}
Indeed, let $g$ be a solution for $f^{*}\omega$, i.e. 
\begin{equation*}
\dot g(\tau) = - \omega_{f(\gamma(\tau))}(T_{\gamma(\tau)}f(\dot\gamma(\tau))) g(\tau)
\quith g(0) = 1\text{.}
\end{equation*}
The very same initial problem is also the one for $\omega$ and $f \circ \gamma$.
\end{comment}

\item 
\label{lem:PE:d}
It is natural under Lie group homomorphisms:
if $\varphi:G \to\ G'$ is a Lie group homomorphism, then $\varphi(\PE_{\omega}(\gamma)) = \PE_{\varphi_{*}(\omega)}(\gamma)$.
\begin{comment}
Indeed, let $g:[0,1] \to G$ be as above. Let $g' := \varphi \circ g$. We have
\begin{equation*}
\dot g'(\tau) = T_{g(\tau)}\varphi(\dot g(\tau))=-T_{g(\tau)}\varphi( \omega(\dot\beta(\tau)) g(\tau))=-\varphi_{*}( \omega(\dot\beta(\tau))\varphi( g(\tau)))\text{;}
\end{equation*}
thus, $E_{\varphi_{*}(\omega)}(\gamma)=g'(1)$. 
\end{comment}

\item
\label{lem:PE:e}
It is compatible with gauge transformations: if  $g:X \to G$ is a smooth map and $\omega' := \mathrm{Ad}^{-1}_{g}(\omega)+g^{*}\theta$, then $\PE_\omega(\gamma) \cdot g(\gamma(0)) = g(\gamma(1)) \cdot \PE_{\omega'}(\gamma)$.

\end{enumerate}
\end{lemma}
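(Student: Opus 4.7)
The plan is to exhibit the path-ordered exponential $\PE_{\omega'}(\gamma)$ explicitly in terms of the solution $f:[0,1]\to G$ to the defining initial value problem $\dot f(\tau)=-\omega(\dot\gamma(\tau))f(\tau)$ with $f(0)=1$, so that $\PE_\omega(\gamma)=f(1)$. Concretely, I would introduce the smooth path
\begin{equation*}
h(\tau) := g(\gamma(\tau))^{-1}\cdot f(\tau)\cdot g(\gamma(0))
\end{equation*}
and show that $h$ is the unique solution to the initial value problem for $\omega'$ along $\gamma$, which immediately gives $h(1)=\PE_{\omega'}(\gamma)$ and rearranges to the claimed identity $\PE_\omega(\gamma)\cdot g(\gamma(0))=g(\gamma(1))\cdot\PE_{\omega'}(\gamma)$.

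First, I would check the initial condition, which is trivial: $h(0)=g(\gamma(0))^{-1}\cdot 1\cdot g(\gamma(0))=1$. Second, I would compute $\dot h(\tau)$ using the product rule, together with the identities $\frac{d}{d\tau}g(\gamma(\tau))=g(\gamma(\tau))\cdot(g^{*}\theta)(\dot\gamma(\tau))$ (since $\theta$ is left-invariant) and consequently $\frac{d}{d\tau}g(\gamma(\tau))^{-1}=-(g^{*}\theta)(\dot\gamma(\tau))\cdot g(\gamma(\tau))^{-1}$. Substituting also $\dot f(\tau)=-\omega(\dot\gamma(\tau))f(\tau)$ and collecting terms, one obtains
\begin{equation*}
\dot h(\tau)=-\bigl[(g^{*}\theta)(\dot\gamma(\tau))+\mathrm{Ad}_{g(\gamma(\tau))}^{-1}\omega(\dot\gamma(\tau))\bigr]h(\tau)=-\omega'(\dot\gamma(\tau))h(\tau),
\end{equation*}
which is exactly the initial value problem defining $\PE_{\omega'}(\gamma)$.

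The main step is thus the short differential-calculus computation above; there is no real obstacle, but one has to be careful about the conventions for the Maurer-Cartan form (left- versus right-invariant) and the order of multiplication, since the formula for $\omega'$ involves conjugation by $g$ from the left. Once the computation is done, uniqueness of solutions to linear initial value problems yields $h(1)=\PE_{\omega'}(\gamma)$, and multiplying $f(1)=g(\gamma(1))\cdot h(1)\cdot g(\gamma(0))^{-1}$ on the right by $g(\gamma(0))$ gives the desired identity.
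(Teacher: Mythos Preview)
Your proof of part (e) is correct and is exactly the standard argument: introduce the candidate $h(\tau)=g(\gamma(\tau))^{-1}f(\tau)g(\gamma(0))$, check the initial condition, and verify by the product rule together with $\frac{d}{d\tau}g(\gamma(\tau))^{-1}=-(g^{*}\theta)(\dot\gamma(\tau))\,g(\gamma(\tau))^{-1}$ that $h$ satisfies $\dot h=-\omega'(\dot\gamma)h$. The computation you outline is clean and the caveat about conventions (left-invariant $\theta$, order of multiplication) is well placed.

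There is nothing to compare against in the paper: the lemma is stated as a collection of ``well-known general properties'' and no proof is given in the text (for parts (b)--(d) there are short sketch arguments hidden in \texttt{comment} environments, but none for (e)). Your argument is the natural one and would serve perfectly as the missing proof.
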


The following propositions discuss special properties of path-ordered exponentials in the total space of principal 2-bundles and 1-morphisms between those; in combination with the notion of horizontality defined in \cref{sec:horizontality}. 

\begin{proposition}
Let $\inf P$ be  a principal $\Gamma$-2-bundle equipped with a connection $\Omega$. 
\begin{enumerate}[(a)]

\item
\label{lem:poeOmega:x}
If $\beta$ is a path in $\ob{\inf P}$ and $\gamma$  a path in $G$ with $\gamma(0)=1$, then $\PE_{\fa\Omega}(R(\beta,\gamma))=\gamma(1)^{-1}\cdot\PE_{\fa\Omega}(\beta)$.

\item 
\label{lem:poeOmega:a}
$\PE_{\fb\Omega}(\id_{\beta})=1$ for every path $\beta$ in $\ob{\inf P}$.

\item
\label{lem:poeOmega:b}
Let $\rho$ be a horizontal path in $\mor{\inf P}$ such that  $s(\rho)$ is horizontal, and let $h$ be a path in $H$ with $h(0)=1$. Then, $\PE_{\fb\Omega}(R(\rho,(h,1)))=h(1)^{-1}$. 

\end{enumerate}
\end{proposition}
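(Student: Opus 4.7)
My plan is to treat the three statements separately, in each case reducing the claimed identity to an initial value problem whose solution is immediate from the transformation behaviour of the connection components.

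For part \cref{lem:poeOmega:x*}, I would start by computing the pulled back form $\fa\Omega$ along the path $R(\beta,\gamma)$ using \cref{eq:conform:a}:
\begin{equation*}
\fa\Omega(\partial_t R(\beta,\gamma)) = R^{*}\fa\Omega(\dot\beta,\dot\gamma) = \mathrm{Ad}_{\gamma}^{-1}(\fa\Omega(\dot\beta)) + \gamma^{-1}\dot\gamma\text{.}
\end{equation*}
If $g_0:[0,1]\to G$ denotes the solution with $\dot g_0=-\fa\Omega(\dot\beta)g_0$ and $g_0(0)=1$, so that $\PE_{\fa\Omega}(\beta)=g_0(1)$, I would check directly that $g_1:=\gamma^{-1}g_0$ satisfies $g_1(0)=1$ together with the initial value problem for $\PE_{\fa\Omega}(R(\beta,\gamma))$. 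Evaluating at $t=1$ then yields the claim.

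For part \cref{lem:poeOmega:a*}, the identity $\id^{*}\fb\Omega=0$ (used also in the proof of \cref{lem:hormor:a*}) implies that $\fb\Omega(\partial_t\id_\beta)=\id^{*}\fb\Omega(\dot\beta)=0$ pointwise. Hence the defining initial value problem for $\PE_{\fb\Omega}(\id_\beta)$ is solved trivially by the constant function $1$.

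For part \cref{lem:poeOmega:b*}, the main step is to simplify the transformation formula \cref{eq:conform:b} using the two horizontality hypotheses. Since the $G$-component of $(h,1)$ is constant at $1$, the prefactor $(\alpha_{g^{-1}})_{*}$ is the identity, so
\begin{equation*}
\fb\Omega(\partial_tR(\rho,(h,1)))=\mathrm{Ad}_{h}^{-1}(\fb\Omega(\dot\rho))+(\tilde\alpha_{h})_{*}(\fa\Omega(s_{*}\dot\rho))+h^{-1}\dot h\text{.}
\end{equation*}
Horizontality of $\rho$ kills the first summand, horizontality of $s(\rho)$ kills the second, leaving $h^{-1}\dot h$. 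I would then verify that $h_1:=h^{-1}$ solves the resulting initial value problem $\dot h_1=-h^{-1}\dot h\cdot h_1$ with $h_1(0)=1$; evaluating at $t=1$ gives $\PE_{\fb\Omega}(R(\rho,(h,1)))=h(1)^{-1}$.

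None of the three parts present a real obstacle: each reduces to a one-line ODE verification after applying the relevant transformation formula from \cref{def:connection}. The only point requiring care is the bookkeeping of the two semidirect factors $H$ and $G$ in \cref{lem:poeOmega:b*}, specifically that the special form $(h,1)$ of the acting morphism allows the outer $(\alpha_{g^{-1}})_*$ to be dropped, so that horizontality of $\rho$ and of its source combine cleanly.
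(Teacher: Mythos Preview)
Your proposal is correct and essentially matches the paper's proof. The only cosmetic difference is in part \cref{lem:poeOmega:x*}: the paper packages the computation by observing that $(p,g')\mapsto g'^{-1}$ is a gauge transformation between $R^{*}\fa\Omega$ and $\pr_1^{*}\fa\Omega$ on $\ob{\inf P}\times G$ and then invokes \cref{lem:PE:e}, whereas you verify directly that $g_1=\gamma^{-1}g_0$ solves the relevant initial value problem; the underlying calculation is identical.
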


\begin{proof}
For \cref{lem:poeOmega:x*}  we consider $X:=\ob{\inf P}\times G$, so that $(\beta,\gamma)$ is a path in $X$. It is easy to check using \cref{eq:conform:a} that the map $g:X \to G:(p,g')\mapsto g'^{-1}$ is a gauge transformation between $R^{*}\fa\Omega$ and $\pr_1^{*}\fa\Omega$.
\begin{comment}
Indeed,
\begin{equation*}
\pr_1^{*}\fa\Omega =\mathrm{Ad}_{g'}(\mathrm{Ad}_{g'}^{-1}(p^{*}\fa\Omega) + \theta_{g'})+\theta_{g'^{-1}}\eqcref{eq:conform:a} \mathrm{Ad}_{g'}(R^{*}\fa\Omega|_{p,g'})+\theta_{g'^{-1}}
\end{equation*}
\end{comment}
Thus, by \cref{lem:PE:c,lem:PE:e} we have 
\begin{equation*}
\PE_{\fa\Omega}(R(\beta,\gamma))  =\PE_{R^{*}\fa\Omega}(\beta,g) \cdot g(\beta(0),\gamma(0)) =g(\beta(1),\gamma(1)) \cdot \PE_{\pr_1^{*}\fa\Omega}(\beta,g)=\gamma(1)^{-1} \cdot \PE_{\fa\Omega}(\beta)
\end{equation*}
For \cref{lem:poeOmega:a*}, we apply \cref{lem:PE:c} to $\id: \ob{\inf P} \to \mor{\inf P}$ and use $\id^{*}\fb\Omega=0$. For \cref{lem:poeOmega:b*}
we note that
\cref{eq:conform:b} and the assumptions on $\rho$ imply $R^{*}\fb\Omega(\dot\rho,\dot h,0) =\eta^{-1}\dot\eta$. 
\begin{comment}
Indeed,
\begin{align*}
R^{*}\fb\Omega(\dot\rho(\tau),\dot h(\tau),0) = \mathrm{Ad}_{ h(\tau)}^{-1}(\fb\Omega(\dot\rho(\tau)))+(\tilde \alpha_{ h(\tau)})_{*}(\fa\Omega(s_{*}(\dot\rho))) + h(\tau)^{-1}\dot h(\tau) 
=  h(\tau)^{-1}\dot h(\tau)\text{.} 
\end{align*}
\end{comment}
The corresponding initial value problem is then solved by $\eta^{-1}$; this shows the claim.
\begin{comment}
Thus, $\kappa :=  h^{-1}$ solves
$\dot\kappa = - h^{-1}\dot h h^{-1}=-R^{*}\fb\Omega(\dot\rho,\dot h,0)\kappa$ and $\kappa(0)=1$;
this shows the claim.
\end{comment}
\end{proof}

\begin{proposition}
\label{lem:pathJ:a}
Suppose $J:\inf P_1 \to \inf P_2$ is a 1-morphism in $\zweibuncon\Gamma M$. Let $\lambda:[0,1] \to J$ be a horizontal path such that $\alpha_r(\lambda)$ is horizontal, and let $h:[0,1] \to H$ be a path with $h(0)=1$. Then, $\PE_{\nu_0}(\lambda\cdot(h,1))=h(1)^{-1}$. \end{proposition}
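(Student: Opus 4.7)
The plan is to mimic the argument given for \cref{lem:poeOmega:b}, only this time on the total space $J$ of the anafunctor instead of on $\mor{\inf P}$, using the connectivity of the pullback $\nu$ in place of the transformation behaviour of $\fb\Omega$. The role of \cref{eq:conform:b} will be played by the first equation of \cref{def:pullback:b}: restricting to the subset $\{(h,1)\sep h\in H\}\subset \mor\Gamma$, where the group-part $g$ is trivial so that $\alpha_{g^{-1}}=\id$, the connectivity condition reads
\begin{equation*}
\rho^{*}\nu_0 = \mathrm{Ad}_{h}^{-1}(\pr_J^{*}\nu_0)+(\tilde\alpha_{h})_{*}(\pr_J^{*}\alpha_r^{*}\fa{\Omega}_2) +h^{*}\theta
\end{equation*}
over $J \times \{(h,1)\}$.

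First I would evaluate this identity at the tangent vector $(\dot\lambda(t),(\dot h(t),0))$ in order to compute $\nu_0(\partial_t(\lambda\cdot(h,1)))$. The first summand vanishes because $\lambda$ is horizontal, i.e. $\nu_0(\dot\lambda)=0$. The second summand vanishes because $\alpha_r(\lambda)$ is horizontal, so $\fa\Omega_2(\partial_t\alpha_r(\lambda))=\alpha_r^{*}\fa\Omega_2(\dot\lambda)=0$. What remains is just the Maurer-Cartan contribution:
\begin{equation*}
\nu_0(\partial_t(\lambda\cdot(h,1))) = h(t)^{-1}\dot h(t)\text{.}
\end{equation*}

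Next I would feed this into the defining initial value problem of the path-ordered exponential from \cref{sec:pathordered}. The unique solution $\kappa:[0,1]\to H$ to
\begin{equation*}
\dot\kappa(t) = -\nu_0(\partial_t(\lambda\cdot(h,1)))\kappa(t) = -h(t)^{-1}\dot h(t)\kappa(t)\text{,}\quad \kappa(0)=1
\end{equation*}
is $\kappa = h^{-1}$, as one verifies directly from $\tfrac{d}{dt}(h^{-1})=-h^{-1}\dot h h^{-1}$ together with $h(0)^{-1}=1$. Therefore $\PE_{\nu_0}(\lambda\cdot(h,1))=\kappa(1)=h(1)^{-1}$, as claimed.

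There is no genuine obstacle: the argument is a literal translation of \cref{lem:poeOmega:b}, with the two horizontality hypotheses on $\lambda$ and $\alpha_r(\lambda)$ precisely compensating the two nontrivial terms in the connectivity formula, leaving only the Maurer-Cartan term which makes the ODE explicitly integrable. The only thing worth stating carefully is the restriction to $g=1$ in \cref{def:pullback:b}, so that $\alpha_{g^{-1}}$ drops out and the formula takes the simple form used above.
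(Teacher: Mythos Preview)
Your proof is correct and follows essentially the same approach as the paper's: both use the connectivity formula of \cref{def:pullback:b} (restricted to $g=1$) together with the two horizontality assumptions to reduce $\nu_0(\partial_t(\lambda\cdot(h,1)))$ to the Maurer-Cartan term $h^{-1}\dot h$, and then observe that $\kappa=h^{-1}$ solves the resulting initial value problem. Your write-up is in fact more explicit than the paper's terse version, which merely asserts the key identity and that $h^{-1}$ solves the equation.
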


\begin{proof}
Let $\nu=(\nu_0,\nu_1)$ be the connective, connection-preserving pullback on $J$. Connectivity together with our assumptions on $\lambda$ imply $\nu_0(\partial_t(\lambda \cdot (h,1)))=h^{-1}\dot h$. The corresponding initial value problem is then solved by $\eta^{-1}$; this shows the claim.
\begin{comment}
If $\rho: J \times H \to J$ denotes the $\mor{\Gamma}$-action then we have 
\begin{equation*}
\rho^{*}\nu_0 = \mathrm{Ad}_{h}^{-1}(\pr_F^{*}\nu_0)+(\tilde\alpha_{h})_{*}(\pr_F^{*}\alpha_r^{*}\fa{\Omega}_2) +h^{*}\theta \text{.}
\end{equation*}
The differential equation is
\begin{equation*}
\dot \kappa = -\rho^{*}\nu_0(\lambda,h) \kappa=-h^{-1}\dot h\kappa\text{;}
\end{equation*}
it is solved by $\kappa:=h^{-1}$. 
\end{comment}
\end{proof}

Next we discuss an important application of the path-ordered exponential related to a gauge transformation 
$(g,\varphi):(A,B) \to (A',B')$ between $\Gamma$-connections on $X$, see \cref{def:gammacon}. We note that $(\varphi,A')$ is a 1-form on $X$ with values in $\mathfrak{h} \ltimes \mathfrak{g}$, so that $\PE_{\varphi,A'}(\gamma)\in H \ltimes G$ for any path $\gamma$ in $X$. Since $G$ acts on $H$ the $G$-component of $\PE_{\varphi,A'}(\gamma)$ is just  $\PE_{A'}(\gamma)$. 
The $H$-component, however, is an independent  quantity; we denote it by $h_{g,\varphi}(\gamma) \in H$. In the following we study some of its properties.

\begin{proposition}
Let $(g,\varphi):(A,B) \to (A',B')$ be a gauge transformation between $\Gamma$-con\-nec\-tions on $X$. 
\begin{enumerate}[(a)]

\item 
\label{lem:SE:gauge:z}
If $\gamma:x \to y$ and $\gamma':y \to z$ are composable paths, then
\begin{equation*}
h_{g,\varphi}(\gamma'\circ \gamma) = h_{g,\varphi}(\gamma')\cdot \alpha(\PE_{A'}(\gamma'),h_{g,\varphi}(\gamma))
\end{equation*}

\item 
\label{lem:SE:gauge:a}
For all paths $\gamma:x \to y$, we have
\begin{equation*}
\PE_{A'}(\gamma)\cdot g(x)=t(h_{g,\varphi}(\gamma)^{-1})\cdot g(y)\cdot \PE_{A}(\gamma)\text{.} 
\end{equation*}

\item
\label{lem:SE:gauge:m}
If $(g',\varphi'):(A',B') \to (A'',B'')$ is a second gauge transformation, and $\gamma:x\to y$ is a path, then
\begin{equation*}
h_{(g',\varphi') \circ (g,\varphi)}(\gamma) = \alpha(g'(\gamma(y)),h_{g,\varphi}(\gamma))\cdot h_{g',\varphi'}(\gamma)\text{.}
\end{equation*}

\end{enumerate}
\end{proposition}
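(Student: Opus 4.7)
All three assertions will be proved by viewing $h_{g,\varphi}(\gamma)$ as (minus one projection away from) the path-ordered exponential $\PE_{(\varphi,A')}(\gamma)$ living in the semidirect-product Lie group $H\ltimes G$, whose multiplication reads $(h_1,g_1)\cdot(h_2,g_2)=(h_1\alpha(g_1,h_2),g_1g_2)$. Under this identification the $G$-component is $\PE_{A'}(\gamma)$ (which decouples) while the $H$-component is precisely $h_{g,\varphi}(\gamma)$, so the whole proposition becomes a short list of statements about $\PE_{(\varphi,A')}$.

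For \textbf{(a)}, I will apply \cref{lem:PE:b} in the group $H\ltimes G$, which yields
\begin{equation*}
(h_{g,\varphi}(\gamma'\ast\gamma),\PE_{A'}(\gamma'\ast\gamma)) = (h_{g,\varphi}(\gamma'),\PE_{A'}(\gamma'))\cdot(h_{g,\varphi}(\gamma),\PE_{A'}(\gamma)),
\end{equation*}
and then read off the $H$-component using the semidirect-product multiplication rule above. This immediately gives $h_{g,\varphi}(\gamma'\ast\gamma) = h_{g,\varphi}(\gamma')\cdot\alpha(\PE_{A'}(\gamma'),h_{g,\varphi}(\gamma))$.

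For \textbf{(b)}, let $(\eta(t),\kappa'(t))$ denote the solution of the defining IVP for $\PE_{(\varphi,A')}$ along $\gamma|_{[0,t]}$, so that $\eta(1)=h_{g,\varphi}(\gamma)$ and $\kappa'(1)=\PE_{A'}(\gamma)$. I will then define
\begin{equation*}
\tilde\kappa(t) := g(\gamma(t))^{-1}\cdot t(\eta(t))\cdot \kappa'(t)\cdot g(x),
\end{equation*}
verify $\tilde\kappa(0)=1$, and show by direct differentiation that $\dot{\tilde\kappa}(t) = -A(\dot\gamma(t))\,\tilde\kappa(t)$. The three contributions (from $\dot g(\gamma)^{-1}$, $t(\dot\eta)$, and $\dot\kappa'$) combine into this form precisely via condition \eqref{gt1}, i.e. $A'+t_{*}(\varphi)=\mathrm{Ad}_g(A)-g^{*}\bar\theta$. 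Uniqueness of solutions then forces $\tilde\kappa(t)=\PE_A(\gamma|_{[0,t]})$, and evaluating at $t=1$ and applying $t$ to the defining relation rearranges to (b).

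For \textbf{(c)}, by definition of the composition of gauge transformations the left-hand side is the $H$-component of $\PE_{(\varphi'+(\alpha_{g'})_{*}\varphi,\,A'')}(\gamma)$. Writing $\eta_1,\eta_2$ for the $H$-components of $\PE_{(\varphi,A')}(\gamma|_{[0,t]})$ and $\PE_{(\varphi',A'')}(\gamma|_{[0,t]})$ respectively, I will show that
\begin{equation*}
H(t):=\alpha(g'(\gamma(t)),\eta_1(t))\cdot \eta_2(t)
\end{equation*}
solves the defining IVP for the $H$-component of $\PE_{(\varphi'+(\alpha_{g'})_{*}\varphi,\,A'')}(\gamma|_{[0,t]})$. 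The initial condition $H(0)=1$ is immediate; the differential equation check involves three terms (from $\partial_t g'(\gamma(t))$, from $\dot\eta_1$ using its IVP with respect to $(\varphi,A')$, and from $\dot\eta_2$ using its IVP with respect to $(\varphi',A'')$), which must rearrange into the target right-hand side using \eqref{gt1} for $(g',\varphi')$. The main obstacle will be tracking the $\mathrm{Ad}$, $\alpha$ and $\tilde\alpha$ gymnastics through the semidirect product; I will manage this with the formulary of \cite[Appendix A]{Waldorf2016}, in the same spirit as the analogous but simpler calculation already performed in the proof of \cref{ex:gt}.
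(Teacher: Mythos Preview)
Your approach for (a) is exactly the paper's: apply the multiplicativity of path-ordered exponentials (\cref{lem:PE:b}) in $H\ltimes G$ and read off the $H$-component.

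For (b) and (c) the paper takes a different, shorter route: it simply cites \cite[Lemma~2.18]{schreiber5} for (b) and the functoriality proved in \cite[Section~2.3.4]{schreiber5} for (c). What you propose is essentially a self-contained rederivation of those cited facts via direct ODE verification. Your argument for (b) is correct: with $\tilde\kappa(t)=g(\gamma(t))^{-1}t(\eta(t))\kappa'(t)g(x)$, the three derivative contributions combine (using $t_*\circ(\tilde\alpha_\eta)_*(X)=\mathrm{Ad}_{t(\eta)}^{-1}(X)-X$ and \eqref{gt1}) to give $\dot{\tilde\kappa}=-A(\dot\gamma)\tilde\kappa$, so uniqueness forces $\tilde\kappa=\PE_A(\gamma|_{[0,t]})$; this is in fact exactly the relation the paper later extracts from \cite{schreiber5} in the proof of \cref{ex:gt}. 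Your plan for (c) is likewise sound and of the same flavour as the horizontal-composition check for gauge transformations carried out (in comments) in \cref{def:gammacon}. The trade-off is clear: the paper's proof is two lines of citations, while yours is self-contained but requires the crossed-module calculus you flag at the end.
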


\begin{proof}
We have $\PE_{\varphi,A'}(\gamma'\circ \gamma)=\PE_{\varphi,A'}(\gamma')\cdot \PE_{\varphi,A'}(\gamma)$ in $H \ltimes G$. Since the projection  of $\PE_{\varphi,A'}(\gamma)$ to $G$ is $\PE_{A'}(\gamma)$  we have  \cref{lem:SE:gauge:z*}. \cref{lem:SE:gauge:a*} is \cite[Lemma 2.18]{schreiber5}. \cref{lem:SE:gauge:m*} is the functoriality proved in \cite[Section 2.3.4]{schreiber5}.
\end{proof}

Gauge transformations can be produced from a fake-flat connection $\Omega$ on a principal $\Gamma$-2-bundle $\inf P$ in the following way. First we note that the pair $(\fa\Omega,-\fc\Omega)$ is a $\Gamma$-connection on $\ob{\inf P}$, with the sign chosen such that it is fake-flat in the sense of \cref{def:gammacon}. Consider the smooth manifold $X := \mor{\inf P} \times G$ equipped with the maps $\chi_1,\chi_2:X \to \ob{\inf P}$ defined by $\chi_1(\rho,g) := t(\rho)$ and $\chi_2(\rho,g) := R(s(\rho),g^{-1})$.
Now we have the fake-flat $\Gamma$-connections $(A,B) := \chi_1^{*}(\fa\Omega,-\fc\Omega)$ and $(A',B') := \chi_2^{*}(\fa\Omega,-\fc\Omega)$ over $X$. We define $g := \pr_2:X \to G$ and $\varphi := (\alpha_{g})_{*}(\pr_1^{*}\fb\Omega) \in \Omega^1(X,\mathfrak{h})$. 

\begin{lemma}
\label{lem:gaugetrafoX}
$(g,\varphi)$ is a gauge transformation between $(A,B)$ and $(A',B')$.  
\end{lemma}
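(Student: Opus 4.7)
The plan is to verify the two defining equations \cref{gt1} and \cref{gt2} of a gauge transformation by direct computation, using the transformation behaviour of $\Omega$ under $R$ (\cref{eq:conform:a,eq:conform:b,eq:conform:c}) together with the fake-flatness conditions and the elementary identities $t_{*}\circ(\alpha_g)_{*}=\mathrm{Ad}_g\circ t_{*}$, $(\alpha_g)_{*}\circ\alpha_{*}(X,Y)=\alpha_{*}(\mathrm{Ad}_g X,(\alpha_g)_{*}Y)$ and $\mathrm{d}((\alpha_g)_{*}\beta)=(\alpha_g)_{*}(\mathrm{d}\beta)+\alpha_{*}(g^{*}\bar\theta\wedge(\alpha_g)_{*}\beta)$ already compiled in \cite[Appendix A]{Waldorf2016}.

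First I would expand $A'=\chi_2^{*}\fa\Omega$. Since $\chi_2 = R \circ (s\times \mathrm{inv})$, where $\mathrm{inv}:G\to G$ sends $g\mapsto g^{-1}$, pulling \cref{eq:conform:a} back and using $\mathrm{inv}^{*}\theta=-\bar\theta$ gives $A'=\mathrm{Ad}_g(\pr_1^{*}s^{*}\fa\Omega)-g^{*}\bar\theta$. Then
\begin{equation*}
A'+t_{*}(\varphi)=\mathrm{Ad}_g(\pr_1^{*}s^{*}\fa\Omega)-g^{*}\bar\theta+\mathrm{Ad}_g(t_{*}(\pr_1^{*}\fb\Omega))=\mathrm{Ad}_g(\pr_1^{*}t^{*}\fa\Omega)-g^{*}\bar\theta=\mathrm{Ad}_g(A)-g^{*}\bar\theta,
\end{equation*}
where the crossed module identity $t^{*}\fa\Omega-s^{*}\fa\Omega=t_{*}(\fb\Omega)$ on $\mor{\inf P}$ (used earlier in the proof of \cref{lem:hormor:e*}) is invoked. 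This settles \cref{gt1}.

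For \cref{gt2} I would first compute $B'=-\chi_2^{*}\fc\Omega=-(\alpha_g)_{*}(\pr_1^{*}s^{*}\fc\Omega)$ from \cref{eq:conform:c}, so that the desired identity reduces to
\begin{equation*}
(\alpha_g)_{*}(\pr_1^{*}\Delta\fc\Omega)+\alpha_{*}(A'\wedge\varphi)+\mathrm{d}\varphi+\tfrac{1}{2}[\varphi\wedge\varphi]=0.
\end{equation*}
Then I would substitute the fake-flatness relation $\Delta\fc\Omega=-\mathrm{d}\fb\Omega-\tfrac{1}{2}[\fb\Omega\wedge\fb\Omega]-\alpha_{*}(s^{*}\fa\Omega\wedge\fb\Omega)$, the expression for $A'$ obtained above, and the three identities listed in the first paragraph to rewrite $\mathrm{d}\varphi$, $[\varphi\wedge\varphi]$ and $(\alpha_g)_{*}\alpha_{*}(s^{*}\fa\Omega\wedge\fb\Omega)$ in terms of $(\alpha_g)_{*}$ applied to the respective $\Omega$-quantities plus $g^{*}\bar\theta$-corrections. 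A term-by-term comparison then shows that all six contributions cancel pairwise: the $(\alpha_g)_{*}(\mathrm{d}\fb\Omega)$ and $\tfrac{1}{2}(\alpha_g)_{*}[\fb\Omega\wedge\fb\Omega]$ terms cancel against their counterparts from $\mathrm{d}\varphi$ and $\tfrac{1}{2}[\varphi\wedge\varphi]$, the two $\alpha_{*}(g^{*}\bar\theta\wedge\varphi)$ terms cancel, and the two $\alpha_{*}(\mathrm{Ad}_g(s^{*}\fa\Omega)\wedge\varphi)$ terms cancel.

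The proof is thus entirely mechanical; the main obstacle is merely keeping the bookkeeping of the $\mathrm{Ad}_g$- and $(\alpha_g)_{*}$-conjugations straight when transferring forms between $\mor{\inf P}$ and $X=\mor{\inf P}\times G$. I would carry the computation out on the level of pullbacks to $X$ from the start, treating $\pr_1^{*}s^{*}$, $\pr_1^{*}t^{*}$, and $\pr_2^{*}\theta$ as the basic building blocks, so that the roles of each factor never get confused.
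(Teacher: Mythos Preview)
Your proposal is correct and follows essentially the same route as the paper: verify \cref{gt1} using $t_{*}(\fb\Omega)=\Delta\fa\Omega$ and the transformation rule \cref{eq:conform:a} for $\fa\Omega$, then verify \cref{gt2} using \cref{eq:conform:c} and the fake-flatness condition on $\fb\Omega$, with the cancellations you describe. The paper's proof is just the condensed version of exactly this computation.
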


\begin{proof}
Identity \cref{gt1} is proved by a direct calculation using only $t_{*}(\fb\Omega)=\Delta(\fa\Omega)$ and the transformation rule for $\fa\Omega$,  \cite[Eq. (5.1.1)]{Waldorf2016}.
\begin{comment}
Indeed,
\begin{eqnarray*}
t_{*}(\varphi_{\rho,g'}) &=& t_{*}((\alpha_{g'})_{*}(\fb\Omega_{\rho}))
\\&=& \mathrm{Ad}_{g'}(t_{*}(\fb\Omega_{\rho}))
\\&=&  \mathrm{Ad}_{g'}(\fa\Omega_{t(\rho)}) -\mathrm{Ad}_{g'}(\fa\Omega_{s(\rho)})+\bar\theta_{g'}- \bar\theta_{g'}
\\&=&  \mathrm{Ad}_{g'}(\fa\Omega_{t(\rho)}) -\fa\Omega_{R(s(\rho),g^{-1})}- \bar\theta_{g'}
\\&=&  \mathrm{Ad}_{g(\rho,g')}(A_{\chi_1(\rho,g')}) - A _{\chi_2(\rho,g')}- \bar\theta_{g(\rho,g')}
\end{eqnarray*}
\end{comment}
Identity \cref{gt1} is proved similarly using additionally the transformation rule for $\fc\Omega$ (\cref{eq:conform:c})  and the fake-flatness of $\Omega$.
\begin{comment}
Indeed,
\begin{eqnarray*}
&&\hspace{-4em}\mathrm{d}\varphi_{\rho,g} + \frac{1}{2}[\varphi_{\rho,g}\wedge\varphi_{\rho,g}] +\alpha_{*}(A_{\pr_2(\chi(\rho,g))} \wedge \varphi_{\rho,g} ) 
\\&=& \mathrm{d}(\alpha_g)_{*}(\fb\Omega_\rho) + \frac{1}{2}[(\alpha_g)_{*}(\fb\Omega_\rho)\wedge(\alpha_g)_{*}(\fb\Omega_\rho)] \\&&+\alpha_{*}(\fa\Omega_{R(s(\rho),g^{-1})} \wedge (\alpha_g)_{*}(\fb\Omega_\rho) )
\\&=& (\alpha_{g})_{*}(\mathrm{d}\fb\Omega_\rho+ \frac{1}{2}[\fb\Omega_\rho\wedge\fb\Omega_\rho])+\alpha_{*}(g^{*}\bar\theta \wedge (\alpha_{g})_{*}(\fb\Omega_\rho))  \\&&+\alpha_{*}((\mathrm{Ad}_{g}(\fa\Omega_{s(\rho)}) - \bar\theta_g) \wedge (\alpha_g)_{*}(\fb\Omega_\rho) )
\\&=& (\alpha_{g})_{*}(\mathrm{d}\fb\Omega_\rho+ \frac{1}{2}[\fb\Omega_\rho\wedge\fb\Omega_\rho]+\alpha_{*}(\fa\Omega_{s(\rho)}  \wedge \fb\Omega_\rho))
\\&=& (\alpha_{g})_{*}(-\Delta\fc\Omega_{\rho})
\\&=& -(\alpha_{g})_{*}(\fc\Omega_{t(\rho)})+\fc\Omega_{R(s(\rho),g^{-1})}
\\&=& (\alpha_{g})_{*}(B_{t(\rho)})-B_{R(s(\rho),g^{-1})}
\\&=& (\alpha_{\phi(\rho,g)})_{*}(B_{\pr_1(\chi(\rho,g))})-B_{\pr_2(\chi(\rho,g))}
\end{eqnarray*}
Here we have used
\begin{equation*}
\fa\Omega_{R(p,g)} = \mathrm{Ad}_{g}^{-1}(\fa\Omega_{p}) + \theta_g
\quand
\fc\Omega_{R(p,g)} = (\alpha_{g^{-1}})_{*}(\fc\Omega_{p})
\end{equation*}
We have further used the formula
\begin{equation*}
\mathrm{d}(\alpha_{g})_{*}(\varphi) = (\alpha_{g})_{*}(\mathrm{d}\varphi)+\alpha_{*}(g^{*}\bar\theta \wedge (\alpha_{g})_{*}(\varphi))\text{.}
\end{equation*}
\end{comment}
\end{proof}

Correspondingly, we have the quantity $h_{g,\varphi}(\rho,\gamma)\in H$ associated to any pair of paths $\rho$ in $\mor{\inf P}$ and $\gamma$ in $G$. The following two lemmas list its relevant  properties.

\begin{lemma}
\begin{enumerate}[(a)]

\item
\label{lem:gomega:c}
$h_{g,\varphi}(\id_{\beta},\gamma)=1$ for all paths $\beta$ in $\ob{\inf P}$ and $\gamma$ in $G$.

\item 
\label{lem:h:b}
$h_{g,\varphi}(\rho_{1} \circ R(\rho_{2},\gamma_1) ,\gamma_{2}\gamma_{1})=\alpha(\gamma_2(1),h_{g,\varphi}(\rho_1,\gamma_1))\cdot h_{g,\varphi}(\rho_2,\gamma_2)$
for all paths $\rho_1,\rho_2$ in $\mor{\inf P}$ and $\gamma_1,\gamma_2$ in $G$ such that  $t(\rho_2)=R(s(\rho_1),\gamma_1^{-1})$.

\end{enumerate}
\end{lemma}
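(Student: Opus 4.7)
For part (a) the argument is brief and structural. The key observation is that $\id^{*}\fb\Omega=0$, which means that along any path of the form $(\id_\beta,\gamma)$ in $X$, the $\mathfrak{h}$-component $\varphi=(\alpha_g)_{*}(\pr_1^{*}\fb\Omega)$ evaluates to zero. I would then invoke naturality of $\PE$ under Lie-group homomorphisms (\cref{lem:PE:d}) applied to the inclusion $G\hookrightarrow H\ltimes G$, $g\mapsto(1,g)$, whose derivative is $X\mapsto (0,X)$: since the pulled back form is $(0,A')$, we get $\PE_{(\varphi,A')}(\id_\beta,\gamma)=\PE_{(0,A')}(\id_\beta,\gamma)=(1,\PE_{A'}(\id_\beta,\gamma))$, so the $H$-component is $1$.

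For part (b) the approach is a direct ODE verification. Writing $\sigma(t):=(\rho_1(t)\circ R(\rho_2(t),\gamma_1(t)),\gamma_2(t)\gamma_1(t))$ and $\tau_i(t):=(\rho_i(t),\gamma_i(t))$, I would first use the transformation laws \cref{eq:conform:a,eq:conform:b}, the identity $c^{*}\fb\Omega=\pr_1^{*}\fb\Omega+\pr_2^{*}\fb\Omega$, and the hypothesis $t(\rho_2)=R(s(\rho_1),\gamma_1^{-1})$ to compute the pullbacks
\begin{equation*}
A'(\dot\sigma)=A'(\dot\tau_2)\quand \varphi(\dot\sigma)=(\alpha_{\gamma_2})_{*}(\varphi(\dot\tau_1))+\varphi(\dot\tau_2);
\end{equation*}
the first equality uses that $\chi_2(\sigma(t))=R(s(\rho_2),\gamma_2^{-1})=\chi_2(\tau_2(t))$.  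Then I would define the candidate curve
\begin{equation*}
F(t):=\alpha(\gamma_2(t),h_{\tau_1}(t))\cdot h_{\tau_2}(t)\in H,
\end{equation*}
where $h_{\tau_i}(t)$ denotes the $H$-component of the partial path-ordered exponential along $\tau_i|_{[0,t]}$.  This curve satisfies $F(0)=1$, and $F(1)$ is precisely the right-hand side of the claimed identity.

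The heart of the proof is to verify that $F$ satisfies the right-logarithmic ODE for the $H$-component of $\PE_{(\varphi,A')}(\sigma|_{[0,t]})$, namely $\dot F F^{-1}=-\varphi(\dot\sigma)-\mathrm{Ad}_F((\tilde\alpha_F)_{*}(A'(\dot\sigma)))$, so that uniqueness yields $F(1)=h_{g,\varphi}(\sigma)$.  I would expand $\dot F F^{-1}$ via the product rule, substitute the analogous ODEs for $h_{\tau_1}$ and $h_{\tau_2}$, and reduce the matching to a single algebraic identity of the form $(\mathrm{Ad}_{h_{\tau_1}}-\id)(\fb\Omega(\dot\rho_2))=\mathrm{Ad}_{h_{\tau_1}}((\tilde\alpha_{h_{\tau_1}})_{*}(Z))$ with $Z=\fa\Omega(s_{*}\dot\rho_2)-A'(\dot\tau_1)$.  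This identity follows from the crossed-module formulae $\mathrm{Ad}_h=(\alpha_{t(h)})_{*}$ and $(\tilde\alpha_h)_{*}\circ t_{*}=\mathrm{Ad}_{h^{-1}}-\id$, combined with $t_{*}(\fb\Omega)=t^{*}\fa\Omega-s^{*}\fa\Omega$; its verification reduces to checking $\gamma_2^{-1}\dot\gamma_2+\mathrm{Ad}_{\gamma_2^{-1}}(A'(\dot\tau_2))=\fa\Omega(s_{*}\dot\rho_2)$, which is immediate from the transformation rule for $\fa\Omega$ under~$R$.

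The main obstacle is the bookkeeping in the ODE verification: one has to commute $\mathrm{Ad}_\beta$ (with $\beta=\alpha(\gamma_2,h_{\tau_1})$) past both $(\alpha_{\gamma_2})_{*}$ and the various $\mathfrak h$-valued contributions coming from the $h_{\tau_i}$-equations.  The crucial simplification is that $A'(\dot\tau_1)$ does not appear in $\dot h_\sigma h_\sigma^{-1}$, and the crossed-module identity relating $(\tilde\alpha_h)_{*}\circ t_{*}$ to $\mathrm{Ad}_{h^{-1}}-\id$ is exactly what makes the spurious $A'(\dot\tau_1)$ and $\gamma_2^{-1}\dot\gamma_2$ terms cancel against each other.
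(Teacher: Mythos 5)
Your part (a) coincides with the paper's argument: $\id^{*}\fb\Omega=0$ makes $\varphi$ vanish along $(\id_{\beta},\gamma)$, and naturality of the path-ordered exponential (whether phrased as pullback along $(p,g)\mapsto(\id_p,g)$, as the paper does, or as naturality under the inclusion of $G$ into $H\ltimes G$, as you do) then forces the $H$-component to be $1$.

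For part (b) you take a genuinely different route, and it is correct. The paper argues structurally: on $\tilde X=X\ttimes{\chi_1}{\chi_2}X$ it shows that $\mu((\rho_2,g_2),(\rho_1,g_1))=(\rho_1\circ R(\rho_2,g_1),g_2g_1)$ pulls $(g,\varphi)$ back to the composite gauge transformation $\pr_1^{*}(g,\varphi)\circ\pr_2^{*}(g,\varphi)$ --- the only computation being $\mu^{*}\varphi=\pr_1^{*}\varphi+(\alpha_{g\circ\pr_1})_{*}(\pr_2^{*}\varphi)$, which is exactly your pullback formula for $\varphi(\dot\sigma)$ --- and then quotes the composition law \cref{lem:SE:gauge:m} for $h_{g,\varphi}$ from \cite{schreiber5} together with naturality under pullback. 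You instead unfold that composition law into an ODE-uniqueness argument for the candidate curve $F(t)=\alpha(\gamma_2(t),h_{\tau_1}(t))\cdot h_{\tau_2}(t)$. I checked your reduction: after the product rule and the cocycle identity $\mathrm{Ad}_{hh'}\circ(\tilde\alpha_{hh'})_{*}=\mathrm{Ad}_{h}\circ(\tilde\alpha_{h})_{*}+\mathrm{Ad}_{hh'}\circ(\tilde\alpha_{h'})_{*}$, the matching does collapse to your identity $\mathrm{Ad}_{h_{\tau_1}}((\tilde\alpha_{h_{\tau_1}})_{*}(\fa\Omega(s_{*}\dot\rho_2)-A'(\dot\tau_1)))=(\mathrm{Ad}_{h_{\tau_1}}-\id)(\fb\Omega(\dot\rho_2))$, which via the infinitesimal Peiffer identity and $t_{*}(\fb\Omega)=t^{*}\fa\Omega-s^{*}\fa\Omega$ is equivalent to $A'(\dot\tau_1)=\fa\Omega(t_{*}\dot\rho_2)$; this is precisely where the hypothesis $t(\rho_2)=R(s(\rho_1),\gamma_1^{-1})$ is consumed. (Your write-up attributes the last step to the transformation rule for $\fa\Omega$ under $R$; that rule handles the separate simplification $\mathrm{Ad}_{\gamma_2^{-1}}(\dot\gamma_2\gamma_2^{-1}+A'(\dot\tau_2))=\fa\Omega(s_{*}\dot\rho_2)$, while the source-target hypothesis is what actually closes the $Z$-identity --- a bookkeeping point worth fixing when writing this out, not a gap.) What the paper's route buys is brevity, since all the analytic content is delegated to the already-established functoriality of $h_{g,\varphi}$ under composition of gauge transformations; what yours buys is self-containedness, at the cost of re-deriving that functoriality in this special case.
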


\begin{proof}
For \cref{lem:gomega:c*} we consider the map $i:\ob{\inf P} \times G \to \mor{\inf P} \times G: (p,g) \mapsto (\id_p,g)$, under which  $i^{*}\varphi=0$ (because $\id^{*}\fb\Omega=0$).
\begin{comment}
We have 
\begin{equation*}
i^{*}\varphi|_{p,g} = \varphi_{\id_p,g}=(\alpha_g)_{*}(\fb\Omega_{\id_p})=0
\end{equation*}
\end{comment}
From \cref{lem:PE:c} we obtain
\begin{equation*}
\PE_{\varphi,A'}(\id_{\beta},\gamma)=\PE_{0,i^{*}A'}(\beta,\gamma)=(1,\PE_{i^{*}A'}(\gamma))\text{;}
\end{equation*}
this implies the claim.
For \cref{lem:h:b*} 
 we consider $\tilde X:=X \ttimes{\chi_1}{\chi_2}X$, where $X=\mor{\inf P} \times G$, so that $((\rho_2,\gamma_2),(\rho_1,\gamma_1))$ is a path in $\tilde X$. On $\tilde X$ we have the three $\Gamma$-connections 
\begin{align*}
(A,B)&:=\pr_2^{*}\chi_1^{*}(\fa\Omega,-\fc\Omega)
\\
(A',B')&:=\pr_2^{*}\chi_2^{*}(\fa\Omega,-\fc\Omega)=\pr_1^{*}\chi_1^{*}(\fa\Omega,-\fc\Omega)
\\
(A'',B'')&:=\pr_1^{*}\chi_2^{*}(\fa\Omega,-\fc\Omega)\text{.}
\end{align*}
and by \cref{lem:gaugetrafoX} two gauge transformations
\begin{align*}
\pr_2^{*}(g,\varphi): (A,B) \to (A',B')
\quand
\pr_1^{*}(g,\varphi): (A',B') \to (A'',B'')\text{.}
\end{align*}
We claim that the map $\mu: \tilde X \to X$ defined by $\mu((\rho_2,g_2),(\rho_1,g_1)) := (\rho_{1} \circ R(\rho_{2},g_1) ,g_{2}g_{1})$
satisfies
\begin{equation*}
\mu^{*}(g,\varphi) = \pr_1^{*}(g,\varphi)\circ \pr_2^{*}(g,\varphi)\text{,}
\end{equation*}
where $\circ$ denotes the composition of gauge transformations. The only non-trivial part is to show the required identity for the $\mathfrak{h}$-valued differential forms,   $\mu^{*}\varphi= \pr_1^{*}\varphi + (\alpha_{g \circ \pr_1})_{*}(\pr_2^{*}\varphi)$, which follows from the definition of $\varphi$ and the identity $\Delta\fb\Omega=0$.  
\begin{comment}
Indeed,
\begin{eqnarray*}
(\mu^{*}\varphi)_{(\rho_{23},g_{23}),(\rho_{12},g_{12})}&=& \varphi_{(\rho_{12} \circ R(\rho_{23},\id_{g_{12}}) ,g_{23}g_{12})}
\\&=& (\alpha_{g_{23}g_{12}})_{*}(\fb\Omega_{\rho_{12} \circ R(\rho_{23},\id_{g_{12}})}) 
\\&=& (\alpha_{g_{23}g_{12}})_{*}(\fb\Omega_{\rho_{12}  }+\fb\Omega_{R(\rho_{23},1,g_{12})}) \\&=& (\alpha_{g_{23}g_{12}})_{*}(\fb\Omega_{\rho_{12}  }+(\alpha_{g_{12}^{-1}})_{*} (\fb\Omega_{\rho_{23}}) \\&=& (\alpha_{g_{23}})_{*}(\fb\Omega_{\rho_{23}}) + (\alpha_{g_{23}})_{*}((\alpha_{g_{12}})_{*}(\fb\Omega_{\rho_{12}}))
\\&=& \varphi|_{(\rho_{23},g_{23})} + (\alpha_{g_{23}})_{*}(\varphi|_{(\rho_{12},g_{12})})\text{.}
\end{eqnarray*} 
\end{comment}
By \cref{lem:SE:gauge:m,lem:PE:c} we obtain
\begin{equation*}
h_{g,\varphi}(\rho_{1} \circ R(\rho_{2},\gamma_1) ,\gamma_{2}\gamma_{1})=h_{\mu^{*}(g,\varphi)}((\rho_2,\gamma_2),(\rho_1,\gamma_1))= \alpha(\gamma_2(1), h_{g,\varphi}((\rho_1,\gamma_1))\cdot h_{g,\varphi}(\rho_2,\gamma_2))\text{,}
\end{equation*}
this is the claim.
\begin{comment}
Indeed,
\begin{align*}
&\mquad h_{\pr_G,\varphi}(\rho_{1} \circ R(\rho_{2},\gamma_1) ,\gamma_{2}\gamma_{1}) 
\\&=h_{\mu^{*}(\pr_G,\varphi)}((\rho_2,\gamma_2),(\rho_1,\gamma_1))
\\&=h_{\pr_1^{*}(\pr_G,\varphi)\circ \pr_2^{*}(\pr_G,\varphi)}((\rho_2,\gamma_2),(\rho_1,\gamma_1))
\\&= \alpha(\pr_1^{*}\pr_G((\rho_2(1),\gamma_2(1)),(\rho_1(1),\gamma_1(1))),h_{\pr_2^{*}(\pr_G,\varphi)}((\rho_2,\gamma_2),(\rho_1,\gamma_1)))\cdot h_{\pr_1^{*}(\pr_G,\varphi)}((\rho_2,\gamma_2),(\rho_1,\gamma_1))
\\&= \alpha(\gamma_2(1), h_{\pr_G,\varphi}((\rho_1,\gamma_1))\cdot h_{\pr_G,\varphi}(\rho_2,\gamma_2))
\end{align*}
\end{comment}
\end{proof}

\begin{lemma}
\label{co:h:a}
$h_{g,\varphi}(\rho  ,\gamma)=\alpha(\gamma(1),h_{g,\varphi}(\rho,1))$.
\end{lemma}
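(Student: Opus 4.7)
My plan is to deduce the identity directly from the composition formula \cref{lem:h:b*} together with the vanishing lemma \cref{lem:gomega:c*}, without any new computation. The key observation is that the path $(\rho,\gamma)$ in $X = \mor{\inf P} \times G$ can be written as a formal composition in which one summand has trivial $G$-component and the other has an identity $\mor{\inf P}$-component; then one of the two resulting factors on the right-hand side of \cref{lem:h:b*} will be killed by \cref{lem:gomega:c*}, while the other will produce precisely the asserted $\alpha$-twist by $\gamma(1)$.

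Concretely, I will apply \cref{lem:h:b*} with $\rho_1 := \rho$, $\gamma_1 := 1$ (the constant path at $1 \in G$), $\rho_2 := \id_{s(\rho)}$, and $\gamma_2 := \gamma$. The compatibility condition $t(\rho_2) = R(s(\rho_1),\gamma_1^{-1})$ holds trivially since both sides equal $s(\rho)$, and the composition yields $\rho_1 \circ R(\rho_2,\gamma_1) = \rho$ with $\gamma_2\gamma_1 = \gamma$. Thus \cref{lem:h:b*} specializes to
\begin{equation*}
h_{g,\varphi}(\rho,\gamma) \;=\; \alpha(\gamma(1),\,h_{g,\varphi}(\rho,1)) \cdot h_{g,\varphi}(\id_{s(\rho)},\gamma).
\end{equation*}
By \cref{lem:gomega:c*}, the rightmost factor equals $1$, and the stated identity follows. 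There is no genuine obstacle — the corollary is a one-line reduction, and the only thing to take care of is matching conventions (which $\gamma_i$ plays the role of the inner versus outer $G$-label in \cref{lem:h:b*}) so that the surviving $h$-factor is $h_{g,\varphi}(\rho,1)$ rather than a $\rho$-twisted variant.
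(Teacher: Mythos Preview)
Your proof is correct and follows essentially the same approach as the paper: specialize \cref{lem:h:b} with $\rho_2=\id_{s(\rho)}$ and $\gamma_1=1$, then kill the residual factor $h_{g,\varphi}(\id_{s(\rho)},\gamma)$ using \cref{lem:gomega:c}. Your explicit verification of the compatibility condition and the resulting composition is accurate.
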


\begin{proof}
Put $\rho_2=\id$ and $\gamma_1=1$ in \cref{lem:h:b} and then use \cref{lem:calchgvarphi}.
\begin{comment}
Indeed, we get
\begin{equation*}
h_{\Omega}(\rho_{1}  ,\gamma_{2})=\alpha(\gamma_2(1),h_{\Omega}(\rho_1,1))\cdot h_{\Omega}(\id,\gamma_2)\text{.}
\end{equation*}
\end{comment}
\end{proof}

We can thus restrict ourselves to the case of constant paths $\gamma=1$, and remain with a quantity $h_{\Omega}(\rho) := h_{g,\varphi}(\rho,1)\in H$ associated to any path $\rho$ in $\mor{\inf P}$. It has the following properties:

\begin{proposition}
Let $\inf P$ be a principal $\Gamma$-2-bundle over $M$ with fake-flat connection $\Omega$.
\label{co:h}
\begin{enumerate}[(a)]

\item
\label{co:h:b}
$h_{\Omega}( R(\rho,\gamma))=\alpha(\gamma(1)^{-1},h_{\Omega}(\rho))$.

\item
\label{co:h:c}
$h_{\Omega}(\rho_2)^{-1}=h_{\Omega}(\rho_{2}^{-1})$. 

\item
\label{co:h:d}
$h_{\Omega}(\rho_{1} \circ \rho_{2} )=h_{\Omega}(\rho_1)\cdot h_{\Omega}(\rho_2)$ whenever $\rho_1$ and $\rho_2$ are pointwise composable.

\item
\label{prop:SE:gauge:z}
$h_{\Omega}(\rho' \ast \rho) = h_{\Omega}(\rho')\cdot \alpha(\PE_{\fa\Omega}(s(\rho')),h_{\Omega}(\rho))$ whenever $\rho'$ and $\rho$ are composable paths.

\item
\label{lem:calchgvarphi}
$h_{\Omega}(\rho)=\PE_{\fb\Omega}(\rho)$ if $s(\rho)$ is horizontal.

\item
\label{co:hcalc:a}
$h_{\Omega}(R(\rho, (h,1)))=h(1)^{-1}$ if $\rho$ and $s(\rho)$ are horizontal, and $h$ is a path in $H$ with $h(0)=1$.  

\item
\label{co:hcalc:b}
$h_{\Omega}(\rho)=1$ if $\rho$ and $s(\rho)$ are horizontal.

\end{enumerate}
\end{proposition}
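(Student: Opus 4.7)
The plan is to deduce the seven identities from the general properties of $h_{g,\varphi}$ already established in \cref{lem:gomega:c,lem:h:b,co:h:a,lem:SE:gauge:z}, together with a single structural computation for \cref{lem:calchgvarphi}. I would treat the algebraic identities \cref{co:h:b,co:h:c,co:h:d,prop:SE:gauge:z} first, then prove \cref{lem:calchgvarphi}, and finally obtain \cref{co:hcalc:a,co:hcalc:b} as immediate consequences.

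For \cref{co:h:d}, I would specialise \cref{lem:h:b} to $\gamma_1 = \gamma_2 = 1$, giving directly $h_{\Omega}(\rho_1 \circ \rho_2) = \alpha(1,h_{\Omega}(\rho_1))\cdot h_{\Omega}(\rho_2) = h_{\Omega}(\rho_1)\cdot h_{\Omega}(\rho_2)$. Taking $\rho_2 := \rho^{-1}$ in \cref{co:h:d} yields $h_{\Omega}(\id_{t(\rho)}) = h_{\Omega}(\rho)\cdot h_{\Omega}(\rho^{-1})$, and since $h_{\Omega}(\id_\beta) = h_{g,\varphi}(\id_\beta,1) = 1$ by \cref{lem:gomega:c}, \cref{co:h:c} follows. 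For \cref{co:h:b} I would apply \cref{lem:h:b} with $\rho_1 := \id_{t(R(\rho,\gamma))}$, $\rho_2 := \rho$, $\gamma_1 := \gamma$, and $\gamma_2 := \gamma^{-1}$; the first factor on the right collapses via \cref{lem:gomega:c} to $1$, and the second factor $h_{g,\varphi}(\rho,\gamma^{-1})$ is rewritten by \cref{co:h:a} as $\alpha(\gamma(1)^{-1},h_{\Omega}(\rho))$. Finally, \cref{prop:SE:gauge:z} comes from applying \cref{lem:SE:gauge:z} to the paths $(\rho,1)$ and $(\rho',1)$ in $X = \mor{\inf P}\times G$: their concatenation is $(\rho'\ast \rho,1)$, and \cref{lem:PE:c} identifies $\PE_{A'}(\rho',1) = \PE_{\chi_2^{*}\fa\Omega}(\rho',1) = \PE_{\fa\Omega}(s(\rho'))$.

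The main work is \cref{lem:calchgvarphi}. By construction $h_{\Omega}(\rho) = h_{g,\varphi}(\rho,1)$ is the $H$-component of the path-ordered exponential $\PE_{(\varphi,A')}(\rho,1)$ in the semidirect product $\mor{\Gamma} = H\ltimes_{\alpha} G$. Along the path $(\rho(t),1)$ the connection form $(\varphi,A')$ evaluates to $(\varphi(\dot\rho),A'(\dot\rho,0)) = (\fb\Omega(\dot\rho),\fa\Omega(\partial_t s(\rho)))$, using $\alpha_1 = \id$ and $\chi_2(\rho,1) = s(\rho)$. Under the assumption that $s(\rho)$ is horizontal, the $\mathfrak{g}$-component vanishes identically. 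Unpacking the initial value problem in $H\ltimes G$, this forces the $G$-factor of the solution to stay constant at $1$, and the $H$-factor then satisfies the decoupled equation $\dot h(\tau) = -\fb\Omega(\dot\rho(\tau))\,h(\tau)$ with $h(0) = 1$. Hence $h_{\Omega}(\rho) = \PE_{\fb\Omega}(\rho)$.

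Given \cref{lem:calchgvarphi}, item \cref{co:hcalc:a} follows by noticing that the source of $R(\rho,(h,1))$ is again $s(\rho)$ and thus horizontal, so $h_{\Omega}(R(\rho,(h,1))) = \PE_{\fb\Omega}(R(\rho,(h,1)))$, and the right-hand side equals $h(1)^{-1}$ by \cref{lem:poeOmega:b}. Item \cref{co:hcalc:b} is then the specialisation $h \equiv 1$, using $R(\rho,(1,1)) = \rho$. The only delicate point in the whole argument is the semidirect-product bookkeeping in \cref{lem:calchgvarphi}; once one verifies that the $G$-factor of the exponential is frozen by the vanishing of $A'$ along the path, the remaining identities are routine consequences of prior lemmas.
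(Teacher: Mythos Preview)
Your proposal is correct and follows essentially the same approach as the paper: the algebraic parts \cref{co:h:b*,co:h:c*,co:h:d*} are obtained by specializing \cref{lem:h:b} (with \cref{lem:gomega:c,co:h:a} handling the identity and $\gamma$-dependence), the key computation \cref{lem:calchgvarphi*} is done by splitting the semidirect-product ODE and using horizontality of $s(\rho)$ to freeze the $G$-factor, and \cref{co:hcalc:a*,co:hcalc:b*} follow from \cref{lem:calchgvarphi*} together with \cref{lem:poeOmega:b}. The only noteworthy difference is in \cref{prop:SE:gauge:z*}: you observe directly that $\chi_2(\rho,1)=s(\rho)$ and invoke \cref{lem:PE:c}, whereas the paper first establishes the general gauge relation between $\pr_1^{*}s^{*}\fa\Omega$ and $A'$ over all of $X$ and then applies \cref{lem:PE:e}; your route is slightly more economical since only the case $\gamma=1$ is needed.
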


\begin{proof}
 \cref{co:h:b*} follows in the same way by putting $\rho_1 = \id$ and $\gamma_2=1$  in \cref{lem:h:b}, and then using \cref{co:h:a}.
\begin{comment}
Indeed, we first get 
\begin{equation*}
h_{\Omega}( R(\rho_{2},\gamma_1) ,\gamma_{1})=h_{\Omega}(\rho_2,1)\text{.}
\end{equation*} 
Using \cref{co:h:a} we have
\begin{equation*}
h_{\Omega}( R(\rho_{2},\gamma_1) ,\gamma_{1})=\alpha(\gamma(1),h_{\Omega}( R(\rho_{2},\gamma_1),1))\text{.}
\end{equation*} 
\end{comment}
\cref{co:h:c*} follows similarly with $\rho_1=\rho_2^{-1}$ and $\gamma_1=\gamma_2=1$, and \cref{co:h:d*} follows with $\gamma_1=\gamma_2=1$. In \cref{prop:SE:gauge:z*}, we have over $X$ an identity $A'=\mathrm{Ad}_{\pr_2}(\pr_1^{*}\rho^{*}\fa\Omega)-\pr_2^{*}\bar\theta$, i.e. the map $(\rho,g) \mapsto g^{-1}$ is a gauge transformation between $s^{*}\fa\Omega$ and $A'$ in the sense of \cref{lem:PE:e}. 
\begin{comment}
Indeed,
\begin{equation*}
A'|_{\rho,g}=\chi_2^{*}\fa\Omega|_{\rho,g}=\fa\Omega_{R(s(\rho),g^{-1})}\eqcref{eq:conform:a} \mathrm{Ad}_{g}(\fa\Omega|_{s(\rho)}) - g^{*}\bar\theta
\end{equation*}
\end{comment}
Thus, \cref{lem:PE:e} implies
\begin{equation*}
\PE_{\fa\Omega}(s(\rho')) \cdot \gamma'(0)^{-1} = \gamma'(1)^{-1} \cdot \PE_{A'}(\rho',\gamma')
\end{equation*}
and hence
\begin{multline*}
h_{g,\varphi}(\rho' \circ \rho,1) = h_{g,\varphi}(\rho',1)\cdot \alpha(\PE_{A'}(\rho',1),h_{g,\varphi}(\rho,1))=h_{g,\varphi}(\rho',1)\cdot \alpha(\PE_{\fa\Omega}(s(\rho')),h_{g,\varphi}(\rho,1))\text{.}
\end{multline*}
\cref{lem:calchgvarphi*} is proved  by a direct calculation of $h_{g,\varphi}(\rho,1)$. Let $(\eta,\kappa)$ be a path in $H \times G$ that solves the initial value problem for $(\varphi,A')$, i.e. $h_{g,\varphi}(\rho,1)=\eta(1)$.
Employing the definitions of $\varphi$ and $A'$ the differential equation splits into two components
\begin{align*}
\dot\kappa(\tau)&=-\fa\Omega(s_{*}(\dot\rho(\tau))\kappa(\tau) 
\\
\dot\eta(\tau) &=-\fb\Omega(\dot\rho(\tau))\eta(\tau)-(\alpha_{\eta(\tau)})_{*}(\fa\Omega(s_{*}(\dot\rho(\tau))) )
\end{align*}
\begin{comment}
Indeed,
\begin{align*}
(\dot\eta(\tau),\dot\kappa(\tau))&=-(\varphi,A')(\dot\rho(\tau),0)(\eta(\tau),\kappa(\tau))
\\&=-(\varphi(\dot\rho(\tau),0),A'(\dot\rho(\tau),0))(\eta(\tau),\kappa(\tau))
\\&=-(\fb\Omega(\dot\rho(\tau)),\fa\Omega(s_{*}(\dot\rho(\tau))))(\eta(\tau),\kappa(\tau))
\\&=-(\fb\Omega(\dot\rho(\tau))\eta(\tau)+(\alpha_{\eta(\tau)})_{*}(\fa\Omega(s_{*}(\dot\rho(\tau))) ), \fa\Omega(s_{*}(\dot\rho(\tau))\kappa(\tau) )
\end{align*}
Here we have used 
\begin{equation*}
(Y,X)(h,g)=\partial_t(\eta(t),\chi(t))(h,g)=\partial_t(\eta(t)\alpha(\chi(t),h),\chi(t)g)=(Yh+\alpha_h(X),Xg)\text{.}
\end{equation*}
\end{comment}
Since $s(\rho)$ is horizontal, we have $\kappa=1$, and we see that $\eta(1)=\PE_{\fb\Omega}(\rho)$.
This shows the claim.
\cref{co:hcalc:a*,co:hcalc:b*} follow from \cref{lem:calchgvarphi*} in combination with \cref{lem:poeOmega:b}.
\begin{comment}
Indeed, we note that $s(R(\rho,(h,1)))=s(\rho)$  is horizontal, so that 
\begin{equation*}
h_{\Omega}(R(\rho, (h,1)),\gamma) \eqcref{lem:calchgvarphi} \alpha(\gamma(1),\PE_{\fb\Omega}(R(\rho, (h,1)))) \eqcref{lem:poeOmega:b} \alpha(\gamma(1),h(1)^{-1})\text{.}
\end{equation*}

\end{comment}
\end{proof}

Another situation where a gauge transformation appears are 1-morphisms.
Suppose $J:\inf P \to \inf P'$ is a 1-morphism in $\zweibunconff\Gamma M$ between principal $\Gamma$-2-bundles with fake-flat connections $\Omega$ and $\Omega'$, respectively. Let $\nu=(\nu_0,\nu_1)$ be its connective, connection-preserving, and fake-flat $\Omega'$-pullback. We consider the smooth manifold $Q := J \times G$ equipped with the maps $\chi: Q \to \ob{\inf P}$ and $\chi': Q \to \ob{\inf P'}$ defined by $\chi(j,g) := \alpha_l(j)$ and $\chi'(j,g):= R(\alpha_r(j),g^{-1})$. Then we have the $\Gamma$-connections $(A,B):= \chi^{*}(\fa\Omega,-\fc\Omega)$ and $(A',B') := \chi'^{*}(\fa{\Omega'},-\fc{\Omega'})$.
We define the map $g := \pr_G:Q \to G$ and the  1-form $\varphi\in\Omega^1(Q,\mathfrak{h})$ by $\varphi := (\alpha_{g})_{*}( \pr_J^{*}\nu_0)$.

\begin{lemma}
\label{lem:1morphgauge}
$(g,\varphi)$ is a gauge transformation between $(A,B)$ and $(A',B')$.
\end{lemma}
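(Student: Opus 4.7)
The plan is to mimic the proof of \cref{lem:gaugetrafoX} almost verbatim, with the role played there by $\fb\Omega$ (on morphisms) now played by $\nu_0$ (on the anafunctor total space $J$), and with the role played by the transformation rules for $\fa\Omega$ and $\fc\Omega$ under the right $\Gamma$-action now played by the properties of the $\Omega'$-pullback $\nu$. Before anything else, I would pull the relevant forms down to $Q = J \times G$: using the transformation rule \cref{eq:conform:a} for $\fa{\Omega'}$ under the right $G$-action, together with the fact that $\chi'$ is the composition $J\times G \xrightarrow{\alpha_r \times \mathrm{inv}} \ob{\inf P'}\times G \xrightarrow{R} \ob{\inf P'}$, one obtains $A' = \Ad_g(\pr_J^{*}\alpha_r^{*}\fa{\Omega'}) - g^{*}\bar\theta$. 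Analogously, using \cref{eq:conform:c} for $\fc{\Omega'}$, one gets $B' = -(\alpha_g)_{*}(\pr_J^{*}\alpha_r^{*}\fc{\Omega'})$. Also $A = \pr_J^{*}\alpha_l^{*}\fa\Omega$ and $B = -\pr_J^{*}\alpha_l^{*}\fc\Omega$.

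For \cref{gt1}, I would compute $t_{*}(\varphi) = \Ad_g(t_{*}(\pr_J^{*}\nu_0))$. Since $\nu$ is connection-preserving, $t_{*}(\nu_0) = \alpha_l^{*}\fa\Omega - \alpha_r^{*}\fa{\Omega'}$ (this identity is already exploited implicitly in the proof of \cref{lem:horF:a}). Substituting yields
\begin{equation*}
A' + t_{*}(\varphi) = \Ad_g\bigl(\pr_J^{*}\alpha_r^{*}\fa{\Omega'} + \pr_J^{*}(\alpha_l^{*}\fa\Omega - \alpha_r^{*}\fa{\Omega'})\bigr) - g^{*}\bar\theta = \Ad_g(A) - g^{*}\bar\theta,
\end{equation*}
as required.

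For \cref{gt2}, I would use the standard formula $\mathrm{d}((\alpha_g)_{*}(\eta)) = (\alpha_g)_{*}(\mathrm{d}\eta) + \alpha_{*}(g^{*}\bar\theta \wedge (\alpha_g)_{*}(\eta))$, together with the fact that $(\alpha_g)_{*}$ is a Lie-algebra automorphism, to rewrite each summand of $\mathrm{d}\varphi + \tfrac{1}{2}[\varphi\wedge\varphi] + \alpha_{*}(A'\wedge \varphi)$ with a $(\alpha_g)_{*}(\pr_J^{*}-)$ in front. The $g^{*}\bar\theta$ terms coming from $\mathrm{d}\varphi$ and from the $-g^{*}\bar\theta$ part of $A'$ cancel, and what remains is
\begin{equation*}
(\alpha_g)_{*}\bigl(\pr_J^{*}\bigl(\mathrm{d}\nu_0 + \tfrac{1}{2}[\nu_0\wedge\nu_0] + \alpha_{*}(\alpha_r^{*}\fa{\Omega'}\wedge\nu_0)\bigr)\bigr) = -(\alpha_g)_{*}(\pr_J^{*}\nu_1)
\end{equation*}
by fake-flatness of $\nu$ (\cref{def:pullback:a}(b)). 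It therefore remains to show $B' - (\alpha_g)_{*}(\pr_J^{*}\nu_1) = (\alpha_g)_{*}(B)$, which after substituting the formulas for $B$ and $B'$ reduces to the identity $\nu_1 = \alpha_l^{*}\fc\Omega - \alpha_r^{*}\fc{\Omega'}$.

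The main obstacle — and really the only non-bookkeeping step — is to justify this last identity. It is the \quot{$\fc$-component} of the connection-preservation condition $\Omega = J^{*}_{\nu}\Omega'$: the definition of $J^{*}_{\nu}\Omega'$ in \refpullbackconnection\ is designed precisely so that $\fa{J^{*}_{\nu}\Omega'}$ matches via $t_{*}(\nu_0) = \alpha_l^{*}\fa\Omega - \alpha_r^{*}\fa{\Omega'}$ and $\fc{J^{*}_{\nu}\Omega'}$ matches via $\nu_1 = \alpha_l^{*}\fc\Omega - \alpha_r^{*}\fc{\Omega'}$, and this is consistent with the canonical pullback of \cref{rem:smoothfunctorconnection:a} (where it specializes to $\nu_1 = -\pr_2^{*}s^{*}\fc\Omega' + \pr_1^{*}\phi^{*}\fc\Omega'$ with $\phi^{*}\fc\Omega' = \fc\Omega$). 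Once this identification is invoked, \cref{gt2} follows immediately and the lemma is proved.
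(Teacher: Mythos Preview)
Your proof is correct and follows essentially the same approach as the paper: verify \cref{gt1} via the transformation rule \cref{eq:conform:a} together with the connection-preservation identity $t_{*}(\nu_0)=\alpha_l^{*}\fa\Omega-\alpha_r^{*}\fa{\Omega'}$, and verify \cref{gt2} by expanding $\mathrm{d}\varphi+\tfrac12[\varphi\wedge\varphi]+\alpha_{*}(A'\wedge\varphi)$, cancelling the $g^{*}\bar\theta$-terms, invoking fake-flatness of $\nu$ to reduce to $-(\alpha_g)_{*}(\nu_1)$, and then using the other connection-preservation identity $\nu_1=\alpha_l^{*}\fc\Omega-\alpha_r^{*}\fc{\Omega'}$ together with \cref{eq:conform:c}. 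The paper records these two identities as parts of the relation $J_{\nu}^{*}\Omega'=\Omega$ proved in \refpullbackform, which is exactly the justification you flagged as the only non-bookkeeping step.
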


\begin{proof}
\cref{gt1} is a straightforward computation using \cref{eq:conform:a} and the condition $t_{*}(\nu_0) = \alpha_l^{*}\fa\Omega-\alpha_r^{*}\fa{\Omega'}$ which is part of the relation $J_{\nu}^{*}\Omega'=\Omega$, see \refpullbackform. 
\begin{comment}
Indeed,
\begin{align*}
t_{*}(\varphi_{f,g}) &= t_{*}((\alpha_{g})_{*}(\nu_f))
\\&= \mathrm{Ad}_g(t_{*}(\nu_f))
\\&= \mathrm{Ad}_g(\fa\Omega_{\alpha_l(f)}-\fa{\Omega'}_{\alpha_r(f)})
\\&= \mathrm{Ad}_g(\fa\Omega_{\alpha_l(f)})-(\mathrm{Ad}_g(\fa{\Omega'}_{\alpha_r(f)}) - g^{*}\bar\theta)- g^{*}\bar\theta
\\&= \mathrm{Ad}_g(\fa\Omega_{\alpha_l(f)})-\fa{\Omega'}_{R(\alpha_r(f),g^{-1})}- g^{*}\bar\theta
\\&= \mathrm{Ad}_{g}(A_{f,g}) - A' _{f,g}- g^{*}\bar\theta
\end{align*}
\end{comment}
For condition \cref{gt2} we first compute using  \cref{eq:conform:a} that
\begin{equation*}
\mathrm{d}\varphi + \frac{1}{2}[\varphi \wedge \varphi]+\alpha_{*}(A'\wedge \varphi)=(\alpha_{g})_{*}(\mathrm{d}\nu_0)+ \frac{1}{2}(\alpha_g)_{*}[\nu_0\wedge \nu_0]+ (\alpha_g)_{*}\alpha_{*}(\alpha_r^{*}\fa{\Omega'} \wedge \nu_0)\text{.}
\end{equation*}
\begin{comment}
Indeed,
\begin{eqnarray*}
&&\mquad \mathrm{d}\varphi _{f,g}+ \frac{1}{2}[\varphi_{f,g}\wedge\varphi_{f,g}] + \alpha_{*}(A'_{f,g}\wedge \varphi_{f,g})
\\&=& \mathrm{d}(\alpha_g)_{*}(\nu_0|_f)+ \frac{1}{2}[(\alpha_g)_{*}(\nu_0|_f)\wedge(\alpha_g)_{*}(\nu_0|_f)] + \alpha_{*}(\fa{\Omega'}_{R(\alpha_r(f),g^{-1})}\wedge (\alpha_g)_{*}(\nu_0|_f))
\\&=& (\alpha_{g})_{*}(\mathrm{d}\nu_0|_f)+\alpha_{*}(\bar\theta_g \wedge (\alpha_{g})_{*}(\nu_0|_f))+ \frac{1}{2}(\alpha_g)_{*}[\nu_0|_f \wedge \nu_0|_f] \\&&\qquad+ \alpha_{*}((\mathrm{Ad}_{g}(\fa{\Omega'}_{\alpha_r(f)}) -\bar\theta_g )\wedge (\alpha_g)_{*}(\nu_0|_f))
\\&=& (\alpha_{g})_{*}(\mathrm{d}\nu_0|_f)+ \frac{1}{2}(\alpha_g)_{*}[\nu_0|_f \wedge \nu_0|_f] + (\alpha_g)_{*}\alpha_{*}(\fa{(\Omega')}_{\alpha_r(f)} \wedge \nu_0|_f)
\end{eqnarray*}
\end{comment}
Using the fake-flatness of $\nu$, this is equal to $-(\alpha_g)_{*}(\nu_1)$.
Another part of the relation $J_{\nu}^{*}\Omega'=\Omega$ is $\nu_1 = \alpha_l^{*}\fc\Omega-\alpha_r^{*}\fc{\Omega'}$; using this and  \cref{eq:conform:c} it is easy to show that
$-(\alpha_g)_{*}(\nu_1)=   (\alpha_{g})_{*}(B)-B'$.
This shows  \cref{gt2}. 
\begin{comment}
Indeed,
\begin{eqnarray*}
- (\alpha_g)_{*}(\nu_1|_f)
&=&  -(\alpha_{g})_{*}(\fc\Omega_{\alpha_l(f)}-\fc{\Omega'}_{\alpha_r(f)})
\\&=&  -(\alpha_{g})_{*}(\fc\Omega_{\alpha_l(f)})+\fc{(\Omega')}_{R(\alpha_r(f),g^{-1})}
\\&=&  (\alpha_{g})_{*}(B_{f,g})-B'_{f,g}
\end{eqnarray*}
Here we have used
\begin{equation*}
\fc\Omega'_{R(\alpha_r(f),g^{-1})} &=& (\alpha_{g})_{*}(\Omega'_{\alpha_r(f)})\text{.}
\end{equation*}
\end{comment}
\end{proof}

Thus, we have the quantity $ h_{g,\varphi}(\lambda,\gamma)$ associated to any pair of  paths $\lambda$ in $J$ and $\gamma$ in $G$. Our first goal is to understand the dependence on $\gamma$.

\begin{lemma}
\label{lem:hJextend}
$h_{g,\varphi}(\lambda,\gamma)=h_{g,\varphi}(\lambda\cdot \gamma^{-1},1)$.
\end{lemma}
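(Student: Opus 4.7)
The plan is to realize the identity as a naturality statement for the path-ordered exponential with respect to a cleverly chosen smooth map $\mu\maps Q \to Q$ that absorbs the $G$-path $\gamma$ into the $\mor{\Gamma}$-action on the $J$-path $\lambda$. Define
\begin{equation*}
\mu(j,g) := (j \cdot (1,g^{-1}),\,1)\text{,}
\end{equation*}
so that along a curve, $\mu \circ (\lambda,\gamma) = (\lambda \cdot \gamma^{-1},\,1)$. Since $h_{g,\varphi}(\lambda,\gamma)$ is by construction the $H$-component of $\PE_{\varphi,A'}(\lambda,\gamma)$, the naturality property of \cref{lem:PE:c} reduces the lemma to proving the single identity $\mu^{*}(\varphi,A') = (\varphi,A')$ of $\mathfrak{h}\oplus\mathfrak{g}$-valued $1$-forms on $Q$.

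First I would check the $\mathfrak{g}$-component. By definition $A' = \chi'^{*}\fa{\Omega'}$ with $\chi'(j,g) = R(\alpha_r(j),g^{-1})$. A direct calculation using the $\Gamma$-equivariance $\alpha_r(j\cdot (h,g')) = R(\alpha_r(j),g')$ of the right anchor shows $\chi' \circ \mu = \chi'$, so $\mu^{*}A' = A'$ is immediate.

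The harder step is the $\mathfrak{h}$-component identity $\mu^{*}\varphi = \varphi$, which is where the connectivity hypothesis on $\nu$ enters essentially. Writing $\mu = (\rho\circ\phi, 1)$ with $\phi\maps (j,g)\mapsto (j,(1,g^{-1}))$ and $\rho\maps J\times\mor{\Gamma}\to J$ the $\mor{\Gamma}$-action, one computes the pullback of $\varphi = (\alpha_{\pr_G})_{*}(\pr_J^{*}\nu_0)$ along $\mu$ by evaluating $\rho^{*}\nu_0$ at the point $(j,(1,g^{-1}))$ on the vector $\phi_{*}(v_j,v_g)$. The connectivity formula in \cref{def:pullback:b} expresses $\rho^{*}\nu_0$ as  $(\alpha_{g_{\Gamma}^{-1}})_{*}$ applied to a sum of three terms; at $h_{\Gamma}=1$ (constant) the $\mathrm{Ad}_{h_\Gamma}^{-1}$ term reduces to the identity, the $(\tilde\alpha_{h_\Gamma})_{*}$ term vanishes because $\tilde\alpha_1$ is the constant map $1$, and the $h_\Gamma^{*}\theta$ term vanishes because we are not moving in the $H$-direction. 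With $g_\Gamma = g^{-1}$ the outer factor becomes $(\alpha_g)_{*}$, yielding exactly $\varphi|_{(j,g)}(v_j,v_g) = (\alpha_g)_{*}(\nu_0(v_j))$. Since $(\alpha_1)_{*}$ is the identity, $\mu^{*}\varphi = \varphi$ follows.

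Given $\mu^{*}(\varphi,A') = (\varphi,A')$, \cref{lem:PE:c} gives
\begin{equation*}
\PE_{\varphi,A'}(\lambda,\gamma)=\PE_{\mu^{*}(\varphi,A')}(\lambda,\gamma)=\PE_{\varphi,A'}(\mu(\lambda,\gamma)) = \PE_{\varphi,A'}(\lambda\cdot\gamma^{-1},\,1)\text{,}
\end{equation*}
and taking $H$-components yields the lemma. The main obstacle is the bookkeeping in the connectivity calculation of the previous paragraph; all the other steps are formal. Notably, the argument uses the full strength of \cref{def:pullback:b}\cref{def:pullback:b*} (connectivity), whereas the fake-flatness of $\nu$ is not needed here.
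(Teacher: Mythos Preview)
Your proof is correct and follows essentially the same approach as the paper. The paper defines the map $\rho\colon Q\times G\to Q$, $(j,g,g')\mapsto(j\cdot g',gg')$, shows $\rho^{*}\varphi=\pr_Q^{*}\varphi$ and $\rho^{*}A'=\pr_Q^{*}A'$ (using exactly the same two ingredients you isolate: $\Gamma$-equivariance of $\alpha_r$ and the $h=1$ case of connectivity of $\nu_0$), and then applies \cref{lem:PE:c} with $g'=\gamma^{-1}$. Your map $\mu$ is simply this $\rho$ with the choice $g'=g^{-1}$ baked in, so the two arguments are the same up to that cosmetic parameterization.
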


\begin{proof}
We consider $\rho:Q \times G \to Q$ defined by $\rho(j,g,g')=(j\cdot g',gg')$. It is easy to check that $\rho^{*}\varphi=\pr_Q^{*}\varphi$ and $\rho^{*}(\chi'^{*}\fa{\Omega'})=\pr_Q^{*}(\chi'^{*}\fa{\Omega'})$.
Now, the definition of $h_{g,\varphi}$ and \cref{lem:PE:c} give the claim.
\begin{comment}
We have
\begin{equation*}
\rho^{*}\varphi|_{f,g,g'}=\varphi|_{fg',gg'}=(\alpha_{gg'})_{*}(\nu_0|_{fg'})=(\alpha_{g})_{*}(\nu_0|_f)=\pr_Q^{*}\varphi\text{.}
\end{equation*}
Further, 
\begin{equation*}
\rho^{*}A'|_{f,g,g'}=\fa{\Omega'}|_{R(\alpha_r(fg'),g'^{-1}g^{-1})}=\fa{\Omega'}|_{R(\alpha_r(f),g^{-1})}=\pr_Q^{*}A'|_{f,g,g'}\text{.}
\end{equation*}
\end{comment}
\end{proof}

By the lemma, it suffices to consider the quantity $h_{\nu}(\lambda)\in H$ associated to each path $\lambda$ in $J$.

\begin{proposition}
\label{lem:calchgvarphiJ}
Let $J:\inf P \to \inf P'$ be a 1-morphism in $\zweibunconff\Gamma M$, and $\nu$ be its pullback. For every be a path $\lambda$ in $J$ such that $\alpha_r(\lambda)$ is horizontal, and every path $\gamma$ in $G$ we have $h_{\nu}(\lambda \cdot \gamma)=\alpha(\gamma(1)^{-1},\PE_{\nu_0}(\lambda))$. \end{proposition}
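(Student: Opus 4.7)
The plan is to reduce the statement to a direct computation of the initial-value problem that defines $h_\nu(\lambda\cdot\gamma)$, using the gauge transformation $(g,\varphi)$ on $Q=J\times G$ from \cref{lem:1morphgauge}. By construction and \cref{lem:hJextend}, $h_\nu(\lambda\cdot\gamma)=h_{g,\varphi}(\lambda\cdot\gamma,1)=\eta(1)$ where $(\eta,\kappa):[0,1]\to H\ltimes G$ solves
\[
(\dot\eta,\dot\kappa) = -(\varphi,A')(\dot\beta)\cdot(\eta,\kappa),\qquad (\eta(0),\kappa(0))=(1,1),
\]
along the path $\beta(t):=((\lambda\cdot\gamma)(t),1)$ in $Q$. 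Unpacking the semidirect product multiplication yields the equivalent form $\eta^{-1}\dot\eta=-\mathrm{Ad}_\eta^{-1}(\varphi(\dot\beta))-(\tilde\alpha_\eta)_*(A'(\dot\beta))$.

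First I would evaluate the two 1-forms along $\beta$. Since the second component of $\beta$ is constantly $1\in G$, the definition $\varphi=(\alpha_{\pr_G})_*(\pr_J^{*}\nu_0)$ reduces to $\varphi(\dot\beta)=\nu_0(\partial_t(\lambda\cdot\gamma))$. Applying the connectivity formula of \cref{def:pullback:b} to the path $(\lambda,(1,\gamma))$ in $J\times\mor\Gamma$ (noting $\mathrm{Ad}_1=\id$ and $(\tilde\alpha_1)_{*}=0$ because $\tilde\alpha_1$ is constant) gives $\varphi(\dot\beta)=(\alpha_{\gamma^{-1}})_{*}(\nu_0(\dot\lambda))$. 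For $A'=\chi'^{*}\fa{\Omega'}$, the transformation rule \cref{eq:conform:a} combined with the horizontality of $\alpha_r(\lambda)$ yields $A'(\dot\beta)=\gamma^{-1}\dot\gamma$.

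Next I would make the ansatz $\eta(t):=\alpha(\gamma(t)^{-1},\mu(t))$, where $\mu:[0,1]\to H$ is the unique solution of $\dot\mu=-\nu_0(\dot\lambda)\mu$ with $\mu(0)=1$, so that $\mu(1)=\PE_{\nu_0}(\lambda)$. Since $\alpha(-,1)=1$, the initial condition $\eta(0)=1$ is automatic, and $\eta(1)=\alpha(\gamma(1)^{-1},\PE_{\nu_0}(\lambda))$ is exactly the desired expression. It thus suffices to verify that $\eta$ solves the IVP.

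The technical core of the proof is this verification. Differentiating $\eta=\alpha(\gamma^{-1},\mu)$ and applying the Maurer-Cartan form one obtains $\eta^{-1}\dot\eta=(\alpha_{\gamma^{-1}})_{*}(\mu^{-1}\dot\mu-(\tilde\alpha_\mu)_{*}(\dot\gamma\gamma^{-1}))$. Applying $(\alpha_\gamma)_{*}$ to the IVP and comparing, the main obstacle is to reconcile the two sides; this requires two crossed-module identities, namely $(\alpha_g)_{*}\circ\mathrm{Ad}_h^{-1}=\mathrm{Ad}_{\alpha(g,h)}^{-1}\circ(\alpha_g)_{*}$, which follows because $\alpha_g\in\mathrm{Aut}(H)$, and $(\alpha_g)_{*}\circ(\tilde\alpha_h)_{*}=(\tilde\alpha_{\alpha(g,h)})_{*}\circ\mathrm{Ad}_g$, which is obtained by differentiating the pointwise identity $\alpha_g\circ\tilde\alpha_h=\tilde\alpha_{\alpha(g,h)}\circ\mathrm{Ad}_g$ (a direct consequence of the action axiom $\alpha(g_1g_2,h)=\alpha(g_1,\alpha(g_2,h))$ and the homomorphism property of $\alpha(g,-)$). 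Using that $\alpha_\gamma(\eta)=\mu$, both identities apply, the $(\tilde\alpha_\mu)_{*}(\dot\gamma\gamma^{-1})$ contributions cancel, and what remains is precisely $\mu^{-1}\dot\mu=-\mathrm{Ad}_\mu^{-1}(\nu_0(\dot\lambda))$, i.e.\ $\dot\mu=-\nu_0(\dot\lambda)\mu$, the defining IVP of $\mu$. The hardest aspect is simply keeping track of left vs.\ right trivialization conventions while invoking both crossed-module identities; once these are organized cleanly, the matching is mechanical.
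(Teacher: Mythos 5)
Your proposal is correct and follows essentially the same route as the paper, which solves the defining initial value problem for the $(\varphi,A')$-path-ordered exponential in $H\ltimes G$ with the explicit ansatz $\eta=\alpha(\gamma^{\mp1},\mu)$, exactly as in the proof of \cref{lem:calchgvarphi}. The only (immaterial) difference is that you run the IVP along the path $(\lambda\cdot\gamma,1)$ in $Q=J\times G$, whereas the paper runs it along $(\lambda,\gamma)$ and converts via \cref{lem:hJextend}; your evaluations of $\varphi$ and $A'$ along that path and the cancellation of the $(\tilde\alpha)_{*}$-terms all check out.
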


\begin{proof}
Similar to the proof of \cref{lem:calchgvarphi}. 
\end{proof}

\subsection{Surface-ordered exponentials}

\label{sec:surfaceordered}

If $\Gamma$ is a Lie 2-group and $(A,B)$ is a fake-flat  $\Gamma$-connection on a smooth manifold $X$, then there exists a \emph{surface-ordered exponential} $\SE_{A,B}(\Sigma) \in H$ associated to any bigon $\Sigma:\gamma \Rightarrow  \gamma'$ in $X$. It is defined by a two-fold iteration of path-ordered exponentials in \cite[Section 2.3.1]{schreiber5}. We summarize the properties of the surface-ordered exponential in the following four propositions.

\begin{proposition}
Let $(A,B)$ be a fake-flat $\Gamma$-connection, and $\Sigma:\gamma \Rightarrow \gamma'$ be a bigon.
\begin{enumerate}[(a)]

\item
\label{lem:SE:a}
$\SE_{A,B}(\Sigma)$ only depends on the thin homotopy class of $\Sigma$. 

\item 
\label{lem:SE:b}
It satisfies the target-source-matching condition $t(\SE_{A,B}(\Sigma)) \cdot \PE_{A}(\gamma) = \PE_{A}(\gamma')$.

\item
\label{lem:SE:c}
If $\Sigma': \gamma' \Rightarrow \gamma''$ is vertically composable to a bigon $\Sigma' \bullet \Sigma: \gamma \Rightarrow \gamma''$, then 
\begin{equation*}
\SE_{A,B}(\Sigma' \bullet \Sigma) = \SE_{A,B}(\Sigma') \cdot \SE_{A,B}(\Sigma)\text{.}
\end{equation*} 

\item
\label{lem:SE:d}
If $\tilde \Sigma: \tilde\gamma \Rightarrow \tilde\gamma'$ is horizontally composable to a bigon $\tilde \Sigma \circ \Sigma: \tilde\gamma \circ \gamma \Rightarrow \tilde\gamma'\circ \gamma'$, then 
\begin{equation*}
\SE_{A,B}(\tilde\Sigma\circ \Sigma)=\SE_{A,B}(\tilde\Sigma) \cdot \alpha(\PE_A(\tilde\gamma) , \SE_{A,B}(\Sigma) )\text{.}
\end{equation*}

\item
\label{lem:SE:e}
If $f:X \to Y$ is a smooth map, then $\SE_{f^{*}(A,B)}(\Sigma) = \SE_{A,B}(\Sigma \circ f)$.

\item
\label{lem:soetriv}
If $B=0$, then $\SE_{A,B}(\Sigma)=1$.

\end{enumerate}
\end{proposition}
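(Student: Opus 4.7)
The plan is to reduce every part of the proposition to properties of the path-ordered exponential stated in the previous subsection, exploiting the fact that $\SE_{A,B}(\Sigma)$ is defined as a two-fold iterated $\PE$. Concretely, for a bigon $\Sigma:\gamma \Rightarrow \gamma'$ one first forms, for each $s\in[0,1]$, a path-ordered exponential of a suitable $\mathfrak{h}\oplus\mathfrak{g}$-valued 1-form along $t \mapsto \Sigma(s,t)$ (this uses fake-flatness, which makes the $G$-component a genuine flat transport), obtaining a smooth path in $H \ltimes G$. Then $\SE_{A,B}(\Sigma)$ is extracted as the $H$-component of the $\PE$ of a corresponding $\mathfrak{h}$-valued 1-form along $s\in[0,1]$. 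Most claims can then be cross-referenced to \cite[Section 2.3.1]{schreiber5}; what follows is how I would assemble them concisely.

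For \cref{lem:SE:a*}, I would apply \cref{lem:PE:a} twice: a rank-$\leq 2$ thin homotopy of the bigon induces thin homotopies both of the inner paths (pointwise in $s$) and of the outer $s$-path in the intermediate Lie group, so both $\PE$'s are unchanged. For \cref{lem:SE:b*}, projecting the two-fold $\PE$ to $G$ via the Lie group homomorphism $t:H\to G$ gives, by \cref{lem:PE:d}, the path-ordered exponential of $A$ along the path from $\gamma(1)$ to $\gamma'(1)$, which by boundary rectangularity of a bigon equals $\PE_A(\gamma')\PE_A(\gamma)^{-1}$. For \cref{lem:SE:c*}, vertical composition corresponds to concatenation in the outer $s$-direction, so the identity follows directly from \cref{lem:PE:b}; here it is important that at the interface $s=\frac12$ the two inner $\PE$'s agree, which is exactly the compatibility of the sources/targets of vertically composable bigons. \cref{lem:SE:e*} is just \cref{lem:PE:c} applied at both levels together with $f^{*}(A,B)=(f^{*}A,f^{*}B)$. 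Finally, \cref{lem:soetriv*} follows because, if $B=0$, the $\mathfrak{h}$-valued input to the inner $\PE$ vanishes, so the $H$-component stays at $1$ throughout both iterations.

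The main obstacle is \cref{lem:SE:d*}, horizontal composition, since this mixes the inner and outer parameters asymmetrically. The plan here is to use \cref{lem:PE:b} on the inner path-ordered exponential along the concatenated path $\tilde\gamma_s \ast \gamma_s$ for each $s$, which splits it as a product in $H\ltimes G$; the $H$-part of this product then carries a twist by the $G$-part coming from $\tilde\gamma_s$, which is precisely the $\alpha$-factor in the statement. Passing to the outer $\PE$ in $s$ and taking the $H$-component, one obtains $\SE_{A,B}(\tilde\Sigma)\cdot \alpha(\PE_A(\tilde\gamma),\SE_{A,B}(\Sigma))$ after using that at $s=0$ the $\tilde\gamma$-path determines a constant $G$-conjugation on the $\Sigma$-contribution. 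The bookkeeping of this twist is the technical heart, and one has to be careful that fake-flatness is used to ensure the two-step $\PE$ is well-defined and satisfies the Zamolodchikov-type compatibility underlying (d). Since all of this is worked out explicitly in \cite[Section 2.3.1]{schreiber5}, the actual writeup would simply cite the corresponding formulas there, merely indicating which of our Lemmas provide the needed input.
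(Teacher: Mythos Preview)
Your overall approach---defer to \cite{schreiber5} for (a)--(d), derive (e) from naturality of $\PE$, and unwind the definition for (f)---is exactly what the paper does. The paper is simply more terse: it bundles (a)--(d) into a single citation of \cite[Proposition 2.17]{schreiber5} rather than sketching each part.

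That said, two of your informal sketches are off. In (b) you speak of ``the path from $\gamma(1)$ to $\gamma'(1)$'', but for a bigon these endpoints coincide, so this is a constant path and your argument as written yields nothing; the actual mechanism is that the outer $s$-integration produces an element whose image under $t$ records the change in $\PE_A(\gamma_s)$ from $s=0$ to $s=1$. In (a), a rank~$\leq 2$ homotopy between bigons does \emph{not} in general restrict to rank~$\leq 1$ homotopies pointwise in~$s$, so the two-step application of thin invariance of $\PE$ is not literally valid; the argument in \cite{schreiber5} is more delicate. Since you end by citing \cite{schreiber5} anyway, these are not fatal, but the sketches should be dropped or corrected. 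For (f), the paper is more specific: $B=0$ forces the auxiliary 1-form $\mathcal{A}_\Sigma$ of \cite[Eq.~(2.26)]{schreiber5} to vanish, whence the function $f_\Sigma$ and the map $k_{A,0}$ defining $\SE_{A,0}$ are identically~$1$.
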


\begin{proof}
\cref{lem:SE:a*,lem:SE:b*,lem:SE:c*,lem:SE:d*} are a reformulation of \cite[Proposition 2.17]{schreiber5}. \cref{lem:SE:e*} follows from \cref{lem:PE:c}. Only for \cref{lem:soetriv*} we have to look into the details of the definition of the surface ordered exponential in \cite[Section 2.3.1]{schreiber5}.
Since $B=0$, we have $\mathcal{A}_{\Sigma}=0$ for the 1-form $\mathcal{A}_{\Sigma}$ of Eq. (2.26) in that reference. Then, the function $f_{\Sigma}$ vanishes, and so does the map $k_{A,0}$ which defines $\SE_{A,0}(\Sigma)$.
\end{proof}

\begin{remark}
\label{rem:bigonpar}
Suppose $f:[0,1]^2 \to X$ is a smooth map, of which we can think of as a piece of surface in $X$. In order to compute the surface ordered exponential of $f$, we need the following terminology. A \emph{bigon-parameterization} of $f$ is a bigon $\Sigma: \gamma_r \ast \gamma_t \Rightarrow \gamma_b \ast \gamma_l$ in $X$ such that there exists a homotopy between $f$ and $\Sigma$ of rank less than three, which induces homotopies of rank less than two between the following pairs of paths:
$\gamma_t$ and the top edge $f(0,-)$, $\gamma_b$ and $f(1,0)$ the bottom edge, $\gamma_r$ and the right edge $f(-,0)$ and $\gamma_r$ and the left edge $f(-,1)$. 
It follows immediately that two bigon-parameterizations $\Sigma$ and $\Sigma'$ of $f$ are thin homotopic. In particular, the surface-ordered exponential of $f$ is well-defined. To see the existence of a bigon-parameterization, one can compose $f$ with a standard bigon in $\R^2$, see \cite[Eq. 2.5]{schreiber5}. 
\end{remark}

Let $\inf P$ be a principal $\Gamma$-2-bundle with fake-flat connection $\Omega$. We recall that $(\fa\Omega,-\fc\Omega)$ is a fake-flat $\Gamma$-connection on $\ob{\inf P}$. Hence, we have a surface-ordered exponential
\begin{equation*}
h_{\Omega}(\Sigma) := h_{\fa\Omega,-\fc\Omega}(\Sigma)\in H
\end{equation*}
associated to every bigon $\Sigma$ in $\ob{\inf P}$. 
Next we study the surface-ordered exponential under gauge transformations. We start with the following.

\begin{lemma}
\label{lem:SE:gauge:b}
Let $(g,\varphi):(A,B) \to (A',B')$ be a gauge transformation between fake-flat $\Gamma$-connections,  let  $\Sigma: \gamma \Rightarrow \gamma'$ be a bigon, and let  $y:=\gamma(1)=\gamma'(1)$. Then, 
\begin{equation*}
\SE_{A',B'}(\Sigma) \cdot h_{g,\varphi}(\gamma)^{-1}= h_{g,\varphi}(\gamma')^{-1} \cdot \alpha(g(y),\SE_{A,B}(\Sigma))\text{.}
\end{equation*}
\end{lemma}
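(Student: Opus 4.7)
The plan is to reduce the claim to the gauge-compatibility \cref{lem:PE:e} of the path-ordered exponential, applied at the level of the path space of $X$. Recall from \cite[Section 2.3]{schreiber5} that $\SE_{A,B}(\Sigma)$ is constructed as a path-ordered exponential on the space of paths in $X$, with respect to an $\mathfrak{h}$-valued $1$-form $\mathcal{B}_{A,B}$ built from $B$ and from the $A$-holonomies along paths. My approach is to show that the gauge transformation $(g,\varphi)$ induces a corresponding gauge transformation between the path-space $1$-forms $\mathcal{B}_{A,B}$ and $\mathcal{B}_{A',B'}$, with gauge factor at a path $\gamma$ given precisely by $h_{g,\varphi}(\gamma)$ and twist by $\alpha(g(y),-)$; the claimed identity would then follow by a direct application of \cref{lem:PE:e} to the bigon $\Sigma$, viewed as a path in path space running from $\gamma$ to $\gamma'$.

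To identify this induced path-space gauge transformation, I would differentiate \cref{lem:SE:gauge:a} in the direction transverse to $\gamma$, thereby exhibiting the assignment $\gamma \mapsto h_{g,\varphi}(\gamma)$ itself as the parallel transport on path space of a particular $\mathfrak{h}$-valued $1$-form. Using the defining equations \cref{gt1} and \cref{gt2} for the gauge transformation, one checks that this $1$-form is precisely the gauge connecting $(\alpha_g)_{*}\mathcal{B}_{A,B}$ to $\mathcal{B}_{A',B'}$. Once this identification is in place, the path-space version of \cref{lem:PE:e} immediately produces the asserted formula.

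An alternative axiomatic route is to define $T(\Sigma) := h_{g,\varphi}(\gamma')^{-1} \cdot \alpha(g(y), \SE_{A,B}(\Sigma)) \cdot h_{g,\varphi}(\gamma)$ and verify directly, using \cref{lem:SE:gauge:z} and \cref{lem:SE:gauge:a} for the bookkeeping of boundary terms, that $T$ satisfies the target-source matching of \cref{lem:SE:b}, the vertical composition of \cref{lem:SE:c}, and the horizontal composition of \cref{lem:SE:d} with $(A',B')$ in place of $(A,B)$. These formal manipulations reduce the claim to bigons supported in an arbitrarily small coordinate chart. The main obstacle lies precisely here: on such small bigons one still needs a genuine infinitesimal computation, carried out either in the path-space framework of \cite{schreiber5} or directly from the $2$-form transformation law in \cref{gt2}, to match $T$ with $\SE_{A',B'}$ at leading order. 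Everything else in both routes is formal, so this infinitesimal step is the single nontrivial piece of the argument.
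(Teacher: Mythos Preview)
The paper does not prove this lemma at all: its proof consists solely of the citation \cite[Lemma 2.19]{schreiber5}. Your first approach---recognising $\SE_{A,B}$ as a path-ordered exponential on path space with respect to an $\mathfrak{h}$-valued $1$-form, and then showing that $(g,\varphi)$ induces a gauge transformation between $\mathcal{B}_{A,B}$ and $\mathcal{B}_{A',B'}$ with gauge factor $h_{g,\varphi}$---is exactly the strategy used in that reference, so your sketch is on the right track and matches the original argument.

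Your second, axiomatic route is not how the cited proof proceeds, and as you yourself note it does not avoid the infinitesimal computation: the properties in \cref{lem:SE:b}--\cref{lem:SE:d} do not uniquely determine the surface-ordered exponential (any cocycle satisfying target-source matching and the two composition laws would pass), so reducing to small bigons and matching at leading order is not merely a residual technicality but the entire content. If you pursue that direction you would in effect be redoing the path-space calculation anyway.
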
  

\begin{proof}
\cite[Lemma 2.19]{schreiber5}.
\end{proof}

Now recall from \cref{lem:gaugetrafoX}  that every principal $\Gamma$-2-bundle $\inf P$ with fake-flat connection $\Omega$ induces a gauge transformation on the smooth manifold $X := \mor{\inf P} \times G$. \cref{lem:SE:gauge:b} gives the following result.

\begin{proposition}
\label{prop:SE:gauge:b}
Let $\inf P$  be a principal $\Gamma$-2-bundle with fake-flat connection $\Omega$. Let $\Psi:\rho \Rightarrow \rho'$ be a bigon in $\mor{\inf P}$, and $\Theta:\gamma \Rightarrow \gamma'$ be a bigon in $G$. Then, 
\begin{equation*}
\alpha(\gamma(1)^{-1},\SE_{\Omega}(R(s(\Psi),\Theta^{-1}))) \cdot h_{\Omega}(\rho)^{-1}= h_{\Omega}(\rho')^{-1} \cdot \SE_{\Omega}(t(\Psi))\text{,}
\end{equation*}
where $\Theta^{-1}$ denotes the point-wise inversion in $G$. 
\end{proposition}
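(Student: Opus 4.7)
The plan is to reduce the claim to \cref{lem:SE:gauge:b} applied on the auxiliary manifold $X := \mor{\inf P}\times G$ carrying the gauge transformation $(g,\varphi)\colon (A,B)\to(A',B')$ of \cref{lem:gaugetrafoX}, with $\chi_1(\rho,g')=t(\rho)$ and $\chi_2(\rho,g')=R(s(\rho),g'^{-1})$. The key observation is that $\Psi$ and $\Theta$ combine into a single bigon $(\Psi,\Theta)\colon (\rho,\gamma)\Rightarrow (\rho',\gamma')$ in $X$, whose common endpoint is $y:=(\rho(1),\gamma(1))$ with $g(y)=\gamma(1)=\gamma'(1)$.

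Applied to this bigon, \cref{lem:SE:gauge:b} yields the identity
\begin{equation*}
\SE_{A',B'}((\Psi,\Theta)) \cdot h_{g,\varphi}(\rho,\gamma)^{-1}= h_{g,\varphi}(\rho',\gamma')^{-1} \cdot \alpha(\gamma(1),\SE_{A,B}((\Psi,\Theta)))\text{.}
\end{equation*}
I would then translate each of the four quantities to the ones in the statement. The surface-ordered exponentials are handled by naturality under smooth maps (\cref{lem:SE:e}) applied to $\chi_1$ and $\chi_2$:
\begin{equation*}
\SE_{A,B}((\Psi,\Theta))=\SE_{\Omega}(\chi_1 \circ (\Psi,\Theta))=\SE_{\Omega}(t(\Psi))
\end{equation*}
and
\begin{equation*}
\SE_{A',B'}((\Psi,\Theta))=\SE_{\Omega}(\chi_2 \circ (\Psi,\Theta))=\SE_{\Omega}(R(s(\Psi),\Theta^{-1}))\text{.}
\end{equation*}
For the path-quantities I would invoke \cref{co:h:a}, giving $h_{g,\varphi}(\rho,\gamma)=\alpha(\gamma(1),h_{\Omega}(\rho))$ and $h_{g,\varphi}(\rho',\gamma')=\alpha(\gamma'(1),h_{\Omega}(\rho'))$.

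Substituting these four identifications into the displayed equation, using $\gamma(1)=\gamma'(1)$ and the fact that $\alpha(g,-)$ is a group homomorphism on $H$, one obtains
\begin{equation*}
\SE_{\Omega}(R(s(\Psi),\Theta^{-1}))\cdot \alpha(\gamma(1),h_{\Omega}(\rho)^{-1})=\alpha(\gamma(1),h_{\Omega}(\rho')^{-1})\cdot \alpha(\gamma(1),\SE_{\Omega}(t(\Psi)))\text{.}
\end{equation*}
Applying $\alpha(\gamma(1)^{-1},-)$ to both sides (or, equivalently, multiplying by $\alpha(\gamma(1),-)^{-1}$ on the right of the $H$-factor, which commutes with $\alpha(\gamma(1),-)$ on the other factor since $\alpha(\gamma(1)^{-1}, \alpha(\gamma(1), h))=h$) yields precisely the claimed identity.

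The proof is essentially a bookkeeping exercise: the substantive content is already in \cref{lem:SE:gauge:b} together with the naturality formulas \cref{lem:SE:e} and \cref{co:h:a}. The only point requiring minor care is tracking the action of $\alpha(\gamma(1),-)$ when collapsing the $\mor{\inf P}\times G$-picture back to $\ob{\inf P}$, but this is immediate from the $G$-equivariance of the crossed module.
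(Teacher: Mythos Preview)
Your proof is correct and follows essentially the same approach as the paper: view $(\Psi,\Theta)$ as a bigon in $X=\mor{\inf P}\times G$, apply \cref{lem:SE:gauge:b}, then translate all four quantities back via \cref{lem:SE:e} and \cref{co:h:a}, and finally strip off the global $\alpha(\gamma(1),-)$ using that it is an automorphism of $H$. The paper's own argument is identical in structure and uses exactly these ingredients.
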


\begin{comment}
\begin{proof}
We consider $(\Psi,\Theta):(\rho,\gamma) \Rightarrow (\rho',\gamma')$ as a bigon in $X$. \cref{lem:SE:gauge:b} gives
\begin{equation*}
\SE_{A',B'}(\Psi,\Theta) \cdot h_{g,\varphi}(\rho,\gamma)^{-1}= h_{g,\varphi}(\rho',\gamma')^{-1} \cdot \alpha(g(\rho(1),\gamma(1)),\SE_{A,B}(\Psi,\Theta))\text{.}
\end{equation*}
Using \cref{lem:SE:e,co:h:a} this becomes
\begin{equation*}
\SE_{\Omega}(\chi_2(\Psi,\Theta)) \cdot\alpha(\gamma(1),h_{\Omega}(\rho)^{-1})= \alpha(\gamma##'(1),h_{\Omega}(\rho')^{-1} \cdot \alpha(\gamma(1),\SE_{\Omega}(\chi_1(\Psi,\Theta)))\text{.}
\end{equation*}
Using the definitions of $\chi_1$ and $\chi_2$ and $g$, we obtain the claim.
\end{proof}
\end{comment}

\begin{corollary}
\label{lem:soe2bun}
If $\Sigma$ is a bigon in $\ob{\inf P}$ and $\Theta:\gamma \Rightarrow \gamma'$ is a bigon in $G$, then
\begin{equation*}
\SE_{\Omega}(R(\Sigma,\Theta))=\alpha(\gamma(1)^{-1},\SE_{\Omega}(\Sigma))\text{.}
\end{equation*}
\end{corollary}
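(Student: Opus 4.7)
The plan is to deduce the corollary by a direct application of \cref{prop:SE:gauge:b} with a convenient choice of bigons. Specifically, I would take $\Psi := \id_\Sigma$, the pointwise-identity bigon in $\mor{\inf P}$ associated to $\Sigma$, and replace the bigon $\Theta$ appearing in the proposition by its pointwise inverse $\tilde\Theta(s,t) := \Theta(s,t)^{-1}$, which is a bigon $\tilde\Theta : \tilde\gamma \Rightarrow \tilde\gamma'$ with $\tilde\gamma(t) := \gamma(t)^{-1}$ and $\tilde\gamma'(t) := \gamma'(t)^{-1}$, and which satisfies $\tilde\Theta^{-1} = \Theta$ as well as $\tilde\gamma(1)^{-1} = \gamma(1)$.

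With this choice, $s(\Psi) = t(\Psi) = \Sigma$ (since $s$ and $t$ restricted to identities in $\mor{\inf P}$ just recover the underlying objects), so $R(s(\Psi), \tilde\Theta^{-1}) = R(\Sigma, \Theta)$ on the left-hand side of \cref{prop:SE:gauge:b}, and $\SE_{\Omega}(t(\Psi)) = \SE_{\Omega}(\Sigma)$ on the right. The proposition therefore reads
\begin{equation*}
\alpha(\gamma(1),\SE_{\Omega}(R(\Sigma,\Theta))) \cdot h_{\Omega}(\rho)^{-1} = h_{\Omega}(\rho')^{-1}\cdot \SE_{\Omega}(\Sigma)\text{,}
\end{equation*}
where $\rho, \rho'$ are the two bounding paths of $\id_\Sigma$ in $\mor{\inf P}$.

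These bounding paths are themselves identity morphisms: if the bounding paths of $\Sigma$ in $\ob{\inf P}$ are $\beta$ and $\beta'$, then $\rho = \id_\beta$ and $\rho' = \id_{\beta'}$, understood as paths in $\mor{\inf P}$. By \cref{lem:gomega:c} (applied with the constant path $\gamma = 1$ in $G$), $h_{g,\varphi}(\id_\beta, 1) = 1$ and similarly for $\beta'$, so $h_{\Omega}(\rho) = h_{\Omega}(\rho') = 1$. The displayed equation reduces to $\alpha(\gamma(1), \SE_{\Omega}(R(\Sigma,\Theta))) = \SE_{\Omega}(\Sigma)$, and applying $\alpha(\gamma(1)^{-1}, -)$ to both sides gives the claim.

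There is no serious obstacle here; the only thing to be careful about is the bookkeeping of pointwise inverses so as to arrive at $\gamma(1)^{-1}$ rather than $\gamma'(1)^{-1}$ or $\gamma(0)^{-1}$ in the final formula, and the recognition that $\id_\Sigma$ is a genuine bigon in $\mor{\inf P}$ whose source and target paths are identities, so that \cref{lem:gomega:c} applies to trivialize both $h_{\Omega}$-factors.
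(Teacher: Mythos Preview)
Your proof is correct and follows exactly the same approach as the paper: apply \cref{prop:SE:gauge:b} with $\Psi=\id_\Sigma$, then use \cref{lem:gomega:c} to kill the two $h_\Omega$-factors. The paper states this in one sentence; your version simply spells out the bookkeeping (the substitution $\Theta\mapsto\Theta^{-1}$ and the identification of the bounding paths of $\id_\Sigma$ as identity morphisms) that the paper leaves implicit.
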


\begin{proof}
We use \cref{prop:SE:gauge:b} with $\Psi=\id(\Sigma)$, and then  \cref{lem:gomega:c}. \end{proof}

Finally, we recall from \cref{lem:1morphgauge} that every 1-morphism $J:\inf P \to \inf P'$ in $\zweibunconff\Gamma M$ induces a gauge transformation on the smooth manifold $Q := J \times G$. \cref{lem:SE:gauge:b} gives the following result. 

\begin{proposition}
\label{lem:bigonF}
Let $J: \inf P \to \inf P'$ be a 1-morphism in $\zweibunconff \Gamma M$. Let $\Omega$ and $\Omega'$ denote the connections on $\inf P$ and $\inf P'$, respectively, and let $\nu$ denote the $\Omega'$-pullback on $J$. Let $\Sigma: \lambda \Rightarrow\lambda'$ be a bigon in $J$, let $\Theta: \gamma \Rightarrow \gamma'$ be a bigon in $G$. Then,
\begin{equation*}
\SE_{\Omega'}(R(\alpha_r(\Sigma),\Theta^{-1})) \cdot h_{\nu}(\lambda\cdot \gamma^{-1})^{-1}= h_{\nu}(\lambda'\cdot \gamma'^{-1})^{-1} \cdot \alpha(\gamma(1),\SE_{\Omega}(\alpha_l(\Sigma)))\text{.}
\end{equation*}
\end{proposition}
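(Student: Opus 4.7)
The plan is to apply Lemma \ref{lem:SE:gauge:b} directly to the gauge transformation $(g,\varphi)$ on $Q := J \times G$ constructed in Lemma \ref{lem:1morphgauge}, in complete analogy with how Proposition \ref{prop:SE:gauge:b} is reduced to Lemma \ref{lem:SE:gauge:b} via the gauge transformation of Lemma \ref{lem:gaugetrafoX} on $\mor{\inf P} \times G$. First I would observe that $(\Sigma,\Theta)$ is a bigon in $Q$ going from $(\lambda,\gamma)$ to $(\lambda',\gamma')$, and that its images under the two maps $\chi,\chi':Q \to \ob{\inf P}, \ob{\inf P'}$ (defined by $\chi(j,g) = \alpha_l(j)$ and $\chi'(j,g) = R(\alpha_r(j),g^{-1})$) are precisely $\alpha_l(\Sigma)$ and $R(\alpha_r(\Sigma),\Theta^{-1})$, respectively.

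Next, applying Lemma \ref{lem:SE:gauge:b} to $(\Sigma,\Theta)$ with the gauge transformation $(g,\varphi):(A,B) \to (A',B')$ of Lemma \ref{lem:1morphgauge} yields
\begin{equation*}
\SE_{A',B'}(\Sigma,\Theta) \cdot h_{g,\varphi}(\lambda,\gamma)^{-1} = h_{g,\varphi}(\lambda',\gamma')^{-1} \cdot \alpha(g(\lambda(1),\gamma(1)),\SE_{A,B}(\Sigma,\Theta))\text{.}
\end{equation*}
It then remains to translate each term into the $\Omega$-notation. Naturality of the surface-ordered exponential under pullbacks (Lemma \ref{lem:SE:e}) combined with the definitions $(A,B) = \chi^{*}(\fa\Omega,-\fc\Omega)$ and $(A',B') = \chi'^{*}(\fa{\Omega'},-\fc{\Omega'})$ give $\SE_{A,B}(\Sigma,\Theta) = \SE_{\Omega}(\alpha_l(\Sigma))$ and $\SE_{A',B'}(\Sigma,\Theta) = \SE_{\Omega'}(R(\alpha_r(\Sigma),\Theta^{-1}))$. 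Since $g = \pr_G$ on $Q$, one has $g(\lambda(1),\gamma(1)) = \gamma(1)$. Finally, Lemma \ref{lem:hJextend} identifies $h_{g,\varphi}(\lambda,\gamma) = h_{\nu}(\lambda\cdot\gamma^{-1})$ and likewise for the primed data. Substituting all of these into the equation above produces exactly the claimed identity.

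The main obstacle is really just bookkeeping: the conceptual content sits already in Lemma \ref{lem:SE:gauge:b} (gauge behavior of $\SE$ in the target bundle) and in Lemma \ref{lem:1morphgauge} (the fact that a connective, connection-preserving, fake-flat $\Omega'$-pullback $\nu$ on a 1-morphism really does define a gauge transformation between the pullback $\Gamma$-connections on $Q$). Once these are in place, the proof collapses to matching notations — no independent differential-geometric computation is needed.
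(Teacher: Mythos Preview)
Your proposal is correct and follows exactly the same approach as the paper: apply \cref{lem:SE:gauge:b} to the bigon $(\Sigma,\Theta)$ in $Q=J\times G$ with the gauge transformation of \cref{lem:1morphgauge}, then unwind via \cref{lem:SE:e}, the definition $g=\pr_G$, and \cref{lem:hJextend}. There is nothing to add.
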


\begin{comment}
\begin{proof}
\cref{lem:SE:gauge:b} implies for $l:=\lambda(1)=\lambda'(1)$ and $g':=\gamma(1)=\gamma'(1)$: 
\begin{equation*}
\SE_{\chi'^{*}\Omega'}(\Sigma,\Gamma) \cdot h_{g,\varphi}(\lambda,\gamma)^{-1}= h_{g,\varphi}(\lambda',\gamma')^{-1} \cdot \alpha(g(l,g'),\SE_{\chi^{*}\Omega}(\Sigma,\Gamma))\text{.}
\end{equation*}
The definitions of $\chi$, $\chi'$, and $g$ yield the claim.
Now \cref{lem:hJextend} gives
\begin{equation*}
\SE_{\chi'^{*}\Omega'}(\Sigma,\Gamma) \cdot h_{\nu}(\lambda\cdot \gamma^{-1})^{-1}= h_{\nu}(\lambda'\cdot \gamma'^{-1})^{-1} \cdot \alpha(g',\SE_{\chi^{*}\Omega}(\Sigma,\Gamma))\text{.}
\end{equation*}
\end{proof}
\end{comment}

\subsection{Smooth 2-functors on path 2-groupoids}

\label{sec:smoothfunctors}

The path 2-groupoid $\mathcal{P}_2(M)$ of a smooth manifold $M$ is defined in the following way:
\begin{enumerate}[(a)]

\item 
Its objects are the points $x$ of $M$.

\item
Its 1-morphisms are thin homotopy classes $[\gamma]:x \to y$ of paths in $M$.

\item
Its 2-morphisms are thin homotopy classes $[\Sigma]:[\gamma] \Rightarrow [\gamma']$ of bigons in $M$.

\end{enumerate}
Using thin homotopy classes is one way to turn this structure into a strict bigroupoid, with the usual composition of paths, and the obvious vertical and horizontal composition of bigons sketched at the beginning of \cref{sec:compbigoncomp}. A  detailed definition is in \cite[Section 2.1]{schreiber5}.

\noindent
We recall the following constructions from \cite[Section 2.3]{schreiber5}:
\begin{enumerate}[(a)]

\item 
If $(A,B)$ is a fake-flat $\Gamma$-connection on $M$, then  we obtain a 2-functor \begin{equation*}
F_{A,B}:\mathcal{P}_2(M) \to B\Gamma\text{,}
\end{equation*}
given by the following assignments:
\begin{equation*}
x \mapsto \ast
\quomma
[\gamma] \mapsto \PE_{A}(\gamma)
\quand
[\Sigma] \mapsto (\SE_{A,B}(\Sigma),\PE_{A}(\gamma))\text{,}
\end{equation*}
where $[\Sigma]:[\gamma]\Rightarrow [\gamma']$. The well-definedness under thin homotopies was already mentioned in \cref{lem:PE:a,lem:SE:a}. 

\item
If $(g,\varphi):(A,B) \to (A',B')$ is a gauge transformation between fake-flat $\Gamma$-connections, then we have a  pseudonatural transformation 
\begin{equation*}
\rho_{g,\varphi}: F_{A,B} \to F_{A',B'}\text{,}
\end{equation*}
given by the assignments
$x \mapsto g(x)$ and $[\gamma]\mapsto (h_{g,\varphi}(\gamma)^{-1},g(y)\PE_{A}(\gamma))$ for $[\gamma]:x \to y$.

\item
If $a: (g_1,\varphi_1) \Rightarrow (g_2,\varphi_2)$ is a a gauge 2-transformation, then we have a modification 
\begin{equation*}
\mathcal{A}_{a}:\rho_{g_1,\varphi_1} \Rightarrow \rho_{g_2,\varphi_2}\text{,}
\end{equation*}
given by the assignment $x\mapsto (a(x),g(x))$. 
\end{enumerate}
These three constructions define a 2-functor
\begin{equation*}
\mathcal{P}:\conff\Gamma M \to \fun(\mathcal{P}_2(M),B\Gamma)\text{,}
\end{equation*}
see \cite[Section 2.3.4]{schreiber5}. 
Besides of being a strict bigroupoid, the path 2-groupoid is naturally  enriched in the category of diffeological spaces. Hence, there is a sub-bicategory
\begin{equation*}
\fun^{\infty}(\mathcal{P}_2(M),B\Gamma) \subset \fun(\mathcal{P}_2(M),B\Gamma)
\end{equation*} 
consisting of \emph{smooth} 2-functors. The main result of \cite{schreiber5} is that $\mathcal{P}$ induces an equivalence of bicategories, $\conff\Gamma M \cong \fun^{\infty}(\mathcal{P}_2(M),B\Gamma)$.

\end{appendix}


\bibliographystyle{kobib}
\bibliography{kobib}

\end{document}